\newcommand{\note}[1]{{\bf \textcolor{blue}
{[#1\marginpar{\textcolor{red}{***}}]}}}
\newcommand{\bR}{{\mathbb R}}
\newcommand{\bZ}{{\mathbb Z}}
\newcommand{\cA}{{\mathcal A}}
\newcommand{\cZ}{{\mathcal Z}}
\newcommand{\cO}{\mathcal O}
\newcommand{\cP}{\mathcal P}
\newcommand{\cT}{\mathcal T}
\newcommand{\cS}{\mathcal S}
\newcounter{mycount}
\newtheorem{theorem}{Theorem}[section]
\newtheorem{lemma}[theorem]{Lemma}
\newtheorem{prop}[theorem]{Proposition}
\newtheorem{cor}[theorem]{Corollary}
\theoremstyle{definition}
\newtheorem{ex}[theorem]{Example}
\newtheorem{remark}[theorem]{Remark}
\newcounter{example}
\newcounter{open}
\newcounter{figno}
\newcounter{tableno}
\numberwithin{equation}{section}
\numberwithin{figure}{section}
\numberwithin{table}{section}
\numberwithin{example}{section}
\newcommand{\prob}[1]{\mathbb{P}\left(#1\right)}
\newcommand{\probsub}[2]{\mathbb{P}_{#2}\left(#1\right)}
\newcommand{\Esub}[2]{\mathbb{E}_{#2}\left(#1\right)}
\newcommand{\abs}[1]{\left|#1\right|}
\newcommand{\ceil}[1]{\left\lceil#1\right\rceil}
\newcommand{\floor}[1]{\left\lfloor#1\right\rfloor}
\newcommand{\supp}{\mathop{\mathrm{supp}}}
\newcommand{\Z}{\cZ}
\newcommand{\ZE}{\widetilde \cZ}
\newcommand{\gammaE}{\widetilde\gamma}
\newcommand{\TE}{\widetilde \cT}
\newcommand{\RE}{\widetilde R}
\newcommand{\IE}{\widetilde I}
\newcommand{\length}{\text{\tt length}}
\newcommand{\area}{\text{\tt area}}
\newcommand{\squrep}{\text{\tt square}}
\newcommand{\origin}{\mathbf 0}
\newcommand{\X}{\mathbf \times}
\newcommand{\tmax}{{T_{{\mathrm {max}}}}}
\newcommand{\Econv}{\overset{\mathrm E}{\longrightarrow}}
\newcommand{\Span}{\text{\tt Span}}
\newcommand{\zswk}{\Z^{\swarrow k}}
\newcommand{\Il}{\widetilde{I}_{\text{lower}}}
\newcommand{\Iu}{\widetilde{I}_{\text{upper}}} 
\newcommand{\row}{\text{\tt row}}
\newcommand{\col}{\text{\tt col}}
\newcommand{\hE}{\widetilde{\row}}
\newcommand{\vE}{\widetilde{\col}}
\newcommand{\myll}{\text{\scalebox{2}{$\llcorner$}}}
\newcommand{\angk}{A_{>k}}
\begin{document}

\begin{center}\Large
{\bf Neighborhood growth dynamics on the Hamming plane}\footnote{Version 1, \today}
\end{center}

\begin{center}

{\sc Janko Gravner}\\
{\rm Mathematics Department}\\
{\rm University of California}\\
{\rm Davis, CA 95616, USA}\\
{\rm \tt gravner{@}math.ucdavis.edu}
\end{center}
\begin{center}
{\sc David Sivakoff}\\
{\rm Departments of Statistics and Mathematics}\\
{\rm The Ohio State University}\\
{\rm Columbus, OH 43210, USA}\\
{\rm \tt dsivakoff{@}stat.osu.edu}
\end{center} 
\begin{center}
{\sc Erik Slivken}\\
{\rm Mathematics Department}\\
{\rm University of California}\\
{\rm Davis, CA 95616, USA}\\
{\rm \tt erikslivken{@}math.ucdavis.edu}
\end{center} 

\begin{abstract} 
We initiate the study of general neighborhood growth dynamics on 
two dimensional Hamming graphs. 
The decision to add a point is made by counting the currently 
occupied points on the horizontal and the vertical line through it, and checking whether 
the pair of counts lies outside a fixed Young diagram.  
We focus on two related extremal quantities. The first is the size of the
smallest set that eventually occupies the entire plane. The second 
is the minimum of an energy-entropy functional that comes from the scaling of the probability of
eventual full occupation versus the density of the 
initial product measure within a rectangle. We demonstrate the existence of this scaling 
and study these quantities for large Young diagrams. 
\end{abstract}

\let\thefootnote\relax\footnote{\small {\it AMS 2000 subject classification\/}. 05D99, 60K35}
\let\thefootnote\relax\footnote{\small {\it Key words and phrases\/}. Bootstrap percolation,  
Hamming graph, large deviations, line growth, spanning set, Young diagram.}

\section{Introduction}\label{sec-intro}

We consider a long-range deterministic 
growth process on the discrete plane,
restricted for convenience to the first quadrant $\bZ_+^2$. 
This dynamics iteratively enlarges a subset of $\bZ_+^2$ 
by adding points based on counts on the entire 
horizontal and vertical lines through them. The connectivity is therefore that of a 
two-dimensional Hamming graph, 
that is, a Cartesian product of two complete graphs. The papers \cite{Siv, GHPS, Sli, BBLN} 
address some percolation and growth processes on vertices of 
Hamming graphs, but such highly nonlocal growth models remain
largely unexplored. In particular, the few two-dimensional problems addressed so far
appear to be too limited to offer much insight, and we seek to 
remedy this with a class of models we now introduce.

\newcommand{\cI}{\mathcal I}
\newcommand{\bN}{\mathbb N}

For integers $a,b\in \bN^2$, we let $R_{a,b}=([0,a-1]\times [0,b-1])\cap \bZ_+^2$ be 
the discrete $a\times b$ rectangle. A set $\Z=\cup_{(a,b)\in \cI} R_{a,b}$, given by a union
of rectangles 
over some set $\cI\subseteq\bN^2$, is called a (discrete) {\it zero-set\/}. We allow the trivial case
$\Z=\emptyset$, and also the possibility that $\Z$ is infinite. However, 
in most of the paper the zero-sets will be 
finite and therefore equivalent to Young diagrams in the French notation \cite{Rom} (see Figure~\ref{growth-example-Young}).
Our dynamics will be given by iteration of a growth transformation $\cT:2^{\bZ_+^2}\to 2^{\bZ_+^2}$,
and will be determined by the associated zero-set $\Z$, so we 
will commonly not distinguish between the two. 

Fix a zero-set $\Z$. Suppose $A\subseteq \bZ_+^2$ and $x\in \bZ_+^2$. 
Let $L^h(x)$ and $L^v(x)$ be the horizontal and the vertical line through $x$, so that 
the {\it neighborhood\/} of $x$ is $L^h(x)\cup L^v(x)$.
If $x\in A$, then 
$x\in \cT(A)$. If $x\notin A$, we compute the horizontal and vertical counts
$$
\row(x,A)=|L^h(x)\cap A|\ \text{ and }\ \col(x,A)=|L^v(x)\cap A|, 
$$
form the pair $(u,v)=(\row(x,A), \col(x,A))$,
and declare $x\in \cT(A)$ if and only if $(u,v)\notin\Z$. Observe 
that, by definition of a zero set, {\it monotonicity\/} holds: $A\subseteq A'$ implies 
$\cT(A)\subseteq \cT(A')$. We call such a rule a {\it neighborhood growth\/} rule. 
So defined, this class in fact comprises all rules that satisfy the natural monotonicity and 
symmetry assumptions and have only nearest-neighbor dependence under the Hamming
connectivity; see Section~\ref{sec-pattern}.  

\begin{figure}[htb]
\centering
\begin{subfigure}[c]{0.3\textwidth}
\centering
\includegraphics[width=\textwidth]{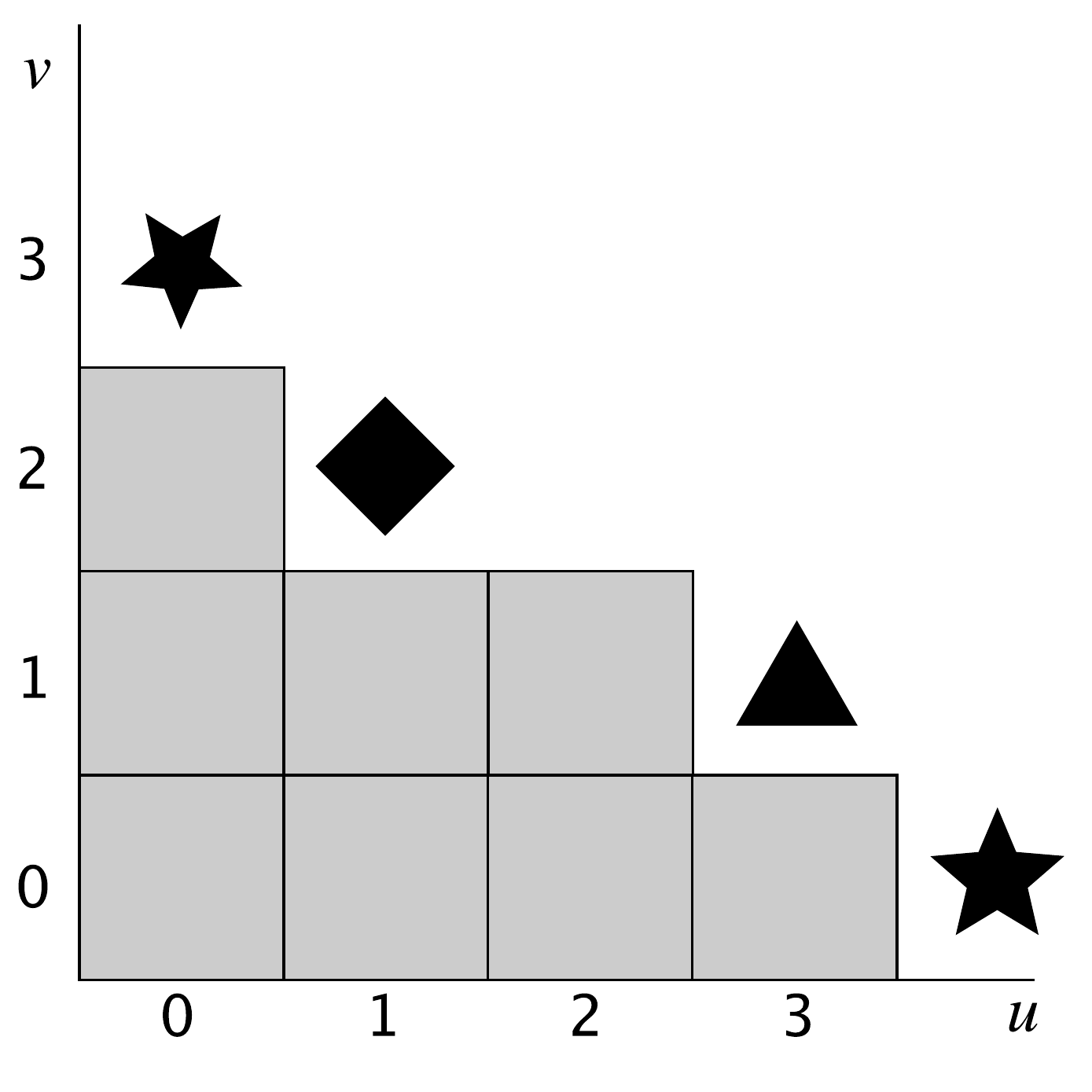}
\caption{\label{growth-example-Young} A zero-set $\cZ$ (grey region). Shapes on external boundary correspond to distinct minimal neighborhood counts that will result in occupation of vertices.  E.g., the diamond signifies occupation by having at least one horizontal and at least two vertical neighbors.}
\end{subfigure}
\quad
\begin{subfigure}[c]{0.6\textwidth}
\centering
\includegraphics[width=\textwidth]{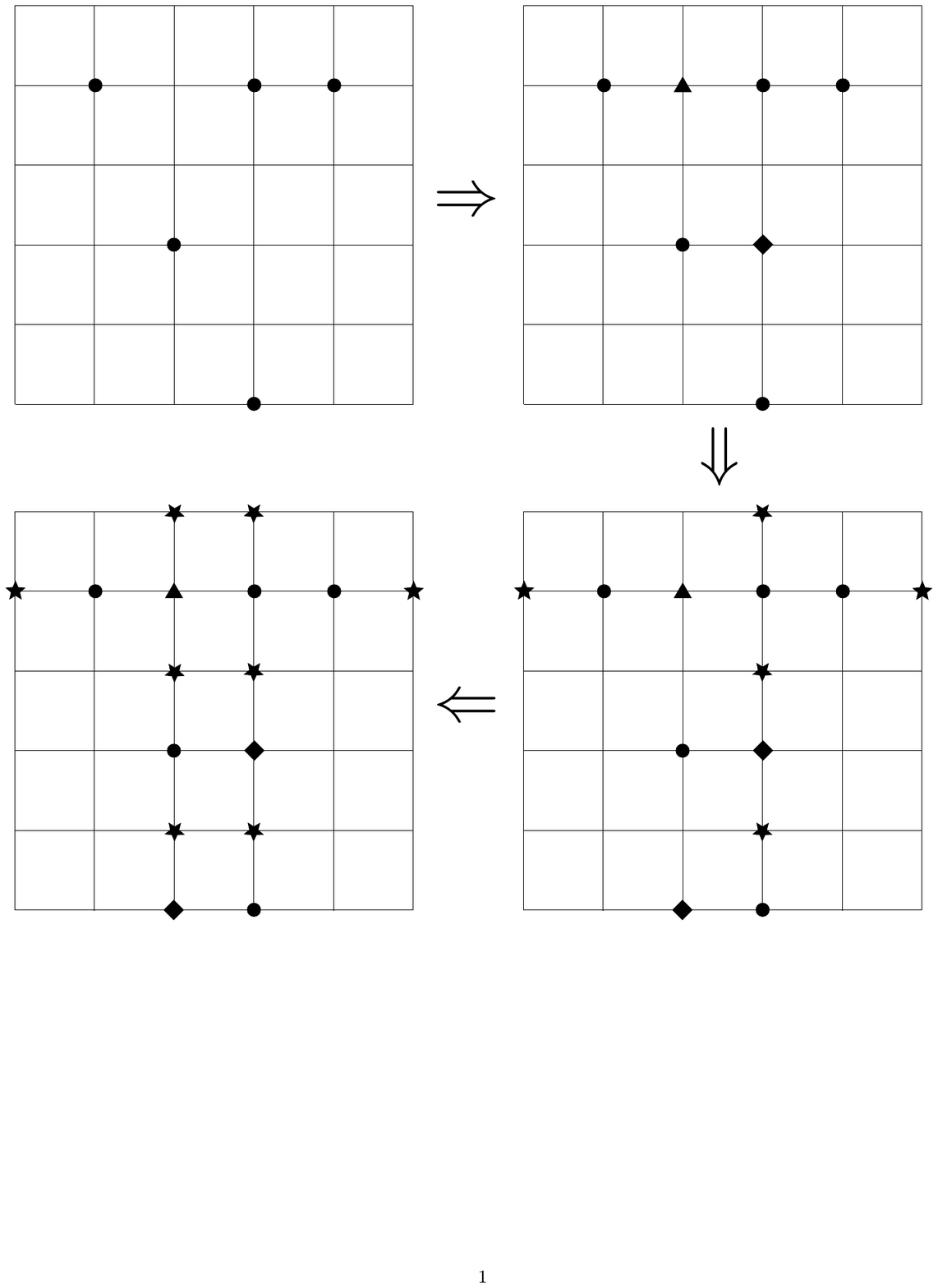}
\caption{\label{growth-example-A} Example of growth from an initial occupied set $A$ (top left, circles).  Different shapes correspond to the row and column counts at the time of occupation, as indicated in~\ref{growth-example-Young}. The last configuration (bottom left) is inert, that is, $\cT^4(A) = \cT^3(A)$.}
\end{subfigure}
\caption{\label{growth-example} An example of neighborhood growth.}
\end{figure}

A given initial set $A\subseteq \bZ_+^2$ and $\cT$ 
then specify the discrete-time trajectory: 
$A_{t}=\cT^t(A)$ for $t\ge 0$. 
The points in $A_t$ and $A_t^c$ are respectively called 
{\it occupied\/} and {\it empty\/} at time $t$. 
We define $A_\infty=\cT^\infty(A)=\cup_{t\ge 0} A_t$ 
to be the set of eventually occupied points.
We say that the set $A$ {\it spans\/} if $A_\infty=\bZ_+^2$.  
We also say that a set $B\subseteq \bZ_+^2$ 
is {\it spanned\/} if $B\subseteq \cT^\infty(A)$ and that $B$ is 
{\it internally spanned\/} by $A$ if the 
dynamics restricted to $B$ spans it: $B=\cT^\infty(A\cap B)$.  See Figure~\ref{growth-example-A} for an example of these dynamics.

The central theme of this paper is minimization of certain functionals on the 
set $\cA$ of all finite spanning sets. Perhaps the simplest such functional is the 
cardinality, which results in the quantity
$$\gamma(\cT)=\gamma(\Z)=\min\{|A|:A\in \cA\}.
$$
Our second functional is related but requires further explanation and notation, and we will introduce it below
when we state our main results.  We first put the topic in the context of previous work. 

\newcommand{\Ttheta}{T_\theta}

The best known special case of neighborhood growth 
is given by an integer threshold $\theta\ge 1$, with the rule 
that $x$ joins the occupied set whenever the entire neighborhood count
is at least $\theta$. This rule makes sense on any graph; in our case it translates to 
triangular $\Z=\Ttheta=\{(u,v):u+v\le \theta-1\}$.  
Such dynamics are known by the name of {\it threshold growth\/} \cite{GG1} or 
{\it bootstrap percolation\/} \cite{CLR}. Bootstrap percolation on graphs with short range 
connectivity has a long and distinguished history as a model for 
metastability and nucleation. The most common setting is a graph of the form 
$[k]^\ell$, a Cartesian product of $\ell$ path graphs of $k$ points, and thus with 
standard nearest neighbor lattice connectivity.  The foundational mathematical paper is
\cite{AL}, which studied what we call the {\it classic\/} bootstrap percolation, which is
the process with $\theta=2$ on $[n]^2$. A brief summary of this paper's ongoing legacy is impossible, so we mention only a few notable successors: \cite{Hol} gives 
the precise asymptotics for the classical bootstrap percolation; 
\cite{BBDM} extends the result for all $[n]^d$ and $\theta$; 
the hypercube $[2]^n$ with $\theta=2$ is analyzed in \cite{BB,BBDM}; 
and a recent paper \cite{BDMS} addresses a bootstrap percolation model with drift. 
The main focus of the voluminous research is estimation 
of the critical probability on large finite sets, that is, 
the initial occupation density $p_c$
that makes spanning occur with probability $1/2$. It is typical for this class 
of models that $p_c$ approaches zero very slowly with increasing system size, 
certainly slower than any power, 
and that the transition in the probability 
of spanning from small to close to $1$ near $p_c$ is very sharp. For example, 
$p_c\sim \pi^2/(18\log n)$ for the classic bootstrap percolation \cite{Hol}. 
Neither slow decay nor sharp transition happen for supercritical
threshold growth on the two-dimensional lattice \cite{GG1} or threshold growth on Hamming graphs 
\cite{GHPS, Sli}, where instead power laws hold.  
One of our main results, Theorem~\ref{intro-ld-rate}, 
shows that, for any neighborhood growth, 
there is a well-defined 
power-law relationship between the density of the initial set, the size of the 
system, and the probability of spanning.  

Another special case is the {\it line growth\/}, where $\Z=R_{a,b}$ for some 
$a,b\in \bN$. This was introduced under the 
name {\it line percolation\/} 
in the recent paper \cite{BBLN}, which proves that $\gamma(R_{a,b})=ab$,
establishes a similar result in higher dimensions, and obtains
the large deviation rate (defined below) for $\Z=R_{a,a}$ on a square. Some 
of our results are therefore extensions of those in \cite{BBLN}. In particular, 
one may ask for which $\Z$ the equality $\gamma(\Z)=\gamma(R_{a,b})$ holds for some
$R_{a,b}\subseteq \Z$. We discuss this in Section~\ref{sec-lgbound}. 

Extremal problems play a prominent role in growth models: they feature in 
the estimation of the nucleation probability, but they are also interesting in 
their own right. For bootstrap percolation, the size of the smallest spanning subset for
$[n]^d$ when $\theta=2$ is known to be $\lfloor d(n-1)/2\rfloor +1$  for all $n$ and $d$ 
\cite{BBM}; the clever argument 
that the smallest spanning set for classic bootstrap percolation on $[n]^2$ 
has size $n$ is a folk classic. 
The situation is much murkier 
for larger $\theta$; see \cite{BPe, BBM} for a review of known results
and conjectures for low-dimensional lattices $[n]^d$ and hypercubes $[2]^n$. 
The smallest spanning sets have also been studied for bootstrap percolation on trees
\cite{Rie2} and certain hypergraphs \cite{BBMR}. However, the 
closest parallel to the analysis of $\gamma$ in the present paper is the 
large neighborhood setting for the threshold growth model on $\bZ^2$ from \cite{GG2}. 
Several related extremal questions, which are not considered in this paper, are also of interest. 
For example, one may ask for the {\it largest\/} size of the  inclusion-minimal set that 
spans (\cite{Mor} addresses this for the classic bootstrap percolation,  
\cite{Rie1} for hypercubes with $\theta=2$, 
and \cite{Rie2} for trees), or for the {\it longest time\/} that a spanning set may take 
to span (this is the subject of a recent paper \cite{BPr} on the classic bootstrap percolation).

We now proceed to our main results, beginning 
with a theorem that gives  basic information on the size of $\gamma$.
The upper bound we give
cannot be improved, as it is achieved by the line growth.  
We do not know whether the $1/4$ in the lower 
bound can be replaced by a larger number.

\begin{theorem} \label{intro-gamma-area-thm}
For all zero sets $\Z$, 
$$
\frac14 |\Z|\le \gamma(\Z)\le |\Z|.
$$
\end{theorem}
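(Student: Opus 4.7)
The plan is to treat the two inequalities separately. For the upper bound $\gamma(\Z)\le|\Z|$, I would take the initial set to be $A=\Z$ itself, viewed as a subset of $\bZ_+^2$; this set has exactly $|\Z|$ points, so it suffices to show that $A$ spans. Write $k$ for the width of $\Z$ (its number of nonempty columns) and $m$ for its height. The row count of $A$ at row $0$ equals $k$, and since $(k,0)\notin\Z$, every point $(u,0)$ with $u\ge k$ has $(\row,\col)=(k,0)\notin\Z$ and is added at step~$1$; the symmetric argument fills column~$0$. Iterating, I would argue inductively on the number of fully-occupied rows and columns that every finite square $R_{K,K}$ is eventually contained in $A_t$; once $K$ exceeds $\max(k,m)$, every remaining point has row and column counts at least $K$, so $(\row,\col)$ lies outside $\Z$ and the point is added.

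For the lower bound $\gamma(\Z)\ge|\Z|/4$, the workhorse is monotonicity of the dynamics in $\Z$: if $\Z'\subseteq\Z$, then $\cT_{\Z'}$ has fewer blocked count-pairs, so $\cT_{\Z'}(A)\supseteq\cT_\Z(A)$ for every $A$, and hence $\gamma(\Z)\ge\gamma(\Z')$. Applied to any inscribed sub-rectangle $R_{a,b}\subseteq\Z$, together with the line-growth identity $\gamma(R_{a,b})=ab$ from~\cite{BBLN}, this gives $\gamma(\Z)\ge ab$. The lower bound would then follow from the combinatorial claim that every finite Young diagram $\Z$ contains an axis-aligned rectangle of area at least $|\Z|/4$; a natural attempt is to optimize over the one-parameter family of maximal inscribed rectangles obtained by fixing the width $a$ and letting the height be the largest $b$ with $R_{a,b}\subseteq\Z$.

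The main obstacle lies in this last combinatorial step. For Young diagrams with a hyperbolic column-height profile of order $C/u$, the largest inscribed axis-aligned rectangle has area only of order $C$, whereas $|\Z|$ can be as large as $C\log k$; the inscribed-rectangle bound alone therefore falls short of $|\Z|/4$ once the diagram is sufficiently wide. To recover the $1/4$ factor in full generality, one likely needs either to combine the spanning costs from several suitably chosen non-nested inscribed rectangles into a single lower bound on $|A|$, or to design a potential function on finite subsets of $\bZ_+^2$ that is approximately preserved under $\cT$ and evaluates to at least $|\Z|/4$ on any spanning configuration. The authors' own remark that the factor $1/4$ need not be optimal suggests that the proof is indeed not via a single maximal inscribed rectangle.
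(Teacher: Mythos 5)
Your upper-bound argument is essentially the paper's: Lemma~\ref{Z-grows} shows that a Young diagram $A_0$ spans precisely when $\Z\subseteq A_0$, so taking $A_0=\Z$ gives $\gamma(\Z)\le|\Z|$. That part is fine, modulo tightening the induction (the paper does it by observing that $\cT$ preserves the Young-diagram property and that $\Z+e_1,\Z+e_2\subseteq\cT(\Z)$).

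The lower bound, however, has a real gap, which you partially diagnose yourself: the line-growth bound $\gamma(\Z)\ge\max\{ab:R_{a,b}\subseteq\Z\}$ cannot give $|\Z|/4$ in general, since a hyperbolic diagram $\{(u,v):(u+1)(v+1)\le C\}$ has $|\Z|\asymp C\log C$ while no inscribed rectangle has area exceeding $C$. Neither of your suggested repairs is carried out, and the first one faces a structural obstacle: each inscribed-rectangle bound charges against the \emph{entire} spanning set, so the contributions of non-nested rectangles do not simply add. The paper's proof takes a different route built on \emph{two-Y sets}, unions $(y_1+Y_1)\cup(y_2+Y_2)$ of two translated Young diagrams with no horizontal or vertical line meeting both. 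Lemma~\ref{get-two-Y} converts any spanning set $A_0$ into a spanning two-Y set of size exactly $2|A_0|$ (sort the column counts of $A_0$ into a Young diagram on fresh columns, likewise for rows, and verify by an induction on time that the modified configuration still spans). Lemma~\ref{two-Y} then proves, by induction on the number of rows of $\Z$, that any spanning two-Y set has at least $|\Z|/2$ points: if the bottom row of $\Z$ has $a_0$ sites, the bottom rows of $Y_1$ and $Y_2$ together must contain at least $a_0$ points (otherwise $A_0$ lies inside $a_0-1$ vertical lines, an inert set), so one of them contributes at least $a_0/2$; delete that row and recurse on $\Z^{\downarrow 1}$. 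Chaining the two lemmas gives $2|A_0|\ge|\Z|/2$, i.e. $\gamma(\Z)\ge|\Z|/4$. This "split into two monotone halves on disjoint lines" idea is the missing ingredient your proposal would need.
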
 

Assume that the initially occupied set is restricted to a rectangle $R_{N,M}$, which 
is large enough to include the entire $\Z$ (which is then, of course, finite). 
Then, as it is easy to see, the 
dynamics spans $\bZ_+^2$ if and only if it internally spans $R_{N,M}$. As all our rectangles 
will satisfy this assumption, we will not distinguish between spanning and their internal spanning.
Now, one may ask if a configuration restricted to the interior of such a rectangle 
requires more sites to span than an unrestricted configuration. Our next result 
answers this question in the negative, establishing a property of obvious importance for 
a computer search for smallest spanning sets. 

\begin{theorem}\label{intro-packing-minimal}
Assume that $a_0,b_0\in \bN$ are such that $\Z\subseteq R_{a_0,b_0}$. Then
$$\gamma(\Z)=\min\{|A|: A\in \cA \text{ and }A\subseteq R_{a_0,b_0}\}.$$
\end{theorem}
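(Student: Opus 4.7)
The direction $\gamma(\cZ) \le \min\{|A| : A \in \cA,\, A \subseteq R_{a_0, b_0}\}$ is immediate, since the right-hand side minimizes cardinality over a restriction of the family of finite spanning sets. The content is the reverse direction: given any finite spanning set $A$, I must exhibit a spanning set $A^* \subseteq R_{a_0, b_0}$ with $|A^*| \le |A|$.

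The first step is a reduction by the permutation invariance of $\cT$. Because $\row(x, A)$ and $\col(x, A)$ depend only on the row and column of $x$ and on $A$, for any bijections $\sigma, \tau : \bZ_+ \to \bZ_+$ with finite support, $A$ spans if and only if its image $\{(\sigma(r), \tau(c)) : (r, c) \in A\}$ does. Let $I$ and $J$ denote the rows and columns met by $A$. Choosing $\sigma, \tau$ to map $I$ and $J$ onto the initial segments $[0, |I|-1]$ and $[0, |J|-1]$ produces a spanning set of the same cardinality contained in $R_{|I|, |J|}$. If $|I| \le a_0$ and $|J| \le b_0$, this set is already contained in $R_{a_0, b_0}$ and we are done.

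The heart of the proof is the case $|I| > a_0$ or $|J| > b_0$. Here I plan to use the hypothesis $\cZ \subseteq R_{a_0, b_0}$, which is equivalent to saying that $\cT$'s decision at each site depends on the pair $(\row, \col)$ only through its truncation at $(a_0, b_0)$: any count with $\row \ge a_0$ or $\col \ge b_0$ automatically satisfies $(\row, \col) \notin \cZ$. Assuming $A$ has minimum cardinality and $|I| > a_0$, I aim to exchange: pick a row $r^\star \in I$ minimizing $a(r^\star)$, excise its points from $A$, and redistribute them among rows of $I \setminus \{r^\star\}$ chosen so that columns that lose a point through excision either already carry more than $b_0$ points (where the truncation principle makes the loss irrelevant) or recover the point through the redistribution. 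Combining monotonicity with the truncation principle should show that the resulting set of the same or smaller cardinality still spans, contradicting minimality if any collapse strictly decreased the count. Iterating this exchange drives $|I|$ down to $a_0$, and the symmetric argument does the same for $J$.

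The principal obstacle is verifying that the redistribution preserves spanning: excising a row lowers certain column counts, which could in principle halt the dynamics at some delicate step. I expect the resolution to rest on two observations. First, by minimality of $|A|$, each row of $I$ must have been essential to driving the dynamics only through the columns where the truncated count has not yet reached $b_0$; once it has, additional contributions to that column are redundant for $\cT$ by the truncation principle. Second, among all rows in $I$ a lightest one can be absorbed into the rest without exceeding any column capacity that matters for $\cT$. Making these two observations precise, and checking them against the full iteration of $\cT$, is the technical content of the proof.
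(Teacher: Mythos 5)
Your plan---compress $A$ onto a bounding rectangle by permutation invariance, then shrink that rectangle one line at a time by removing a lightest line and reassigning its points---shares its skeleton with the paper's proof. But the key step, showing that the reassignment preserves spanning, is not correctly sketched, and the two observations you hope to rely on will not carry it. Removing a line and reassigning its points necessarily alters the counts on the other lines, and preserving column counts (or noting they already exceed $b_0$) does not ensure the dynamics still spans: the excised line now starts empty and may never be reoccupied, and at early times a point on a light line may be essential for reasons that have nothing to do with whether some column count is near $b_0$. Minimality of $|A|$ does not rescue this; the paper's proof never invokes minimality at all.

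Here is what the paper's argument actually supplies and what is missing from your sketch. After permuting so the rightmost column of $R_{M,N}$ is a lightest column, one inducts on $k$, the minimum column count over the box (not on the number of columns). One defines $T$ to be the first time at which a point of the last column becomes occupied while some cell in the same row, inside the smaller box $R_{M-1,N}$, is still empty. If $T=\infty$, the last-column points can be slid along their own rows to empty cells in $R_{M-1,N}$ without disturbing the dynamics there---the move preserves row counts---so the last column is freed. If $T<\infty$, a targeted swap of points between the last column and the column $i_0$ containing the empty cell produces a spanning set of the same cardinality with strictly fewer points in the last column, which drives the induction on $k$. Your proposal has no analog of the time $T$, no case split, and no fallback for when the naive redistribution fails to preserve spanning; without these ingredients the argument is incomplete.
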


Next we consider spanning by random subsets of rectangles $R_{N,M}$. Assume 
that the initial configuration is restricted to $R_{N,M}$, where it is chosen according 
to a product measure with a small density $p>0$. The possibly unequal 
sizes $N$ and $M$ need to increase as 
$p\to 0$, and, given that in all known cases spanning probabilities on Hamming graphs obey 
power laws \cite{GHPS, BBLN}, it is natural to suppose that they scale as powers of $p$. 
Thus we fix $\alpha,\beta\ge 0$ and assume that, as $p\to 0$, $N,M\to\infty$ and 
$$\log N\sim -\alpha\log p,\quad \log M\sim -\beta\log p.$$
We will denote
by $\Span$ the event that the so defined initial set spans, and turn our attention to the
question of the resulting power-law scaling for $\probsub{\Span}{p}$. 
The answer will involve finding the optimal 
energy-entropy balance, so there is a conceptual connection with 
large deviation theory, despite the fact that the probabilities involved are not 
exponential.  Thus we call the quantity 
$$
I(\alpha,\beta)=I(\alpha,\beta,\Z)=\lim_{p\to 0}\frac{\log \probsub{\Span}{p}}{\log p}
$$
the {\it large deviation rate\/} for the event $\Span$, provided it exists.
 
The rate $I$ is given as the minimum, over the spanning sets, of the functional $\rho$ 
that we now define.
For a finite set $A\subseteq \bZ_+^2$, let $\pi_x(A)$ and $\pi_y(A)$ be projections of $A$ on 
the $x$-axis and $y$-axis, respectively. Then let
$$
\rho(\alpha,\beta, A)=\max_{B\subseteq A}\,\left(|B|-\alpha |\pi_x(B)|-\beta|\pi_y(B)|\right). 
$$
The term $|B|$ represents the energy of the 
subset $B$ and the linear combination of sizes of the 
two projections the entropy of $B$. In the next theorem, we 
use the following notation for the {\it outside boundary\/} of a Young diagram 
$Y$:
$$
\partial_o Y=\{(u,v)\in \bZ_+^2\setminus Y: (u-1,v)\in Y\text{ or }(u,v-1)\in Y\}. 
$$
 Also, we use the notation $a\vee b = \max(a,b)$ and $a\wedge b = \min(a,b)$ for real numbers $a,b$.
\begin{theorem}\label{intro-ld-rate}
For any finite zero-set $\Z$, the large deviation rate $I(\alpha,\beta, \Z)$ 
exists.
Moreover, there exists a finite set $\cA_0\subseteq \cA$, independent of $\alpha$ and $\beta$, 
so that 
\begin{equation}\label{ld-vq}
I(\alpha,\beta,\Z)=\inf\{\rho(\alpha,\beta,A): 
A\in \cA\}=\min\{\rho(\alpha,\beta,A): 
A\in \cA_0\}.
\end{equation}
The rate $I(\alpha,\beta,\Z)$ as a function of $(\alpha,\beta)$ is continuous, 
piecewise linear, nonincreasing in both arguments, concave when $\alpha+\beta\le 1$, and 
$I(0,0,\Z)=\gamma(\Z)>I(\alpha,\beta,\Z)$ unless $\alpha=\beta=0$.

Moreover, the support of $I$ is given by 
\begin{equation}\label{ld-support-formula}
\supp I(\cdot,\cdot,\cZ) = \bigcap_{(u,v)\in\partial_o\cZ} \left\{(\alpha,\beta)\in [0,1]^2 : [u(1-\alpha) -\beta] \vee [v(1-\beta) - \alpha] \ge 0 \right\}.
\end{equation}
Furthermore, if $\alpha,\beta \in [0,1]^2 \setminus {\supp I(\cdot,\cdot,\cZ)}$, then $\probsub{\Span}{p} \to 1$. 
\end{theorem}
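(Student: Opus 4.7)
The plan is to derive matching upper and lower bounds on $\probsub{\Span}{p}$ by first- and second-moment methods applied to \emph{embeddings} of finite patterns in $R_{N,M}$. The driving heuristic: for a finite $A\subseteq \bZ_+^2$, the expected number of incidence-preserving copies of $A$ (obtained by choosing $|\pi_x(A)|$ columns out of $N$ and $|\pi_y(A)|$ rows out of $M$ in which to place the $|A|$ points) in a density-$p$ configuration is $\binom{N}{|\pi_x(A)|}\binom{M}{|\pi_y(A)|}p^{|A|}\asymp p^{|A|-\alpha|\pi_x(A)|-\beta|\pi_y(A)|}$. Taking $B=A$ in the definition of $\rho$ recovers this exponent, while the maximum over $B\subseteq A$ will arise from the second-moment correction coming from overlapping embeddings.

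For the upper bound, the event $\Span$ forces the initial set to contain a minimal internally-spanning pattern, so a union bound yields $\log\probsub{\Span}{p}/\log p \ge \inf_{A\in\cA}\rho(\alpha,\beta,A) - o(1)$. The reduction of the infimum to a minimum over a finite, $(\alpha,\beta)$-independent set $\cA_0$ is the principal combinatorial obstacle; I would build it on Theorem~\ref{intro-gamma-area-thm}, using that $|\pi_x(A)|,|\pi_y(A)|\le|A|$ and that $\gamma(\Z)\le|\Z|$ to argue that any minimal spanning $A$ with $|A|$ beyond some absolute threshold $C(\Z)$ is uniformly dominated in $\rho$ by some bounded spanning set; the remaining equivalence classes form $\cA_0$. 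For the lower bound, fix $A^*\in\cA_0$ minimizing $\rho$ and let $X$ count embeddings of $A^*$. The second moment decomposes over intersection patterns $B\subseteq A^*$, yielding $\E X^2/(\E X)^2 \asymp \sum_{B\subseteq A^*}p^{-(|B|-\alpha|\pi_x(B)|-\beta|\pi_y(B)|)} \asymp p^{-\rho(\alpha,\beta,A^*)}$. Paley--Zygmund then gives $\probsub{X\ge 1}{p}\gtrsim p^{\rho(\alpha,\beta,A^*)}=p^I$, and $\{X\ge 1\}\subseteq \Span$ since $A^*$ spans; this matches the upper bound and establishes \eqref{ld-vq}.

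For the qualitative properties, each $\rho(\cdot,\cdot,A)$ is a maximum of linear forms, hence convex piecewise linear and nonincreasing in both arguments; the minimum $I=\min_{A\in\cA_0}\rho(\cdot,\cdot,A)$ over the finite family $\cA_0$ inherits continuity, piecewise linearity, and monotonicity. For concavity on $\{\alpha+\beta\le 1\}$, the pointwise bound $|\pi_x(A)|-|\pi_x(A\setminus S)|\le|S|$ (and likewise for $\pi_y$) gives
\begin{equation*}
\bigl(|A|-\alpha|\pi_x(A)|-\beta|\pi_y(A)|\bigr)-\bigl(|A\setminus S|-\alpha|\pi_x(A\setminus S)|-\beta|\pi_y(A\setminus S)|\bigr)\ge|S|(1-\alpha-\beta)\ge 0,
\end{equation*}
so the max defining $\rho(\alpha,\beta,A)$ is attained at $B=A$ throughout this region, each $\rho(\cdot,\cdot,A)$ becomes affine, and $I$ is a concave infimum of affine functions. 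Setting $\alpha=\beta=0$ yields $\rho(A)=|A|$, hence $I(0,0)=\gamma(\Z)$; strict inequality $I(\alpha,\beta)<\gamma$ for $(\alpha,\beta)\ne(0,0)$ holds by picking any $A\in\cA_0$ with $|A|=\gamma$ and noting $\rho(\alpha,\beta,A)\le\max(\gamma-\alpha-\beta,\ \gamma-1)<\gamma$, since every proper $B\subsetneq A$ satisfies $|B|\le\gamma-1$ while $B=A$ loses at least $\alpha+\beta>0$ to the projection penalties.

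For the support formula together with the final assertion: if some $(u,v)\in\partial_o\Z$ satisfies both $u(1-\alpha)<\beta$ and $v(1-\beta)<\alpha$, then $Mp^{u(1-\alpha)}\to\infty$ and $Np^{v(1-\beta)}\to\infty$, so a first/second-moment comparison shows that with probability tending to $1$ the initial configuration contains many rows with at least $u$ occupied points and many columns with at least $v$. An empty cell at the intersection of such a row and column has counts dominating $(u,v)\notin\Z$, and since $\Z$ is a down-set it too falls outside $\Z$ and is added; an inductive cascade based on the Young-diagram structure of $\Z$ (successively upgrading more rows and columns as intersection cells accumulate) then fills $R_{N,M}$, giving $\probsub{\Span}{p}\to 1$ and hence $I=0$ off the claimed support. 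For the reverse containment, when every $(u,v)\in\partial_o\Z$ satisfies at least one of the two inequalities one shows $I>0$ by tracking, for any $A\in\cA_0$, a growth step at which a point with counts $(u,v)\in\partial_o\Z$ is added; the $u$ row witnesses, $v$ column witnesses, and the newly occupied point form a subset $B\subseteq A$ with $|B|-\alpha|\pi_x(B)|-\beta|\pi_y(B)|=u(1-\alpha)+v(1-\beta)+(1-\alpha-\beta)$, which is made strictly positive by the assumed boundary inequality, yielding $\rho(\alpha,\beta,A)>0$. The main technical hurdles are the compactness reduction to $\cA_0$ and the verification of the inductive cascade; the energy-entropy moment calculation itself is routine once the combinatorial setup is in place.
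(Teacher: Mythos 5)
Your proposal captures the right high-level architecture — first-moment upper bound via pattern containment, second-moment lower bound, the variational characterization, piecewise-linearity of $\rho$, and a cascade argument for the support — and several steps are sound: the observation that on $\{\alpha+\beta\le1\}$ the maximum defining $\rho(\cdot,\cdot,A)$ is attained at $B=A$ (so $\rho$ is affine there) cleanly delivers concavity, and Paley--Zygmund is a legitimate substitute for the Janson inequality the paper invokes for the lower bound. However, the reduction of the infimum to a minimum over a finite, $(\alpha,\beta)$-independent family $\cA_0$ has a genuine gap, and it is the crux of the entire theorem. For the upper bound on $\probsub{\Span}{p}$ one needs a containment of the spanning event inside a \emph{finite} union of pattern-containment events, not merely an assertion that the variational infimum is attained on a bounded family: a priori there may be infinitely many inclusion-minimal internally-spanning patterns, and a union bound over them does not produce a polynomial rate without uniform tail control. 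Your proposed route (``any minimal spanning $A$ with $|A|$ beyond some threshold $C(\Z)$ is dominated in $\rho$ by a bounded spanning set'') does not supply that containment and is not obviously provable; nothing in Theorem~\ref{intro-gamma-area-thm} bounds the number or the structure of inclusion-minimal spanning patterns. The paper's resolution is structural: it sets $\cT'=\cT^{\tmax}$ with $\tmax$ the uniform completion time of Theorem~\ref{when-done}, invokes Theorem~\ref{growth-patterns} to identify $\cT'$ as a pattern-inclusion transformation governed by a \emph{finite} pattern set $\cP$, and defines $\cA_0$ to be the patterns in $\cP$ avoiding the neighborhood of the origin. After conditioning on the high-probability event $G$ that some vertex has an entirely empty neighborhood (handled separately when $\alpha\ge 1$), one gets $\Span\cap G\subseteq\{\text{$\omega_0$ contains a member of }\cA_0\}$, so a union bound over the finite $\cA_0$ closes the argument. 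Without the completion-time theorem and the pattern-inclusion classification, your plan cannot establish the reduction to $\cA_0$.

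A secondary slip: in your sufficiency argument for the support, the witness set $B$ consisting of $u$ row-witnesses and $v$ column-witnesses has $|B|=u+v$, $|\pi_x(B)|=u+1$, $|\pi_y(B)|=v+1$ (the newly occupied point is not in $A$ and so cannot be placed in $B$), yielding exponent $u(1-\alpha)+v(1-\beta)-\alpha-\beta$, not $+(1-\alpha-\beta)$; and it is in any case simpler to take $B$ to be just the row-witnesses (or just the column-witnesses), yielding $u(1-\alpha)-\beta$ (resp.\ $v(1-\beta)-\alpha$) directly. The paper sidesteps witness bookkeeping entirely and bounds $\probsub{\Span}{p}$ by the expected count of rows with $u$ points (or columns with $v$ points) summed over $\partial_o\cZ$, which is cleaner.
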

 
We give explicit formulae for $I(\alpha,\beta, R_{a,b})$ 
and $I(\alpha,\alpha,\Ttheta)$ in Sections~\ref{sec-lgld} and~\ref{sec-bpld}. 
In general, determining an explicit analytical formula for 
this rate even for a moderately large $\Z$ appears to be quite challenging. Figure~\ref{BP support fig} depicts the support of $I(\cdot,\cdot,\Ttheta)$ for several values of $\theta$, and Figure~\ref{LG large deviation fig} shows the function $I(\alpha,\beta, R_{9,4})$.

\begin{figure}[htb]
\centering
\begin{subfigure}[t]{0.45\textwidth}
\centering
\includegraphics[width=.9\textwidth]{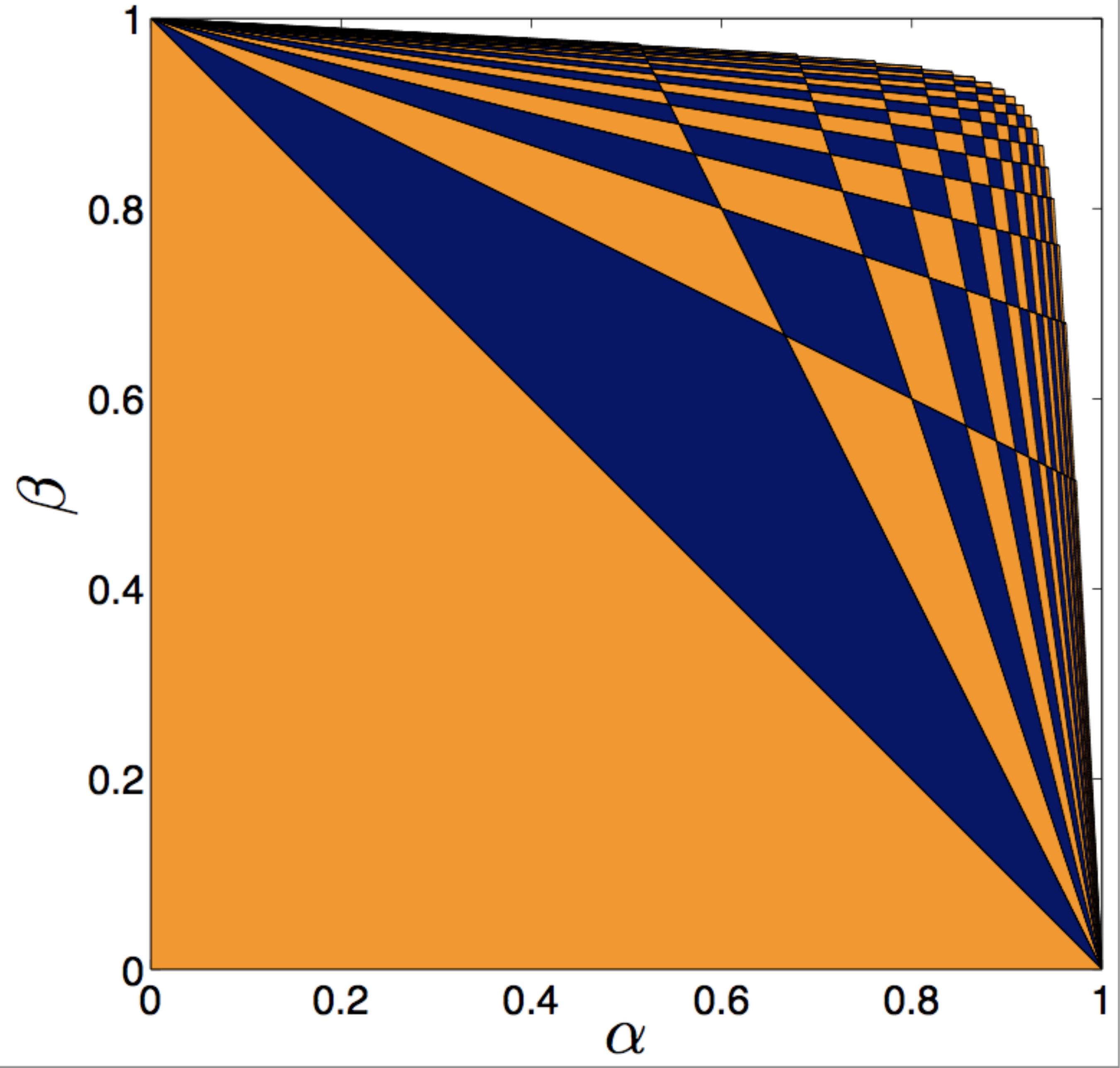}
\caption{\label{BP support fig} Boundaries of the supports of $I(\cdot,\cdot,T_\theta)$ for $\theta=2, \ldots, 20$ (from bottom to top; regions between successive boundaries shaded in alternating colors for visual guidance).}
\end{subfigure} \quad
\begin{subfigure}[t]{0.45\textwidth}
\centering
\includegraphics[width=.95\textwidth]{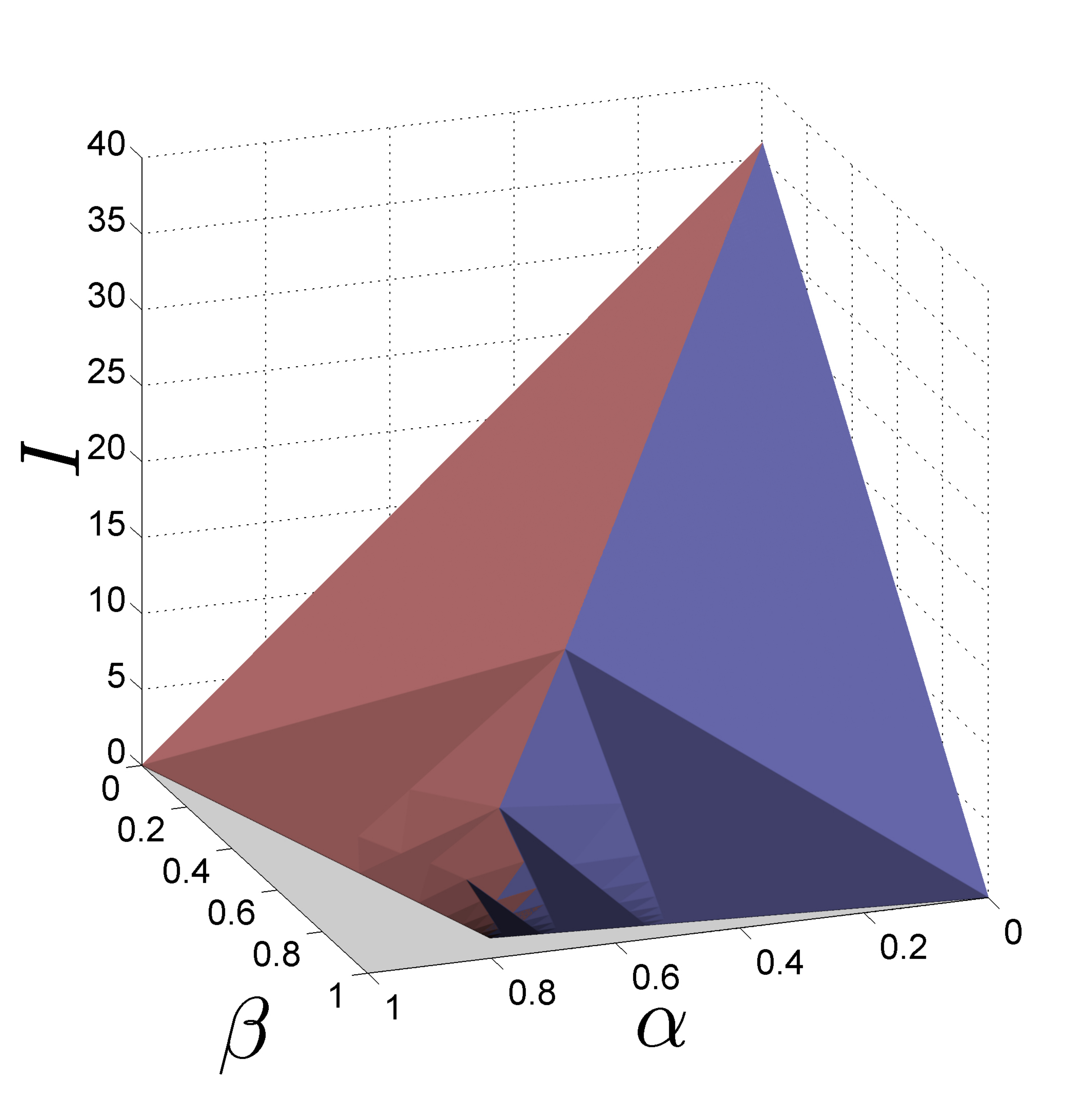}
\caption{\label{LG large deviation fig} The function $I(\alpha,\beta, R_{9,4})$. Lighter shades correspond to steeper gradients.}
\end{subfigure}
\caption{\label{large deviation figures} Examples of $I(\cdot,\cdot,\cZ)$.}
\end{figure}

It is clear that both $\gamma$ and $I$ increase if $\Z$ is enlarged, 
so it is natural to ask how they behave for large $\Z$. 
Theorem~\ref{intro-gamma-area-thm} suggests that $\gamma(\Z)/|\Z|$ 
might converge, and this is indeed true with the proper definition 
of convergence of $\Z$, which we now formulate. 

A Euclidean rectangle is denoted by $\RE_{a,b}=[0,a]\times [0,b]\subseteq\bR_+^2$.   
We define a {\it Euclidean zero-set\/}, or a {\it continuous Young diagram\/}, $\ZE$ to be 
a closed subset of  $\bR_+^2$ such that $(a,b)\in \ZE$ implies 
$\RE_{a,b}\subseteq \ZE$, and such that $\ZE$ is the closure of $\ZE\cap (0,\infty)^2$. 
For Euclidean zero-sets $\ZE_n$ and $\ZE$, we say that the sequence   
$\ZE_n$ {\it E-converges\/} to $\ZE$, $\ZE_n\Econv \ZE$, if 
\begin{enumerate}
\item [] \label{C1} (C1) for any $R>0$, $\ZE_n\cap [0,R]^2\to \ZE\cap [0,R]^2$ in Hausdorff metric; and 
\item [] \label{C2} (C2) $ \area(\ZE_n)\to \area(\ZE)$. 
\end{enumerate}
For $A\subseteq \bZ_+^2$, define 
its {\it square representation\/} by $\squrep(A)=\cup_{x\in A}(x+[0,1]^2)\subseteq \bR^2$.
Observe that, for a (discrete) zero-set $\Z$, $\squrep(\Z)$ is a Euclidean zero-set. Convergence 
of a sequence $\Z_n$ of zero-sets will mean convergence to some limit $\ZE$ of their properly 
scaled square representations. We note that we do not assume that $\ZE$ is bounded; in fact, 
unbounded continuous Young diagrams with finite area arise as a limit of a random selection 
of discrete ones; see Section~\ref{sec-random-Z}.

Next, we state our main convergence theorem, which provides the properly scaled 
limits for $\gamma$, $I$, and another extremal quantity that we now introduce.
 Call a set $A\subseteq\bZ_+^2$ {\it thin\/} if every point $x\in A$ has no 
other points of $A$ either 
on the vertical line through $x$ or on the horizontal line through $x$. We denote by
$\gamma_{\rm thin}(\Z)$ the cardinality of the smallest thin spanning set for $\Z$. 

\begin{theorem}\label{intro-d-E-theorem} There exist functions $\IE(\alpha,\beta,\ZE)$, 
$\gammaE(\ZE)=\IE(0,0,\ZE)$, and $\gammaE_{\rm thin}(\ZE)$ defined on Euclidean zero-sets $\ZE$ and $(\alpha,\beta)\in [0,1]^2$ so that the following holds. 

Assume that $\Z_n$ is a sequence of discrete zero-sets and $\delta_n>0$ is a sequence
of numbers such that $\delta_n\to 0$ and 
$\delta_n\squrep(\Z_n)\Econv \ZE$. Then 
\begin{equation}\label{dE-I}
\delta_n^{2}I(\alpha,\beta,\Z_n)\to \IE(\alpha,\beta, \ZE), 
\end{equation}
\begin{equation}\label{dE-gamma}
\delta_n^{2}\gamma(\Z_n)\to \gammaE(\ZE).
\end{equation}
and
\begin{equation}\label{dE-gamma-thin}
\delta_n^{2}\gamma_{\rm thin}(\Z_n)\to \gammaE_{\rm thin}(\ZE).
\end{equation}
If $\area(\ZE)=\infty$, then $\IE(\cdot,\cdot,\ZE)\equiv \infty$ on $[0,1)^2$ and 
$\gammaE_{\rm thin}(\ZE)=\infty$. If $\area(\ZE)<\infty$, then the following holds:
$\IE(\cdot,\cdot,\ZE)$ is finite, concave and continuous on $[0,1]^2$; $\gammaE_{\rm thin}(\ZE)<\infty$; convergence in (\ref{dE-I}) 
is uniform for $(\alpha,\beta)\in [0,1]^2$; and, if $\ZE_n$ is a sequence of Euclidean zero-sets and $\ZE_n\Econv \ZE$, then $\IE(\cdot,\cdot, \ZE_n)\to \IE(\cdot,\cdot, \ZE)$ uniformly on $[0,1]^2$.
\end{theorem}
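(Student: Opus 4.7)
The plan is to define Euclidean analogs $\IE(\alpha,\beta,\ZE)$, $\gammaE(\ZE)=\IE(0,0,\ZE)$, and $\gammaE_{\rm thin}(\ZE)$ via a continuous variant of the neighborhood growth dynamics, then establish (\ref{dE-I})--(\ref{dE-gamma-thin}) by matching upper and lower bounds. For a Euclidean zero-set $\ZE$ and a measurable $Y\subseteq\bR_+^2$, the continuous dynamics adds $x$ precisely when the pair $(\length(Y\cap L^h(x)),\length(Y\cap L^v(x)))$ of one-dimensional Lebesgue measures lies outside $\ZE$. I then set $\gammaE(\ZE)=\inf\{\area(Y):Y\text{ continuously spans }\bR_+^2\}$, and define $\IE(\alpha,\beta,\ZE)$ as the infimum of a continuous analog $\rho_c$ of $\rho$ (replacing cardinalities by Lebesgue areas and discrete projection sizes by one-dimensional Lebesgue measures); $\gammaE_{\rm thin}$ is defined analogously. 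The bridge between scales is the scaled square representation $\delta_n\squrep(A)$, which converts a discrete set $A$ into a Euclidean set of area $\delta_n^2|A|$ and satisfies $\length(\delta_n\squrep(A)\cap L^h(\hat x))=\delta_n\row(x,A)$ whenever $\hat x$ lies in the scaled cell of $x$; the continuous triggering condition at scale $\delta_n$ thus matches the discrete one.

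For the upper bound $\limsup_n \delta_n^2 I(\alpha,\beta,\cZ_n)\le \IE(\alpha,\beta,\ZE)$, I take a near-optimal continuous $Y$ and discretize via $A_n=\{x\in\bZ_+^2:\delta_n x\in Y\}$. The E-convergence $\delta_n\squrep(\cZ_n)\Econv\ZE$ translates ``$(u,v)\notin\cZ_n$'' into ``$(\delta_n u,\delta_n v)\notin\ZE$'' up to vanishing error, so $A_n$ spans $\cZ_n$ for $n$ large, and $\delta_n^2\rho(\alpha,\beta,A_n)\to\rho_c(\alpha,\beta,Y)$. For the reverse bound, I take near-optimal $A_n\subseteq R_{a_n,b_n}$ (by Theorem~\ref{intro-packing-minimal}, with $R_{a_n,b_n}\supseteq\cZ_n$ and $\delta_n a_n,\delta_n b_n$ uniformly bounded), so the thickenings $Y_n=\delta_n\squrep(A_n)$ live in a common compact rectangle with area $\delta_n^2|A_n|\le\delta_n^2|\cZ_n|\to\area(\ZE)$; pass to an $L^1$-subsequential limit $Y_\infty$, verify that it continuously $\ZE$-spans, and show $\delta_n^2\rho(\alpha,\beta,A_n)\to\rho_c(\alpha,\beta,Y_\infty)$. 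The main obstacle is this compactness step: since $|A_n|\sim 1/\delta_n^2$, the thickenings form dense clouds in a bounded rectangle, and naïve compactness can destroy the fine-scale structure that encodes the spanning condition. A cleaner route exploits the finite family $\cA_0$ from (\ref{ld-vq}): $I$ is the minimum over a structurally constrained finite family of spanning sets, so one can track how this family evolves under perturbations of $\cZ$ combinatorially, bypassing measure-theoretic compactness.

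When $\area(\ZE)=\infty$, Theorem~\ref{intro-gamma-area-thm} gives $\gamma(\cZ_n)\ge|\cZ_n|/4$, and $\delta_n^2|\cZ_n|\to\infty$ forces $\gammaE=\gammaE_{\rm thin}=\infty$; for $\IE\equiv\infty$ on $[0,1)^2$, monotonicity in $\cZ$ combined with a line-growth subrectangle $R_{a_n,b_n}\subseteq\cZ_n$ with $\delta_n^2 a_n b_n\to\infty$ and the explicit $I(\alpha,\beta,R_{a,b})$ from Section~\ref{sec-lgld} gives divergence. In the finite-area regime, concavity of $\IE(\cdot,\cdot,\ZE)$ is inherited from the discrete $I(\cdot,\cdot,\cZ_n)$, which by (\ref{ld-vq}) is the minimum of finitely many affine functions of $(\alpha,\beta)$. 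Uniform convergence on $[0,1]^2$ follows from equicontinuity (Lipschitz constants controlled by the projection sizes in $\cA_0$) together with pointwise convergence. Continuity of $\ZE\mapsto\IE(\cdot,\cdot,\ZE)$ under E-convergence then follows by a diagonal argument: approximate each $\ZE_n$ by a discrete sequence and apply the main convergence statement.
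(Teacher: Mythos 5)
Your plan defines $\IE(\alpha,\beta,\ZE)$ as the infimum of a ``continuous analog $\rho_c$ of $\rho$'' over continuously spanning Euclidean sets, replacing cardinalities by areas and projection sizes by one-dimensional Lebesgue measures. This is the naive definition that the paper explicitly warns against in the paragraph following Theorem~\ref{intro-d-E-theorem}: a Euclidean spanning set $Y$ can consist of a few vertical and horizontal strips of total area $\epsilon$ that still span (since each strip contributes its full length to the line counts), and then $\rho_c(\alpha,\beta,Y)=\max_{B\subseteq Y}(\area(B)-\alpha|\pi_x(B)|-\beta|\pi_y(B)|)\le\area(Y)=\epsilon$. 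Taking $\epsilon\to 0$ forces the infimum to $0$ whenever the dynamics is voracious, so your candidate $\IE$ is identically zero off the origin, and the claimed convergence $\delta_n^2 I(\alpha,\beta,\Z_n)\to\IE$ fails because the left side has a strictly positive limit (e.g.\ by Proposition~\ref{Ilb-discrete}). The discrepancy arises because discrete thin ``tentacles'' still cost one site per point and so contribute $\Theta(1/\delta_n)$ area after scaling; a continuum set has no such quantization. The paper's actual definition~(\ref{IE-def}) encodes these tentacles separately as enhancement profiles $(f,g)$ weighted by $(1-\alpha),(1-\beta)$, and the bulk of the proof (Sections~\ref{subsec-enhanced}--\ref{subsec-E-limit-enhanced}) is devoted to showing that the discrete $I$ is well-approximated by the discrete enhancement rate $\oI$ (Lemmas~\ref{I-le-Ibar} and~\ref{Ibar-le-I}), whose continuum limit is then genuinely transparent. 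This core--tentacle decomposition is the missing idea in your proposal.

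Two related gaps: first, your ``cleaner route'' via the finite family $\cA_0$ does not rescue the argument, because $\cA_0$ depends on $\Z_n$, grows without bound as $n\to\infty$, and controls $\rho$, not the misdefined $\rho_c$; second, $\gammaE_{\rm thin}$ cannot be defined ``analogously'' in the continuum, since a set of positive area cannot be thin in the one-point-per-line sense -- the paper defines it via~(\ref{gammaE-thin-def}) with $A=\emptyset$, using only the enhancement functions. Your arguments for the infinite-area case and for concavity/uniform convergence are sound in outline and close to the paper's, but they rest on the correct definition of $\IE$.
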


The function $\gammaE$ can be defined through a natural Euclidean counterpart of the 
growth dynamics, replacing cardinality of two-dimensional discrete sets 
with area and cardinality 
of one-dimensional ones with length. However, if we attempt such a naive definition for
$\IE$, we get zero unless $\alpha=\beta=0$ because Euclidean sets 
can have projection lengths much larger than their areas. In fact, to properly define $\IE$, 
we need to understand the design of optimal sets for large $\Z$. Roughly, such sets are 
unions of two parts: a thick ``core'' that contributes very little to the entropy, and 
thin high-entropy tentacles. The resulting variational characterization of $\IE$
when $\ZE$ is bounded is  
given by the formula (\ref{IE-def}). We proceed to give more information on $\IE$, 
starting with the general bounds.

\begin{theorem}\label{intro-IEbounds}
For a Euclidean zero-set $\ZE$ with finite area, and $(\alpha,\beta)\in [0,1]^2$, 
\begin{equation}\label{IElb}
	\IE(\alpha,\beta,\ZE) \geq (1- \max(\alpha,\beta))\,\gammaE(\ZE)
\end{equation}
and 
\begin{equation}\label{IEub}
	\IE(\alpha,\beta,\ZE) \leq \min( ( 1-\max(\alpha,\beta) )\, \area(\ZE),\, 2(1-\min(\alpha,\beta))\,\gammaE(\ZE),\, \gammaE(\ZE)).
\end{equation}
\end{theorem}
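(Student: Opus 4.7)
The plan is to use Theorem~\ref{intro-d-E-theorem} to reduce each of the four inequalities to its discrete analog (up to $o(\gamma(\Z_n))$ errors), then prove each via an explicit construction of a spanning set (for the upper bounds) or a lower bound on $\rho(\alpha,\beta,A)$ valid for every spanning $A$ (for the lower bound). By the symmetry $I(\alpha,\beta,\Z) = I(\beta,\alpha,\Z^T)$ under transposition of $\Z$, I may assume throughout that $\alpha \geq \beta$.

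For the upper bounds in~(\ref{IEub}), the bound $\IE \leq \gammaE$ is immediate, since any minimum spanning set $A^*$ satisfies $\rho(\alpha,\beta,A^*) \leq \max_{B\subseteq A^*}|B| = \gamma(\Z_n)$. For $\IE \leq (1-\max(\alpha,\beta))\area(\ZE)$, I would use a \emph{shifted-rows} construction: for each row $v$ of $\Z_n$ (of width $w_v$), place $w_v$ points of $A_n$ in row $v$, using columns that are mutually disjoint from those used for other rows. This yields $|A_n| = |\Z_n|$, $|\pi_y(A_n)|$ equal to the height of $\Z_n$, and $|\pi_x(A_n)| = |A_n|$; since every $B \subseteq A_n$ inherits $|\pi_x(B)| = |B|$, we get $\rho(A_n) \leq (1-\alpha)|A_n|$, which scales to $(1-\alpha)\area(\ZE)$. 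Spanning of $A_n$ is checked as in the line-growth analysis of~\cite{BBLN}: the rows of $A_n$ whose width equals that of $\Z_n$ trigger full row occupation at the first step (since $\row = w_v$ escapes $\Z_n$'s boundary), after which column counts grow and cascade through the remaining levels. Finally, $\IE \leq 2(1-\min(\alpha,\beta))\gammaE$ is only sharper than $\IE \leq \gammaE$ when $\min(\alpha,\beta) > 1/2$, equivalently $\alpha+\beta > 1$; in that regime, any thin spanning set $A_n^{\mathrm{th}}$ of size $\gamma_{\rm thin}(\Z_n)$ (guaranteed finite in the Euclidean limit by Theorem~\ref{intro-d-E-theorem}) has $|B|(1-\alpha-\beta) \leq 0$ for every $B \subseteq A_n^{\mathrm{th}}$, so $\rho(A_n^{\mathrm{th}}) = 0$ upon taking $B = \emptyset$, giving $\IE = 0$ and the bound holds trivially.

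For the lower bound~(\ref{IElb}), take any spanning $A$ and start from $\rho(\alpha,\beta,A) \geq |A| - \alpha|\pi_x(A)| - \beta|\pi_y(A)| = \sum_{x = (u,v) \in A}\bigl(1 - \alpha/m_u - \beta/n_v\bigr)$, where $m_u$ and $n_v$ are the $A$-counts of the column through $x$ and the row through $x$. With a slowly growing threshold $K_n \to \infty$, I would decompose: points in columns with $m_u \geq K_n$ contribute at least $1 - \alpha/K_n - \beta \geq 1 - \alpha - o(1)$ (using $\beta \leq \alpha$); points in rows with $n_v \geq K_n$ contribute at least $1 - \alpha - \beta/K_n = 1 - \alpha - o(1)$; doubly-thick points contribute at least $1 - o(1)$. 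The principal obstacle is the \emph{doubly-thin} points (those with $m_u = n_v = 1$), each of which contributes only $1-\alpha-\beta$, strictly less than $1-\alpha$ whenever $\beta > 0$. I expect to handle these by exploiting the spanning property: since the minimum counts in $\partial_o \Z_n$ diverge with $\Z_n$, doubly-thin points (with counts $\leq 1$) cannot themselves drive growth; a reorganization argument that replaces clusters of doubly-thin points by a locally thicker structure preserves spanning and does not raise $\rho$, forcing $|A_{\mathrm{dt}}| = o(\gamma(\Z_n))$ in any near-optimal $A$. Combining the contributions then yields $\rho(A) \geq (1-\alpha)|A|(1 - o(1)) \geq (1-\alpha)\gamma(\Z_n)(1 - o(1))$, which by Theorem~\ref{intro-d-E-theorem} scales to $\IE \geq (1-\alpha)\gammaE = (1-\max(\alpha,\beta))\gammaE$.
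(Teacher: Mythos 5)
Your handling of the two easier upper bounds, $\IE\le\gammaE$ and $\IE\le(1-\max(\alpha,\beta))\area(\ZE)$, matches the paper: the second is exactly the paper's $\Z_r$/$\Z_c$ construction (Proposition~\ref{Iupper-bound1}), with the line-growth cascade verifying spanning. The other two components, however, each contain a genuine gap.

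For the bound $\IE\le 2(1-\min(\alpha,\beta))\gammaE$, your argument rests on the claim that $\IE(\alpha,\beta,\ZE)=0$ whenever $\alpha+\beta>1$, obtained by taking a thin spanning set $A^{\rm th}$ and asserting $|B|-\alpha|\pi_x(B)|-\beta|\pi_y(B)|=|B|(1-\alpha-\beta)$ for $B\subseteq A^{\rm th}$. That identity would require $|\pi_x(B)|=|\pi_y(B)|=|B|$, i.e., at most one point per row \emph{and} per column, which is the ``strongly thin'' notion. But the paper's ``thin'' is weaker — each point is alone on at least one of its two lines — and only gives $|\pi_x(B)|+|\pi_y(B)|\ge|B|$, hence $\rho(A^{\rm th})\le(1-\min(\alpha,\beta))|A^{\rm th}|$, not zero. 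Indeed with the strong definition $\gamma_{\rm thin}(T_\theta)=\infty$ for $\theta\ge3$, and the conclusion is simply false: by Corollary~\ref{bp-ld-limit}, $\IE(\alpha,\alpha,\ZE)\sim(1-\alpha)\gammaE(\ZE)>0$ on the whole diagonal when $\ZE$ is a triangle, including $\alpha>1/2$ where $\alpha+\beta>1$. (Also note $\min(\alpha,\beta)>1/2$ is not equivalent to $\alpha+\beta>1$, e.g.\ $(\alpha,\beta)=(0.9,0.3)$.) The paper instead proves this bound by a direct construction (Proposition~\ref{Iupper-bound2}): split a minimum spanning set $A$ into column-sorted and row-sorted pieces $A_r$, $A_c$ placed so their projections are disjoint, yielding a spanning set $A_s$ with $|A_s|=2\gamma(\Z)$ and $\rho(A_s)\le|A_s|(1-\min(\alpha,\beta))$.

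For the lower bound, your decomposition with $B=A$ correctly isolates the problematic points — those with both $m_u<K_n$ and $n_v<K_n$ (not just $m_u=n_v=1$) — but the proposed ``reorganization'' argument is unsubstantiated, and the claim that their number must be $o(\gamma(\Z_n))$ in near-optimal $A$ is neither proved nor needed. The paper's argument is both simpler and unconditional: since $\rho(A)=\max_{B\subseteq A}(\cdots)$, it suffices to take $B=\angk$ and lower-bound $\rho(\angk)$. Lemma~\ref{k-proj} gives the projection bound $|\pi_x(\angk)|+|\pi_y(\angk)|\le(1+\tfrac1{k+1})|\angk|$, and — this is the key step you are missing — Lemma~\ref{retract} shows that removing the thin points does not merely shrink the set, it produces a spanning set for the slightly smaller zero-set $\zswk$, so $|\angk|\ge\gamma(\zswk)$. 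This yields Proposition~\ref{Ilb-discrete} for \emph{every} spanning $A$, with no structural analysis of near-optimal sets; sending $k\to\infty$ slowly with $n$ and using $\gamma(\zswk_n)\delta_n^2\to\gammaE(\ZE)$ finishes the proof.
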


The lower bound (\ref{IElb}) is sharp: it is attained for all $\alpha$ and $\beta$ if and
only if $\ZE=\RE_{a,b}$ for some $a,b>0$ (Corollary~\ref{E-gamma-lg}). 
The upper bound (\ref{IEub}) is almost certainly not sharp as it 
equals the trivial bound $\gammaE(\ZE)$ on a large portion of $[0,1]^2$. To what 
extent it can be improved is an interesting open problem, which we 
clarify, to some extent, by investigating the behavior of $\IE$ near the corners 
of the unit square. 

\begin{theorem}\label{intro-IEcorners}
For any Euclidean zero-set $\ZE$ with finite area, 
\begin{equation}\label{IE-10}
	\lim_{\alpha\to 1-}\frac 1{1-\alpha}
\IE(\alpha,0,\ZE)=\area(\ZE)
\end{equation}
and 
\begin{equation}\label{IE-11}
	\lim_{\alpha\to 1-}\frac 1{1-\alpha}
\IE(\alpha,\alpha,\ZE)=\gammaE_{\rm thin}(\ZE).
\end{equation}
Moreover, the following holds for the supremum over Euclidean zero-sets $\ZE$ with finite area: 
\begin{equation}\label{IE-max}
\sup_{\ZE}\frac{\IE(\alpha,\alpha,\ZE)}{\gammaE(\ZE)}=
\begin{cases}
1+o(\alpha) &\text{as $\alpha\to 0+$,}\\
2(1-\alpha)+o(1-\alpha) &\text{as $\alpha\to 1-$.}
\end{cases}
\end{equation}
\end{theorem}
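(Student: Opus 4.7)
The plan is to handle (\ref{IE-10}), (\ref{IE-11}), and (\ref{IE-max}) separately. The upper bounds in the first two statements come directly from Theorem~\ref{intro-IEbounds}; the matching lower bounds and the extremal constructions in (\ref{IE-max}) require further work.

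For (\ref{IE-10}): the inequality $\IE(\alpha,0,\ZE) \le (1-\alpha)\area(\ZE)$ from (\ref{IEub}) gives $\limsup_{\alpha\to 1-}\IE(\alpha,0,\ZE)/(1-\alpha) \le \area(\ZE)$. For the matching lower bound, first observe that when $\beta=0$ and $\alpha\in[0,1]$, the maximum in $\rho(\alpha,0,A)$ is attained at $B=A$ (each added point of $A$ changes the value by $1-\alpha\Delta$ with $\Delta\in\{0,1\}$), so $I(\alpha,0,\Z)=\inf_A(|A|-\alpha|\pi_x(A)|)$. Decomposing $|A|-\alpha|\pi_x(A)|=(1-\alpha)|A|+\alpha(|A|-|\pi_x(A)|)$ shows that near-minimizers for $\alpha$ close to $1$ must have $|\pi_x(A)|=|A|$, since otherwise the second term is a non-vanishing $\Omega(\alpha)$ contribution while the whole quantity we hope to achieve is $O(1-\alpha)$. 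Hence $I(\alpha,0,\Z)\sim (1-\alpha)\cdot m_{\rm col}(\Z)$ in this regime, where $m_{\rm col}(\Z)$ denotes the minimum size of a column-disjoint spanning set. The key step is a staircase estimate: a column-disjoint $A$ initiates growth in row $v$ only after enough earlier rows have filled to furnish the requisite column count, which forces $m_v$ to be bounded below by a specific row-width of $\Z$. Summing these row-by-row lower bounds and passing to the Euclidean limit yields $\delta_n^{2}m_{\rm col}(\Z_n)\to\area(\ZE)$, completing the lower bound.

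For (\ref{IE-11}): the upper bound $\IE(\alpha,\alpha,\ZE) \le (1-\alpha)\gammaE_{\rm thin}(\ZE)$ is obtained in the Euclidean variational framework (\ref{IE-def}) by taking an optimal thin spanning configuration: this decomposes into horizontal and vertical 1D tentacles, each projecting onto only one axis and thus contributing exactly $(1-\alpha)$ times its length to $\rho_E$, totalling $(1-\alpha)\gammaE_{\rm thin}$. For the lower bound, I would argue that as $\alpha\to 1-$ the combined $(\alpha,\alpha)$-entropy penalty eliminates any 2D core contribution from near-optimal configurations (the area contribution $\area(K)(1-2\alpha r)$ becomes negative for compactness ratio $r$ close to $1$, suppressing $K$ unless $\area(K)\to 0$), leaving a configuration of 1D tentacles whose total length is bounded below by $\gammaE_{\rm thin}(\ZE)$.

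For (\ref{IE-max}): in the $\alpha\to 0+$ regime, the bound $\IE\le\gammaE$ gives $\sup\le 1$; to strengthen this to $\sup=1+o(\alpha)$, I would exhibit a family of \textbf{thick} $\ZE$ (e.g., $\RE_{a,a}$ with $a$ large, or more general convex zero-sets with small perimeter-to-area ratio) for which optimal spanning sets are essentially 2D with $|\pi_x(A)|+|\pi_y(A)|$ scaling as $O(\sqrt{\gammaE(\ZE)})$, so that the $\alpha$-entropy correction is $o(\alpha\gammaE)$. In the $\alpha\to 1-$ regime, (\ref{IEub}) gives $\sup\le 2(1-\alpha)$; for the matching lower bound, construct degenerate \textbf{cross-shaped} or \textbf{L-shaped} $\ZE$ (for example, a thin horizontal strip joined perpendicularly to a thin vertical one of comparable length) where the thin spanning genuinely requires comparable tentacle length in both $x$ and $y$ directions, so that $\gammaE_{\rm thin}(\ZE)$ is close to $2\gammaE(\ZE)$; then (\ref{IE-11}) yields $\IE/\gammaE\to 2(1-\alpha)$. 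The main obstacle is the lower bound in (\ref{IE-10}): it cannot be obtained from (\ref{IElb}), because $\gammaE(\ZE)$ may be strictly smaller than $\area(\ZE)$, and instead requires the row-by-row staircase analysis particular to the column-disjoint constraint; a secondary difficulty is the $\alpha\to 1-$ part of (\ref{IE-max}), which relies on designing $\ZE$ for which thin spanning cannot share entropy efficiently between the two axis directions.
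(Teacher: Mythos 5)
The most concrete error is in the $\alpha\to 0+$ part of \eqref{IE-max}. You propose to realize the supremum with thick $\ZE$ such as $\RE_{a,a}$, reasoning that 2D spanning sets have small projection-to-area ratio. This gets the extremal picture exactly backwards: by Corollary~\ref{E-gamma-lg}, $\IE(\alpha,\alpha,\RE_{a,a})/\gammaE(\RE_{a,a}) = 1-\alpha$ for every $a$, which is the \emph{lower} bound \eqref{IElb}, not something close to $1$. Indeed Corollary~\ref{one-zero-edge-cor} shows rectangles are precisely the $\ZE$ that achieve that lower bound. The reason your intuition fails is that in the variational formula \eqref{IE-def} the infimum is free to replace a compact 2D core by thin enhancement functions $f,g$ when that is cheaper, and for a rectangle the optimal triple is $(\emptyset,f,0)$ with $\int f = ab$, which gets the \emph{full} entropy discount $(1-\alpha)$. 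What actually pushes the ratio toward $1$ as $\alpha\to 0$ is the opposite kind of shape: the paper's Theorem~\ref{L-thm} shows that the L-shape $\RE_{a,1}\cup\RE_{1,a}$ has $\IE(\alpha,\alpha,\ZE)/\gammaE(\ZE) \ge 1 - 2\alpha/a - 9\alpha^{3/2}$, so sending $a\to\infty$ gives $1+o(\alpha)$. These L-shapes force most of the optimal spanning mass to sit in a genuine 2D core (the middle $(a-2)$ squares in the construction), which in the Euclidean limit pays no entropy discount, while only a vanishing fraction $2/a$ can be converted to tentacles. Your $\alpha\to 1-$ construction, by contrast, correctly identifies L-shapes as extremal for $\gammaE_{\rm thin}/\gammaE\to 2$; that matches Theorem~\ref{edge11-L} and Lemma~\ref{thin-for-L}.

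The lower bounds for \eqref{IE-10} and \eqref{IE-11} are also underspecified in a way that matters. For \eqref{IE-10}, the claim that near-minimizers are exactly column-disjoint is not quite right—they are only approximately so for $\alpha<1$ at any fixed $\Z$, and one must control the error uniformly across the $\Z_n$ in the Euclidean approximation; moreover, the ``staircase estimate'' that $\delta_n^2 m_{\rm col}(\Z_n)\to\area(\ZE)$ is the entire content of the bound and cannot simply be asserted. The paper's Theorem~\ref{one-zero-edge} proof devotes considerable work to precisely this: it perturbs $\Z$ to satisfy a slope condition (no long flat runs on the boundary) to control how much of $A$ a filled line can consume, and then runs a slowed-down painting (red/blue) argument to account for points that occupy rows versus columns. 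Without some surrogate for the slope condition, the row-by-row count you describe can be short-circuited by a $\Z$ with long horizontal runs on its boundary, where a single column can absorb many points of $A$ without forcing rows to fill. For \eqref{IE-11}, the assertion that the area contribution ``becomes negative for compactness ratio $r$ close to $1$'' is not a theorem, and the real work is to convert an approximate minimizer into a genuinely thin spanning set with controlled size increase—this is what Lemmas~\ref{makethin} and~\ref{makethin-span} supply, and they are indispensable to close the argument.
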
 
Note that (\ref{IE-max}) says that the slopes of the supremum are $0$ 
at $\alpha=0$ and $-2$ at $\alpha=1$.
These match the slopes of the two expessions involving $\gammaE$ in 
the upper bound (\ref{IEub}), while the expression involving $\area$ has the correct 
slope at $(1,0)$ due to (\ref{IE-10}). 
Therefore no linear improvement of (\ref{IEub}) is possible 
near the corners on the 
square. We obtain (\ref{IE-max}), which in particular 
implies that $\gammaE(\ZE)$ and $\gammaE_{\rm thin}(\ZE)$ 
are not always equal, by analyzing L-shaped zero-sets with 
long arms. The proof of all parts of Theorem~\ref{intro-IEcorners} again 
relies on providing a lot of
information about the design of the optimal spanning sets, which turn out to be very  
thick near $(0,0)$ and very thin near $(1,0)$ and $(1,1)$.  

We conclude with a brief outline of the rest of the paper. In Section~\ref{Preliminaries}, we prove 
some preliminary results and discuss lower bounds on $\gamma$ for small $\Z$ and for small perturbations of large $\Z$. In Sections~\ref{sec-gamma-area} 
and~\ref{sec-packing} we analyze smallest spanning sets, 
providing proofs of Theorems~\ref{intro-gamma-area-thm} and~\ref{intro-packing-minimal}.  In Section~\ref{sec-ld-existence} we prove (\ref{ld-vq}), and in Section~\ref{sec-ld-bounds} we prove general upper and lower bounds on the large deviation rate; we then complete the 
proof of Theorem~\ref{intro-ld-rate} in Section~\ref{sec-ld-support}. 
In Sections~\ref{sec-lgld} and~\ref{sec-bpld} we provide derivations for the two cases for which 
the large deviation rate $I$ is known exactly.
In Section~\ref{sec-E-limit} we introduce Hamming neighborhood growth on the continuous plane 
and prove Theorem~\ref{intro-d-E-theorem}, 
which is completed in Section~\ref{subsec-E-limit-proof}. 
Sections~\ref{ld-limit-bounds-subsec}--\ref{ld-Lshape-11-subsec} contain proofs of 
Theorem~\ref{intro-IEbounds} (completed in Section~\ref{ld-limit-bounds-subsec}) 
and Theorem~\ref{intro-IEcorners} 
(completed in Section~\ref{ld-Lshape-11-subsec})
and give some related results on $I$ for large $\Z$. We conclude with 
an application of limiting shape results for randomly 
selected Young diagrams in Section~\ref{sec-random-Z}, and 
with a selection of open problems in Section~\ref{sec-open}. 

\section{Preliminaries} \label{Preliminaries}
\subsection{The pattern-inclusion growth}\label{sec-pattern}

The neighborhood growth rules defined in Section~\ref{sec-intro} are 
part of a much larger class of pattern-inclusion dynamics, which we define in this section. 
Our reason to do so is not an attempt to develop a comprehensive theory in this general 
setting, but rather 
because we need Theorem~\ref{growth-patterns} in the proof of Theorem~\ref{intro-ld-rate}. 

Any process that takes advantage of the connectivity of the Hamming plane will 
have long range of interaction, so locality, as in cellular automata growth dynamics \cite{Gra},
is out of the question, but we retain some of its flavor by the property (G4) 
below. Again, we assume that the growth takes place on the vertex set $\bZ_+^2$.

A {\it growth transformation\/} is a map 
$\cT:2^{\bZ_+^2}\to 2^{\bZ_+^2}$ with the following properties:
\begin{enumerate}
\item[](G1) {\it solidification\/}: if $A\subseteq \bZ_+^2$, $A\subseteq \cT(A)$; 
\item[](G2) {\it monotonicity\/}: if $A_1\subseteq A_2\subseteq \bZ_+^2$, then 
$\cT(A_1)\subseteq \cT(A_2)$;
\item[](G3) {\it permutation invariance\/}: $\cT$ commutes with any permutation 
of rows and any permutation of columns of $\bZ_+^2$; and 
\item[](G4) {\it finite inducement\/}: there exists a number $K$, so that for any $A\subseteq V$ and 
$x\in \cT(A)$ there exists a set $A'\subseteq A$, such that 
$|A'|\le K$ and $x\in \cT(A')$. 
\end{enumerate} 

A {\it growth dynamics\/} starting from the initially occupied set $A$ is defined 
as in the Section~\ref{sec-intro} by $A_t=\cT^t(A)$, with $A_\infty=\cT^\infty(A)$
the set of all eventually occupied points. We say that $A\subseteq\bZ_+^2$ is {\it inert\/} 
if $\cT(A)=A$. It follows from (G4) that $A_\infty$ is always inert.  As for the 
neighborhood growth, we say that $A$ {\it spans\/} 
if $\cT^\infty(A)=\bZ_+^2$. This notion 
leads to another property of $\cT$:
\begin{enumerate}
\item[](G5)
{\it voracity\/}:  there exists a finite set $A\subseteq \bZ_+^2$ that spans.
\end{enumerate}

\begin{ex}
If $\cT$ is the neighborhood growth with $\Z$ consisting of the nonnegative $x$- and 
$y$-axis, then 
$$
\cT(A)=\{x: \text{$L^h(x)\cap A\ne\emptyset$ and $L^v(x)\cap A\ne\emptyset$}\}, 
$$
and $\cT$ fails voracity as no $A$ with an empty (horizontal or vertical) line spans. 
\end{ex}

A {\it pattern\/} is a finite subset of $\bZ_+^2$.  Two patterns are {\it equivalent\/} 
if the rows and columns of $\bZ_+^2$ can be permuted to transform one into the other, 
and {\it $0$-equivalent\/} if they could be so permuted while keeping the $0$th row and 
$0$th column fixed. We say that $A\subseteq \bZ_+^2$ {\it contains\/}
a pattern $P$ if there exist permutations $\sigma_h$ and $\sigma_v$ of 
rows and columns of $\bZ_+^2$ to obtain a set $A'$ such that 
that $P\subseteq A'$. Moreover, we say that a pattern is {\it observed\/} 
by the origin $\origin=(0,0)$ in $A$ if there exist such permutations 
$\sigma_h$ and $\sigma_v$, which also fix $0$. 

There is a bijection between growth transformations $\cT$ and finite sets of patterns $\cP$ with the
following properties: 
\begin{enumerate}
\item[](P1) $\{\origin\}\in \cP$; and 
\item[](P2) no pattern in $\cP$ is $0$-equivalent to a subset of another pattern in $\cP$.  
\end{enumerate}
We consider sets $\cP_1$ and $\cP_2$ of patterns {\it equivalent\/} 
if they have the same elements up to $0$-equivalence. 

For  a set of patterns $\cP$ that satisfies (P1--2), we call the transformation 
$\cT=\cT_\cP$ which commutes with  
any transposition of rows and any transposition of columns and satisfies
\begin{equation}\label{T-P}
\origin \in \cT(A) \text{ if and only if there exists a 
pattern $P\in\cP$, observed by $\origin$ in $A$}, 
\end{equation}
a {\it pattern-inclusion transformation\/}. Observe that $\cT_\cP$ is uniquely defined by the 
equivalence class of $\cP$.

\begin{theorem}\label{growth-patterns} A composition of two growth transformations is a growth 
transformation. Moreover, any map $\cT:2^{\bZ_+^2}\to 2^{\bZ_+^2}$ 
is a growth transformation if and only if 
it is a pattern inclusion transformation.  
\end{theorem}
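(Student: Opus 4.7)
The plan is to handle the two assertions separately. For the composition claim, let $\cT_1,\cT_2$ be growth transformations with constants $K_1,K_2$ from (G4). Properties (G1)--(G3) for $\cT_2\circ\cT_1$ follow mechanically from the corresponding properties of the factors (solidification chains through both stages, monotonicity composes, and both commute with row/column permutations). For (G4), given $x\in\cT_2(\cT_1(A))$, apply (G4) for $\cT_2$ to obtain $B\subseteq\cT_1(A)$ with $|B|\le K_2$ and $x\in\cT_2(B)$. For each $y\in B$, apply (G4) for $\cT_1$ to get $A_y\subseteq A$ with $|A_y|\le K_1$ and $y\in\cT_1(A_y)$. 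Then $A'=\bigcup_{y\in B}A_y$ has at most $K_1K_2$ elements, and by (G2) for $\cT_1$, $B\subseteq\cT_1(A')$, hence $x\in\cT_2(\cT_1(A'))$.

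For the easy direction of the equivalence, I verify (G1)--(G4) for an arbitrary pattern-inclusion transformation $\cT_\cP$. Using permutation invariance it suffices to check (G1) at $\origin$: if $\origin\in A$, then $\{\origin\}\in\cP$ by (P1) is observed at $\origin$, so $\origin\in\cT_\cP(A)$. (G2) is immediate because any permutation fixing $0$ that witnesses the observation of $P\in\cP$ in $A_1$ also witnesses it in any superset. (G3) is built into the definition. For (G4), take $K=\max_{P\in\cP}|P|$: if $x\in\cT_\cP(A)$, transport $x$ to $\origin$ by a row/column permutation $\tau$, apply the definition at $\origin$ to obtain $P\in\cP$ and a permutation $\sigma$ fixing $0$ with $P\subseteq\sigma(\tau(A))$, and pull back $P$ through $\sigma^{-1}\tau^{-1}$ to get the desired subset.

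For the converse, I construct $\cP$ from $\cT$. Let $K$ come from (G4) and set
\[
\cP_0=\{P\subseteq\bZ_+^2:|P|\le K,\ \origin\in\cT(P),\ \text{and } \origin\notin\cT(P')\ \text{for every } P'\subsetneq P\}.
\]
Choose one representative from each $0$-equivalence class in $\cP_0$ to form $\cP$. Subset-minimality together with this choice guarantees (P2), and (P1) holds since $\{\origin\}\in\cP_0$ by (G1). The set $\cP$ is finite because a $0$-equivalence class of a pattern of size $\le K$ is determined by a finite piece of combinatorial data (the incidence structure of rows and columns carrying its points, with two distinguished markers for row $0$ and column $0$), and there are only finitely many such structures up to size $K$.

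It remains to verify $\cT=\cT_\cP$. By (G3) and the permutation invariance of $\cT_\cP$, it suffices to check equality at $\origin$. If $\origin\in\cT(A)$, then (G4) provides a finite $A''\subseteq A$ with $|A''|\le K$ and $\origin\in\cT(A'')$; passing to a subset-minimal such $A''$ produces an element of $\cP_0$, which is $0$-equivalent to some $P\in\cP$ and hence observed by $\origin$ in $A$, so $\origin\in\cT_\cP(A)$. Conversely, if some $P\in\cP$ is observed by $\origin$ in $A$ via permutations $\sigma_h,\sigma_v$ fixing $0$, write $\sigma=\sigma_h\times\sigma_v$; then $P\subseteq\sigma(A)$, so (G2) gives $\origin\in\cT(P)\subseteq\cT(\sigma(A))$, and (G3) together with $\sigma(\origin)=\origin$ yields $\origin\in\cT(A)$. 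The main obstacle I anticipate is the bookkeeping around $0$-equivalence: confirming that subset-minimality in $\cP_0$ translates faithfully to the $0$-equivalence condition (P2), and that the representative-picking does not break the correspondence. A secondary care point is that the axioms (G1)--(G4) do not explicitly force $\cT(\emptyset)=\emptyset$, so one should either derive this from (G1) and minimality, or restrict attention to the non-degenerate case, since the framework (P1)+(P2) excludes $\emptyset\in\cP$.
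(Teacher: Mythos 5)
Your proof is correct and follows essentially the same strategy as the paper's (which is terse and relegates most verifications to the reader as "easy to check"): the composition claim via pulling (G4)-witnesses through both stages, and the converse direction by collecting one representative of each $0$-equivalence class of inclusion-minimal sets forcing $\origin$ into $\cT$. Your remark about $\cT(\emptyset)$ is a legitimate observation the paper silently skips: the constant map $\cT(A)\equiv\bZ_+^2$ satisfies (G1)--(G4) yet is not a pattern-inclusion transformation since any $\cT_\cP$ with $\cP$ satisfying (P1)--(P2) has $\cT_\cP(\emptyset)=\emptyset$; the "if and only if" is therefore implicitly restricted to the non-degenerate case $\cT(\emptyset)=\emptyset$, which cannot be derived from the axioms alone.
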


\begin{proof} The first statement is easy to check by (G1--4). 
To prove the second statement assume first that  
$\cT$ is a growth transformation. 
Then gather all inclusion-minimal sets $A$ that result in 
$\origin\in \cT(A)$; there are finitely many $0$-equivalence 
classes of them by (G4), and so we can collect one pattern 
per $0$-equivalence class to form $\cP$. The converse statement is 
again easy to check by definition. 
\end{proof}

We now formally state the connection to the neighborhood growth.  

\begin{prop}\label{general-to-neighborhood}
A neighborhood growth transformation is characterized by a set $\cP$ of patterns that  
are included in the two lines through $\origin$. It is voracious if and only if its 
zero-set $\Z$ is finite.
\end{prop}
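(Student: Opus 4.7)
The plan is to verify both assertions via the pattern-inclusion characterization of growth transformations from Theorem~\ref{growth-patterns}. For the first assertion, I would argue in both directions. Starting from a neighborhood growth $\cT_\Z$, note that when $\origin\notin A$ the rule at $\origin$ depends on $A$ only through $\row(\origin,A)$ and $\col(\origin,A)$. Hence the inclusion-minimal sets $P$ for which $\origin\in\cT_\Z(P)$ are either $\{\origin\}$ (from solidification (G1)) or consist of $u$ points on $L^h(\origin)\setminus\{\origin\}$ together with $v$ points on $L^v(\origin)\setminus\{\origin\}$, with $(u,v)$ a coordinatewise minimal element of $\bZ_+^2\setminus\Z$; each such pattern is contained in $L^h(\origin)\cup L^v(\origin)$, so the associated $\cP$ has the required form. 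Conversely, suppose $\cP$ consists of patterns contained in these two lines, and for $P\in\cP\setminus\{\{\origin\}\}$ let $u_P$ and $v_P$ count the points of $P$ on $L^h(\origin)$ and $L^v(\origin)$ respectively, excluding $\origin$. Because $\sigma_h,\sigma_v$ are required to fix $0$, they send row $0$ to row $0$ and column $0$ to column $0$, so $P$ is observed by $\origin$ in $A$ if and only if $\row(\origin,A)\ge u_P$ and $\col(\origin,A)\ge v_P$. Setting $\Z=\{(u,v)\in\bZ_+^2:\, u<u_P\ \text{or}\ v<v_P\ \text{for every}\ P\in\cP\setminus\{\{\origin\}\}\}$ yields an order ideal in $\bZ_+^2$, hence a zero-set, and $\cT_\cP=\cT_\Z$.

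For the voracity equivalence, one direction is direct: if $\Z$ is finite, pick $a,b$ with $\Z\subseteq R_{a,b}$ and take $A=R_{a,b}$. Any $x=(i,j)$ with $i\ge a$, $j<b$ satisfies $(\row(x,A),\col(x,A))=(a,0)\notin R_{a,b}\supseteq\Z$, and symmetrically for $i<a$, $j\ge b$, so after one step the strips $\bZ_+\times[0,b-1]$ and $[0,a-1]\times\bZ_+$ are entirely occupied; after the next step every remaining point has row and column counts at least $a$ and $b$ respectively, placing it outside $\Z$. For the converse, assume $\Z$ is infinite. Since $\Z$ is an order ideal in $\bZ_+^2$, boundedness of both projections $\pi_x(\Z),\pi_y(\Z)$ would force $\Z$ into a finite rectangle, so at least one projection is unbounded, say $\pi_y(\Z)$; downward closure in both coordinates then gives $(0,v)\in\Z$ for every $v\ge 0$. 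Consequently, whenever $x=(i,j)$ lies in a row containing no point of $A_t$, we have $\row(x,A_t)=0$ and so $(\row(x,A_t),\col(x,A_t))\in\Z$, whence $x\notin\cT(A_t)\setminus A_t$. Inductively, the set of rows containing occupied points never grows, and since a finite initial $A$ occupies only finitely many rows, $A_\infty$ omits infinitely many rows and cannot span.

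The main technical point requiring care is the permutation bookkeeping in the first part: one must verify that, because $\sigma_h$ and $\sigma_v$ fix $0$, a pattern supported on $L^h(\origin)\cup L^v(\origin)$ is observed by $\origin$ precisely when the row and column counts at $\origin$ in $A$ are at least those of the pattern. Once this is in place, the voracity half reduces to the clean observation that an unbounded coordinate projection of $\Z$ forces an entire coordinate axis into $\Z$, which immediately blocks any new row (or column) from being ignited out of a finite seed.
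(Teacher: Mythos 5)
The paper does not supply a proof of Proposition~\ref{general-to-neighborhood} (it states ``we omit the simple proof''), so there is no in-paper argument to compare against. Your proof is correct and fills the gap cleanly: for the pattern characterization you correctly identify the inclusion-minimal inducing sets as those supported on $L^h(\origin)\cup L^v(\origin)$ with $(u,v)$ a coordinatewise-minimal element of $\bZ_+^2\setminus\Z$, and correctly use the fact that the permutations in the definition of observation fix $0$ (and hence preserve row~$0$ and column~$0$) to turn pattern observation into the inequality $\row(\origin,A)\ge u_P$, $\col(\origin,A)\ge v_P$; the converse construction of $\Z$ from $\cP$ and the verification that it is an order ideal (hence a zero-set) are both right. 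The voracity argument is also correct in both directions: $A=R_{a,b}$ spans in two steps when $\Z\subseteq R_{a,b}$, and when $\Z$ is infinite one coordinate projection of $\Z$ is unbounded, so the whole corresponding coordinate axis lies in $\Z$ (downward closure), and empty rows (or columns) then stay empty forever, defeating any finite seed.
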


We omit the 
simple proof of this proposition. From now on, we will assume that all zero-sets are finite. 

We end this section with 
an example that show that (G4) is indeed a necessary assumption if we want the 
set $\cP$ to be finite (which is in turn a crucial property for our application). 

\begin{ex} We  give an example of a dynamics given by  (\ref{T-P}) with an infinite set 
$\cP$ of finite patterns that satisfies (G1)--(G3) and (G5), but not (G4). Define 
$\cP$ to comprise $\{\origin\}$ and the following patterns
$$
\begin{matrix}
\origin & \X\\
\end{matrix}\quad,\quad
\begin{matrix}
\X & \X & \X\\
\X\\
\origin
\end{matrix}\quad,\quad
\begin{matrix}
  & \X & \X & \X\\
\X & \X\\
\X\\
\origin
\end{matrix}\quad,\quad
\begin{matrix}
  &    & \X & \X & \X\\
  & \X & \X\\
\X & \X\\
\X\\
\origin
\end{matrix}\quad,\quad
\ldots
$$
(Here, we denote by $\X$ a point in the pattern.) No pattern above is $0$-equivalent to 
a subset of another, and 
a 2 by 1 rectangle of occupied sites spans.  
\end{ex}

\subsection{Perturbations of $\Z$}\label{sec-perturbations}

In this section, 
we prove some results on the  
effects that small perturbations to a zero-set $\Z$ have on 
the spanning sets.  We start with some notation.

Fix a zero-set $\Z$ and an integer $k\ge 1$. We define the following two Young diagrams, obtained by deleting the $k$ largest 
(bottom) rows (resp., columns) of $\Z$,
\begin{equation*}
\begin{aligned}
&\Z^{\downarrow k}\,\,=\{(u,v-k): (u,v)\in \Z, v\ge k\},\\
&\Z^{\leftarrow k}=\{(u-k,v): (u,v)\in \Z, u\ge k\}.
\end{aligned}
\end{equation*}
Then we let
\begin{equation*}
\begin{aligned}
&\zswk = (\Z^{\downarrow k})^{\leftarrow k}
\end{aligned}
\end{equation*}
and 
$$\Z^{\myll k}=\Z\setminus ((k,k)+\Z^{\swarrow k}),$$ which is the set comprised of the
$k$ longest rows and columns of $\Z$. 
Suppose $A\subseteq \bZ_+^2$, and let
$$
\angk = \{x \in A : \row(x,A)> k \text{ or } \col(x,A) > k\}
$$
denote the set of points in $A$ that lie in either a row or a column with 
at least $k$ other points of~$A$.  For example, $A_{>1}$ is the set of non-isolated 
points in $A$.  The next two lemmas let us identify low-entropy spanning sets 
for perturbations of $\Z$.

\begin{lemma} \label{retract}

If $A$ spans for $\Z$, then $\angk$ spans for $\zswk.$

\end{lemma}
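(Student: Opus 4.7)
The plan is to establish the invariant
\begin{equation*}
A_t \setminus A \subseteq S_t \quad \text{for every } t \ge 0,
\end{equation*}
where $A_t := \cT_\Z^t(A)$ and $S_t := \cT_{\zswk}^t(\angk)$, by induction on $t$. The base case is trivial. For the inductive step, the only nontrivial situation is a point $x \in A_{t+1} \setminus A_t$ just added under the $\Z$-dynamics, for which $(\row(x, A_t), \col(x, A_t)) \notin \Z$. Since $(u, v) \in \zswk$ is by definition equivalent to $(u + k, v + k) \in \Z$, and the complement of the Young diagram $\Z$ is closed under coordinatewise increase, it suffices to prove
\begin{equation*}
\row(x, A_t) \le \row(x, S_t) + k, \qquad \col(x, A_t) \le \col(x, S_t) + k.
\end{equation*}

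For the horizontal bound, I would use the induction hypothesis together with $\angk \subseteq S_t$ (solidification) and the fact that $\angk$ and $A_t \setminus A$ are disjoint subsets of $S_t$ to obtain $\row(x, A_t) - \row(x, S_t) \le |L^h(x) \cap (A \setminus \angk)|$. If some $y$ lies in $L^h(x) \cap (A \setminus \angk)$ then $\row(y, A) \le k$; but $L^h(y) = L^h(x)$, so $|L^h(x) \cap A| = \row(y, A) \le k$ and in particular $|L^h(x) \cap (A \setminus \angk)| \le k$. If no such $y$ exists the bound $0 \le k$ is trivial. The vertical bound is symmetric, completing the induction.

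To finish, the hypothesis $A_\infty = \bZ_+^2$ combined with the invariant yields $\bZ_+^2 \setminus A \subseteq S_\infty$. For any remaining $x \in A \setminus \angk$, both $|L^h(x) \cap A|$ and $|L^v(x) \cap A|$ are at most $k$, so each of the infinite lines $L^h(x)$ and $L^v(x)$ contains infinitely many points of $\bZ_+^2 \setminus A \subseteq S_\infty$; since $\zswk$ is a finite zero-set, $x$ must join $S$ at some finite step of the $\zswk$-dynamics. The main obstacle I anticipate is identifying the correct invariant: the natural-looking $(A_t)_{>k} \subseteq S_t$ is awkward because the predicate ``$\row > k$'' would be evaluated against different sets on the two sides, whereas $A_t \setminus A \subseteq S_t$ neatly captures that the shift $+k$ in passing from $\Z$ to $\zswk$ is paid for by the at-most-$k$ discrepancy between $A$ and $\angk$ on every horizontal and vertical line.
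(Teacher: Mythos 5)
Your proof is correct and follows essentially the same route as the paper's. The paper's argument starts from the pointwise bound $\row(x,\angk)\ge(\row(x,A)-k)\vee 0$ (equivalently, $|L^h(x)\cap(A\setminus\angk)|\le k$, which is exactly your line-count observation), notes that this implies $x\in\cT(A)\setminus A\Rightarrow x\in\cT_k(\angk)\setminus\angk$, and propagates this by induction to $\cT^t(A)\setminus A\subseteq\cT_k^t(\angk)\setminus\angk$ for all $t$ — the same invariant you isolate as $A_t\setminus A\subseteq S_t$ (equivalent since $\angk\subseteq A$). Your version spells out the inductive step — in particular, that the discrepancy $\row(x,A_t)-\row(x,S_t)$ stays bounded by $|L^h(x)\cap(A\setminus\angk)|\le k$ uniformly in $t$ because $\angk$ and $A_t\setminus A$ are disjoint subsets of $S_t$ — which the paper compresses into a single ``by induction.'' The closing step (row/column counts of $S_t$ tend to infinity since $A\setminus\angk$ is bounded on each line, hence the remaining points of $A\setminus\angk$ are swallowed) is likewise identical in both.
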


\begin{proof}
For each $x\in \bZ_+^2$, 
$$\row(x,\angk) \ge (\row(x,A)-k)\vee 0\text{ and }\col(x,\angk)\ge (\col(w,A)-k)\vee 0,$$ 
since the vertices removed from $A$ to form $\angk$ are on both horizontal 
and vertical lines with at most $k$ vertices of $A$.  
Therefore, if $\cT$ and $\cT_k$ are the respective 
growth transformations corresponding to $\Z$ and $\zswk$, 
then $x \in \cT(A)\setminus A$ implies that $x\in \cT_k(\angk)\setminus \angk$.  
By induction, $\cT^t(A)\setminus A \subseteq \cT_k^t(\angk)\setminus \angk$ 
for all $t\ge 1$.  Since $A$ spans for 
$\Z$ and $A\setminus \angk$ has at most $k$ sites in each line, 
for every $x\in \bZ_+^2$, $\row(x,\cT_k^t(\angk))\to \infty$ as $t\to\infty$, 
so $\angk$ spans for $\zswk$.\end{proof}
%

\begin{lemma}\label{k-proj}
 
Let $A\subseteq \bZ_+^2$ and $k$ be a nonnegative integer.  Then 

$$|\pi_x(\angk)| + |\pi_y(\angk)| \leq \left(1+\frac{1}{k+1}\right)|\angk|.$$
\end{lemma}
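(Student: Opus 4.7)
The plan is to use a simple charging argument. Let $B = \angk$, and for each row $v$ (resp.\ column $u$) denote by $r_v = |L^h \cap B|$ (resp.\ $c_u = |L^v \cap B|$) the number of points of $B$ in that line. For each $x \in B$ located in row $v$ and column $u$, assign the charge $w(x) = 1/r_v + 1/c_u$. Summing $1/r_v$ over the $r_v$ points in a nonempty row gives $1$, so summing over all rows yields $|\pi_y(B)|$, and similarly for columns. Thus
\[
\sum_{x\in B} w(x) = |\pi_x(B)| + |\pi_y(B)|,
\]
and it suffices to show $w(x) \le 1 + 1/(k+1)$ for every $x \in B$.

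The key observation is that for every $x \in B$, either $r_v \ge k+1$ or $c_u \ge k+1$. To see this, note that if $x \in B$ and $\row(x,A) > k$, then every $y \in L^h(x)\cap A$ also satisfies $\row(y,A) = \row(x,A) > k$, hence $y \in B$; thus $L^h(x)\cap A = L^h(x)\cap B$ and $r_v = \row(x,A) \ge k+1$. Symmetrically, if $\col(x,A) > k$ then $c_u \ge k+1$. Since membership in $B = \angk$ demands one of the two inequalities on $\row(x,A)$ or $\col(x,A)$, the claim follows.

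It remains to bound $w(x)$ in the three possible cases. If both $r_v, c_u \ge k+1$, then $w(x) \le 2/(k+1) \le 1 + 1/(k+1)$ (valid for $k\ge 0$). If $r_v \ge k+1$ and $c_u \le k$, then since $c_u \ge 1$ (because $x\in B$ lies in its own column) we get $w(x) \le 1/(k+1) + 1$. The remaining subcase is symmetric. Combining these bounds with the identity above gives the desired inequality.

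There is no real obstacle here: the argument hinges entirely on the small observation that the row/column counts computed with respect to $A$ and with respect to $B = \angk$ coincide whenever they exceed $k$, which is what forces each point of $B$ to lie in a ``thick'' (size $\ge k+1$) row or column of $B$. Once this is in place, the charging bound $1 + 1/(k+1)$ follows by inspection.
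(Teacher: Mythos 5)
Your proof is correct, and it takes a genuinely different route from the paper's. Both arguments hinge on the same key observation, namely that each point of $B = \angk$ lies on a (horizontal or vertical) line containing at least $k$ other points of $B$; you prove this carefully by noting that if $\row(x,A)>k$ then every point of $A$ on $L^h(x)$ automatically belongs to $\angk$, so $L^h(x)\cap A = L^h(x)\cap B$. The paper asserts this fact and then partitions $\angk$ into three pieces $A_h$, $A_0$, $A_v$ (rows thick in $\angk$; remaining points sharing a column with $A_h$; the rest), bounding projections of each piece separately and using the identity $\pi_x(A_h\cup A_0)=\pi_x(A_h)$. Your approach replaces that bookkeeping by a per-point weight $w(x)=1/r_v+1/c_u$, which sums globally to $|\pi_x(B)|+|\pi_y(B)|$, reducing the lemma to the pointwise bound $w(x)\le 1+\tfrac{1}{k+1}$. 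The charging version is a bit cleaner and avoids the auxiliary set $A_0$ altogether, at no loss of generality. One small note: when you claim $2/(k+1)\le 1+1/(k+1)$ for $k\ge0$, this holds since it is equivalent to $1/(k+1)\le 1$, and you use $c_u,r_v\ge1$ (each point covers its own row and column), both of which are fine.
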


\begin{proof}
 Each point in $\angk$ shares a line with at least $k$ other points in $\angk$, and we use this fact to subdivide $\angk$ into three disjoint sets.  Let 
$$
A_h = \{ x \in \angk : \row(x,\angk)>k\}.
$$   
Thus every point of $A_h$ shares a row with at least $k$ other points of $\angk$, and therefore 
with at least $k$ other points of $A_h$. Moreover, let $A_0$ be the set of points that 
are not in $A_h$ but share a column with at least one point in $A_h$. Lastly, let
$A_v = \angk \setminus (A_h\cup A_0)$. Each point $x\in A_v$ is in a column with at least 
$k$ other points of $A_v$. Indeed, $x$ shares a column with at least $k$ other points of $\angk$, 
but none of the points in this column can be in $A_h$ (as otherwise $x$ would be in $A_0$)
or in $A_0$ (as every point that shares a column with a point in $A_0$ is itself in $A_0$).
 
Each nonempty row in $A_h$ contains at least $k+1$ points of $A_h$, so 
$|\pi_y(A_h)| \leq \frac{1}{k+1}|A_h|.$   Similarly, $|\pi_x(A_v)| \leq \frac{1}{k+1}|A_v|.$
Furthermore, $\pi_x( A_h \cup A_0) = \pi_x(A_h).$  Trivially, we have $|\pi_x(A_h)|\leq |A_h|$, $|\pi_y(A_v)| \leq |A_v|$ and $|\pi_y(A_0)| \leq |A_0|.$ Then,
\allowdisplaybreaks
\begin{align*}
|\pi_x(\angk)| + |\pi_y(\angk)| &= |\pi_x(A_v\cup A_h\cup A_0)| + |\pi_y(A_v\cup A_h\cup A_0)|\\
&\leq |\pi_x(A_v)| + |\pi_x(A_h\cup A_0)| + |\pi_y(A_v)| + |\pi_y( A_h)| + |\pi_y(A_0)|\\
&\leq \frac{1}{k+1}|A_v| + |A_h| + |A_v| + \frac{1}{k+1}|A_h| + |A_0|\\
&\leq \left(1+\frac{1}{k+1}\right)(|A_v| + |A_h| + |A_0|)\\
&= \left(1+\frac{1}{k+1}\right)|\angk|.	
\end{align*}
This completes the proof.
\end{proof}

Next, we give a perturbation result that addresses removal of the shortest 
lines from $\Z$. In  particular, we conclude that this operation cannot decrease $\gamma$ by more than 
the number of removed sites. To put the result in perspective, we note 
that it is not true that $\gamma$ decreases by at most $k$ if we remove {\it any\/} $k$ sites. 
For the simplest counterexample, observe that $\gamma(R_{2,2})=4$ 
(use Proposition~\ref{lp-gamma} below or note that, with 3 initially occupied points, no point 
is added after time $1$) 
but $\gamma(R_{2,2}\setminus\{(1,1)\})=2$ (as any pair of non-collinear points spans).

\begin{theorem}\label{gamma-perturbation}
Let $\Z$ be any zero-set.  Suppose $A'$ spans for $\Z\cap R_{a,b}$, then there exists $A\supseteq A'$, which spans for $\Z$ and is such that
$$
\abs{A} = \abs{A'} + \abs{\Z\setminus R_{a,b}}.
$$
Furthermore, if $A'$ is thin, then $A$ can be made thin as well. 
Therefore, for any $\Z$ and $a,b\in[1,\infty]$, 
\begin{equation*}
\begin{aligned}
&\gamma(\Z\cap R_{a,b})\ge \gamma(\Z)-|\Z\setminus R_{a,b}|, \\
&\gamma_{\rm thin}(\Z\cap R_{a,b})\ge \gamma_{\rm thin}(\Z)-|\Z\setminus R_{a,b}|.
\end{aligned}
\end{equation*}
\end{theorem}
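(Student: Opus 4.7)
The plan is to prove Theorem~\ref{gamma-perturbation} by induction on the number $|\Z\setminus R_{a,b}|$ of boxes of $\Z$ outside the restricting rectangle. The base case $|\Z\setminus R_{a,b}|=0$ is immediate: then $\Z=\Z\cap R_{a,b}$ and we take $A=A'$. For the inductive step, I will enumerate the cells of $\Z\setminus R_{a,b}$ as $(u_1,v_1),\ldots,(u_k,v_k)$ in an order such that, for each $i$, the cell $(u_i,v_i)$ is an outer corner of the intermediate Young diagram $\Z_{i-1}:=(\Z\cap R_{a,b})\cup\{(u_1,v_1),\ldots,(u_{i-1},v_{i-1})\}$. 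Such an ordering exists because both $\Z$ and $\Z\cap R_{a,b}$ are Young diagrams, so the skew shape $\Z\setminus R_{a,b}$ can be built up one outer corner at a time. The theorem then reduces to the following single-box lemma: \emph{if $A^-$ spans for a Young diagram $\Z^-$ and $\Z^-\cup\{(u^*,v^*)\}$ is a Young diagram obtained by adjoining an outer corner $(u^*,v^*)$, then there exists $z\notin A^-$ such that $A^-\cup\{z\}$ spans for $\Z^-\cup\{(u^*,v^*)\}$}.

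For the single-box lemma I will compare the $\Z^-$-dynamics from $A^-$ (which spans by hypothesis) with the $\Z$-dynamics from $A:=A^-\cup\{z\}$, where $\Z:=\Z^-\cup\{(u^*,v^*)\}$. The two rules differ only on the ``critical'' count $(u^*,v^*)$: under $\Z^-$ a point with this count is added, while under $\Z$ it is not. My choice of $z$ is as follows. If no point is ever added with count $(u^*,v^*)$ in the $\Z^-$-trajectory from $A^-$, the two dynamics agree on $A^-$, so $A^-$ itself spans for $\Z$ and any $z$ works. Otherwise, let $t_0$ be the first step at which a critical point appears and take $z$ to be any such point. Pre-loading $z$ ensures the first critical addition happens automatically; moreover, because $(u^*,v^*)$ is an outer corner, both $(u^*+1,v^*)$ and $(u^*,v^*+1)$ lie outside $\Z$, so any other critical point $y$ that shares $z$'s plane row or column is also added under $\Z$-dynamics thanks to the $+1$ boost from $z$ in its row or column count. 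Critical points not directly boosted by $z$ must be resolved through the cascade of subsequent non-critical additions that $z$'s presence triggers.

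The main obstacle is verifying that the $\Z$-dynamics from $A^-\cup\{z\}$ genuinely spans $\bZ_+^2$, i.e., that every critical event in the $\Z^-$-trajectory is ultimately resolved in the $\Z$-trajectory. I plan to prove this by induction on the time step, maintaining the invariant that the $\Z$-trajectory of $A^-\cup\{z\}$ eventually contains the $\Z^-$-trajectory of $A^-$ (possibly after a finite delay). The Young-diagram monotonicity of $\Z$ ensures that once a count lies outside $\Z$ the same remains true when any coordinate is increased; combined with the outer-corner structure of $(u^*,v^*)$, this should let the boost from $z$ propagate through the cascade. For the thin refinement, when $A^-$ is thin I will instead select $z$ to lie in a plane row and a plane column disjoint from $A^-$ and from all previously added extras, preserving thinness; the cascade argument still applies because each new initial site of a thin $A$ contributes at most $1$ to any row or column count, which matches the minimum boost needed to defeat the critical count near an outer corner of $\Z^-$. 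I expect the careful bookkeeping for this cascade, both in general and under the thin constraint, to be the most technically delicate part of the argument.
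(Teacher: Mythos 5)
Your reduction to a ``single-box lemma'' is exactly the trap the paper explicitly warns against, and the lemma is false. You claim that if $A^-$ spans for $\Z^-$ and $(u^*,v^*)$ is an outer corner with $\Z = \Z^-\cup\{(u^*,v^*)\}$ a Young diagram, then some single point $z$ gives $A^-\cup\{z\}$ spanning for $\Z$. Take $\Z^- = \{(0,0),(0,1),(1,0)\}$, $(u^*,v^*) = (1,1)$, $\Z = R_{2,2}$, and $A^- = \{(0,1),(1,0)\}$. Any two non-collinear points span for $\Z^-$ (the paper points this out), so $A^-$ spans; but $\gamma(R_{2,2}) = 4$, so no set of size $3$ can span for $\Z$. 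Hence no valid $z$ exists. Equivalently, $\gamma$ can drop by $2$ when a single cell of the zero-set is removed, which is precisely the cautionary remark right before the theorem statement in the paper. Your chain of intermediate Young diagrams $\Z_0\subset\Z_1\subset\cdots\subset\Z_k$ must pass through sets that are not of the form $\Z\cap R_{a',b'}$, and for such sets the one-cell Lipschitz bound on $\gamma$ that your induction needs simply does not hold; the hypothesis of the theorem (restriction to a rectangle) is what makes the claimed bound true, and your decomposition discards exactly that structure.

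The paper's proof sidesteps this by removing a whole extremal row (or column) of $\Z$ at a time, so each intermediate zero-set is again a rectangle intersection, and $k$ cells are added to the spanning set in one coordinated step rather than one at a time. Concretely, they slow down the line-growth-like $\cT'$-dynamics, mark points that complete a vertical line, pick $k$ marked points on the first $k$ vertical lines to become occupied, and then show that the $\cT$-dynamics started from $A'\cup\{\text{those }k\text{ points}\}$ keeps pace. That global, row-at-a-time compensation is the ingredient your proposal lacks. Beyond the false lemma, your ``cascade'' paragraph is not yet an argument: you assert that critical points not directly boosted by $z$ ``must be resolved,'' but you give no mechanism or invariant that forces this, and the counterexample shows it can genuinely fail. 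If you want to salvage a cell-by-cell approach, you would have to restrict to removing cells only from the extremal row or column and add one new initial point per removed cell simultaneously, which is essentially what the paper does.
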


\begin{proof}
We may assume that $a=\infty$ 
and that $\Z\setminus  R_{\infty,b}$ consists of a single row, the 
topmost (shortest) row of $\Z$, of cardinality $k$; we then iterate to obtain the general 
result. Let $A'$ be a spanning set for the dynamics $\cT'$ with zero-set 
$\Z'=\Z\cap R_{\infty,b}$. We will construct a set $A\supseteq A'$ of cardinality $|A'|+k$ 
that spans for $\Z$.   

Order $\bZ_+^2$ in an arbitrary fashion. 
Slow down the $\cT'$-dynamics by occupying a single site at each time step, the first site in the 
order that can be occupied, with one exception: 
when a vertical line contains 
enough sites to become completely occupied under the standard synchronous rule, make 
it completely occupied at the next time step.

Mark vertices that are made occupied one-at-a-time according to the ordering on $\bZ_+^2$ in red, 
and vertices that are made occupied by completing a vertical line in black.  
Let $L_1,\ldots, L_k$ be the first $k$ vertical lines in the slowed-down dynamics for $\cT'$ that 
become occupied; say that $L_k$ becomes occupied at time $t$. Choose $k$ black sites, one on each of 
the $k$ lines, and adjoin them to $A'$ to form the 
set $A$ (if $A'$ is thin, choose these black points so that no two share a row with each other or with any points of $A'$, then $A$ is also thin). Define the slowed-down version of $\cT$ started from $A$ so that 
it only tries to occupy the site, or sites,
occupied by the $\cT'$-dynamics. We claim that, up to $t$, such 
dynamics occupies every site that $\cT'$ does from $A'$. Indeed, 
the only possible problem arises when a line in $\cT'$-dynamics from $A'$
contains $b$ occupied sites and fills in the next step, and then the $\cT$-dynamics 
from $A$ does the same by construction. 
After time $t$, $k$ vertical lines are occupied and thus the horizontal count 
of any site is at least $k$ and the two dynamics agree. 
\end{proof}

\subsection{The enhanced neighborhood growth}\label{sec-enhanced-definition}

We will need another useful generalization of the neighborhood growth, which will play 
a key role in the proof of Theorem~\ref{intro-d-E-theorem}. In this section 
we only give its definition, as 
it will be encountered in the proof of Theorem~\ref{when-done}. We postpone a more detailed 
study until Section~\ref{subsec-enhanced}. 
 
The {\it enhancements\/} 
$\vec f=(f_0, f_1,\ldots)\in \bZ_+^\infty$ 
and $\vec g=(g_0, g_1,\ldots)\in \bZ_+^\infty$
are sequences of positive integers. These increase   
horizontal and vertical counts, respectively, by fixed amounts.  
The {\it enhanced neighborhood growth\/} 
is then given by the triple $(\Z,\vec f, \vec g)$, which determines the transformation $\cT$
as follows:
$$
\cT(A)=A\cup\{(u,v)\in\bZ_+^2: (\row((u,v),A)+f_v,\col((u,v),A)+g_u)\notin \Z\}.
$$
The usual neighborhood growth given by $\Z$ is the same as its enhancement given by 
$(\Z,\vec 0,\vec 0)$, and we will not distinguish between the two.

\subsection{Completion time}\label{sec-completion-time}

Started from any finite set, the neighborhood growth clearly reaches its final state in 
a finite number of steps. We will now show that in fact this is true for any initial set, and that 
the number of steps depends only on $\Z$. 

\begin{theorem}\label{when-done} There exists a time $\tmax=\tmax(\Z)$ so that
for any set $A\subseteq\bZ_+^2$, not necessarily finite, 
$$
\cT^{\tmax+1}(A)=\cT^\tmax(A).
$$
\end{theorem}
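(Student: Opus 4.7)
The plan has two parts. For Part~1, I verify that $A_\infty := \bigcup_{t \ge 0} \cT^t(A)$ is inert, i.e., $\cT(A_\infty) = A_\infty$. Take any $x \notin A_\infty$. The counts $u_t := \row(x, A_t)$ and $v_t := \col(x, A_t)$ are nondecreasing in $t$. Fix integers $a_0, b_0$ with $\Z \subseteq R_{a_0, b_0}$. If $u_t \to \infty$, then at some finite $t$ we have $u_t \ge a_0$, forcing $(u_t, v_t) \notin \Z$ and hence $x \in A_{t+1}$, contradicting $x \notin A_\infty$; similarly if $v_t \to \infty$. Hence both sequences stabilize at finite limits $(u_\infty, v_\infty) = (\row(x, A_\infty), \col(x, A_\infty))$, which must lie in $\Z$ (otherwise $x$ would eventually be added), and so $x \notin \cT(A_\infty)$.

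For Part~2, I bound the stabilization time uniformly in $A$. The key observation is that membership of $x$ in $\cT(A_t) \setminus A_t$ depends only on the capped line counts $\min(\row(x,A_t), a_0)$ and $\min(\col(x, A_t), b_0)$, which take values in the finite grid $\{0,\ldots,a_0\} \times \{0,\ldots,b_0\}$. I would split according to $A_\infty$: in the trivial case $A_\infty = \bZ_+^2$, the cascade terminates quickly because once $b_0$ rows (or $a_0$ columns) become full, every remaining site has enough column (resp.\ row) count to be added at the very next step; in the non-trivial case, the number of eventually-full rows is at most $b_0 - 1$ and eventually-full columns at most $a_0 - 1$, and once those lines have been filled, the remaining growth occurs in a bounded-count regime that terminates in a few further steps.

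The main obstacle is that $A$ may be infinite, so a direct finite-state argument is not immediately available and different cascade chains across the plane could in principle run for different amounts of time. I would address this by invoking the enhanced neighborhood growth of Section~\ref{sec-enhanced-definition}: restricting $\cT$ to any finite window $R \subseteq \bZ_+^2$ yields an enhanced growth $(\Z, \vec f^t, \vec g^t)$ with enhancement vectors recording the number of sites of $A_t$ outside $R$ in each row and column. These vectors are nondecreasing in $t$ and, once capped at $(a_0, b_0)$, take values in a finite set, so only finitely many effective enhanced rules arise; each stabilizes in some bounded number of steps. Combining the finite maximum over these rules with the case analysis above, while choosing the window large enough that interior stabilization precedes further outside influence, yields the desired uniform bound $\tmax(\Z)$.
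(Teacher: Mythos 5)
Your Part~1 correctly shows $A_\infty$ is inert, but that is not the hard part: the theorem requires a bound on the stabilization time that is \emph{uniform} over all initial sets, including infinite ones. The genuine gap is in Part~2, on two counts. The assertion that, after the eventually-full lines have filled, the remaining growth ``terminates in a few further steps'' is exactly what needs a proof; in an infinite configuration, single-point cascades could a priori chain across the plane indefinitely, and the capped-count observation bounds only how many times a given line's count can change, not the total running time. The window/enhancement reduction does not close this, because it is circular. You invoke that ``each [effective enhanced rule] stabilizes in some bounded number of steps,'' but that statement is precisely the theorem for enhanced growth, not an available lemma. Moreover the enhancements $(\vec f^t,\vec g^t)$ are driven by the dynamics \emph{outside} the window $R$, so bounding when they stop changing requires a uniform-time bound for the exterior dynamics, which is again the original problem; enlarging $R$ only relocates the boundary. ``Choosing the window large enough that interior stabilization precedes further outside influence'' is not actionable without the a priori time bound that is being sought.

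The paper supplies the missing structure by inducting on the number of rows of $\Z$, proving the statement directly for the enhanced dynamics $(\Z,\vec h,\vec 0)$. Writing $a$ for the longest row of $\Z$ and taking $k<a$ to be the maximum initial (enhanced) row count, it lets $t_0$ be the first time some row count reaches $k+1$ and observes that before $t_0$ no new occupied site can appear outside the strip of columns through the $k$ occupied sites on a row of maximal count; on that strip the dynamics is a $\Z^{\downarrow 1}$-dynamics with a \emph{static} horizontal enhancement. The inductive hypothesis then bounds $t_0$, and iterating (as $k$ can only increase to $a$, after which a row fills and one again reduces to $\Z^{\downarrow 1}$) yields $\tmax(\Z)\le(a+1)\bigl(\tmax(\Z^{\downarrow 1})+1\bigr)$. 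This is what your window argument lacks: the window must be chosen, guided by the dynamics themselves, so that nothing moves outside it until a row count increases; this makes the restricted enhancement static, so the induction runs on $\Z$ rather than on the window.
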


\begin{proof}
We will prove the theorem for the more general enhanced neighborhood growth
dynamics given by $(\Z,\vec h, \vec 0)$, for some horizontal enhancement 
$\vec h=(h_0,h_1,\ldots)\in \bZ_+^\infty$, also proving that
$\tmax$ does not depend on $\vec h$. 

We prove this by induction on the number of lines in $\Z$. If $\Z=\emptyset$, then clearly the
dynamics is done in a single step. 

Now take an arbitrary $\Z$ whose longest row contains $a$ sites and fix an $\vec h$. First suppose the initial set $A$ has a row count of at least $a$ on some horizontal line (the $x$-axis, say). (We emphasize that 
all counts include the numbers from the enhancement sequence.)  Then in one step, all points on the $x$-axis become occupied.  If we let $A'$ be the set formed by running the dynamics for one step, and let $A'' = A' \setminus \{(x,0):x\in \bZ_+\}$, then the dynamics given by $(\Z,\vec h, \vec 0)$ started from $A'$ coincides with the dynamics given by $(\Z^{\downarrow 1}, (0,h_1, h_2, \ldots), \vec 0)$ started from $A''$ (except on the $x$-axis, which no longer has any effect on the running time).  By the induction hypothesis, in this case the original dynamics started from $A$ therefore terminates in at most $\tmax(\Z^{\downarrow 1}) + 1$ steps.

Fix an integer $k<a$, and assume now that the initial set $A$ has a row count of $k$ on some horizontal line, and every horizontal line has a row count of at most $k$.  Let $t_0$ be the 
first time at which there is a horizontal line with (at least) $k+1$ occupied 
sites. (Let $t_0=\infty$ if there is no such time.) 

Let  $L$ be any horizontal line with $k$ occupied sites 
at time $0$.  Assume without loss of generality that $L$ 
is the $x$-axis and that $[0,k-1-h_0]\times \{0\}$ are the sites occupied on $L$ at time $0$. 
No site above $[k-h_0, \infty)\times \{0\}$ 
becomes occupied before time $t_0$; if it did, the site below it on the 
$x$-axis would become occupied at the same time. 
Thus the dynamics above $[0,k-1-h_0]\times \{0\}$ 
behaves like the dynamics with zero-set $\Z^{\downarrow 1}$, 
and a different horizontal enhancement sequence $\vec f$, which takes into account the contributions of occupied sites outside of $[0,k-1-h_0]\times [1,\infty)$ to the row counts. By the induction hypothesis, these dynamics terminate 
by some time dependent only on $\Z^{\downarrow 1}$.  Therefore, either $t_0 \le \tmax(\Z^{\downarrow 1}) + 1$ or $t_0 = \infty$.  In the latter case, the original $(\Z,\vec h,\vec 0)$-dynamics terminate by time $\tmax(\Z^{\downarrow 1})$, so we can assume $t_0\le \tmax(\Z^{\downarrow 1})+1$.

Assume that $a=a_0\ge a_1\ge \ldots a_k>0$ are the rows of $\Z$. 
The arguments above imply that $\tmax(\Z)\le (a+1)(\tmax(\Z^{\downarrow 1})+1)$. This, together with $\tmax(\emptyset)=1$, gives 
$$
\tmax(\Z)\le (k+2)(a_0+1)(a_1+1)\cdots (a_k+1),
$$
which ends the proof.
\end{proof}

\subsection{The line growth bound}\label{sec-lgbound}



The first result on the smallest spanning sets on the Hamming plane was this simple 
formula about line growth from \cite{BBLN}. 

\begin{prop} \label{lp-gamma}
For $a,b\ge 0$, $\gamma(R_{a,b})=ab$.
\end{prop}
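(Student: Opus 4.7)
The plan is to prove the two inequalities separately. For the upper bound $\gamma(R_{a,b})\le ab$, I would take the initial set $A=R_{a,b}$ itself: each of the $b$ rows $y=0,\ldots,b-1$ initially contains $a$ occupied points, so under one application of $\cT$ every point with $y\in[0,b-1]$ is occupied (its horizontal count is at least $a$). This makes every column contain at least $b$ occupied points, and a second application of $\cT$ yields $\bZ_+^2$. Since $|R_{a,b}|=ab$, this proves the upper bound.

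For the lower bound $\gamma(R_{a,b})\ge ab$, I would proceed by induction on $a+b$. The base cases $a=0$ or $b=0$ are trivial because then $R_{a,b}=\emptyset$ forces $\cT(\emptyset)=\bZ_+^2$, so $\gamma=0$. For the inductive step with $a,b\ge 1$, let $A$ be any finite spanning set. Since $A$ is finite and spans, the first application of $\cT$ must add new points; but a point $x\notin A$ is added precisely when $(\row(x,A),\col(x,A))\notin R_{a,b}$, i.e.\ when some initial row of $A$ contains at least $a$ points or some initial column contains at least $b$. Without loss of generality fix a row $y_0$ with $r_{y_0}:=|A\cap L^h((0,y_0))|\ge a$, and set $A'=A\setminus L^h((0,y_0))$.

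The central claim is that $A'$ spans for the zero-set $R_{a,b-1}$. Under the original $\cT$ started from $A$, row $y_0$ becomes fully occupied at step $1$ and thereafter contributes exactly $+1$ to every column count; so on $\bZ_+^2\setminus L^h((0,y_0))$ the iterates $\cT^{t+1}(A)\cap(\bZ_+^2\setminus L^h((0,y_0)))$ evolve exactly like $\cT'^{\,t}(A')$, where $\cT'$ is the growth transformation for $R_{a,b-1}$. Since $A$ spans under $\cT$, this restricted dynamics fills all of $\bZ_+^2\setminus L^h((0,y_0))$, and hence so does $\cT'$ from $A'$. Once every row $y\ne y_0$ is fully occupied under $\cT'$, each column has unboundedly many occupied points, so $(x,y_0)\in\cT'^{\,\infty}(A')$ for every $x$; thus $A'$ spans for $R_{a,b-1}$. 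By the inductive hypothesis, $|A'|\ge a(b-1)$, and therefore $|A|=|A'|+r_{y_0}\ge a(b-1)+a=ab$. The symmetric case of an initial column of at least $b$ points reduces to $R_{a-1,b}$ and yields the same conclusion, completing the induction.

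The only step requiring care is the reduction claim: one must verify that deleting the filled row simultaneously from the configuration and from the ``vertical budget'' of $\Z$ produces a genuine equivalence of the restricted dynamics, and in particular that $\cT'$ from $A'$ eventually fills the deleted row $y_0$ as well (rather than only $\bZ_+^2\setminus L^h((0,y_0))$). Everything else is a routine bookkeeping of counts.
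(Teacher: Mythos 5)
Your argument is essentially the ``simple inductive proof'' that the paper delegates to \cite{BBLN}; the paper gives no self-contained proof here and only points to that reference and to Theorem~\ref{lgld-thm} as an alternative probabilistic route (via a recursion proved with the BK inequality). Your upper bound (a special case of Lemma~\ref{Z-grows}) and the outer skeleton of the induction are fine, but the justification of the central claim --- that $A'=A\setminus L^h((0,y_0))$ spans for $R_{a,b-1}$ --- does not work as stated. You assert that $\cT^{t+1}(A)\cap\bigl(\bZ_+^2\setminus L^h((0,y_0))\bigr)$ ``evolves exactly like'' $\cT'^{\,t}(A')$, but this equality can fail already at $t=0$: $\cT(A)$ may contain sites off row $y_0$ that are not in $A'$ (for instance, if some column of $A$ also has at least $b$ points, $\cT(A)$ fills that entire column), so the restricted time-$1$ configuration may be a \emph{strict superset} of $A'$. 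Monotonicity then runs in the wrong direction for you: knowing that $\cT'$ started from that larger set fills $\bZ_+^2\setminus L^h((0,y_0))$ does not imply that $\cT'$ started from the smaller set $A'$ does.

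The statement you actually need, and which does hold, is the one-sided inclusion
\begin{equation*}
\cT^t(A)\cap\bigl(\bZ_+^2\setminus L^h((0,y_0))\bigr)\subseteq \cT'^{\,t}(A')\qquad\text{for all }t\ge 0.
\end{equation*}
This is proved by a direct induction on $t$: for any $x\notin L^h((0,y_0))$, deleting row $y_0$ from $\cT^t(A)$ leaves $\row(x,\cdot)$ unchanged and decreases $\col(x,\cdot)$ by at most $1$, and that deficit is exactly offset by shrinking the zero-set from $R_{a,b}$ to $R_{a,b-1}$. (Equivalently, one can check that $\cT$ started from $A\cup L^h((0,y_0))$, restricted off row $y_0$, \emph{is} in exact step-by-step correspondence with $\cT'$ started from $A'$, and then use monotonicity of $\cT$ to dominate the dynamics from $A$.) Since $A$ spans, the inclusion gives $\bZ_+^2\setminus L^h((0,y_0))\subseteq\cT'^{\,\infty}(A')$, and your closing observation about infinite column counts on row $y_0$ finishes the spanning of $A'$. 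With this repair the inductive step and the count $|A|=|A'|+r_{y_0}\ge a(b-1)+a=ab$ are sound.
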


\begin{proof}
See Section 1 of \cite{BBLN} for a simple inductive proof, 
or Theorem~\ref{lgld-thm}.
\end{proof}

\begin{cor} \label{lp-bound}
For any zero set $\Z$, $\gamma(\Z)\ge \max\{ab:R_{a,b}\subseteq \Z\}$.
\end{cor}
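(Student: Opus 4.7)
The plan is to exploit monotonicity of $\gamma$ in the zero-set, combined with Proposition~\ref{lp-gamma}. The key observation is that enlarging $\Z$ can only make spanning harder: for zero-sets $\Z_1 \subseteq \Z_2$ with corresponding growth transformations $\cT_1, \cT_2$, the rule ``$(u,v)\notin\Z_i$'' is more restrictive for larger $i$, so $\cT_2(A)\subseteq \cT_1(A)$ for every $A$. By iteration, any set that spans for $\Z_2$ also spans for $\Z_1$, which gives $\gamma(\Z_1)\le \gamma(\Z_2)$.

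Applying this with $\Z_1 = R_{a,b}$ and $\Z_2 = \Z$ whenever $R_{a,b}\subseteq \Z$, and invoking Proposition~\ref{lp-gamma} to evaluate $\gamma(R_{a,b}) = ab$, we obtain $ab \le \gamma(\Z)$. Taking the maximum over all admissible $(a,b)$ concludes the proof.

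There is no real obstacle here; the entire content is the one-line monotonicity observation plus a citation. The only mildly subtle point worth a sentence is making the monotonicity in $\Z$ explicit (as opposed to monotonicity in $A$, which is (G2) from Section~\ref{sec-pattern}).
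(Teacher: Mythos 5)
Your proposal is correct and follows the same route as the paper: monotonicity of $\gamma$ in $\Z$ (which you spell out, while the paper states it without proof) combined with Proposition~\ref{lp-gamma} applied to each $R_{a,b}\subseteq\Z$. Nothing differs in substance.
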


\begin{proof} This follows from Proposition~\ref{lp-gamma}, and the fact that $\Z'\subseteq \Z$ implies $\gamma(\Z')\leq \gamma(\Z)$.
\end{proof}

We call the bound in  Corollary~\ref{lp-bound} the {\it line growth bound\/}. 
It is somewhat surprising that the inequality is, in fact, in many cases equality.
For example, it is equality for bootstrap percolation with arbitrary $\theta$ 
(which follows from Proposition~\ref{bpld-prop}) and when 
the $\Z$ is a union of two rectangles (a special case of a more general 
result from \cite{CGP}). On the other hand, it easily follows 
from Theorem~\ref{intro-gamma-area-thm} 
that the line growth bound can be, in general, very far from
equality when $\Z$ is large. 
In this section we give a general lower bound on $\gamma$ that tends to work 
better for small $\Z$; in particular, it proves that in general 
equality does not hold when $\Z$ is a symmetric 
zero set which is the union of three rectangles.  

%

\begin{theorem}\label{lb-general}
For any choice of a comparison rectangle
$R_{a,b}\subseteq \Z$ and a Young diagram $Y\subseteq R_{a-1,b-1}$,
$$
\gamma(\Z)\ge \frac 12 \min_{(k,\ell)\in \partial_oY}\left(kb+\ell a-k\ell+
\gamma(\Z^{\downarrow\ell})+\gamma(\Z^{\leftarrow k})\right).
$$
\end{theorem}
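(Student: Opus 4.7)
The plan is to analyze a minimum spanning set $A$ for $\Z$ (so $|A|=\gamma(\Z)$) by running the $\Z$-dynamics slowly and extracting from $A$ two sub-spanning configurations, one for $\Z^{\downarrow\ell}$ and one for $\Z^{\leftarrow k}$, for a pair $(k,\ell)\in\partial_oY$ determined by the trajectory.

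First I will slow down the $\Z$-dynamics started from $A$ so that one site is occupied per step, with ties broken so that the state pair $(k_t,\ell_t)$---counting the tall columns (column count $\ge b$) and long rows (row count $\ge a$) in $A_t$---has single-coordinate increments in $\{(0,0),(1,0),(0,1)\}$. Since $A$ spans, this monotone integer trajectory tends to $(\infty,\infty)$, hence leaves $Y$ through some site $(k,\ell)\in\partial_oY$ at a first step $\tau$. Let $\mathcal L$ denote the union of the $\ell$ long rows of $A_\tau$ and $\mathcal C$ the union of the $k$ tall columns.

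The structural step is the following: $A\setminus\mathcal L$ spans $\Z^{\downarrow\ell}$ and $A\setminus\mathcal C$ spans $\Z^{\leftarrow k}$. Indeed, replacing $A\cap\mathcal L$ by all of $\mathcal L$ gives a superset of $A$ which still spans $\Z$ by monotonicity; restricting the induced dynamics to $\bZ_+^2\setminus\mathcal L$ (where each fully-occupied row of $\mathcal L$ contributes exactly $1$ to the column count of every site off it) identifies the effective dynamics as that of $\Z^{\downarrow\ell}$ with initial configuration $A\setminus\mathcal L$, so $|A\setminus\mathcal L|\ge\gamma(\Z^{\downarrow\ell})$, and similarly $|A\setminus\mathcal C|\ge\gamma(\Z^{\leftarrow k})$.

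To conclude, I write the decomposition
\[
2|A|=|A\setminus\mathcal L|+|A\cap\mathcal L|+|A\setminus\mathcal C|+|A\cap\mathcal C|
\ge\gamma(\Z^{\downarrow\ell})+\gamma(\Z^{\leftarrow k})+|A\cap\mathcal L|+|A\cap\mathcal C|,
\]
and use the bound $|A_\tau\cap(\mathcal L\cup\mathcal C)|\ge\ell a+kb-k\ell$ from inclusion-exclusion together with the crucial observation that, since $R_{a,b}\subseteq\Z$, every dynamically-added site has row count $\ge a$ or column count $\ge b$ at its addition time and hence lies in $\mathcal L\cup\mathcal C$; in particular $A\setminus(\mathcal L\cup\mathcal C)=A_\tau\setminus(\mathcal L\cup\mathcal C)$. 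Tracing the at most $\tau$ added sites through the partition $(\mathcal L\setminus\mathcal C)\sqcup(\mathcal C\setminus\mathcal L)\sqcup(\mathcal L\cap\mathcal C)$ and applying inclusion-exclusion to the original set $A$ then yields $|A\cap\mathcal L|+|A\cap\mathcal C|\ge\ell a+kb-k\ell$, completing the inequality.

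The main obstacle is precisely this last bookkeeping: the lower bound $\ell a+kb-k\ell$ is on $A_\tau$, not $A$, and the $k\ell$ cells of $\mathcal L\cap\mathcal C$ can absorb dynamically-added sites that are counted in both $|A_\tau\cap\mathcal L|$ and $|A_\tau\cap\mathcal C|$; controlling this double-counting is exactly what produces the $-k\ell$ correction in the stated bound. Boundary cases---the trajectory starting outside $Y$, or a tie-induced $(1,1)$ jump---are handled by selecting a dominated element of $\partial_oY$ and adjusting the tie-breaking in the slow dynamics.
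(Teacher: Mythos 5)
Your overall framework is the same as the paper's: run a slowed dynamics, stop at a time $\tau$ when the pair of line counts first leaves $Y$, obtain $(k,\ell)\in\partial_o Y$, observe that deleting the $\ell$ long rows leaves a spanning set for $\Z^{\downarrow\ell}$ and deleting the $k$ tall columns leaves a spanning set for $\Z^{\leftarrow k}$, and finally count the $A$-points inside the distinguished lines. (The paper runs the slowed \emph{line growth} $R_{a,b}$ rather than the $\Z$-dynamics, but this is only a cosmetic difference.) Steps 1--4 of your argument are correct; the gap is in the final counting step, and it is real.

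You claim $|A\cap\mathcal{L}|+|A\cap\mathcal{C}|\ge \ell a+kb-k\ell$ by ``inclusion-exclusion on $A_\tau$'' plus ``tracing'' the added sites. Unpacking this: the inequality $|A_\tau\cap(\mathcal{L}\cup\mathcal{C})|\ge \ell a+kb-k\ell$ is correct, but transferring it to $A$ via inclusion-exclusion requires $|A_\tau\setminus A|\le |A\cap\mathcal{L}\cap\mathcal{C}|$, and this is false in general ($A\cap\mathcal{L}\cap\mathcal{C}$ can easily be empty while $A_\tau\setminus A$ is not). The other natural estimate, that each long row has at most $k$ dynamically added points when it first reaches $\ge a$ occupied sites (since those added points must lie on already-tall columns), gives only $|A\cap L^h(i)|\ge a-k$ per row and hence $\ell a+kb-2k\ell$, which is weaker than what the theorem asserts. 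To recover the stated $-k\ell$ you need the sharper bound $|A\cap L^h(i)|\ge a-k_i'$, where $k_i'$ is the number of columns that became tall \emph{strictly before} row $i$ became long (and similarly $\ell_j'$ for columns), together with the pairwise observation $\sum_i k_i'+\sum_j \ell_j'\le k\ell$ because for each pair $(i,j)$ exactly one of the two ``became long/tall before the other'' events can occur. This ordering observation is exactly the paper's swap argument showing that the total mass depends only on the endpoint $(k,\ell)$ and equals $kb+\ell(a-k)$. You allude to ``tie-breaking in the slow dynamics,'' which suggests you sense the need for an ordering argument, but the sketch as written does not supply it; without it the proof does not reach the constant $-k\ell$.
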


\begin{proof} Order the lines of $\bZ_+^2$ in an arbitrary fashion.
Assume $A$ is a finite spanning 
set for $\cZ$. We will construct
a finite sequence $\vec S$ of lines (dependent on $A$), by a recursive specification 
of sequences $\vec S_i$ of $i$ lines. 

Consider the line growth $\cT'$ with zero-set $R_{a,b}$. Note that $A$ spans for 
the growth dynamics $\cT'$; 
we now consider a slowed-down version. Let $A_0'=A$ and $\vec S_0$ the empty sequence. 
Given the sequence $\vec S_i$, $i\ge 0$, $A_i'$ is the union of $A$ and all lines in $\vec S_i$. 
Assume  $\vec S_i$
consists of $k$ vertical and $\ell$ horizontal lines, with $k+\ell=i$.  

If $(k,\ell)\in Y$, 
examine lines of $\bZ_+^2$ in order until a line $L$ is found on which $\cT'(A_i')$ adds a point 
and thus immediately makes it fully occupied (since $\cT'$ is a line growth). Adjoin $L$ to the end of the 
sequnce $\vec S_i$ to obtain 
$\vec S_{i+1}$. 
If $L$ is horizontal
(resp.~vertical), 
define its {\it mass\/} to be $a-k>0$ (resp.~$b-\ell>0$). The mass of $L$  
is a lower bound 
on the number of points in $A\cap L$ that are not on any of the preceding lines in the sequence.  

If $(k,\ell)\notin Y$,
the sequence stops, that is, $\vec S=\vec S_i$. 
As we add only one line to the sequence each time, 
the final counts $k$ and $\ell$ of vertical and horizontal lines 
satisfy $(k,\ell)\in \partial_o Y$. Let $m_h$ and $m_v$ be the respective final masses of the 
horizontal and vertical lines.  

The key step in this proof is the observation that total mass $m_h+m_v$ 
only depends on $k$ and
$\ell$ and not on the positions of vertical and horizontal lines in the sequence. 
Indeed, if $L$ is followed by $L'$ in $\vec S$, and the two lines are of different type, 
and a new sequence is formed by swapping $L$ and $L'$, 
the mass of $L'$ increases by $1$, while the mass of $L$ decreases
by $1$. 
Thus the total mass can be obtained by starting with all vertical lines:
\begin{equation}\label{lb-general-eq1}
m_h+m_v=kb+\ell(a-k)=kb+\ell a-\ell k.
\end{equation}

For a possible sequence $\vec S$ of lines, let $\gamma_{\vec S}$ be the minimal size of 
a set that spans (for $\Z$) and generates the sequence $\vec S$. Then, simultaneously, 
\begin{equation}\label{lb-general-eq2}
\begin{aligned}
\gamma_{\vec S}&\ge m_h+\gamma(\Z^{\downarrow\ell}),\\
\gamma_{\vec S}&\ge m_v+\gamma(\Z^{\leftarrow k}).\\
\end{aligned}
\end{equation}
Now we add the two inequalities of  (\ref{lb-general-eq2}) and use (\ref{lb-general-eq1}) to 
get
$$
2\gamma_{\vec S}\ge kb+\ell a-k\ell+
\gamma(\Z^{\downarrow\ell})+\gamma(\Z^{\leftarrow k}).
$$
Finally, we observe that 
$$
\gamma(\Z)=\min\{\gamma_{\vec S}: {\vec S}\text{ a possible sequence}\}
$$
to end the proof. 
\end{proof}

 
\begin{cor}
Let $\cZ=R_{b,c}\cup R_{c,b}\cup R_{a+b,a+b}$, with $a+b<c$. Then 
$$
\gamma(\Z)\ge 
\begin{cases}
bc+\frac12 a^2  & a\le b\\
bc+\frac18(a+b)(3a-b) &a>b.
\end{cases}
$$
\end{cor}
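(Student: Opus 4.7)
The plan is to apply Theorem~\ref{lb-general} with the square comparison rectangle $R_{a+b,a+b}\subseteq \cZ$, so that (in the theorem's notation) both rectangle-dimensions equal $a+b$ and the quantity to minimize over $(k,\ell)\in\partial_oY$ is
\begin{equation*}
F(k,\ell) = (a+b)(k+\ell) - k\ell + \gamma(\cZ^{\downarrow \ell}) + \gamma(\cZ^{\leftarrow k}).
\end{equation*}
All auxiliary $\gamma$-terms will be bounded from below using Corollary~\ref{lp-bound}: the inclusions $R_{b,c-\ell}\subseteq \cZ^{\downarrow \ell}$ and $R_{c-k,b}\subseteq \cZ^{\leftarrow k}$ give $\gamma(\cZ^{\downarrow \ell})\ge b(c-\ell)$ and $\gamma(\cZ^{\leftarrow k})\ge b(c-k)$, while $R_{b,c}\subseteq \cZ$ gives $\gamma(\cZ)\ge bc$. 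Substituting these (and using $\gamma(\cZ)\ge bc$ in place of $\gamma(\cZ^{\downarrow 0})$ or $\gamma(\cZ^{\leftarrow 0})$ when $k$ or $\ell$ vanishes) produces the master inequality
\begin{equation*}
F(k,\ell) \ge 2bc + a(k+\ell) - k\ell,\qquad k,\ell\in\bZ_+,
\end{equation*}
in which the $(a+b)(k+\ell)$ and $-b(k+\ell)$ contributions combine exactly into $a(k+\ell)$.

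For the first case $a\le b$, I would take $Y=R_{a,a}$, which fits in $R_{a+b-1,a+b-1}$ since $b\ge 1$. Its outer boundary is $\partial_oY=\{(a,v):0\le v\le a-1\}\cup\{(u,a):0\le u\le a-1\}$, and on every such point the master inequality evaluates to exactly $2bc+a^2$ via the identities $a(a+v)-av=a^2$ and $a(u+a)-ua=a^2$. Hence $F_{\min}\ge 2bc+a^2$ and $\gamma(\cZ)\ge bc+a^2/2$.

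For the second case $a>b$, I would take $Y$ to be the triangle $\{(u,v):u+v\le a+b-1\}$ truncated to $R_{a+b-1,a+b-1}$, i.e., with its two extreme corners $(a+b-1,0)$ and $(0,a+b-1)$ removed. A direct check gives $\partial_oY=\{(k,\ell):k+\ell=a+b,\ k,\ell\ge 1\}\cup\{(a+b-1,0),(0,a+b-1)\}$. At a diagonal point the master inequality yields $F\ge 2bc+a(a+b)-k\ell$, and the bound $k\ell\le(a+b)^2/4$ (for $k+\ell=a+b$) then gives $F\ge 2bc+(a+b)(3a-b)/4$. At the axis point $(a+b-1,0)$ the master inequality yields $F\ge 2bc+a(a+b-1)$; the elementary inequality $(a+b)^2\ge 4a$ (valid for all $a,b\ge 1$) rearranges to $4a(a+b-1)\ge(a+b)(3a-b)$, so the axis contribution matches or exceeds the diagonal one, and symmetry disposes of $(0,a+b-1)$. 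Thus $F_{\min}\ge 2bc+(a+b)(3a-b)/4$ and $\gamma(\cZ)\ge bc+(a+b)(3a-b)/8$.

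The main subtlety will be the axis points of $\partial_oY$, where $\gamma(\cZ^{\downarrow 0})=\gamma(\cZ)$ naively threatens to reduce $2\gamma(\cZ)\ge F_{\min}$ to the tautology $2\gamma(\cZ)\ge \gamma(\cZ)+bc$; replacing this $\gamma(\cZ)$ a priori by its line-growth lower bound $bc$ is precisely what allows the axis contributions to align with those of the interior boundary points and makes the minimum over $\partial_oY$ reach the claimed values.
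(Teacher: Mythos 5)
Your proof is correct and uses the same core machinery as the paper: Theorem~\ref{lb-general} with the comparison square $R_{a+b,a+b}$, together with the line-growth lower bounds $\gamma(\Z^{\downarrow\ell})\ge b(c-\ell)$, $\gamma(\Z^{\leftarrow k})\ge b(c-k)$, and $\gamma(\Z)\ge bc$ (the last at the axis boundary points), which combine into your master inequality $F(k,\ell)\ge 2bc+a(k+\ell)-k\ell$. The difference lies in the choice of $Y$. The paper uses the one-parameter family of triangles $Y_i=\{(k,\ell):k+\ell\le i-1\}$, whose outer boundary is the anti-diagonal $k+\ell=i$, and then optimizes over $i$ (taking $i=2a$ when $a\le b$ and $i=a+b$ when $a>b$). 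You instead take $Y=R_{a,a}$ in the first case — a pleasant simplification, since $a(k+\ell)-k\ell$ is identically $a^2$ on the staircase $\partial_o R_{a,a}$, so no minimization is needed — and the corner-truncated triangle in the second. Your truncation is actually a genuine fix: when $i=a+b$ (and likewise when $a=b$, so $i=2a=a+b$), the untruncated triangle $Y_i$ contains $(a+b-1,0)$ and $(0,a+b-1)$, which lie outside $R_{a+b-1,a+b-1}$ and violate the hypothesis $Y\subseteq R_{a+b-1,a+b-1}$ of Theorem~\ref{lb-general}. The paper's bound survives because the minimum over the anti-diagonal is interior, but your argument closes that gap and explicitly shows, via $(a+b)^2\ge 4a$, that the axis points $(a+b-1,0)$ and $(0,a+b-1)$ contribute at least as much as the interior. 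One small quibble: your ``main subtlety'' paragraph is slightly off — at an axis point $(k,0)$ with $k\ge 1$ the term $k(a+b)$ is strictly positive, so substituting the true value $\gamma(\Z)$ for $\gamma(\Z^{\downarrow 0})$ would not produce a tautology but a \emph{stronger} (just less explicit) inequality $\gamma(\Z)\ge k(a+b)+\gamma(\Z^{\leftarrow k})$; replacing it by the weaker $bc$ is what yields a closed-form bound, not what saves the argument from circularity. This does not affect the validity of your proof.
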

Note that, if $bc\ge (a+b)^2$, the line growth 
bound is $\gamma(\Z)\ge bc$. 

\begin{proof}
We use the comparison square $R_{a+b,a+b}$, and  
$Y=\{(k,\ell):k+\ell\le i-1\}$, for some $i\le a+b$ to be chosen later. 
Then $k+\ell=i$ when 
$(k,\ell)\in\partial_oY$.
Further, we use the bounds $\gamma(\Z^{\downarrow\ell})\ge \gamma(R_{b,c-\ell})$ and 
$\gamma(\Z^{\leftarrow k})\ge \gamma(R_{c-k,b})$ in Theorem~\ref{lb-general} to get 
\begin{equation*}
\begin{aligned}
\gamma(\Z)&\ge {\textstyle\frac 12}\min_{0\le k\le i}(i(a+b)-k(i-k)+b(c-\ell)+b(c-k))\\
&=bc+ {\textstyle\frac 12}ai- {\textstyle\frac 12}\max_{0\le k\le i}k(i-k)\\
&\ge bc+ {\textstyle\frac 12}ai- {\textstyle\frac 18}i^2.
\end{aligned}
\end{equation*}
We are free to choose $i$; if $a\le b$, then the optimal choice is $i=2a$, otherwise
it is $i=a+b$, which gives the desired inequality.
\end{proof}


\section{Smallest spanning sets}

\subsection{Proof of Theorem~\ref{intro-gamma-area-thm}}\label{sec-gamma-area}

The steps in the proof of Theorem~\ref{intro-gamma-area-thm}
are given in the next three lemmas. The first one demonstrates that 
when the initial set $A_0$ is itself a Young diagram, the growth dynamics are very simple.

\begin{lemma}\label{Z-grows} Assume $A_0$ is a Young diagram. Then $A_0$ spans if 
and only if $\Z\subseteq A_0$. 
\end{lemma}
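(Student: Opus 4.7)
The plan hinges on one structural fact: $\cT$ preserves the Young-diagram property, so $A_t = \cT^t(A_0)$ is a Young diagram for every $t \ge 0$ and hence so is $A_\infty$, which by Theorem~\ref{when-done} is inert. The induction step for this preservation is the heart of the argument: if $A_t$ is a Young diagram and $(u, v) \in A_{t+1} \setminus A_t$, then $(\row((u,v), A_t), \col((u,v), A_t)) \notin \Z$; for any $(u', v') \in R_{u+1, v+1} \setminus A_t$, Young-diagram monotonicity of $A_t$ gives $\row((u',v'), A_t) \ge \row((u,v), A_t)$ and $\col((u',v'), A_t) \ge \col((u,v), A_t)$, so the dominating count pair is also outside $\Z$ (because $\Z$ is itself a Young diagram), placing $(u', v')$ in $A_{t+1}$.

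For the ``only if'' direction, I would argue the contrapositive: if $(a, b) \in \Z \setminus A_0$, then for every $t$ the Young diagram $A_t$ misses $(a, b)$, so $\row((a,b), A_t) \le a$ and $\col((a,b), A_t) \le b$; the count pair at $(a, b)$ is therefore componentwise dominated by $(a, b) \in \Z$, hence itself lies in $\Z$, so $(a, b) \notin A_{t+1}$. Induction yields $(a, b) \notin A_\infty$, and $A_0$ does not span.

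For the ``if'' direction, assume $\Z \subseteq A_0$ and suppose for contradiction that $A_\infty \ne \bZ_+^2$. Pick $(u, v) \notin A_\infty$, let $p$ denote the length of row $v$ in $A_\infty$ (so $p \le u$ and $(p, v) \notin A_\infty$), and let $q$ denote the length of column $p$ in $A_\infty$; since column $p$ cannot contain $(p, v)$, $q \le v$ is finite. Inertness at $(p, v)$ then forces its count pair $(p, q)$ into $\Z \subseteq A_\infty$, but the definition of $q$ as the exact length of column $p$ gives $(p, q) \notin A_\infty$---a contradiction. The main obstacle is the design of this second argument: one must pick the right corner point of $A_\infty^c$, and truncating $v$ to the row length $p$ so that the count pair at $(p, v)$ lands exactly at the tip of column $p$ is the crucial trick.
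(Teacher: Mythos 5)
Your proof is correct. The observation that $\cT$ preserves the Young-diagram property (which the paper asserts with "it is easy to see" but you argue out carefully) and the ``only if'' direction are the same as the paper's: pick a point of $\Z$ that is a minimal point of $\Z\setminus A_t$ (equivalently, on the outer boundary of $A_t$), note its count pair is coordinatewise dominated by its own coordinates, hence still in $\Z$, hence it stays unoccupied.

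Your ``if'' direction, however, is a genuinely different route. The paper argues constructively: for $A_0=\Z$, every point of $\partial_o\Z$ joins at time $1$ (so $\Z+e_1,\Z+e_2\subseteq A_1$), hence by induction $A_n$ contains the horizontal and vertical translates $\Z+[0,n]e_1$ and $\Z+[0,n]e_2$, and the two resulting infinite strips $(\Z+[0,\infty)e_1)\cup(\Z+[0,\infty)e_2)$ span in one more step; the general case $\Z\subseteq A_0$ then follows by monotonicity. You instead argue by contradiction using inertness of $A_\infty$ (via Theorem~\ref{when-done}, which indeed appears earlier in the paper): if $A_\infty$ is a proper inert Young diagram, you locate the row-end $(p,v)$ and then the column-end $(p,q)$, forcing the exact count pair $(p,q)$ into $\Z\subseteq A_\infty$ while $(p,q)\notin A_\infty$ by definition of $q$. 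Your route is shorter and handles $\Z\subseteq A_0$ directly rather than via monotonicity, at the cost of invoking the completion-time theorem; the paper's route is more hands-on and self-contained, and incidentally reveals how the Young diagram actually grows, which is of independent use. Both are valid.
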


\begin{proof} It is easy to see that $\cT$ preserves the property of being a Young diagram. 
Assume first that $A_0=\Z$. Take $z=(x,y)\in \partial_o(A_0)$. Then $\row(z,A_0)=x$ and 
$\col(z,A_0)=y$, and $(x,y)\notin\Z$, so $z\in A_1$. Let $e_1=(1,0)$ and $e_2=(0,1)$. 
It follows the translation $A_0+e_1$ is included in $A_1$, and therefore 
$A_0+[0,n]e_1\subseteq A_n$; similarly, $A_0+[0,n]e_2\subseteq A_n$. To conclude that $A_0$ spans, 
observe that $(\Z+[0,\infty)e_1)\cup(\Z+[0,\infty)e_1)$ spans in a single step. 

If $\Z\not\subseteq A_0$, there exists $z\in \Z\cap \partial_o(A_0)$. Then $z\notin A_1$ and 
therefore no point in $z+\bZ_+^2$ is in $A_1$. By induction $z\notin A_n$ for all $n$.
\end{proof}

To prove the lower bound in 
Theorem~\ref{intro-gamma-area-thm} we consider the case where the initial set is a 
union of two translated Young diagrams. To be more precise, we say that 
$A_0\subseteq \bZ_+^2$ is a {\it two-Y set\/} if 
$A_0=(y_1+Y_1)\cup (y_2+Y_2)$, 
where $Y_1$ and $Y_2$  are
Young diagrams, $y_1,y_2\in \bZ_+^2$, and no line intersects both $(y_1+Y_1)$ and 
$(y_2+Y_2)$. 

\begin{lemma}\label{two-Y} Assume $A_0$ is a two-Y set. 
If $A_0$ spans, then $|A_0|\ge \frac 12|\Z|$. 
\end{lemma}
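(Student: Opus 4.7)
My plan is to reduce Lemma~\ref{two-Y} to the Young-diagram case of Lemma~\ref{Z-grows} by constructing a single Young diagram $\tilde Y$ anchored at the origin that contains $\Z$ and has cardinality at most $2|A_0|$. Once this is done, Lemma~\ref{Z-grows} makes $\tilde Y$ a spanning set, and the chain $|\Z|\le |\tilde Y|\le 2|A_0|$ is the desired inequality.

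By the row/column permutation invariance of $\cT$, I first relocate $Y_1$ to the origin and $Y_2$ to position $(w_1,h_1)$, where $w_1,h_1$ are the width and height of $Y_1$, so that the two pieces occupy disjoint rows and columns without affecting spanning. I then define $Y_{\rm row}$ to be the Young diagram at the origin whose successive row lengths are the multi-set union of the row lengths of $Y_1$ and $Y_2$, sorted in decreasing order; and $Y_{\rm col}$ analogously, from column heights. Each has cardinality $|A_0|$, and their union $\tilde Y = Y_{\rm row}\cup Y_{\rm col}$, being a union of two Young diagrams at a common anchor, is itself a Young diagram with $|\tilde Y|\le 2|A_0|$.

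The central claim is that $\Z\subseteq \tilde Y$ whenever $A_0$ spans, which I argue by contrapositive. Suppose $(r_0,c_0)\in \Z\setminus \tilde Y$; unpacking the definition of $\tilde Y$, this means that at most $c_0$ rows of $A_0$ contain more than $r_0$ points and at most $r_0$ columns contain more than $c_0$ points. Choosing $(u^*,v^*)\in \bZ_+^2$ whose row and column lie outside these exceptional collections yields $\row((u^*,v^*),A_0)\le r_0$ and $\col((u^*,v^*),A_0)\le c_0$. Since $\Z$ is a Young diagram containing $(r_0,c_0)$, the rectangle $R_{r_0+1,c_0+1}$ is contained in $\Z$, so the count pair at $(u^*,v^*)$ lies in $\Z$ and $(u^*,v^*)\notin \cT(A_0)$. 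The goal is to maintain this invariant at every later time, producing a site that is never added and contradicting spanning.

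The main obstacle is controlling the evolution of counts at $(u^*,v^*)$ through the dynamics, since fill events at other rows and columns can raise its row and column counts. My plan is to track the collections of ``wide'' rows (count $>r_0$) and ``tall'' columns (count $>c_0$) over time, using the key observation that an addition to a non-wide row requires a tall column at that moment (otherwise the count pair stays in $R_{r_0+1,c_0+1}\subseteq \Z$), and symmetrically. Combined with the completion-time bound of Theorem~\ref{when-done}, which caps the number of dynamic steps in terms of $\Z$ alone, this chicken-and-egg restriction on the creation of new wide rows and tall columns should permit a choice of $(u^*,v^*)$ so remote from $A_0$'s initial support that no cascade can ever reach it, establishing the invariant and thus completing the contradiction.
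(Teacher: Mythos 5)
Your reduction is attractive: build the ``profile'' Young diagrams $Y_{\rm row}$ and $Y_{\rm col}$ from the row and column counts of $A_0$, set $\tilde Y = Y_{\rm row}\cup Y_{\rm col}$ (indeed a Young diagram of cardinality at most $2|A_0|$), and then it would suffice to show $\Z\subseteq\tilde Y$ and invoke Lemma~\ref{Z-grows}. However, the central claim $\Z\subseteq\tilde Y$ is exactly the content of the lemma (it immediately gives $|\Z|\le|\tilde Y|\le 2|A_0|$), and the argument you sketch for it does not close.

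The difficulty you flag --- controlling the evolution of row and column counts at $(u^*,v^*)$ --- is real, and the ideas you propose to address it do not resolve it. First, the ``chicken-and-egg'' observation (a site on a non-wide row can only be added at the intersection with a tall column, and symmetrically) is correct at a single step, but it gives no bound on how many new wide rows or tall columns are created; a non-wide row can pick up as many as $r_0$ new points at tall-column intersections in a single step, so it can become wide, and the collections of wide rows and tall columns can proliferate. Indeed, if $A_0$ spans, then at the final time every row is fully occupied and hence wide, so any invariant asserting that the collection of wide rows stays bounded is manifestly false for spanning initial sets --- which is precisely the case you are trying to refute. Second, Theorem~\ref{when-done} bounds the number of time steps, not the number of lines that become wide or tall per step, so it does not limit the proliferation. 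Third, choosing $(u^*,v^*)$ ``so remote from $A_0$'s initial support that no cascade can reach it'' cannot work: the dynamics depends only on line counts, not on spatial distance, so a site with initially empty row and column becomes occupied the moment enough wide rows and tall columns have formed to push its counts outside $\Z$. As written, your contrapositive stops at time $1$.

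The paper's proof is a short induction on the number of rows of $\Z$: with $a_0$ the length of the bottom row of $\Z$, the bottom rows of $Y_1$ and $Y_2$ have lengths $h_0,k_0$ with $h_0+k_0\ge a_0$ (else $A_0$ is covered by $a_0-1$ vertical lines, an inert set), so one of them, say $k_0$, is at least $a_0/2$; deleting that row of $Y_2$ and passing to $\Z^{\downarrow 1}$ preserves spanning (treat the deleted horizontal line as fully occupied) and the induction hypothesis gives $|A_0|=|A_0'|+k_0\ge\tfrac12|\Z^{\downarrow1}|+\tfrac12 a_0=\tfrac12|\Z|$. If you want to salvage your strategy, the most natural route to $\Z\subseteq\tilde Y$ is to prove it by the same induction on the rows of $\Z$, but then the profile-diagram reformulation is scaffolding rather than a shortcut.
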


\begin{proof} Our proof will be by induction on the number of horizontal lines 
that intersect $\Z$. If this number is $0$, the claim is trivial. 
Otherwise, let $a_0>0$ be the number of sites on the largest (i.e., bottom) 
line of $\Z$. Observe that the initial set 
consiting of $a_0-1$ vertical lines is inert.

Further, let $h_0$  and $k_0$ be the respective 
numbers of sites on bottom lines for $Y_1$ and $Y_2$. Then $h_0+k_0\ge a_0$, 
as otherwise $A_0$ would be covered by $a_0-1$ vertical lines. Therefore either 
$h_0\ge \frac 12a_0$ or  $k_0\ge \frac 12a_0$; without loss of generality we assume the 
latter. Let $Y_2'=Y_2^{\downarrow 1}$, 
$A_0'=(y_1+Y_1)\cup (y_2+Y_2')$, and $\Z'=\Z^{\downarrow1}$. By making the horizontal line that contains $k_0$ sites of $y_2+Y_2$ occupied in the original configuration $A_0$, we see that $A_0'$ spans for the dynamics with zero-set $\Z'$. 
By the induction hypothesis, $|A_0'|\ge \frac 12|\Z'|$, and then
$$
|A_0|=|A_0'|+k_0\ge \frac 12|\Z'| +\frac 12a_0=\frac 12|\Z|.
$$
\end{proof}

\begin{lemma}\label{get-two-Y} Assume $A_0$ spans. Then there exists a two-Y set $A_0'$, 
which spans and has $|A_0'|=2|A_0|$.  
\end{lemma}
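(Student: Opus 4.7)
The plan is to build $A_0'$ by extracting two Young diagrams, one from the row profile and one from the column profile of $A_0$.  Let $\lambda_1 \ge \lambda_2 \ge \cdots \ge \lambda_r > 0$ denote the non-zero row counts of $A_0$ sorted in decreasing order, and let $\mu_1 \ge \mu_2 \ge \cdots \ge \mu_s > 0$ denote the non-zero column counts sorted likewise; both are partitions of $|A_0|$.  Define
\begin{equation*}
Y_1 = \{(u,v) \in \bZ_+^2 : 0 \le v < r,\ 0 \le u < \lambda_{v+1}\}, \qquad Y_2 = \{(u,v) \in \bZ_+^2 : 0 \le u < s,\ 0 \le v < \mu_{u+1}\},
\end{equation*}
so that $|Y_1| = |Y_2| = |A_0|$.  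Choosing $y_2 = (\lambda_1, r)$ places $y_2 + Y_2$ in rows and columns disjoint from those of $Y_1$, and $A_0' := Y_1 \cup (y_2 + Y_2)$ is then a two-Y set with $|A_0'| = 2|A_0|$.

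The content of the lemma is that $A_0'$ spans.  The first reduction is to invoke permutation invariance of neighborhood growth (property (G3) of Section~\ref{sec-pattern}): permuting the rows of $A_0$ so as to sort the row counts decreasingly and then permuting the columns so as to sort the column counts decreasingly yields a spanning set $\tilde A_0$ whose row counts are exactly $\lambda$ and column counts are exactly $\mu$.  I would then couple the dynamics from $A_0'$ with those from $\tilde A_0$, proving by induction on time that whenever a row (respectively, column) becomes fully occupied in the $\tilde A_0$-dynamics, the corresponding row of $Y_1$ (respectively, column of $y_2 + Y_2$) becomes fully occupied in the $A_0'$-dynamics at the same or an earlier step.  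Since $\tilde A_0$ spans, this forces every row and every column of $\bZ_+^2$ to be fully filled under the $A_0'$-dynamics, and spanning of $A_0'$ follows.

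The main obstacle is that the column counts \emph{internal} to $Y_1$ form the conjugate partition $\lambda'$ rather than $\mu$, and symmetrically the row counts internal to $y_2+Y_2$ are $\mu'$ rather than $\lambda$; consequently $Y_1$ and $y_2+Y_2$ in isolation do not reproduce the dynamics of $\tilde A_0$ on their respective regions.  The resolution is cross-feeding between the two components: once a row $v < r$ becomes fully occupied in $Y_1$, that entire row contributes $+1$ to every column count globally, including columns of $y_2 + Y_2$, and symmetrically for fully occupied columns of $y_2+Y_2$ and row counts in $Y_1$.  A natural way to formalize the mutual recursion, in the spirit of the proof of Lemma~\ref{two-Y}, is by induction on the number of horizontal lines of $\Z$: once the bottom row of $\Z$ is matched with a row of $Y_1$ (or, after a trivial transposition, a column of $y_2+Y_2$) that fills immediately under $\cT$, the surviving dynamics on the complement of that row is governed by $\Z^{\downarrow 1}$, to which the inductive hypothesis applies.
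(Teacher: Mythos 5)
Your construction is the paper's construction up to a permutation of $\bZ_+^2$: both form $A_0'$ from the Young diagram of the sorted row counts and the Young diagram of the sorted column counts of $A_0$, placed so that no line meets both pieces. What differs is the placement, and the placement turns out to matter for the proof. The paper situates the column-count diagram directly above a rectangle $R' \supseteq A_0$, occupying the very same columns as the active columns of the permuted $A_0$, and the row-count diagram directly to the right, occupying the very same rows. This yields $|L\cap A_0'| = |L\cap A_0|$ for every line $L$ through $R'$, which is exactly what feeds a direct site-by-site, time-by-time comparison: one shows by induction on $t$ that every site of $R'\setminus A_0$ occupied by $\cT^t(A_0)$ is occupied by $\cT^t(A_0')$, and spanning of $A_0'$ follows because $R'\setminus R$ spans. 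That is the whole argument — no coupling of fully-occupied lines and no induction on $\Z$.

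Your placement $y_2 = (\lambda_1, r)$ does not align column counts: in your $A_0'$ the columns $[0,\lambda_1)$ carry the conjugate $\lambda'$ while the permuted $A_0$ has $\mu$ on columns $[0,s)$. You correctly flag this as the obstacle, but the proposed resolution does not close the gap. First, tracking only fully-occupied lines is the wrong invariant for a general $\Z$: the dynamics fills sites piecemeal across many partially occupied lines, and partial row and column counts feed one another, so a line-by-line bookkeeping forgets the intermediate state that determines which site fires next. Second, the appeal to the inductive structure of Lemma~\ref{two-Y} is inverted: there one \emph{assumes} a two-Y set spans and extracts a lower bound on its size, whereas here one must \emph{exhibit} spanning, and there is no guaranteed row of $Y_1$ or column of $y_2+Y_2$ that ``fills immediately'' from the initial configuration to kick off the reduction to $\Z^{\downarrow 1}$. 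To repair the argument you would need either to adopt the paper's placement, so all line counts through a containing rectangle agree and a site-by-site comparison goes through, or to carry out the cross-feeding at the level of individual sites — which is essentially the same proof.
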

\begin{remark}
A similar proof to the one below also shows that there exists a thin set $A_0''$, which spans and has $|A_0''|=2|A_0|$.
\end{remark}

\begin{proof}
Assume $A_0\subseteq R$ for some rectangle $R=[0,a-1]\times [0,b-1]$.
Let $R'=[0,2a-1]\times [0,b-1]$ be the horizontal double of $R$. Note that 
$R'\setminus R$ spans.  


Permute the columns of $A_0$ so that the column counts are in nonincreasing order, then permute the rows of $A_0$ so that the row counts are in nonincreasing order; in the sequel we refer to this set as $A_0$, as it clearly spans if and only if the original set spans.  Fix a vertical line $L$ intersecting $R'$, containing $k>0$ sites of $A_0$. 
Create a contiguous interval
of $k$ occupied sites on $L$ just above $L\cap R'$ (in particular, outside $R'$). 
Perform this operation for all vertical lines, and note that the resulting set forms a Young diagram.  Also perform an analogous operation for the horizontal lines, 
adding sites just to the right of $R'$. 
Finally, erase all the sites inside $R'$ to define 
$A_0'$. Clearly,  $|A_0'|=2|A_0|$, and $A_0'$ is a two-Y set.  Figure~\ref{two-Y fig} illustrates the construction of $A_0'$ from $A_0$.

\begin{figure}[htb]
\includegraphics[width=\textwidth]{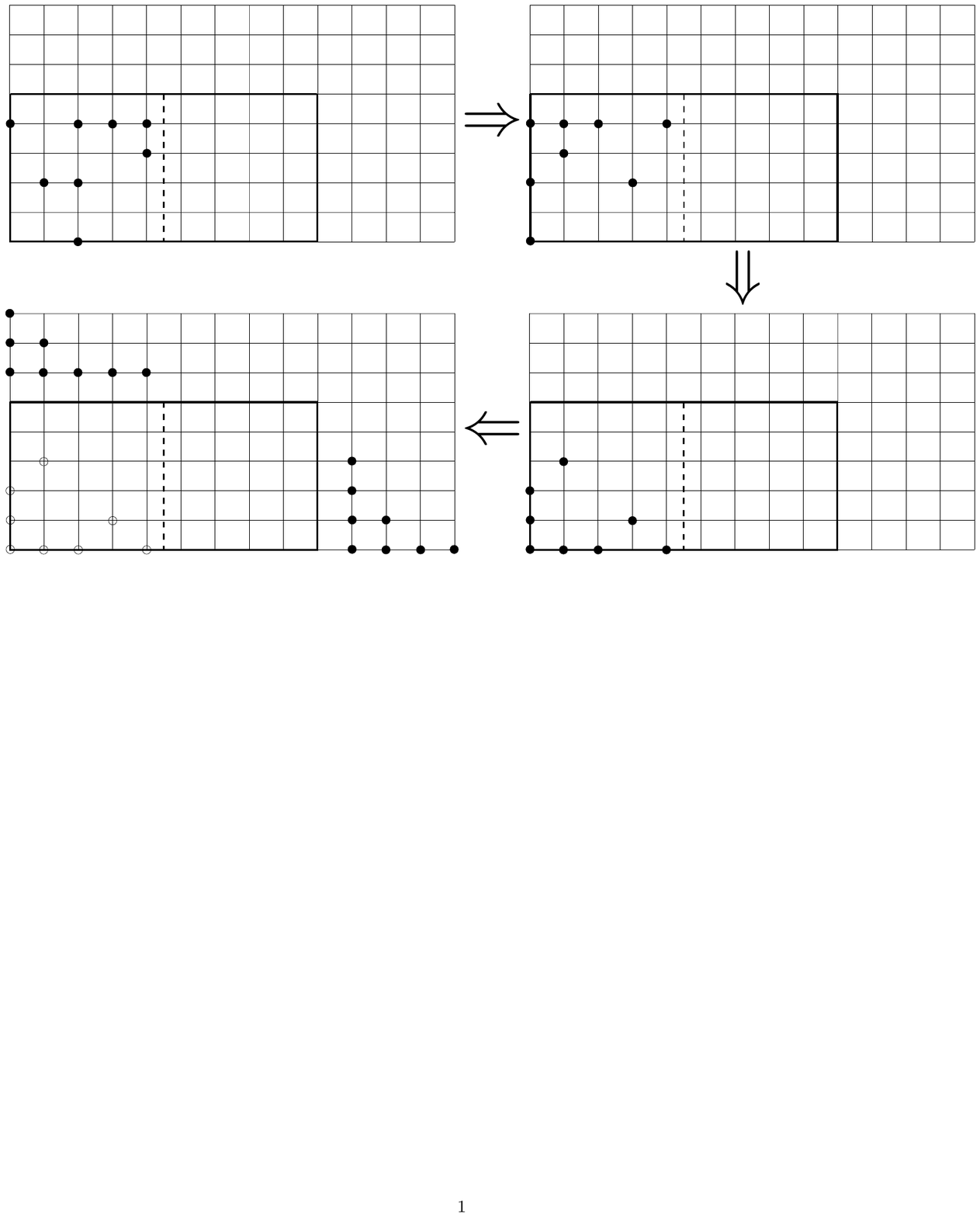}
\caption{\label{two-Y fig} Construction of a two-$Y$ set from $A_0$.  Clockwise from top left: the set $A_0$; columns sorted by descending counts; rows sorted by descending counts; the two-$Y$ set $A_0'$.  Thick lines indicate the rectangle $R'$, and the half of $R'$ to the left of the dotted line is $R$.}
\end{figure}

To see that $A_0'$ spans, it is enough to show that it eventually occupies 
every point in $R'\setminus A_0\supseteq R'\setminus R$.  

Assume, in this paragraph, that the initial set is $A_0\subseteq R'$. 
We claim that, if a point $x\notin R'$ gets occupied at any time $t$, then any line through $x$ that 
intersects $R'$ is fully occupied. This is proved by induction on $t$. The claim is trivially 
true at $t=0$, and assume it holds at time $t-1\ge 0$. Suppose  $x\notin R'$ gets occupied at time 
$t$. If its neighborhood does not intersect $R'$, then $\cT^t(A_0)=\bZ_+^2$. 
Assume now that $L^h(x)\cap R'\ne\emptyset$. Then, by the induction hypothesis, 
any $y\in L^h(x)$ has vertical and horizontal counts at time $t$ at least as large as those of $x$
and thus also becomes occupied. An analogous statement holds if $L^v(x)\cap R'\ne\emptyset$.
This proves the claim, which implies that no site outside $R'$ ever helps in occupying a site in
$R'$. 

Due to the argument in the previous paragraph, we may 
only allow the dynamics from both $A_0$ and $A_0'$ to occupy sites 
within the rectangle $R'$.   

We now claim, and will again 
show by induction on time $t\ge 0$, that every site in $R'\setminus A_0$ occupied 
at time $t$ starting 
from $A_0$ is also occupied starting from $A_0'$. This claim is trivially true at $t=0$. Assume 
the claim at time $t-1$. Fix any point $z\in R'$. Let $L$ be the horizontal line through $z$. 
By the induction hypothesis, 
$$
L\cap  (\cT^{t-1}(A_0)\setminus A_0)\subseteq L\cap \cT^{t-1}(A_0'),
$$
and by construction 
$$
|L\cap A_0|=|L\cap A_0'|,
$$
therefore 
\begin{equation}\label{get-two-Y-eq1}
|L\cap \cT^{t-1}(A_0')|\ge |L\cap  \cT^{t-1}(A_0)|.
\end{equation}
By an analogous argument, the same inequality holds if $L$ is a vertical line. If $z\in \cT^{t}(A_0)\setminus A_0$, then 
$$(\row(z,\cT^{t-1}(A_0)), \col(z,\cT^{t-1}(A_0)))\notin \Z.$$
Therefore, by~\eqref{get-two-Y-eq1}, $$(\row(z,\cT^{t-1}(A_0')), \col(z,\cT^{t-1}(A_0')))\notin \Z,$$
which implies  $z\in  \cT^{t}(A_0')$. This 
establishes the induction step and ends the proof.
\end{proof}

\begin{proof}[Proof of Theorem~\ref{intro-gamma-area-thm}] 
The upper bound is an obvious consequence of Lemma~\ref{Z-grows}, while the 
lower bound follows from Lemmas~\ref{two-Y} and~\ref{get-two-Y}.  
\end{proof} 


\subsection{Proof of Theorem~\ref{intro-packing-minimal}}\label{sec-packing}

Theorem~\ref{intro-packing-minimal} is an immediate consequence of the 
following result. 

\begin{theorem}\label{packing} 
Assume $\Z\subseteq R_{a,b}$. 
Assume that $A\subseteq \bZ_+^2$ that spans. Then there exists a set $B\subseteq R_{a,b}$ 
that spans and has $|B|\le |A|$.  
\end{theorem}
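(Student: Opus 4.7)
The plan is to exploit the structural consequence of $\Z\subseteq R_{a,b}$: a site $x$ joins the occupied set whenever $\row(x)\ge a$ or $\col(x)\ge b$, since then $(\row(x),\col(x))\notin R_{a,b}\supseteq\Z$. An immediate consequence is that as soon as $a$ columns are fully occupied in the $\cT$-dynamics from $A$, every remaining site $y$ has $\row(y)\ge a$ (because $y$'s row meets each of these $a$ fully occupied columns) and thus joins in the next step; symmetrically for $b$ fully occupied rows. Therefore, $A$ spans if and only if the $\cT$-dynamics from $A$ eventually produces either $a$ fully occupied columns or $b$ fully occupied rows.

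I would then handle the easy case $|A|\ge ab$ separately: take $B=R_{a,b}$, which is a Young diagram containing $\Z$, so spans by Lemma~\ref{Z-grows}, and has $|B|=ab\le|A|$. The hard case is $|A|<ab$. Here I would use permutation invariance (G3) to relabel so that either the first $a$ columns to be completely filled under the $\cT$-dynamics from $A$ are $0,1,\ldots,a-1$ or, symmetrically, the first $b$ rows to fill are $0,1,\ldots,b-1$; assume the former. The goal becomes producing $B\subseteq R_{a,b}$ whose $\cT$-dynamics fills these $a$ distinguished columns. I would construct $B$ by iteratively transforming $A$: picking a site $x=(u,v)\in A\setminus R_{a,b}$ (say with $u\ge a$), and either moving it into a column $u'<a$ with $(u',v)\notin A$, or deleting it outright when $\{(u',v):u'<a\}\subseteq A$ already forces row $v$ to be immediately filled. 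An induction on $|A\setminus R_{a,b}|$ then yields a $B\subseteq R_{a,b}$ of cardinality at most $|A|$.

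The main obstacle is verifying, at each step of the induction, that the modified set still meets the spanning requirement. A site outside $R_{a,b}$ may contribute to the dynamics in non-obvious ways, for example by first helping to fill some non-target row whose later filling then assists a target column. The verification will likely require a coupling argument that compares $\row$ and $\col$ counts in the original and modified dynamics at each time step, in the spirit of the comparison used in the proof of Lemma~\ref{get-two-Y}, and it will rely on Theorem~\ref{when-done} to bound the length of the induction on time.
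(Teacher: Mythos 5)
Your high-level plan---iteratively relocate or delete points of $A$ that lie outside $R_{a,b}$, and argue that spanning is preserved at each step---is in the right spirit, but the proposal stops short at precisely the step where all the work lies. Your deletion case is actually easy: if $\{(u',v):u'<a\}\subseteq A$ then row $v$ already has at least $a$ points of $A\setminus\{x\}$, so $x\in\cT(A\setminus\{x\})$; combined with solidification this gives $A\subseteq\cT(A\setminus\{x\})$, hence $\cT^\infty(A)\subseteq\cT^\infty(A\setminus\{x\})$, and $A\setminus\{x\}$ spans. The movement case, by contrast, is doing essentially all the work, and you do not say \emph{which} column $u'<a$ the point should move to, nor prove that any choice preserves spanning. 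It matters a great deal: an arbitrary move can shift occupancy away from the column that needed it, and without a specific rule for the target column there is nothing on which to build the coupling you invoke. You correctly identify this as ``the main obstacle'' but leave it unresolved. There is also a smaller structural issue: moving $(u,v)$ with $u\ge a$ to $(u',v)$ with $u'<a$ does not decrease $|A\setminus R_{a,b}|$ when $v\ge b$, so your induction measure needs refinement, and your reformulation of spanning in terms of ``first $a$ columns to fill'' is not actually used by your argument.

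For comparison, the paper's proof handles the gap by a two-level induction that replaces ``move one point'' with ``remove one column of the bounding rectangle.'' It shrinks $R_{M,N}$ to $R_{M-1,N}$; the inner induction is on the minimum column count $k$, assuming the rightmost column attains it. The key construction is a stopping time $T$---the first time a point of the last column becomes occupied while some interior point of its row is still empty. If $T=\infty$, the last column never helps the interior dynamics, so its initial points can be redistributed freely into their rows. If $T<\infty$, the proof identifies a specific interior column $i_0$ and a set $J$ of rows (fully occupied by time $T-1$ but not pre-seeded in the last column) and performs a tailored swap of columns $i_0$ and $M-1$ restricted to rows off $J$; a monotone time-by-time comparison inside $R_{M-1,N}$ then shows the swapped configuration spans and has strictly fewer points in the last column, closing the induction. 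This careful choice of \emph{where} and \emph{when} to move points is exactly what is missing from your sketch.
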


\begin{proof}[Proof of Theorem~\ref{packing}]
Assume that $A\subseteq R_{M,N}$ is a finite set that 
spans and $M>a$, $N\ge b$. We claim that 
there is a set $B\subseteq  R_{M-1,N}$ that also spans and 
$|B|\le |A|$. Without loss of generality, we will restrict our dynamics to the rectangle $R_{M,N}$ 
throughout the proof. 

We may assume that all row and column occupancy counts satisfy $|L^h(0,i)\cap A|\le a$, $0\le i<N$ and
$|L^v(i,0)\cap A|\le b$, $0\le i<M$. 
Let $$k=\min\{|L^v(i,0)\cap A|:0\le i<M\} \in [0,b]$$ be the smallest of the column
counts. We prove our claim 
by induction on $k$. If $k=0$, the claim is trivial. 

We now prove the induction step. Assume $k>0$ and that the rightmost column in $R_{M,N}$ contains exactly $k$ occupied points, that is, $|L^v(M-1,0)\cap A|=k$, and 
$|L^v(i,0)\cap A|\ge k$ for $i<M-1$.  We define the time $T$ to be the first time in the dynamics at which a point, $(M-1,j_0)$ say, on the last column becomes occupied {\em and} there exists an unoccupied point $(i_0, j_0)$ in the row $L^h(M-1,j_0)$.

First consider the case $T=\infty$.  Then every time a point $x$ in the column $L^v(M-1,0)$ becomes occupied, the entire row $L^h(x)\cap R_{M-1,N}$ also becomes occupied.  Therefore, apart from the initially occupied points in $L^v(M-1,0)$, this column plays no role in the dynamics within $R_{M-1,N}$.  Thus, each initially occupied point $z\in L^v(M-1,0)$ can be moved to an initially unoccupied location on the same row $L^h(z)\cap R_{M-1,N}$.  Such unoccupied locations exist since we assumed $M>a$ and all row occupancy counts are at most $a$. Furthermore, the resulting initial configuration eventually fills the box $R_{M-1,N}$, which spans.

Now consider the case $T<\infty$, and consider the configuration $X= \cT^{T-1}(A)$.  Let $J$ be the collection of row indices $j$ for which the $j^{\text{th}}$ row is fully occupied in $X$ ($|L^h(0,j) \cap X| = M$), and $(M-1,j)\notin A$.  We will now build a new initially occupied set $A_1$ (see Figure~\ref{column swap fig} for guidance on this construction).  First, consider the points in the $i_0^\text{th}$ column that are occupied in $A$, but not on any of the rows with indices in $J$.  Populate the last column ($M-1$) of $A_1$ with these points, keeping their rows the same.  Next, consider the points on the last column of $A$, and populate the $i_0^\text{th}$ column of $A_1$ with these points, again keeping their rows the same, in addition to the points in the $i_0^\text{th}$ column of $A$ that lie on the rows indexed by $J$ ($\{(i_0,j)\in A: j\in J\}$).  Finally, let $A_1$ agree with $A$ outside of the columns $i_0$ and $M-1$.  

\begin{figure}[htb]
\includegraphics[width=.49\textwidth]{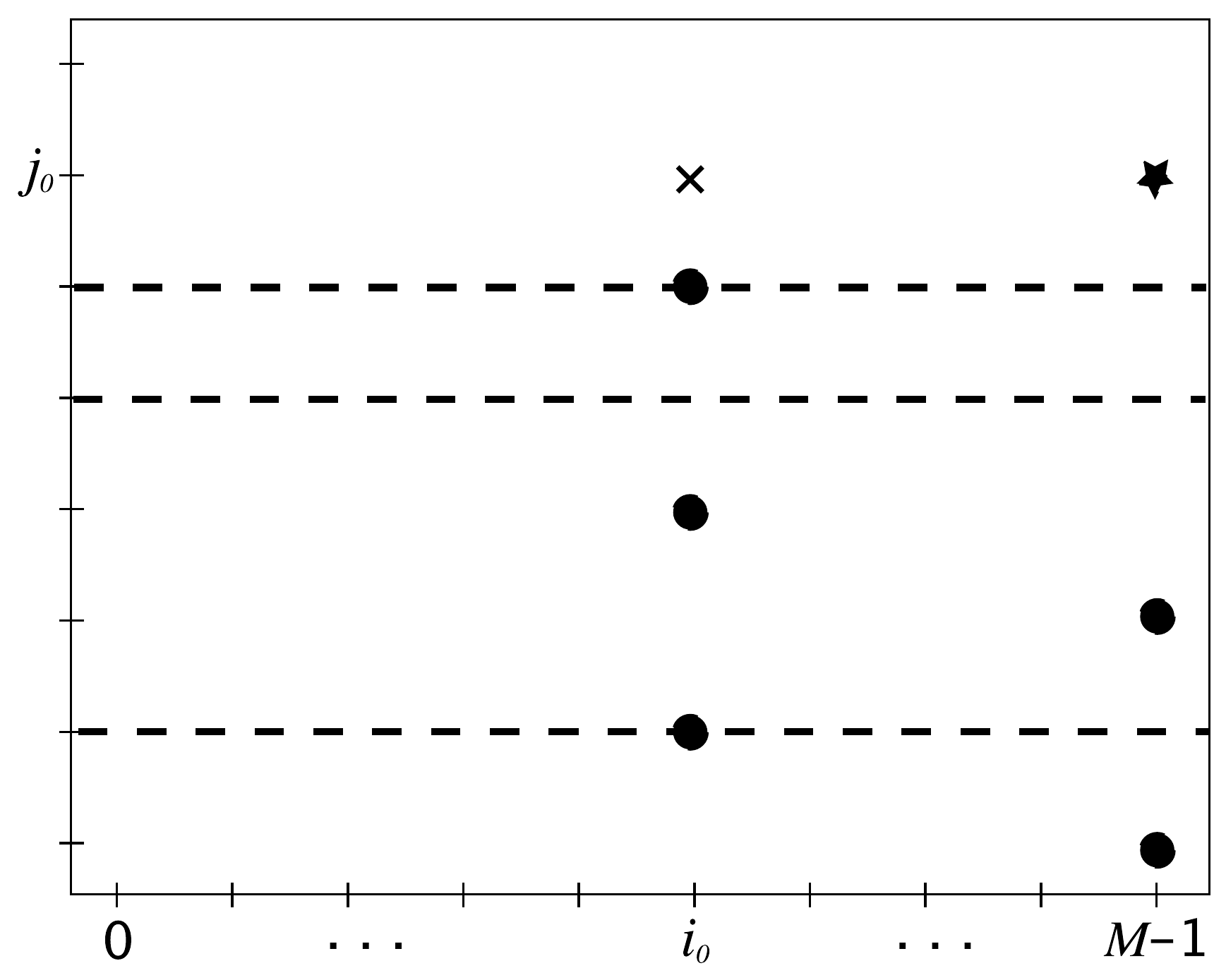}
\includegraphics[width=.49\textwidth]{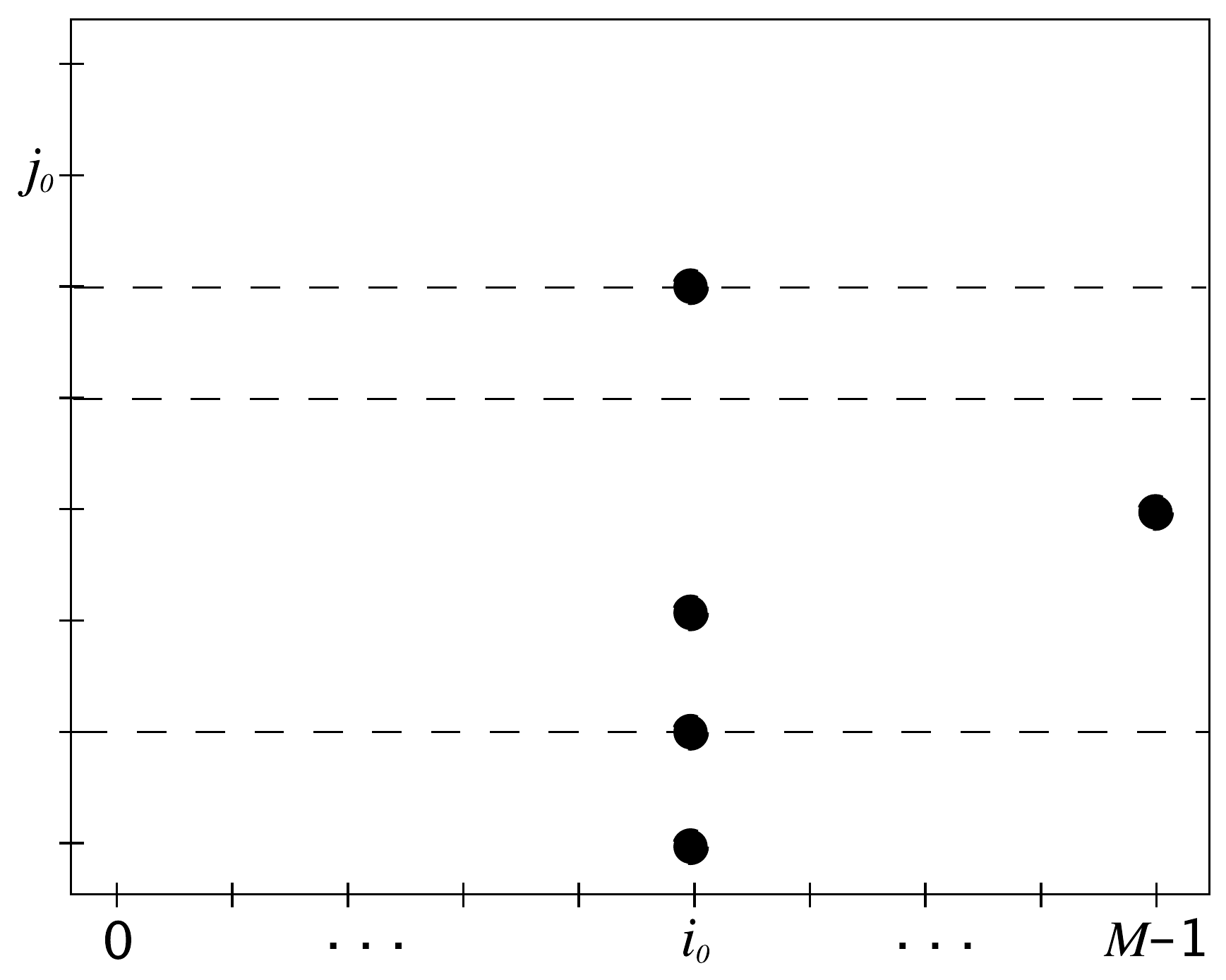}
\caption{\label{column swap fig} On the left is the configuration $\cT^{T}(A)$.  Circles represent points in $A$, and only points in columns $M-1$ and $i_0$ are shown.  In this example $k=2$.  Dashed lines are rows fully occupied by time $T-1$ (with indices in $J$).  The starred vertex becomes occupied at time $T$, while the x remains unoccupied, which is made possible by the last column having more points in $A$ off of the dashed lines.  On the right is the configuration $A'$ -- only points in columns $i_0$ and $M-1$ are shown, and the dashed lines are for reference only; the configuration off of these columns is the same as $A$.}
\end{figure}

Note that $A_1$ has strictly fewer than $k$ occupied points on the last column, $M-1$.  This is because, in the configuration $X$, the column $i_0$ has strictly fewer occupied points than the last column.  This also implies that $T\ge 2$ and $J \neq \emptyset$, since the column $i_0$ started with at least as many occupied points in $A$ as the last column.  The induction step will be completed, provided we show that $A_1$ spans.

Through time $T-1$, every point in the smaller box $R_{M-1,N}$ that becomes occupied by the dynamics from initial set $A$, also becomes occupied by the dynamics from initial set $A_1$.  That is, $$X\cap R_{M-1,N} \setminus A \subseteq \cT^{T-1}(A_1).$$   This is because first, the row occupancy counts are the same in $A_1$ and $A$, and the column occupancy counts in $R_{M-1,N}$ are larger for $A_1$ than for $A$, and second, by the definition of $T$, the points that become occupied in the last column $M-1$ do not affect either dynamics (from $A$ or $A_1$) within $R_{M-1,N}$ through time $T-1$.  Therefore, the configuration $\cT^{T-1}(A_1)$ contains all points on rows with indices in $J$ inside the box $R_{M-1,N}$.  Since $M-1\ge a$, $\cT^{T}(A_1)$ contains {\em all} points on the rows indexed by $J$.  As a result, $\cT^{T}(A_1)$ contains the configuration obtained by swapping the columns $i_0$ and $M-1$ of $A$, so $A_1$ spans.  This completes the induction step and the proof.
\end{proof}


\section{Large deviation rate: existence and bounds}

\subsection{Existence of the large deviation rate} \label{sec-ld-existence}

Throughout this section $\alpha\ge0$ and $\beta\ge0$ are fixed parameters.  We also 
fix a finite zero-set $\Z$. We remark
that the large deviation setting
makes sense for arbitrary growth transformation, not just for neighborhood growth. 
However, the key step in the proof of existence, Theorem~\ref{when-done}, is not 
available for the more general dynamics. 

We recall the setting and notation before the statement of Theorem~\ref{intro-ld-rate}. 
We will establish parts of this theorem in this and the next section. 

\begin{theorem}\label{ld-rate}
The large deviation rate $I(\alpha,\beta)=I(\alpha,\beta, \Z)$ exists. 
Moreover,  
$$
I(\alpha,\beta)=\inf\{\rho(\alpha,\beta,A): 
A\in \cA\}=\min\{\rho(\alpha,\beta,A): 
A\in \cA_0\},
$$
for a finite set $\cA_0\subseteq \cA$ that only depends on $\Z$.
\end{theorem}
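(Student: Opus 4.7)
The plan is to prove matching upper and lower bounds on $\probsub{\Span}{p}$ of order $p^{\inf_{A\in\cA}\rho(\alpha,\beta,A)+o(1)}$ as $p\to 0$, and to exhibit a finite family $\cA_0\subseteq\cA$ that attains the infimum. For any finite $B\subseteq\bZ_+^2$, let $X_B$ count the row/column permutation embeddings of $B$ whose image lies inside the initial product-$p$ configuration on $R_{N,M}$, so that $\E X_B=\Theta(N^{|\pi_x(B)|}M^{|\pi_y(B)|}p^{|B|})=p^{\rho_B(\alpha,\beta)+o(1)}$ with $\rho_B:=|B|-\alpha|\pi_x(B)|-\beta|\pi_y(B)|$. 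Observe that $\rho(\alpha,\beta,A)=\max_{B\subseteq A}\rho_B\ge 0$, since $B=\emptyset$ is allowed.

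For the lower bound I would fix $A\in\cA$ and apply the Paley--Zygmund inequality. Since containing a copy of a spanning $A$ forces $\Span$,
$$\probsub{\Span}{p}\ge \probsub{X_A\ge 1}{p}\ge (\E X_A)^2/\E[X_A^2].$$
Expanding $\E[X_A^2]$ by classifying pairs of embeddings by their intersection shape $B\subseteq A$ yields $\E[X_A^2]/(\E X_A)^2=\Theta\bigl(\sum_{B\subseteq A}p^{-\rho_B}\bigr)$, a sum dominated as $p\to 0$ by the term with the largest $\rho_B$, namely $\rho(\alpha,\beta,A)$. This gives $\probsub{\Span}{p}\ge p^{\rho(\alpha,\beta,A)+o(1)}$, and minimizing the exponent over $A\in\cA$ produces $\probsub{\Span}{p}\ge p^{\inf_{A\in\cA}\rho(\alpha,\beta,A)+o(1)}$.

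The upper bound hinges on the clean estimate $\probsub{\text{copy of }A\text{ in initial configuration}}{p}\le p^{\rho(\alpha,\beta,A)+o(1)}$ for any finite $A$: containing a copy of $A$ forces a copy of each $B\subseteq A$ under the same permutation, so Markov applied to each $B$ gives $\min(1,\E X_B)\le p^{\max(0,\rho_B)+o(1)}$; the smallest of these bounds over $B\subseteq A$ equals $p^{\max_{B}\max(0,\rho_B)+o(1)}=p^{\rho(\alpha,\beta,A)+o(1)}$, using $\rho(A)\ge 0$. To pass to $\probsub{\Span}{p}$ via union bound, I need a finite $\cA_0\subseteq\cA$ such that every spanning $A$ contains, after row/column permutation, some member of $\cA_0$. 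My candidate is the collection of inclusion-minimal spanning configurations modulo row/column permutations, and its finiteness is \emph{the main obstacle}: it should follow from the pattern-inclusion characterization (Theorems~\ref{when-done} and~\ref{growth-patterns} make $\cT^{\tmax}$ a pattern-inclusion transformation governed by a finite family $\cP$ of minimal patterns), combined with a combinatorial argument exploiting $\cP$ and the Hamming-graph symmetry to impose a uniform size bound $K(\Z)$ on every minimal spanning configuration. Granted $\cA_0$, union bound yields
$$\probsub{\Span}{p}\le\sum_{A_0\in\cA_0}\probsub{\text{copy of }A_0}{p}\le |\cA_0|\,p^{\min_{A_0\in\cA_0}\rho(\alpha,\beta,A_0)+o(1)}.$$

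It remains to match the exponents: I claim $\inf_{A\in\cA}\rho(\alpha,\beta,A)=\min_{A_0\in\cA_0}\rho(\alpha,\beta,A_0)$. The inclusion $\cA_0\subseteq\cA$ gives one inequality; for the other, any $A\in\cA$ contains (after permutation, which preserves $\rho$) some $A_0\in\cA_0$, and since every $B\subseteq A_0$ is also a subset of $A$, the monotonicity $\rho(\alpha,\beta,A)\ge\rho(\alpha,\beta,A_0)\ge\min_{\cA_0}\rho$ is immediate. Combining the matched bounds establishes both the existence of $I(\alpha,\beta)$ and the variational formula $(\ref{ld-vq})$.
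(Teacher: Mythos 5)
Your two-sided strategy matches the paper. The Paley--Zygmund step is interchangeable with the Janson inequality (Theorem~2.18 of \cite{JLR}) that the paper applies in its Lemma~\ref{containment-lem}, and your classification of pairs of embeddings by their intersection shape $B\subseteq A$ is the same second-moment computation; the Markov plus union-bound upper step is also identical in spirit.

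The genuine gap is exactly the one you flag and do not close: producing a \emph{finite} $\cA_0$ with the covering property. The ``combinatorial argument exploiting $\cP$'' is the whole proof, not a footnote. Here is how the paper resolves it. By Theorems~\ref{when-done} and~\ref{growth-patterns}, $\cT':=\cT^{\tmax}$ is a one-step pattern-inclusion transformation governed by a finite family $\cP$. Take $\cA_0$ to be the patterns in $\cP$ that avoid $L^h(\origin)\cup L^v(\origin)$; this set is manifestly finite. Each such $P$ spans: by permutation invariance (fixing $P$), $x\in\cT'(P)$ for every $x$ whose neighborhood is disjoint from $P$, so $\cT'(P)$ omits only the finitely many lines meeting $P$, and a few further applications of $\cT$ occupy the rest. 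For the covering property one needs the ``unprobed-vertex'' observation: if $\omega_0$ spans, choose $x$ with $\big(L^h(x)\cup L^v(x)\big)\cap\omega_0=\emptyset$ (the paper introduces this as the event $G$ and shows $\probsub{G^c}{p}$ is superpolynomially small for $\alpha,\beta<1$); then $\cT'(\omega_0)=\bZ_+^2$ forces $x$ to observe some $P\in\cP$ in $\omega_0$, and that $P$ must avoid $\origin$'s neighborhood, so $\omega_0$ contains a member of $\cA_0$. This gives finiteness, $\cA_0\subseteq\cA$, and the covering property simultaneously. Your minimal-spanning-set candidate is in fact finite, but seeing that requires the same device together with (G4): with $x$ as above, finite inducement gives $A'\subseteq A$, $|A'|\le K$, with $x\in\cT'(A')$, and $A'$ already spans, so minimality forces $A=A'$; the direct use of $\cP$ is cleaner. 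Finally, the paper handles $\alpha\ge1$ or $\beta\ge1$ separately (adjoining a $k\times k$ grid pattern to $\cA_0$) because there the estimate $\probsub{G^c}{p}\to 0$ fails; make sure your argument addresses that boundary regime.
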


First we will prove the following lemma for large deviations of the containment of specific patterns, which follows the methods for containment of small subgraphs in Erd\H os--R\'enyi random graphs, as presented in~\cite{JLR}. Throughout the rest the paper, $\omega_0$ will denote the initial 
configuration obtained by occupying every point in $R_{N,M}$ independently with 
probability $p$. 
\begin{lemma}\label{containment-lem}
For any finite pattern $A$, 
\begin{equation}\label{ld-rate-eq1}
\lim_{p\to 0}\frac{\log\probsub{\text{$\omega_0$ contains $A$}}{p}}{\log p}=\rho(\alpha,\beta,A).
\end{equation} 
\end{lemma}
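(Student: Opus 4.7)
The claim is the Hamming-plane analog of the classical small-subgraph-containment threshold for $G(n,p)$ (see, e.g., \cite{JLR}), and the plan is to combine a first-moment upper bound with a second-moment (Paley--Zygmund) lower bound. For a finite pattern $B\subseteq\bZ_+^2$, let $X_B$ be the number of embeddings of $B$ into $\omega_0$: pairs of injections $\phi_x\colon\pi_x(B)\hookrightarrow[0,N-1]$, $\phi_y\colon\pi_y(B)\hookrightarrow[0,M-1]$ with $\{(\phi_x(u),\phi_y(v)):(u,v)\in B\}\subseteq\omega_0$. Then ``$\omega_0$ contains $A$'' is exactly $\{X_A\ge 1\}$,
$$
\mathbb{E}_p[X_B]=\frac{N!}{(N-|\pi_x(B)|)!}\cdot\frac{M!}{(M-|\pi_y(B)|)!}\cdot p^{|B|},
$$
and the scaling $\log N\sim -\alpha\log p$, $\log M\sim -\beta\log p$ forces $\log\mathbb{E}_p[X_B]/\log p\to |B|-\alpha|\pi_x(B)|-\beta|\pi_y(B)|$ as $p\to 0$.

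For the first-moment half, I will use that containment of $A$ forces containment of every $B\subseteq A$ by the same pair of row/column permutations, so $\mathbb{P}_p(X_A\ge 1)\le\mathbb{P}_p(X_B\ge 1)\le\mathbb{E}_p[X_B]$ for any $B\subseteq A$. Choosing $B^*\subseteq A$ that realizes the maximum in the definition of $\rho(\alpha,\beta,A)$ and dividing by $\log p<0$ then gives $\liminf_{p\to 0}\log\mathbb{P}_p(X_A\ge 1)/\log p\ge\rho(\alpha,\beta,A)$.

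For the matching $\limsup$, I will apply Paley--Zygmund, $\mathbb{P}_p(X_A\ge 1)\ge(\mathbb{E}_p X_A)^2/\mathbb{E}_p[X_A^2]$, and estimate $\mathbb{E}_p[X_A^2]=\sum_{\phi_1,\phi_2}p^{|\phi_1(A)\cup\phi_2(A)|}$ by grouping pairs $(\phi_1,\phi_2)$ according to the common subpattern $B\subseteq A$ defined by $B=\phi_1^{-1}(\phi_1(A)\cap\phi_2(A))$. For a fixed $\phi_1$, at most $O_A\!\left(N^{|\pi_x(A)|-|\pi_x(B)|}M^{|\pi_y(A)|-|\pi_y(B)|}\right)$ embeddings $\phi_2$ realize a given $B$, leading to
$$
\mathbb{E}_p[X_A^2]\le C(A)\,(\mathbb{E}_p X_A)^2\sum_{B\subseteq A}\frac{1}{\mathbb{E}_p X_B}.
$$
Since this sum is dominated by the term from $B^*$, the outcome will be $\mathbb{P}_p(X_A\ge 1)\ge c(A)\,p^{\rho(\alpha,\beta,A)+o(1)}$, and hence $\limsup_{p\to 0}\log\mathbb{P}_p(X_A\ge 1)/\log p\le\rho(\alpha,\beta,A)$. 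The hard part will be the combinatorial bookkeeping behind the second-moment estimate---making the subpattern correspondence $(\phi_1,\phi_2)\mapsto B$ precise and absorbing the finitely many pattern automorphisms of $A$ into the $O_A(1)$ constant---while the remaining first- and second-moment manipulations are routine.
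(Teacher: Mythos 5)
Your proposal is correct and matches the paper's argument almost line by line. The first-moment bound over subpatterns $B\subseteq A$ is identical, and your second-moment decomposition --- grouping $(\phi_1,\phi_2)$ by $B=\phi_1^{-1}(\phi_1(A)\cap\phi_2(A))$ and noting that, for a fixed $\phi_1$, only $O_A(1)$ coordinate identifications are consistent with a given $B$, each leaving $N^{|\pi_x(A)|-|\pi_x(B)|}M^{|\pi_y(A)|-|\pi_y(B)|}$ free choices for $\phi_2$ --- is exactly the estimate the paper uses to bound $\Lambda=\sum_{X\simeq A}\sum_{Y\simeq A,\,X\cap Y\neq\emptyset}\Esub{I_XI_Y}{p}$. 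The only divergence is the closing tool: the paper invokes Janson's inequality (Theorem~2.18 of~\cite{JLR}), $\probsub{X_A=0}{p}\le\exp(-\lambda^2/\Lambda)$, whereas you invoke Paley--Zygmund, $\probsub{X_A\ge1}{p}\ge(\Esub{X_A}{p})^2/\Esub{X_A^2}{p}$. Since $\Esub{X_A^2}{p}$ is, up to $A$-dependent constants, $\lambda^2+\Lambda$ and $\lambda^2/\Lambda$ stays bounded in the regime where $\rho>0$, both close the argument with $\probsub{X_A\ge1}{p}\ge c(A)\,p^{\rho+o(1)}$ and extract the same exponent; Paley--Zygmund is marginally more elementary to state, while Janson's form separates the diagonal $\lambda^2$ (disjoint-copies) term automatically rather than carrying it inside the second moment. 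Either choice is fine here, and your sketch of the combinatorial bookkeeping is the right one.
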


\begin{proof}
For any subpattern $B\subseteq A$, the probability that $\omega_0$ contains $B$ is at most
\begin{equation}
\begin{aligned}
\probsub{\text{$\omega_0$ contains $B$}}{p} &\leq C_B \binom{N}{\pi_x(B)}
\binom{M}{\pi_y(B)} p^{\abs{B}}, \\
&\leq C_B N^{\pi_x(B)} M^{\pi_y(B)}p^{\abs{B}}\\
& = C_B p^{\abs{B} - \alpha \pi_x(B) - \beta\pi_y(B) + o(1)},
\end{aligned}
\end{equation}
where $C_B$ is a constant that accounts for the number of ways to reorder the rows and columns of $B$.  This gives the lower bound
\begin{equation}\label{ld-lowerbd}
\liminf_{p\to 0}\frac{\log\probsub{\text{$\omega_0$ contains $A$}}{p}}{\log p}\ge\rho(\alpha,\beta,A).
\end{equation}

For every subset $X\subseteq \bZ_+^2$ that is equivalent to $A$ (in the sense of a pattern) let $I_X$ be the indicator of the event that $X\subseteq \omega_0$,  and let $X\simeq A$ denote the equivalence of $X$ and $A$. Below, $X, Y, Z$ will denote subsets of $\bZ_+^2$.  Define
$$\lambda = \sum_{X \simeq A} \Esub{I_X}{p} = C_A \binom{N}{\pi_x(A)} \binom{M}{\pi_y(A)} p^{\abs{A}}.$$ 
Also, define
$$
\Lambda = \sum_{X\simeq A}\sum_{\substack{Y\simeq A\\ X\cap Y \neq \emptyset}} \Esub{I_XI_Y}{p}.
$$
  Theorem 2.18 of~\cite{JLR} states that
  $$
  \probsub{\text{$\omega_0$ does not contain $A$}}{p} \leq \exp\left[-\frac{\lambda^2}{\Lambda}\right].
  $$
Observe that
\begin{equation}
\begin{aligned}
\Lambda &= \sum_{\substack{B\subseteq A\\B\neq \emptyset}} \sum_{Z\simeq B} \sum_{X\simeq A} \sum_{\substack{Y\simeq A\\ X\cap Y = Z}} p^{2\abs{A} - \abs{B}} \\
& \le C \lambda^2 \sum_{\substack{B\subseteq A\\B\neq \emptyset}} p^{-\abs{B}} N^{-\pi_x(B)} M^{-\pi_y(B)}\\
& = C \lambda^2 \sum_{\substack{B\subseteq A\\B\neq \emptyset}} p^{-(\abs{B} - \alpha \pi_x(B) - \beta \pi_y(B)) + o(1)}\\
&\leq C \lambda^2 p^{-\rho(\alpha, \beta, A) + o(1)}.
\end{aligned}
\end{equation}
This gives the upper bound,
\begin{equation}\label{ld-upperbd}
\limsup_{p\to 0}\frac{\log\probsub{\text{$\omega_0$ contains $A$}}{p}}{\log p}\le\rho(\alpha,\beta,A).
\end{equation}

\end{proof}

\begin{proof}[Proof of Theorem~\ref{ld-rate}]
Lemma~\ref{containment-lem} directly implies that 
\begin{equation}\label{ld-rate-eq2}
\limsup_{p\to 0}\frac{\log \probsub{\Span}{p}}{\log p}\le \inf_{A\in \cA}\,\rho(\alpha,\beta,A).
\end{equation} 

Assume now that $\Span$ happens. Let $\cT'=\cT^{\tmax}$, where $\tmax$ is defined in 
Theorem~\ref{when-done}. By Theorem~\ref{growth-patterns}, 
$\cT'$ is a pattern-inclusion transformation given by a set of
patterns $\cP$. Let $\cA_0$ be the set of patterns in $\cP$ that contain no
site  in the neighborhood of the origin $\origin$. Observe that every set in $\cA_0$ spans, that is, 
$\cA_0\subseteq \cA$. Note also that $\cA_0\ne \emptyset$, which simply follows from the fact 
that there exists a finite set that spans. 

Let $G$ be the event that there exists an $x\in R_{N,M}$ whose entire neighborhood is unoccupied in $\omega_0$, that is $L^v(x)\cup L^h(x) \subseteq \omega_0^c$. Now, $\Span\subseteq \{\cT'(\omega_0)=\bZ_+^2\}$ and therefore
\begin{equation}\label{ld-rate-eq3}
\Span\cap G\subseteq \{\text{$\omega_0$ contains a member of $\cA_0$}\}.
\end{equation}

Assume without loss of generality that $M\le N$, which implies $\beta\le \alpha$. 
Assume first that $\alpha<1$. Then 
\begin{equation}\label{ld-rate-eq4}
\probsub{G^c}{p}\le (pN)^M+(pM)^N\le \exp(-p^{-\beta/2}),
\end{equation} 
for small enough $p$.  Together, (\ref{ld-rate-eq3}) and (\ref{ld-rate-eq4})
imply 
\begin{equation}\label{ld-rate-eq5}
\begin{aligned}
\probsub{\Span}{p}&\le \probsub{\text{$\omega_0$ contains a member of $\cA_0$}}{p}+
\probsub{G^c}{p}\\
&\le |\cA_0|\max_{A\in \cA_0}\probsub{\text{$\omega_0$ contains $A$}}{p}
+\exp(-p^{-\beta/2}).
\end{aligned}
\end{equation}
Now,  Lemma~\ref{containment-lem} and (\ref{ld-rate-eq5}) imply 
\begin{equation}\label{ld-rate-eq6}
\liminf_{p\to 0}\frac{\log \probsub{\Span}{p}}{\log p}\ge \min_{A\in \cA_0}\,\rho(\alpha,\beta,A).
\end{equation} 

We now consider the case $\alpha\ge 1$. For a $k\ge 1$, let
$A_k$ be the pattern 
\begin{equation*}
\begin{aligned}
&\phantom{\X\X\ldots\X\X\X\ldots\X\ldots}\X\X\ldots\X\\
&\phantom{\X\X\ldots\X\X\X\ldots\X}\ldots \\
&\phantom{\X\X\ldots\X}\X\X\ldots\X\\
&\X\X\ldots\X
\end{aligned}
\end{equation*}
The number of rows is $k$, and each interval of occupied sites has length $k$. 
For any fixed $k$ and $\epsilon>0$, 
\begin{equation}\label{ld-rate-eq7}
\probsub{\text{$\omega_0$ includes $A_k$}}{p}
\ge p^\epsilon.
\end{equation} 
Clearly, if $k$ is large enough, 
$A_k$ spans (in two time steps). Add $A_k$ to $\cA_0$. Then,  
by Lemma~\ref{containment-lem} and (\ref{ld-rate-eq7}),
\begin{equation}\label{ld-rate-eq8}
\min_{A\in \cA_0}\,\rho(\alpha,\beta,A)=0.
\end{equation}
Thus, when $\alpha\ge 1$, (\ref{ld-rate-eq8}) trivially implies (\ref{ld-rate-eq6}). 
The inequality (\ref{ld-rate-eq6}) is therefore always valid, and, together with 
(\ref{ld-rate-eq2}), gives the 
desired equalities. 
\end{proof}

\subsection{General bounds on the large deviations rate} \label{sec-ld-bounds}
Having established the existence of $I(\alpha,\beta,\Z)$, we now give three general bounds.  These will be used to establish continuity of $\IE(\alpha,\beta, \ZE)$ in Section~\ref{subsec-E-limit-proof}, and are the key components for the proof of Theorem~\ref{intro-IEbounds} in Section~\ref{ld-limit-bounds-subsec}.  Assume throughout this section that $(\alpha,\beta)\in[0,1]^2$.

\begin{prop}\label{Ilb-discrete} For any zero-set $\Z$ and nonnegative integer $k$,
\begin{equation}\label{Ilb-k}
I(\alpha,\beta,\Z) \geq 
\gamma(\zswk)\left(1-\max(\alpha,\beta)\left(1+{\frac{1}{k+1}}\right)\right).
\end{equation} 
\end{prop}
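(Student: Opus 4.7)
The plan is to use the variational characterization from Theorem~\ref{ld-rate}, namely
$$I(\alpha,\beta,\Z)=\inf\{\rho(\alpha,\beta,A):A\in\cA\},$$
together with the two perturbation lemmas from Section~\ref{sec-perturbations}. Specifically, I would fix an arbitrary spanning set $A\in\cA$ and try to produce a single subset $B\subseteq A$ witnessing the claimed lower bound on $\rho(\alpha,\beta,A)$.

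The natural choice is $B=\angk$. By Lemma~\ref{retract}, $\angk$ spans for $\zswk$, so $|\angk|\ge\gamma(\zswk)$. By Lemma~\ref{k-proj},
$$|\pi_x(\angk)|+|\pi_y(\angk)|\le\left(1+\tfrac{1}{k+1}\right)|\angk|.$$
Using $\alpha|\pi_x(B)|+\beta|\pi_y(B)|\le\max(\alpha,\beta)(|\pi_x(B)|+|\pi_y(B)|)$ and plugging $B=\angk$ into the definition of $\rho$, I would get
$$\rho(\alpha,\beta,A)\ge|\angk|-\max(\alpha,\beta)\left(1+\tfrac{1}{k+1}\right)|\angk|=|\angk|\cdot c_k,$$
where $c_k=1-\max(\alpha,\beta)(1+\tfrac{1}{k+1})$.

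If $c_k\ge 0$, then using $|\angk|\ge\gamma(\zswk)$ gives the desired inequality. If $c_k<0$, then the right-hand side of the claimed bound is negative, while $\rho(\alpha,\beta,A)\ge 0$ (take the subset $B=\emptyset$), so the inequality holds trivially. Taking the infimum over $A\in\cA$ completes the argument.

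There is no real obstacle here: the proof is essentially a one-line computation once Lemmas~\ref{retract} and~\ref{k-proj} are in hand. The only subtlety is to remember to bound $\alpha|\pi_x|+\beta|\pi_y|$ by $\max(\alpha,\beta)$ times the sum so that Lemma~\ref{k-proj} can be applied directly, and to handle the sign-of-$c_k$ case distinction so the inequality remains meaningful for all $(\alpha,\beta)\in[0,1]^2$ and all $k$.
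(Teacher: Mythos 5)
Your proposal is correct and follows essentially the same argument as the paper: apply Lemma~\ref{retract} to show $\angk$ spans for $\zswk$ (hence $|\angk|\ge\gamma(\zswk)$), apply Lemma~\ref{k-proj} to bound the projections, and take $B=\angk$ inside the maximum defining $\rho(\alpha,\beta,A)$. Your explicit handling of the sign of $c_k$ is a careful touch the paper leaves implicit, and your final chain of inequalities is actually cleaner than the paper's, which contains a small typographical slip in the last display (writing $I(\alpha,\beta,\Z)\ge\rho(\alpha,\beta,A)$ rather than passing to the infimum of the lower bound on $\rho(\alpha,\beta,A)$ over $A\in\cA$).
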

\begin{proof}
Let $A$ be a spanning set for $\Z$.  Then, by Lemma \ref{k-proj}, 
$$|\angk| - \alpha|\pi_x(\angk)| - \beta|\pi_y(\angk)| \geq |\angk|\left(1 - \max(\alpha,\beta)
\left(1+{\frac{1}{k+1}}\right)\right).$$ 
By Lemma \ref{retract}, $\angk$ spans for $\zswk$, thus $|\angk|\ge \gamma(\zswk)$. Therefore, 
$$\rho(\alpha,\beta, \angk) \geq  \gamma(\zswk)\left(1-\max(\alpha,\beta)
\left(1+{\frac{1}{k+1}}\right)\right).
$$
Moreover, $\angk$ is a subset of $A$, so
$$I(\alpha,\beta,\Z) \geq \rho(\alpha,\beta, A)\geq  \rho(\alpha, \beta ,\angk),
$$
and the desired inequality follows.
\end{proof}

\begin{prop} \label{Iupper-bound1} 
For any discrete zero-set $\Z$, 
\begin{equation}\label{I-area-ineq}
	I(\alpha,\beta,\Z) \leq ( 1-\max(\alpha,\beta) ) |\Z|. 
\end{equation}
\end{prop}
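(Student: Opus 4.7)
By Theorem~\ref{ld-rate}, $I(\alpha,\beta,\Z) = \inf_{A \in \cA}\rho(\alpha,\beta,A)$, so it suffices to exhibit a single spanning set $A$ satisfying $\rho(\alpha,\beta,A) \leq (1-\max(\alpha,\beta))|\Z|$. The problem is symmetric under simultaneous transposition of rows/columns (which maps $\Z$ to $\Z^T$ with $|\Z^T|=|\Z|$) and the swap $\alpha \leftrightarrow \beta$, so we may assume without loss of generality that $\alpha \geq \beta$ and aim for $\rho(\alpha,\beta,A) \leq (1-\alpha)|\Z|$. The idea is to build an $A$ that is \emph{thin in columns} (each column of $\bZ_+^2$ meets $A$ in at most one point) with $|A| = |\Z|$; for any such $A$ every $B \subseteq A$ satisfies $|\pi_x(B)| = |B|$, so
$$\rho(\alpha,\beta,A) = \max_{B \subseteq A}\bigl((1-\alpha)|B| - \beta|\pi_y(B)|\bigr) \leq (1-\alpha)|A| = (1-\alpha)|\Z|,$$
using $\alpha \in [0,1]$ together with $\beta|\pi_y(B)| \geq 0$. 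So the only real content is constructing such an $A$ and showing that it spans.

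\textbf{Construction.} Let $\ell_0 \geq \ell_1 \geq \cdots \geq \ell_{h_Z - 1}$ denote the row lengths of $\Z$, with $w_Z := \ell_0$ its width. Place the $i$th row of $\Z$ into the ``private'' block of columns $[iw_Z,\,iw_Z + \ell_i)$:
$$A = \bigsqcup_{i=0}^{h_Z - 1} \bigl\{(iw_Z + j,\, i) : 0 \leq j < \ell_i\bigr\}.$$
Distinct rows occupy disjoint column ranges (since each range has length $\ell_i \leq w_Z$), so $A$ is thin in columns and $|A| = \sum_i \ell_i = |\Z|$.

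\textbf{Spanning (by induction on $h_Z$).} The base case $h_Z = 0$ is trivial. For the inductive step, any $(x,0) \notin A$ satisfies $\row((x,0),A) = \ell_0 = w_Z$; since $(w_Z, c) \notin \Z$ for every $c \geq 0$, the point $(x,0)$ is added, so the whole row $y = 0$ is occupied at time~$1$. Set $A' = A \cup (\bZ_+ \times \{0\})$; by monotonicity it suffices that $A'$ spans. For any $(x,y) \notin A'$ with $y \geq 1$, the filled row~$0$ contributes exactly $1$ to $\col((x,y),A')$ and does not change row counts, so under the shift $(x,y) \mapsto (x, y-1)$ the dynamics on rows $y \geq 1$ from $A'$ is precisely neighborhood growth with zero-set $\Z^{\downarrow 1}$ starting from the shifted copy of $A|_{y \geq 1}$. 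That shifted set has the same row counts as—and is thin in columns like—the analogous construction $A_{\Z^{\downarrow 1}}$ for the smaller zero-set $\Z^{\downarrow 1}$; by the column-permutation invariance~(G3) the two sets span simultaneously, and since $h(\Z^{\downarrow 1}) = h_Z - 1$, the inductive hypothesis finishes the argument.

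\textbf{Main obstacle.} The entropy estimate is immediate once $A$ has been constructed; the only nontrivial step is the spanning verification. Everything there hinges on two features: row~$0$ of $A$ is ``saturated'' in the sense that its row count equals $w_Z$, which triggers immediate filling of that row and thereby produces a uniform $+1$ contribution to column counts from then on; and (G3) lets us forget the actual horizontal positions when reducing to the smaller zero-set $\Z^{\downarrow 1}$. Combining the displayed entropy bound with Theorem~\ref{ld-rate} then gives $I(\alpha,\beta,\Z) \leq (1-\alpha)|\Z| = (1-\max(\alpha,\beta))|\Z|$, and the symmetric case $\beta > \alpha$ follows by applying the same argument to the transposed problem.
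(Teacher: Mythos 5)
Your proof is correct and follows essentially the same approach as the paper: the paper constructs the ``row-spread'' set $\Z_r$ (same row counts as $\Z$, at most one point per column), notes that it spans by filling the longest horizontal line first, then the next longest, and so on, and uses $|B|=|\pi_x(B)|$ for every $B\subseteq\Z_r$ to bound $\rho$ by $|\Z_r|(1-\alpha)$, with the $(1-\beta)$ bound obtained from the transposed $\Z_c$. Your explicit column-block placement, the WLOG reduction in place of the paper's ``take the better of $\Z_r,\Z_c$'' step, and the more careful inductive verification of spanning (filling row $0$ and reducing to $\Z^{\downarrow 1}$ via (G3)) are all cosmetic variants of the same argument.
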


\begin{proof}
For a set $A\subseteq \bZ_+^2$ of occupied points, let $A_r\subseteq \bZ_+^2$ 
be a set such that each row in $A_r$ contains 
the same number of occupied sites as the row in $A$, but the columns of 
$A_r$ contain at most one occupied site.  Define $A_c$ analogously.  
These sets satisfy 
$$|A| = |A_r| = |A_c| = |\pi_x(A_r)| = |\pi_y(A_c)|.$$ 
For a Young diagram $\Z$ both $\Z_r$ and $\Z_c$ span: the longest row of $\Z_r$ immediately 
occupies its entire horizontal line, then the next longest does the same, and so on.  
Moreover, for any subset $B\subseteq \Z_r$, $|B| = |\pi_x(B)|$ and hence
$$\rho(\alpha,\beta,\Z_r) \leq |\Z_r|(1-\alpha).$$
Similarly
$$
\rho(\alpha,\beta,\Z_c) \leq |\Z_c|(1-\beta).
$$
The desired inequality (\ref{I-area-ineq}) follows.
\end{proof}

\begin{prop} \label{Iupper-bound2}
For any discrete zero-set $\Z$,
\begin{equation} \label{I-gamma-ineq}
	I(\alpha,\beta,\Z) \leq  2(1-\min(\alpha,\beta))\gamma(\Z).
\end{equation}
\end{prop}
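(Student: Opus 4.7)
The plan is to mirror the structure of Proposition \ref{Iupper-bound1}, but replace the ``single-row-or-column" spanning set $\Z_r$ (resp.\ $\Z_c$) with a thin spanning set of twice the minimal size. This way, the projections onto both axes collapse to the cardinality of the set, and the entropy term $\alpha|\pi_x(B)|+\beta|\pi_y(B)|$ can be bundled as $(\alpha+\beta)|B|$.

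First, I would take a minimal spanning set $A_0$ with $|A_0|=\gamma(\Z)$ and invoke the remark following Lemma~\ref{get-two-Y} to produce a thin spanning set $A$ with $|A|=2|A_0|=2\gamma(\Z)$. Because no two points of $A$ share a row or a column, the same holds for every subset $B\subseteq A$, and therefore $|B|=|\pi_x(B)|=|\pi_y(B)|$. Consequently,
$$\rho(\alpha,\beta,A)=\max_{B\subseteq A}\,|B|\,(1-\alpha-\beta)\le |A|\,(1-\min(\alpha,\beta))=2\gamma(\Z)\,(1-\min(\alpha,\beta)),$$
where the middle inequality is immediate: if $\alpha+\beta\le 1$, take $B=A$ and note $1-\alpha-\beta\le 1-\min(\alpha,\beta)$; if $\alpha+\beta>1$, take $B=\emptyset$ and use $1-\min(\alpha,\beta)\ge 0$.

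Finally, by the variational characterization $I(\alpha,\beta,\Z)=\inf\{\rho(\alpha,\beta,A'):A'\in\cA\}$ of Theorem~\ref{ld-rate}, we have $I(\alpha,\beta,\Z)\le \rho(\alpha,\beta,A)$, giving the desired inequality. I do not anticipate any real obstacle, as the nontrivial step --- the existence of a thin spanning set of size at most $2\gamma(\Z)$ --- is already packaged in the remark after Lemma~\ref{get-two-Y}, and everything else follows from the observation that thin sets turn the entropy functional into a single linear term in $|B|$.
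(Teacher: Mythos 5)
Your argument hinges on a misreading of the paper's notion of ``thin.'' A thin set, as the paper uses it (see the construction in the proofs of Lemmas~\ref{gamma-thin-gamma} and~\ref{gamma-bar-gamma}), is one in which every point has no companion on its row \emph{or} no companion on its column; it is \emph{not} required to be a partial permutation matrix. Indeed, the thin spanning set promised by the remark after Lemma~\ref{get-two-Y} is built from two pieces, one reproducing the row counts of $A_0$ (so some rows carry several points, each alone in its column) and one reproducing the column counts (symmetric). Under your stricter reading, any thin set for $\Z=R_{2,2}$ would have at most one point per row and per column and hence be inert, so no thin spanning set could exist — yet $\gamma_{\rm thin}(R_{2,2})\le 2\gamma(R_{2,2})<\infty$ by Lemma~\ref{gamma-thin-gamma}. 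Consequently the claim $|B|=|\pi_x(B)|=|\pi_y(B)|$ for every $B\subseteq A$ is false, and the bound your argument actually establishes, $\rho(\alpha,\beta,A)\le |A|\big((1-\alpha-\beta)\vee 0\big)$, is too strong: for $\Z=R_{2,2}$ and $\alpha=\beta=3/5$ it would force $I(3/5,3/5,R_{2,2})=0$, while $(3/5,3/5)\in\supp I(\cdot,\cdot,R_{2,2})$ by the formula~\eqref{ld-support-formula}.

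The fix is short. For a thin set $A$ (in the paper's sense) and any $B\subseteq A$, the correct inequality is $|\pi_x(B)|+|\pi_y(B)|\ge|B|$: split $B$ into points that are alone in their column and points alone in their row, and bound each projection below by the corresponding count — this is precisely the observation used in Lemma~\ref{gamma-thin-diagonal}. Then $\alpha|\pi_x(B)|+\beta|\pi_y(B)|\ge(\alpha\wedge\beta)\big(|\pi_x(B)|+|\pi_y(B)|\big)\ge(\alpha\wedge\beta)|B|$, so $\rho(\alpha,\beta,A)\le(1-\alpha\wedge\beta)|A|=2(1-\alpha\wedge\beta)\gamma(\Z)$, and the variational characterization of Theorem~\ref{ld-rate} finishes. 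With this repair your route coincides with the paper's, which builds the two-piece set $A_s$ explicitly from $A_r$ and $A_c$ rather than invoking the remark.
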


\begin{proof}
Suppose the set $A$ spans for $\Z$, has 
size $|A|=\gamma(\Z)$, and $A\subseteq R_{a,b}$ for some $a$, $b$. 
Recall the definition of $A_r$ and 
$A_c$ from the previous proof. 
The key step in proving the upper bound (\ref{I-gamma-ineq}) is to show that 
the set $A_s$ defined by
$$A_s = \{(2a,0) + A_r \} \cup \{(0,2b) + A_c\}$$ 
spans for $\Z$ as well. The proof of this is similar to the proof 
of Lemma~\ref{get-two-Y}, so we only provide a brief sketch. 
Restrict the dynamics to the larger rectangle
$R_{2a,2b}$.  Then prove by induction that,
for every site $x\in R_{2a,2b} \setminus A$ and every 
$t>0$, the number of occupied sites in $\cT^t(A_s)$, in both the row and the column 
containing $x$,  will be at least as large as the number of occupied sites 
in the same row and column in $\cT^t(A).$ 
Therefore, for some $t>0$, $(a,b)+ R_{a,b}$ will be contained in $\cT^t(A_s)$.  As
$R_{a,b}$ spans, therefore so does $A_s$. 

Since $A_s$ spans, an upper bound on $\rho(\alpha,\beta, A_s)$ will also 
provide an upper bound on $I(\alpha,\beta, \Z).$  
For $B\subseteq A_s$, let $B_r = B\cap A_r$ and $B_c =B\cap A_c$. 
Then $|\pi_x(B_r) | = |B_r|$ and $|\pi_y(B_c)| = |B_c|.$  Then
\begin{align*}
|B| - \alpha |\pi_x(B)|- \beta|\pi_y(B)| 
&= |B_r|+ |B_c| - \alpha(|B_r| + |\pi_x(B_c)|) - \beta(|B_c| + |\pi_y(B_r)|)\\
&\leq |B_r|+ |B_c| - \alpha|B_r|  - \beta|B_c| \\
&\leq |B_r|+|B_c|-\min(\alpha,\beta)(|B_r|+|B_c|)\\
&= |B|(1-\min(\alpha,\beta)).
\end{align*}
Therefore $\rho(\alpha,\beta, A_s) \leq |A_s|(1-\min(\alpha,\beta))$ and  
\begin{equation*}
I(\alpha,\beta,\Z)\leq |A_s|( 1-\min(\alpha,\beta)) = 2\gamma(\Z)(1-\min(\alpha,\beta)),
\end{equation*}
as $|A_s|=2|A|=2\gamma(\Z)$. 
\end{proof}

\section{Exact results for the large deviation rate}
\subsection{Support}\label{sec-ld-support}
In this section, we conclude the proof of our main large deviations theorem; the 
most substantial remaining step is an argument for the support formula (\ref{ld-support-formula})
for a general zero-set $\Z$.

\begin{proof}[Proof of Theorem~\ref{intro-ld-rate}]
The existence of  $I$ and its variational characterization (\ref{ld-vq}) follow from 
Theorem~\ref{ld-rate}. 
Then, for every $A$, $\rho(\cdot,\cdot, A)$ is continuous and 
piecewise linear, so by (\ref{ld-vq}) the same is true for $I(\cdot,\cdot, \Z)$. Monotonicity 
in $\alpha$ and in $\beta$ follows from the definition.  

If $(\alpha,\beta)\ne (0,0)$, then $I(\alpha,\beta,\Z)<\gamma(\Z)$, since 
$\rho(\alpha,\beta,A) < \abs{A}$ whenever $A$ is nonempty.  Furthermore, 
if $\alpha+\beta<1$, then
$$
\rho(\alpha,\beta,A) =  \abs{A}-\alpha\abs{\pi_x(A)} - \beta\abs{\pi_y(A)},
$$
so $I$ is the minimum of linear functions, thus concave.  

It remains to prove the claims about the support of $I$. By continuity of 
$I(\cdot,\cdot, \Z)$, we can assume $(\alpha,\beta) \in (0,1]^2$.  
Suppose $(\alpha,\beta)$ are such that $[u(1-\alpha) -\beta] \vee [v(1-\beta) - \alpha] > 0$ for all $(u,v)\in \partial_o \cZ$, and let
$$
\epsilon = \min_{(u,v)\in\partial_o \cZ} [u(1-\alpha) -\beta] \vee [v(1-\beta) - \alpha] > 0.
$$
The event $\Span$ implies that for some $(u,v)\in \partial_o \cZ$ there exists a vertex $x \in V$ such that $\row(x,\omega_0)\ge u$ and $\col(x,\omega_0)\ge v$, and the probability of this event (for a given $(u,v)$) is bounded above by the minimum of the expected number of rows with $u$ initially occupied vertices and the expected number of columns with $v$ initially occupied vertices. Therefore,
\begin{equation}
\begin{aligned}
\probsub{\Span}{p} \le \sum_{(u,v)\in \partial_o \cZ} M(Np)^u \wedge N(Mp)^v \leq \abs{\partial_o \cZ} p^{\epsilon-o(1)},
\end{aligned}
\end{equation}
so $I(\alpha,\beta,\cZ)\ge \epsilon$, and $(\alpha,\beta) \in \supp I(\cdot,\cdot,\cZ)$.

Now suppose $(\alpha,\beta) \in (0,1]^2$ are such that there exists $(u_0,v_0)\in \partial_o \cZ$ such that $[u_0(1-\alpha) -\beta] \vee [v_0(1-\beta) - \alpha] < 0$. Let $K = \max\{ u, v : (u,v) \in \partial_o \cZ\}$, let $E$ denote the event that there are at least $K$ rows with at least $u_0$ initially occupied vertices, and let $F$ denote the event that there are at least $K$ columns with at least $v_0$ initially occupied vertices.  Observe that $E\cap F \subseteq \Span$.  We will show $\probsub{E}{p}\wedge \probsub{F}{p} \to 1$, so
$$
\probsub{\Span}{p} \ge \probsub{E\cap F}{p} \to 1,
$$
and $I(\alpha,\beta,\cZ) = 0$.

We will show $\probsub{E}{p}\to 1$, and the argument for $F$ is similar.  If $\alpha\ge 1$, then the probability that a fixed row has at least $u_0$ initially occupied vertices is at least $p^{o(1)}$, so the expected number of rows with at least $u_0$ initially occupied vertices is at least $p^{-\beta + o(1)}\to \infty$.  If $\alpha<1$ and $u_0(1-\alpha)-\beta< 0$, then the expected number of rows with at least $u_0$ initially occupied vertices is at least
$$
M\binom{N}{u_0} p^{u_0} (1-p)^N \ge M\left(\frac{Np}{3u_0}\right)^{u_0}(1-o(1)) \ge p^{u_0(1-\alpha)-\beta + o(1)} \to \infty.
$$
In either case, since rows are independent, this implies $\probsub{E}{p} \to 1$.
\end{proof}

\subsection{Large deviations for line growth}\label{sec-lgld}

\newcommand{\da}{\Delta a}
\newcommand{\db}{\Delta b}

In the next theorem, we explicitly give the large deviation rate 
for line growth with $\Z=R_{a,b}$, where $a,b\ge 0$. 
When $\alpha=\beta$ and $a=b$, the rate is given in~\cite{BBLN}
by a different method. For $\alpha,\beta\in[0,1)$, we let
\begin{equation*}
\begin{aligned}
\da=\left\lfloor \frac\beta{1-\alpha}\right\rfloor,\quad
\db=\left\lfloor \frac\alpha{1-\beta}\right\rfloor.
\end{aligned}
\end{equation*}

\begin{theorem}\label{lgld-thm}
Fix $\alpha,\beta\in [0,1)$. 
If either $b\le \db$ or $a\le \da$, then $I(\alpha,\beta,R_{a,b})=0$. 
Assume $b>\db$ and $a>\da$ for the rest of this statement. 
If $\beta\le \alpha$ and 
\begin{equation}\label{v0-ineq}
\left\lfloor\frac{\alpha}{1-\beta}\right\rfloor(1-\beta)\le \beta.
\end{equation}
holds, then 
\begin{equation}\label{I-eq1}
\begin{aligned}
I(\alpha,\beta,R_{a,b})&=
(1-\alpha)ab+((\alpha-\beta)\db-\beta)a-\beta b -(1-\beta)\da\db+\beta\da+\beta\db+\beta\\&\quad-
\max\{(1-\beta)\db,(1-\alpha)\da\}.
\end{aligned}
\end{equation}
If $\beta\le \alpha$ and (\ref{v0-ineq}) does not hold, 
\begin{equation}\label{I-eq2}
\begin{aligned}
I(\alpha,\beta,R_{a,b})&=
(1-\alpha)ab
+\alpha\db\cdot a-\beta b+\beta\db\\&\quad
+\min\{-\beta(\db+1)a-(1-\beta)\da\db+\beta\da+\beta-(1-\alpha)\da, 
-\db\cdot a\}.
\end{aligned}
\end{equation}
  If $\beta\ge \alpha$, the rate is determined 
by the equation $I(\alpha,\beta,R_{a,b})=I(\beta,\alpha,R_{b,a})$. 
\end{theorem}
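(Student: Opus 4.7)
The plan is to use Theorem~\ref{ld-rate} to reduce the claim to an explicit minimization of $\rho(\alpha,\beta,\cdot)$ over spanning sets for $\Z=R_{a,b}$, and to exploit the symmetry $I(\alpha,\beta,R_{a,b})=I(\beta,\alpha,R_{b,a})$ (obtained by interchanging horizontal and vertical coordinates) so as to assume $\beta\le\alpha$. The degenerate claim that $I=0$ when $a\le\da$ or $b\le\db$ is immediate from the support characterization (\ref{ld-support-formula}) applied to $\partial_o R_{a,b}=\{(a,v):0\le v<b\}\cup\{(u,b):0\le u<a\}$: when $a\le\da$, at $(u,v)=(a,0)\in\partial_o R_{a,b}$ we have both $u(1-\alpha)-\beta=a(1-\alpha)-\beta\le 0$ and $v(1-\beta)-\alpha=-\alpha\le 0$, so $(\alpha,\beta)$ lies outside $\supp I(\cdot,\cdot,R_{a,b})$, and the second assertion of Theorem~\ref{intro-ld-rate} gives $I=0$.

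For the upper bound I would exhibit a family of explicit spanning sets $A$ whose $\rho$-values match the right-hand sides. The structural template comes from the sequential line-filling analysis in the proof of Theorem~\ref{lb-general}: any line-growth spanning set can be organized around a sequence of filled vertical and horizontal lines, and the identity $kb+\ell a-k\ell+(a-k)(b-\ell)=ab$ shows that the total number of initial sites is $ab$, independent of the split $(k,\ell)$. What does depend on the arrangement is the projection sum $|\pi_x(A)|+|\pi_y(A)|$: placing the initial sites of each filled line in ``fresh'' rows or columns (rather than reusing the rows or columns of the core rectangle) costs nothing in $|A|$ but adds $1$ to the relevant projection. Optimizing the number of such thin extensions against the marginal cost $1-\alpha$ or $1-\beta$ per extra point (when the extension forces additional initial sites to maintain the filling condition) produces exactly the integer parameters $\da$ and $\db$. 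The threshold condition (\ref{v0-ineq}) distinguishes a balanced arrangement that yields (\ref{I-eq1}) (with the $\max$ reflecting which of the two arms dominates) from an unbalanced one that yields (\ref{I-eq2}) (with the $\min$ reflecting two competing candidate configurations).

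For the lower bound, given an arbitrary finite spanning $A$, take $B\subseteq A$ achieving the maximum in the definition of $\rho(\alpha,\beta,A)$, run the line-growth $\cT$, and classify the points of $B$ according to which vertical or horizontal line in the filling sequence each point helps to populate. The identities (\ref{lb-general-eq1})--(\ref{lb-general-eq2}) provide sharp accounting of $|B|$ in terms of the integer counts $(k,\ell)$ of filled columns and rows, while each filled vertical line contributes $1$ to $|\pi_x(B)|$ and at most $b$ to $|\pi_y(B)|$ (with the symmetric statement for horizontal lines), and the residual set lying in the complementary rectangle is handled inductively via the perturbation Theorem~\ref{gamma-perturbation} applied to $R_{a-k,b-\ell}$. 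Minimizing $|B|-\alpha|\pi_x(B)|-\beta|\pi_y(B)|$ over the integer parameters $(k,\ell)$ and the allowed placements of points recovers the upper bound. The main obstacle is precisely this discrete optimization: the dichotomy between (\ref{I-eq1}) and (\ref{I-eq2}) reflects whether an extra unit of thin extension improves $\rho$, which is captured exactly by (\ref{v0-ineq}), and verifying tightness at the boundary of the two regimes, together with ruling out unanticipated arrangements of initial points, requires a careful case analysis in the values of $\da$ and $\db$.
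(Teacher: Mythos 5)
Your route is genuinely different from the paper's, and the difference matters. The paper does not attack the variational characterization $I=\min_A\rho(\alpha,\beta,A)$ directly. Instead, Lemma~\ref{line-growth-I-recursion} derives a \emph{probabilistic} recursion for $I(a,b)$ via disjoint occurrence and the BK inequality — a row filling and a column filling are disjointly witnessed events, so
$\Span_{a,b}=[V_b\circ\Span_{a-1,b}]\cup[H_a\circ\Span_{a,b-1}]$ — and a matching lower bound on $\probsub{\Span_{a,b}}{p}$ by splitting the initial set into two independent halves. The recursion is then solved exactly by the sequence-weight formula of Lemma~\ref{I-w} and reduced to finitely many candidate sequences by the transposition Lemma~\ref{HV-VH}. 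You reach essentially the same combinatorial core (sequences of $H$'s and $V$'s, one transposition swap worth $\alpha-\beta$) but propose to get there by optimizing $\rho$ over spanning sets, which is a legitimate route in principle; the BK recursion is what makes the paper's version so short.

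Two concrete issues with your sketch. First, the lower bound on $I$ is where your approach is weakest. Writing "take $B\subseteq A$ achieving the maximum in the definition of $\rho$" and then lower-bounding is circular: to show $\rho(A)\ge$ (claimed) you must \emph{exhibit} a witness $B\subseteq A$ — presumably the points of $A$ that populate the line-filling sequence, pruned so that each filled line contributes only its genuinely fresh projection directions — and verify the inequality for that $B$. This is exactly the step the BK inequality replaces, and your accounting ("each filled vertical line contributes $1$ to $|\pi_x(B)|$ and at most $b$ to $|\pi_y(B)|$" — it is at most $b-\ell$, not $b$, and depends on the position in the sequence) would need to be carried through carefully; the eventual optimization over $(k,\ell)$ and the placement of points is precisely the content of Lemmas~\ref{I-w} and~\ref{HV-VH}. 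Second, in the degenerate case you invoke the support formula (\ref{ld-support-formula}) at $(a,0)$ with non-strict inequalities, whereas outside-the-support requires strict negativity; the boundary cases $a(1-\alpha)=\beta$ are handled by continuity of $I$, and when $\alpha=0$ the condition $a\le\da$ forces $a=0$, so these are easily patched, but the statement as written is slightly off. Overall the proposal is a viable alternative outline, but the lower-bound half is a genuine gap rather than a routine omission — it is substantially harder than what the paper does.
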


Theorem~\ref{lgld-thm} implies the asymptotic result below. 
As we will see in Section~\ref{ld-limit-bounds-subsec}, (\ref{lgld-E-eq}) implies that
the line growth achieves the lower bound  (\ref{IElb}), 
thus is in this sense the most efficient neighborhood growth dynamics. 

\begin{cor} \label{lgld-E}
If $\alpha,\beta\in[0,1]$ are fixed and $\min\{a,b\}\to\infty$, 
\begin{equation}\label{lgld-E-eq}
I(\alpha,\beta, R_{a,b})\sim\gamma(R_{a,b})(1-\max\{\alpha,\beta\}).
\end{equation}
\end{cor}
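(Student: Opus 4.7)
The plan is to derive the asymptotic from the general bounds on $I$ proved in Section~\ref{sec-ld-bounds}, avoiding the explicit formulas of Theorem~\ref{lgld-thm}. Set $M=\max(\alpha,\beta)$. Since $\gamma(R_{a,b})=ab$ by Proposition~\ref{lp-gamma}, the claim is equivalent to $I(\alpha,\beta,R_{a,b})\sim ab(1-M)$ as $\min(a,b)\to\infty$.

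The upper bound is immediate from Proposition~\ref{Iupper-bound1}:
$$I(\alpha,\beta,R_{a,b})\le (1-M)|R_{a,b}|=(1-M)ab.$$
This already settles the case $M=1$, where both the upper bound and the trivial lower bound $I\ge 0$ force $I=0$; the $\sim$ is then interpreted in the obvious way.

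For the lower bound when $M<1$, the key observation is that the perturbation operator of Section~\ref{sec-perturbations} acts transparently on a rectangle: $R_{a,b}^{\swarrow k}=R_{a-k,b-k}$ for every $0\le k\le \min(a,b)$. Combined with $\gamma(R_{a-k,b-k})=(a-k)(b-k)$ from Proposition~\ref{lp-gamma}, Proposition~\ref{Ilb-discrete} gives, for each such $k$,
$$I(\alpha,\beta,R_{a,b})\ge (a-k)(b-k)\left(1-M\left(1+\frac{1}{k+1}\right)\right).$$

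The remaining step is to choose $k=k(a,b)$ so that this bound matches $ab(1-M)$ asymptotically. Any choice with $k\to\infty$ and $k=o(\min(a,b))$ suffices, for example $k=\lfloor\min(a,b)^{1/2}\rfloor$: then $(a-k)(b-k)/(ab)\to 1$, while $M/(k+1)\to 0$ and $M<1$ is fixed, so $1-M(1+1/(k+1))\to 1-M$. Dividing the displayed inequality by $ab(1-M)$ and taking $\liminf_{\min(a,b)\to\infty}$ yields the matching lower bound. The only mild subtlety is that the entropy penalty $M/(k+1)$ coming from Lemma~\ref{k-proj} must be washed out while the geometric loss incurred by replacing $R_{a,b}$ with $R_{a-k,b-k}$ stays negligible; balancing these with $k=\lfloor\min(a,b)^{1/2}\rfloor$ is what makes the argument work, and there is no genuine obstacle beyond this bookkeeping. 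In particular, this argument bypasses the case analysis of Theorem~\ref{lgld-thm} entirely.
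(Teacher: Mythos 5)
Your proof is correct, and it genuinely departs from the paper's argument. The paper deduces Corollary~\ref{lgld-E} directly from the closed-form expressions (\ref{I-eq1}) and (\ref{I-eq2}) in Theorem~\ref{lgld-thm}, by noting that the discrepancy between $I(\alpha,\beta,R_{a,b})$ and $(1-\max\{\alpha,\beta\})ab$ is affine in $a$ and $b$, hence of lower order than $ab$. You instead combine the general upper bound of Proposition~\ref{Iupper-bound1} with the general lower bound of Proposition~\ref{Ilb-discrete}, using the transparent identity $R_{a,b}^{\swarrow k}=R_{a-k,b-k}$ together with $\gamma(R_{a-k,b-k})=(a-k)(b-k)$ and the choice $k=\lfloor\min(a,b)^{1/2}\rfloor$ to wash out both the entropy penalty $\max(\alpha,\beta)/(k+1)$ and the area lost in shrinking the rectangle. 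Your handling of the degenerate case $\max(\alpha,\beta)=1$ (where both sides vanish) is also fine. What your route buys is that it bypasses the case analysis of Theorem~\ref{lgld-thm} entirely and makes the result a purely structural consequence of the two bounding propositions; it is in fact the exact analogue, in the discrete setting, of how the paper proves the Euclidean version in Corollary~\ref{E-gamma-lg}, where Theorem~\ref{intro-IEbounds} (the continuum counterpart of Propositions~\ref{Ilb-discrete} and~\ref{Iupper-bound1}) is observed to squeeze $\IE(\alpha,\beta,\RE_{a,b})$ between matching bounds. What the paper's route buys, in exchange for the earlier labor, is the sharper conclusion that the error is $O(a+b)$ rather than merely $o(ab)$.
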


\begin{proof}[Proof of Corollary~\ref{lgld-E}]
This follows from (\ref{I-eq1}) and (\ref{I-eq2}), which show that 
the difference between the 
two sides of (\ref{lgld-E-eq}) is an affine function of $a$ and $b$. 
\end{proof}

We shorten $I(a,b)=I(\alpha,\beta, R_{a,b})$ for the rest of this section. 
We begin the proof of Theorem~\ref{lgld-thm} with a recursive formula for
$I(a,b)$. 

\begin{lemma}\label{line-growth-I-recursion}
For $a, b> 0$ and $(\alpha,\beta)\in [0,1)^2$,   
$$
I(a,b) = \min\left\{\left[0\vee(-\alpha + b(1-\beta))\right] + I(a-1,b), \left[0\vee(-\beta + a(1-\alpha))\right]+I(a,b-1)\right\}.
$$
Furthermore, $I(a,0) = I(0,b) = 0$.
\end{lemma}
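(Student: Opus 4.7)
The plan is to prove the recursion by matching upper and lower bounds, corresponding to the two branches of the minimum. The key structural observation for $\Z = R_{a,b}$ is that a point $x$ is added exactly when $\row(x,A) \ge a$ or $\col(x,A) \ge b$, and any such addition instantly fills the ambient row or column. Hence if a finite set $A$ spans and $a,b \ge 1$, then at time $0$ some row must already contain at least $a$ points of $A$ or some column must contain at least $b$ points of $A$. Once such a column $c$ fills, every remaining row count on $\bZ_+^2 \setminus L^v(c)$ rises by exactly $1$, so the residual dynamics there coincides with line growth for $R_{a-1,b}$; the row-fill case is symmetric. The base case is immediate since $R_{a,0}$ and $R_{0,b}$ are empty, making $\emptyset$ a spanning set of cost $0$.

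For the upper bound I would build two candidate spanners explicitly. Take $A_0$ to be a finite optimal spanner for $R_{a-1,b}$ with $\rho(A_0)=I(a-1,b)$, which exists by Theorem~\ref{ld-rate}, and let $A_1$ be a fresh column of $b$ points placed so that its single column and its $b$ rows avoid the rows and columns touched by $A_0$. The column $A_1$ fills immediately, and the residual dynamics on the other columns then follows the $R_{a-1,b}$ rule, so $A_0 \cup A_1$ spans $R_{a,b}$. Because $A_0$ and $A_1$ have disjoint $x$- and $y$-projections, any $B \subseteq A_0 \cup A_1$ decomposes additively under the functional $f(B) = |B| - \alpha|\pi_x(B)| - \beta|\pi_y(B)|$, giving $\rho(A_0 \cup A_1) = \rho(A_0) + \rho(A_1)$. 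Enumerating subsets of the column $A_1$ and using $1-\beta>0$ yields $\rho(A_1) = [0 \vee (b(1-\beta)-\alpha)]$, the maximum being attained by $\emptyset$ or by all $b$ points. This establishes $I(a,b) \le I(a-1,b) + [0 \vee (b(1-\beta)-\alpha)]$, and the row-fill construction gives the other term by symmetry.

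For the lower bound I would start with any spanner $A$ for $R_{a,b}$ and split according to whether a row or a column fills at time $0$. In the column case, pick such a $c$, and write $A=A' \sqcup A''$ with $A'' = A \cap L^v(c)$ (so $|A''| \ge b$) and $A' = A \setminus A''$. Since column $c$ fills first and then contributes $+1$ to every transverse row count, the set $A'$ spans $\bZ_+^2$ under $R_{a-1,b}$ dynamics, so $\rho(A') \ge I(a-1,b)$. The crucial sub-additivity step exploits the fact that $\pi_x(A') \subseteq \bZ_+ \setminus \{c\}$ and $\pi_x(A'') \subseteq \{c\}$ are disjoint: combining optimizers $B' \subseteq A'$ and $B'' \subseteq A''$ gives $f(B' \cup B'') \ge f(B') + f(B'')$, since the $\alpha$-term splits exactly while the $\beta$-term can only improve. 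Hence $\rho(A) \ge \rho(A') + \rho(A'') \ge I(a-1,b) + [0 \vee (b(1-\beta)-\alpha)]$, where the second inequality uses that $A''$ lies on a single column with at least $b$ points.

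The main obstacle is precisely the sub-additivity step in the lower bound, since $\rho$ is a supremum and not obviously additive across disjoint unions. The resolution is that splitting $A$ along a single distinguished line forces the transverse projection to be disjoint, which is exactly enough to make the $\alpha$-term split additively; the $\beta$-term can only bring a gain. A secondary care point is verifying that $A'$ inherits the spanning property for $R_{a-1,b}$, which follows because the filled column contributes exactly $+1$ to every transverse count on the complement, so the addition rule there agrees term-by-term with the $R_{a-1,b}$ rule.
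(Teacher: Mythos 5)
Your argument is correct, but it follows a genuinely different route from the paper. The paper proves the recursion probabilistically, working directly with $\probsub{\Span_{a,b}}{p}$: it decomposes $\Span_{a,b}=[V_b\circ\Span_{a-1,b}]\cup[H_a\circ\Span_{a,b-1}]$ (disjoint occurrence), applies the BK inequality for the lower bound on $I$, and a two-round sprinkling argument (splitting $\omega_0$ into two independent density-$p/2$ layers) for the upper bound. You work instead entirely on the deterministic side, through the variational characterization $I(\alpha,\beta,\Z)=\inf_{A\in\cA}\rho(\alpha,\beta,A)$ from Theorem~\ref{ld-rate}, decomposing optimal or arbitrary spanning sets along the first line that fills. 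Both proofs hinge on the same structural observation --- spanning $R_{a,b}$ needs an initial row of $\ge a$ or an initial column of $\ge b$ points --- but you apply it to sets rather than events. Your route avoids BK and sprinkling entirely once the variational formula is in hand; the paper's route is self-contained at the level of probabilities. Your sub-additivity step in the lower bound is exactly right: splitting along $L^v(c)$ forces the $x$-projections to be disjoint, so for $f(B)=|B|-\alpha|\pi_x(B)|-\beta|\pi_y(B)|$ the $\alpha$-term is exactly additive and the $\beta$-term can only help, giving $\rho(A)\ge\rho(A')+\rho(A'')$. The computation $\rho(A'')\ge 0\vee(b(1-\beta)-\alpha)$ and the base case are also fine.

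One place that deserves a bit more care is the upper-bound spanning claim. You assert that once $A_1$'s column is full, ``the residual dynamics on the other columns then follows the $R_{a-1,b}$ rule, so $A_0\cup A_1$ spans $R_{a,b}$.'' The first part is true: on $\bZ_+^2\setminus L^v(c)$, the $R_{a,b}$ rule with $L^v(c)$ full coincides with the $R_{a-1,b}$ rule. But that residual process lives on $\bZ_+^2\setminus L^v(c)$, whereas $A_0$ is assumed to span the \emph{full} $\bZ_+^2$ for $R_{a-1,b}$; these are not literally the same statement, and a naive time-by-time coupling $\cT_{a,b}^t(A_0\cup L^v(c))\supseteq\cT_{a-1,b}^t(A_0)$ breaks down at the step where a row has count $a-1$ in the $R_{a-1,b}$ process and its $L^v(c)$-point is already occupied there. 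The cleanest fix is permutation invariance (property G3): $\bZ_+^2\setminus L^v(c)$ with the $R_{a-1,b}$ rule is, after relabeling columns, an isomorphic copy of $(\bZ_+^2, R_{a-1,b})$, and the relabeling can be extended to a permutation of $\bZ_+$ fixing the row/column multiset of $A_0$, hence preserving its spanning. The analogous claim in your lower bound --- that $A'=A\setminus L^v(c)$ spans for $R_{a-1,b}$ --- is correct and does admit a direct induction on $t$ showing $\cT_{a-1,b}^t(A')\supseteq\cT_{a,b}^t(A)\setminus L^v(c)$, using that $L^h(x)\cap L^v(c)$ is a single point.
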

\begin{proof}
Let $H_a$ be the event that there is a row with at least $a$ initially occupied points, and $V_b$ be the event that there is a column with at least $b$ initially occupied points.  Also, let $\Span_{x,y}$ be the event that $\omega_0$ spans for $\cZ = R_{x,y}$.  Then,
$$
\Span_{a,b} = \left[ V_b \circ \Span_{a-1,b}\right] \cup \left[H_a \circ \Span_{a,b-1}\right],
$$
where $\circ$ denotes disjoint occurrence.
By the BK inequality and Markov's inequality,
$$
\begin{aligned}
\probsub{\Span_{a,b}}{p} &\leq \probsub{V_b}{p}\probsub{\Span_{a-1,b}}{p} +\probsub{H_a}{p} \probsub{\Span_{a,b-1}}{p}\\ 
&\leq 2 \max\left\{([N(Mp)^b]\wedge 1) \probsub{\Span_{a-1,b}}{p}, ([M(Np)^a]\wedge 1) \probsub{\Span_{a,b-1}}{p}\right\},
\end{aligned}
$$
which implies the lower bound on $I(a,b)$.  For the upper bound, observe that the density $p$ initial set $\omega_0$ dominates the union of two independent initial sets, $\omega_0^1, \omega_0^2$, each with density $p/2$.   Also, note that the probability of a fixed column being empty (and so not participating in the event $\Span_{a-1,b}$) in the initial configuration $\omega_0^2$ is at least $1-Mp/2 \ge 1/2$ for small $p$ (likewise for rows).  Furthermore, for small enough $p$
$$
\begin{aligned}
\probsub{V_b^c}{p/2} &\leq \left(1- \frac12{M\choose b} (p/2)^b\right)^N \\
&\leq \exp\left[-N(Mp/3b)^b\right] \le \begin{cases}
1 - (1/2) N(Mp/3b)^b & N(Mp/3b)^b < 1/2 \\
e^{-1/2} & N(Mp/3b)^b \ge 1/2,
\end{cases}
\end{aligned}
$$
and likewise for $H_a$.
Therefore, for small enough $p$,
\begin{align*}
&\probsub{\Span_{a,b}}{p} \ge \frac12 \max\left\{\probsub{V_b}{p/2} \probsub{\Span_{a-1,b}}{p/2}, \probsub{H_a}{p/2} \probsub{\Span_{a,b-1}}{p/2} \right\} \\
&\quad \ge  \frac14 \max\left\{([N(Mp/3b)^b]\wedge (1/2)) \probsub{\Span_{a-1,b}}{p/2}, ([M(Np/3a)^a]\wedge (1/2)) \probsub{\Span_{a,b-1}}{p/2}\right\}.
\end{align*}
This gives the upper bound on $I(a,b)$.
\end{proof}

Let 
\begin{equation*}
\begin{aligned}
&h_0=\left\lceil \left(b-\frac\alpha{1-\beta}\right)\vee 0\right\rceil
=\left(b-\db\right)\vee 0
, \\
&v_0=\left\lceil \left(a-\frac\beta{1-\alpha}\right)\vee 0\right\rceil
=\left( a-\da\right)\vee 0.
\end{aligned}
\end{equation*}
Thus, $h_0$ is the smallest number of fully occupied rows that
make the probability of spanning of a fixed column at least $p^{o(1)}$ (as $p\to 0$), 
and $v_0$ is the analogous quantity for column occupation. 

We now define a set $\cS$ of finite sequences, denoted by $\vec S=(S_1,S_2,\ldots,S_K)$. 
By convention, we let $\cS$ 
consist only of the empty sequence when either $h_0=0$ or $v_0=0$. 
Otherwise, $\cS$ consists of sequences 
$\vec S$ of length 
$K\le h_0+v_0-1$, with each coordinate $S_i\in \{H,V\}$, and the following property. 
Let $h_i=h_i(\vec S)$ and $v_i=v_i(\vec S)$ be the respective numbers of $H$s and 
$V$s in $(S_1,\ldots, S_{i-1})$;
if $S_K=H$, then $h_K=h_0-1$ and $v_K\le v_0-1$, while if 
$S_K=V$, then $h_K\le h_0-1$ and $v_K= v_0-1$. 
Every sequence represents a way to build
a spanning configuration for the line growth with $\Z=R_{a,b}$. We define
the {\it weight\/} of  $\vec S\in \cS$ as
\begin{equation}\label{weight-sequence}
w(\vec S)=\sum_{i:S_i=H} (-\beta +(1-\alpha)a-(1-\alpha)v_i)+
\sum_{i:S_i=V} (-\alpha +(1-\beta)b-(1-\beta)h_i).
\end{equation}

\begin{lemma}\label{I-w} For all $a,b\ge 0$, 
$$
I(a,b)=\min\{w(\vec S):\vec S\in \cS\}.
$$
\end{lemma}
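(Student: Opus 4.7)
My plan is to prove Lemma~\ref{I-w} by induction on $a+b$, using the recursion from Lemma~\ref{line-growth-I-recursion}. The sequences in $\cS$ are designed to encode the branching choices in this recursion: each $V$ corresponds to saturating a column (and reducing the effective dimension from $a$ to $a-1$), while each $H$ corresponds to saturating a row. The sequence terminates precisely when the remaining rectangle $R_{a-v,b-h}$ enters the regime where all further saturations are free.

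For the base case, I will show that $I(a,b)=0$ exactly when $a \le \da$ or $b \le \db$, which coincides with the case $\cS=\{\emptyset\}$. If $a\le \da$, then $-\beta+a(1-\alpha)\le 0$, so the $H$-branch of the recursion has cost $0$, and iterating Lemma~\ref{line-growth-I-recursion} gives $I(a,b)\le I(a,b-1)\le\cdots\le I(a,0)=0$; similarly if $b\le \db$. Conversely, when $a>\da$ and $b>\db$ both $c_V := -\alpha+b(1-\beta)>0$ and $c_H := -\beta+a(1-\alpha)>0$, so both branches in the recursion contribute strictly positive cost and $I(a,b)>0$. In either case, $w(\emptyset)=0$ matches $I(a,b)$ whenever $\cS=\{\emptyset\}$.

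For the inductive step, assume $a>\da$ and $b>\db$. The heart of the argument is a weight-preserving bijection between sequences $\vec S\in\cS$ (associated with $R_{a,b}$) starting with $V$ and all sequences $\vec S'$ in the corresponding set for $R_{a-1,b}$, obtained simply by deleting $S_1$. If $\vec S=(V,S_2,\ldots,S_K)$ and $\vec S'=(S_2,\ldots,S_K)$, then from the definitions $h_j(\vec S')=h_{j+1}(\vec S)$ and $v_j(\vec S')=v_{j+1}(\vec S)-1$; furthermore, the parameters for $R_{a-1,b}$ are $h_0'=h_0$ and $v_0'=v_0-1$. Shifting indices in~(\ref{weight-sequence}) using $v_{j+1}(\vec S)=v_j(\vec S')+1$ collapses the calculation to
\begin{equation*}
w(\vec S) \;=\; c_V + w(\vec S').
\end{equation*}
A routine check confirms that the terminal conditions ($h_K=h_0-1$, $v_K\le v_0-1$ when $S_K=H$; or $v_K=v_0-1$, $h_K\le h_0-1$ when $S_K=V$) descend from $\vec S$ to $\vec S'$ precisely as required by membership in the $\cS$ associated with $R_{a-1,b}$. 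A symmetric bijection pairs sequences with $S_1=H$ to sequences for $R_{a,b-1}$ with the analogous relation $w(\vec S)=c_H+w(\vec S')$. Combined with the inductive hypothesis applied to $R_{a-1,b}$ and $R_{a,b-1}$, this yields
\begin{equation*}
\min_{\vec S\in\cS}w(\vec S) \;=\; \min\{c_V+I(a-1,b),\;c_H+I(a,b-1)\} \;=\; I(a,b).
\end{equation*}

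The main obstacle is largely bookkeeping: verifying that the terminal conditions in the definition of $\cS$ translate correctly under the deletion, and that the boundary cases $v_0=1$ or $h_0=1$ are handled. In such a boundary situation the partner rectangle lies in the base-case regime (e.g.,~$v_0'=0$), so its associated set is $\{\emptyset\}$, and the bijection pairs it with the singleton sequence $\{(V)\}\subseteq\cS$ of weight $c_V$, matching the recursion exactly.
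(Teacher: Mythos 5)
Your proposal is correct and follows essentially the same route as the paper's proof: the paper also establishes the lemma by observing that the right-hand side satisfies the recursion of Lemma~\ref{line-growth-I-recursion}, and leaves the verification as a straightforward check. Your write-up fills in exactly those details, namely the weight-preserving bijection between $V$-starting (resp.~$H$-starting) sequences for $R_{a,b}$ and all sequences for $R_{a-1,b}$ (resp.~$R_{a,b-1}$), the verification that $h_0'=h_0$, $v_0'=v_0-1$, and the treatment of the boundary regime $a\le\da$ or $b\le\db$ where $\cS=\{\emptyset\}$ and both sides are $0$; all of these checks are correct.
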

\begin{proof}
It is clear that the statement holds if either $a=0$ or $b=0$, 
where $\cS$ consists only of the empty sequence and $I(a,b)=0$. 
It is also straightforward to check by induction that the right-hand side satisfies the 
same recursion as the one for $I(a,b)$ given in Lemma~\ref{line-growth-I-recursion}. 
\end{proof}

Next, we look at the effect of a single transposition of $H$ and $T$ to the weight of 
$\vec S$. Fix an $i\le K-2$ so that $S_i=H$, $S_{i+1}=V$, and denote $\vec S^{HV}=\vec S$. 
Let $\vec S^{VH}$ be the 
sequence obtained from $\vec S$ by transposing $H$ and $V$ at $i$ and $i+1$. Note that 
$\vec S^{VH}\in \cS$ by the restriction on $i$. The following lemma is a simple observation. 

\begin{lemma}\label{HV-VH} For any $i\le K-2$, 
$
w(\vec S^{VH})-w(\vec S^{HV})=\alpha-\beta. 
$
\end{lemma}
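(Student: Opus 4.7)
The transposition is a purely local move: if $\vec S^{HV}$ differs from $\vec S^{VH}$ only by exchanging the letters at positions $i$ and $i+1$, then for every index $j\notin \{i,i+1\}$ the prefix $(S_1,\ldots,S_{j-1})$ contains the same multiset of letters in both sequences, so the counts $h_j$ and $v_j$ are unchanged. Hence the plan is to isolate the contributions to $w$ coming from positions $i$ and $i+1$, and to show that their difference collapses to $\alpha-\beta$.

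First I would record the two pairs of counts explicitly. In $\vec S^{HV}$ we have $(h_i,v_i)=(h,v)$ and $(h_{i+1},v_{i+1})=(h+1,v)$, where I write $h=h_i(\vec S^{HV})$ and $v=v_i(\vec S^{HV})$; in $\vec S^{VH}$ the values at position $i$ are still $(h,v)$ but at position $i+1$ they become $(h,v+1)$. Substituting these into the definition (\ref{weight-sequence}), the sum of the two contributing terms to $w(\vec S^{HV})$ is
\begin{equation*}
\bigl[-\beta+(1-\alpha)a-(1-\alpha)v\bigr]+\bigl[-\alpha+(1-\beta)b-(1-\beta)(h+1)\bigr],
\end{equation*}
and the sum of the two contributing terms to $w(\vec S^{VH})$ is
\begin{equation*}
\bigl[-\alpha+(1-\beta)b-(1-\beta)h\bigr]+\bigl[-\beta+(1-\alpha)a-(1-\alpha)(v+1)\bigr].
\end{equation*}

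Subtracting, all of the $a$, $b$, $h$, and $v$ terms cancel, and one is left with $-(1-\alpha)+(1-\beta)=\alpha-\beta$, as claimed. There is no real obstacle here; the only thing to be a little careful about is to confirm that the counts $h_j,v_j$ at indices $j>i+1$ are genuinely preserved, which is exactly the observation that the transposition fixes the multiset of the first $i+1$ entries. The restriction $i\le K-2$ in the hypothesis is needed only so that $\vec S^{VH}\in\cS$, and plays no role in the arithmetic.
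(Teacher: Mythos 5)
Your computation is correct, and it is the direct verification that the paper describes simply as ``a simple observation'' (no proof is given there). You properly identify that the only two terms whose arguments change are those at positions $i$ and $i+1$, that $h_{i+1}$ and $v_{i+1}$ shift by one depending on whether the earlier letter is $H$ or $V$, and that everything else cancels to leave $(1-\beta)-(1-\alpha)=\alpha-\beta$. The remark about $i\le K-2$ being needed only to keep $\vec S^{VH}$ in $\cS$ is also accurate.
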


It is an immediate consequence of Lemma~\ref{HV-VH} that we only need to 
look for minimizers among sequences $H^{h_0-1}V^{v'}H$, $V^{v'}H^{h_0}$, 
$V^{v_0-1}H^{h'}V$, $H^{h'}V^{v_0}$, where $0\le h'\le h_0-1$ and 
$0\le v'\le v_0-1$. It is also clear from (\ref{weight-sequence}) 
that the weight is in each case a linear function 
of $v'$ or $h'$ and thus the minimum is achieved at an endpoint.
This already gives the formula for $I$ as a minimum of $8$ expressions, 
which we simplify in the proof below. 

\begin{proof}[Proof of Theorem~\ref{lgld-thm}]
We will assume $h_0\ge 1$ and $v_0\ge 1$. We will also assume that $\alpha\ge \beta$, 
as otherwise we obtain the result by exchanging $\alpha$ and $\beta$ and $a$ and $b$. 
Therefore, by  Lemma~\ref{HV-VH}, the minimizing sequence in Lemma~\ref{I-w} must 
be have one of two forms: $H^{h_0-1}V^{v'}H$
or $H^{h'}V^{v_0}$, with $0\le h'\le h_0-1$ and 
$0\le v'\le v_0-1$. We have
\begin{equation*}
\begin{aligned}
&w(H^{h_0-1}V^{v'}H)\\&=
(-\beta+(1-\alpha)a)(h_0-1)+(-\alpha+(1-\beta)(b-h_0+1))v'+(-\beta+(1-\alpha)(a-v'))\\
&=((1-\beta)(b-h_0)-\beta)v'+(-\beta+(1-\alpha)a)h_0,
\\
&w(H^{h'}V^{v_0})\\&=
(-\beta+(1-\alpha)a)h'+(-\alpha+(1-\beta)(b-h'))v_0\\
&=(-\beta+(1-\alpha)a-(1-\beta)v_0)h'+(-\alpha+(1-\beta)b)v_0.
\end{aligned}
\end{equation*}
The coefficient in front of $h'$ in $w(H^{h'}V^{v_0})$ equals 
$$
-\beta-(\alpha-\beta)a+(1-\beta)(a-v_0)\le -(\alpha-\beta)a-
\frac{\beta(\alpha-\beta)}{1-\alpha}\le 0,
$$ 
as we assumed 
$\beta\le \alpha$. Therefore, we take $h'=h_0-1$ to minimize $w(H^{h'}V^{v_0})$. 
Furthermore, 
the coefficient in front of $v'$ in $w(H^{h_0-1}V^{v'}H)$ is 
nonpositive when 
(\ref{v0-ineq}) holds, 
in which case we take $v'=v_0-1$ to minimize $w(H^{h_0-1}V^{v'}H)$;
$v'=0$ is the optimal choice when (\ref{v0-ineq}) does 
not hold. This, after some algebra, gives (\ref{I-eq1}) and (\ref{I-eq2}). 
\end{proof}

\subsection{Large deviations for bootstrap percolation}\label{sec-bpld}
As a second special case, we compute the large deviation rate for bootstrap percolation when $\alpha=\beta$.

\begin{prop}\label{bpld-prop}
Suppose $\alpha = \beta \in [0,1)$, $N = p^{-\alpha}$ and $\Ttheta$ is the Young diagram corresponding to threshold $\theta$ bootstrap percolation.   Let
$$
k = \min_{(u,v) \in \partial_o(\Ttheta)} \max\{u,v\} = \ceil{\theta/2}.
$$
If $m= \floor{\frac{1}{1-\alpha}}\le k$, then for even $\theta$,
\begin{equation}\label{bp-ld-even}
I(\alpha,\alpha, \Ttheta) = (k+m)(k-m+1) - \alpha(k+m+2)(k-m+1),
\end{equation}
and for odd $\theta$,
\begin{equation}\label{bp-ld-odd}
I(\alpha,\alpha, \Ttheta) = [(k+m-1)(k-m)+k] - \alpha\cdot [(k+m+1)(k-m)+k+1].
\end{equation}
In both cases, $I(\alpha,\alpha, \Ttheta) = 0$ for $\alpha\ge k/(k+1)$.
\end{prop}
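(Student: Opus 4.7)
The plan is to reduce $I(\alpha,\alpha,\Ttheta)$ to the line growth rate $I(\alpha,\alpha,R_{a^*,b^*})$ for a specific inscribed rectangle and then invoke Theorem~\ref{lgld-thm}. I take $(a^*,b^*)=(k+1,k)$ for even $\theta=2k$ and $(a^*,b^*)=(k,k)$ for odd $\theta=2k-1$; in either case $a^*+b^*=\theta+1$, so $R_{a^*,b^*}\subseteq \Ttheta$ as zero-sets.

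The lower bound is an immediate monotonicity consequence: since the bootstrap rule associated to $\Ttheta$ is strictly harder to trigger than the line growth rule associated to the smaller $R_{a^*,b^*}$, every $\Ttheta$-spanning set also spans for line growth on $R_{a^*,b^*}$, and the variational characterization from Theorem~\ref{ld-rate} yields $I(\alpha,\alpha,\Ttheta)\ge I(\alpha,\alpha,R_{a^*,b^*})$. For the matching upper bound I use the explicit construction underlying the proof of Theorem~\ref{lgld-thm}: a minimum-weight sequence $\vec S\in\cS$ determines a line growth spanning set $A^\star$ achieving $\rho(\alpha,\alpha,A^\star)=I(\alpha,\alpha,R_{a^*,b^*})$, with the "extra" initial point added at each stage of the sequence placed in a fresh row and a fresh column so that $|\pi_x(A^\star)|+|\pi_y(A^\star)|$ is maximized. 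I then verify that this same $A^\star$ spans $\Ttheta$ by an induction along $\vec S$: since $\theta=a^*+b^*-1$, at each step where the line growth cascade is about to fill a new line, the previously filled lines have already pushed the cross counts high enough that the corresponding row-plus-column counts reach $\theta$ under the bootstrap rule, perhaps after a short delay during which the line accumulates its final points one at a time.

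Once $I(\alpha,\alpha,\Ttheta)=I(\alpha,\alpha,R_{a^*,b^*})$ is secured, the closed form follows by substituting $\beta=\alpha$ and $\da=\db=m-1$ into the line growth formula (\ref{I-eq1}); the hypothesis (\ref{v0-ineq}) reduces to $\alpha\ge(m-1)/m$, which is automatic in the regime $\lfloor 1/(1-\alpha)\rfloor=m$. Collecting terms using the identity $k(k+1)-m(m-1)=(k+m)(k-m+1)$ in the even case, and the analogous manipulation in the odd case, reduces the expression to exactly the stated formulas. The support claim $I=0$ for $\alpha\ge k/(k+1)$ follows directly from Theorem~\ref{intro-ld-rate}: at the critical corner $(u,v)\in\partial_o\Ttheta$ with $\max(u,v)=k$, both $u(1-\alpha)-\alpha$ and $v(1-\alpha)-\alpha$ become nonpositive once $\alpha\ge k/(k+1)$, so $(\alpha,\alpha)$ exits $\supp I(\cdot,\cdot,\Ttheta)$.

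The principal obstacle is the cascade verification in the upper bound: the line growth optimal configuration $A^\star$, designed to fill a rectangle-shaped zero set, must be shown to also trigger the stricter bootstrap dynamics uniformly across the $m$-regimes, which requires careful inductive bookkeeping of row and column counts along the sequence $\vec S$.
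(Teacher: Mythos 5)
Your reduction has a genuinely useful idea in the lower bound: since $R_{a^*,b^*}\subseteq\Ttheta$ as zero-sets, every $\Ttheta$-spanning set spans for the line growth $R_{a^*,b^*}$, and the variational characterization in Theorem~\ref{ld-rate} then gives $I(\alpha,\alpha,\Ttheta)\ge I(\alpha,\alpha,R_{a^*,b^*})$ at once. Your algebraic simplification of formula~(\ref{I-eq1}) at $\beta=\alpha$, $\da=\db=m-1$ is also correct, and the identity $I(\alpha,\alpha,\Ttheta)=I(\alpha,\alpha,R_{a^*,b^*})$ is in fact true. But the upper bound half of your argument contains a genuine gap: the line growth optimal configuration $A^\star$ does \emph{not} span $\Ttheta$, and the "cascade verification" you flag as the principal obstacle cannot be carried out.

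Concretely, take $\theta=4$, $k=2$, $m=1$ (so $\alpha<1/2$), $(a^*,b^*)=(3,2)$. The optimal sequence is $\vec S=HVVV$, giving $A^\star$ consisting of three points on one row, say $\{(1,0),(2,0),(3,0)\}$, plus three singleton points $(4,r_1),(5,r_2),(6,r_3)$, each on a fresh row and fresh column. In the bootstrap dynamics, the points $(4,0),(5,0),(6,0)$ become occupied (each has $\row=3$, $\col=1$), and row $0$ then fully occupies. But at that stage every unoccupied vertex has $\row+\col\le 3<\theta$: each fresh column contains at most $2$ occupied points (one from row $0$, one singleton), and each fresh row contains at most $1$ occupied point. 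The dynamics stalls, so $A^\star$ does not span $T_4$. The obstruction is structural: the line growth cascade fills lines one threshold at a time, whereas the bootstrap cascade needs crossing events, and placing the singleton points on maximally fresh rows and columns (as required to minimize $\rho$) destroys the crossings.

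A spanning set achieving the optimal exponent for $T_4$ does exist and has the same size and projections, but a different shape. For instance $A=\{(1,0),(2,0),(0,1),(0,2),(3,3),(4,4)\}$ has $|A|=6$, $|\pi_x(A)|=|\pi_y(A)|=5$, hence $\rho(\alpha,\alpha,A)=6-10\alpha$, and one checks it spans: $(0,0)$ triggers first (cross of size $4$), then $(3,0),(0,3),(4,0),(0,4)$, then row $0$ and column $0$ fill, and the diagonal points $(3,3),(4,4)$ propagate the cascade. This is the kind of construction from Equation (5.1) of~\cite{GHPS}, which the paper cites directly for the upper bound. The paper's lower bound, incidentally, is obtained not by your monotone comparison but by an iterated BK inequality: $\probsub{\Span_\theta}{p}\le\probsub{A_\theta}{p}\probsub{\Span_{\theta-2}}{p}$, peeling off one cross at a time; this gives the same exponent as your line growth comparison but factors in a way that directly matches the GHPS construction. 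Your lower bound argument is a legitimate alternative; your upper bound argument needs to be replaced by a cross-based construction (or a citation to~\cite{GHPS}).
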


A consequence of Proposition~\ref{bpld-prop} is that bootstrap percolation also achieves the lower bound (\ref{IElb}), at least along the diagonal $\alpha=\beta$.

\begin{cor}\label{bp-ld-limit}
As $\theta\to\infty$, for every fixed $\alpha\in [0,1]$
$$
I(\alpha,\alpha,\Ttheta) \sim \frac{\theta^2}{4}(1-\alpha) \sim \gamma(\Ttheta)(1 - \alpha).
$$
\end{cor}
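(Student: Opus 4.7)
The plan is to derive the corollary directly from the explicit formulas in Proposition~\ref{bpld-prop}, as they already give $I(\alpha,\alpha,\Ttheta)$ to all orders for large enough $\theta$. The case $\alpha=1$ is trivial: all three expressions vanish, so we fix $\alpha\in[0,1)$ for what follows.

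First I would observe that $m=\lfloor 1/(1-\alpha)\rfloor$ depends only on $\alpha$, while $k=\lceil\theta/2\rceil\to\infty$. Hence for all sufficiently large $\theta$ we have $m\le k$, so (\ref{bp-ld-even}) and (\ref{bp-ld-odd}) apply. Next, specializing these formulas at $\alpha=0$ (so $m=1$) yields
\begin{equation*}
\gamma(\Ttheta)=I(0,0,\Ttheta)=
\begin{cases}
(k+1)k=\theta(\theta+2)/4 & \theta\text{ even},\\
(k-1)k+k=k^2=(\theta+1)^2/4 & \theta\text{ odd},
\end{cases}
\end{equation*}
in both cases $\gamma(\Ttheta)=\theta^2/4+O(\theta)\sim \theta^2/4$. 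This takes care of the second equivalence $\gamma(\Ttheta)(1-\alpha)\sim(\theta^2/4)(1-\alpha)$.

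For the first equivalence I would factor (\ref{bp-ld-even}) as $(k-m+1)\bigl[(k+m)(1-\alpha)-2\alpha\bigr]$, and regard this as a polynomial in $k$ with $m$ and $\alpha$ fixed. Its leading term is $k^2(1-\alpha)$ and the remainder is $O(k)$. The formula (\ref{bp-ld-odd}) for odd $\theta$ rearranges analogously: the top-degree part is $(k+m-1)(k-m)-\alpha(k+m+1)(k-m)=(k-m)[(k+m)(1-\alpha)-(\alpha-\alpha)]$, which again equals $k^2(1-\alpha)+O(k)$. In both parities, $I(\alpha,\alpha,\Ttheta)=k^2(1-\alpha)+O(\theta)=(\theta^2/4)(1-\alpha)+O(\theta)$, and dividing through by $\theta^2/4$ gives $I(\alpha,\alpha,\Ttheta)/(\theta^2/4)\to 1-\alpha$.

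There is essentially no obstacle here; the argument is entirely a bookkeeping exercise on the closed forms from Proposition~\ref{bpld-prop}. The only place requiring care is keeping the even and odd parities of $\theta$ straight when reading off the leading and subleading terms, and the (minor) observation that for $\alpha$ fixed the threshold $\alpha\ge k/(k+1)$ beyond which $I=0$ is not activated as $\theta\to\infty$, so the explicit formulas really do govern the limit.
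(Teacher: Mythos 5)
Your argument matches the paper's: both plug into the explicit formulas of Proposition~\ref{bpld-prop}, note that for fixed $\alpha<1$ the constraint $m\le k$ holds once $\theta$ is large, read off the leading $k^2(1-\alpha)=(\theta^2/4)(1-\alpha)+O(\theta)$ term, identify $\gamma(\Ttheta)\sim\theta^2/4$ via $m=1$, and dispose of $\alpha=1$ separately. One algebraic slip in the odd-$\theta$ factoring: the bracket should be $(k+m)(1-\alpha)-(1+\alpha)$ rather than $(k+m)(1-\alpha)-(\alpha-\alpha)$, though this does not affect the $O(k)$ bound or the conclusion.
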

\begin{proof}
For fixed $\alpha \in [0,1)$ and large enough $\theta$, $m = \floor{\frac{1}{1-\alpha}}$, so equations~\eqref{bp-ld-even} and~\eqref{bp-ld-odd} can be written
$$
I(\alpha,\alpha,\Ttheta) = \frac{\theta^2}{4}(1-\alpha) + O(\theta).
$$
The fact $\gamma(\Ttheta) \sim \theta^2/4$ is implied by sending $\alpha\to 0$ in~\eqref{bp-ld-even} and~\eqref{bp-ld-odd} and observing that $m = 1$ for small $\alpha$.  The case $\alpha = 1$ follows since $I(1,1,\Ttheta) = 0$ for all $\theta$.
\end{proof}

\begin{proof}[Proof of Proposition~\ref{bpld-prop}]
Suppose $\alpha = \beta \in (0,1)$, $N = p^{-\alpha}$ and $\Ttheta$ is the Young diagram corresponding to threshold $\theta$ bootstrap percolation. Observe that $I(\alpha, \alpha, \Ttheta) = 0$ for $\alpha\ge k/(k+1)$.

First suppose that $\theta = 2k$ and $\alpha < m/(m+1)$ where $m \in \{1, \ldots, k\}$.  Denote by $A_j$ the event that there exists a vertex, $x$, such that $\row(x,\omega_0)+\col(x,\omega_0) \geq j$, and denote by $\Span_j$ the event that $\omega_0$ spans for threshold $j$ bootstrap percolation. Then by the BK inequality
\begin{equation}\label{cross bound BK}
\begin{aligned}
\probsub{\Span_\theta}{p} \leq \probsub{A_\theta \circ \Span_{\theta-2}}{p} 
\leq \probsub{A_\theta}{p} \probsub{\Span_{\theta-2}}{p}.
\end{aligned}
\end{equation}
Iterating (\ref{cross bound BK}) gives 
\begin{equation}\label{cross bound}
\begin{aligned}
\probsub{\Span_\theta}{p}  
\leq \prod_{j=0}^{k-m} \probsub{A_{\theta-2j}}{p}
\leq \prod_{j=0}^{k-m} N^2 (2Np)^{2(k-j)} 
\leq  C \prod_{j=0}^{k-m} p^{2(k-j) - 2\alpha(k-j+1)}.
\end{aligned}
\end{equation}
Observe that in the last expression above, 
the assumption $\alpha< m/(m+1)$ guarantees that each factor is $o(1)$.  Therefore,
$$
\liminf_{p\to 0} \frac{\log\probsub{\Span_\theta}{p} }{\log p} \ge (k+m)(k-m+1) - \alpha(k+m+2)(k-m+1)
$$
whenever $0\le \frac{m-1}{m} \le \alpha < \frac{m}{m+1} \le \frac{k}{k+1}$.

Suppose now that $\theta = 2k-1$, $m\in \{1, \ldots, k\}$ and $\alpha <\frac{m}{m+1}$. Let $B_j$ denote the event that there exists a vertex $x$ such that $\row(x,\omega_0)\ge j$ or $\col(x,\omega_0) \geq j$.  Then by the BK inequality and inequality \eqref{cross bound},
\begin{equation}\label{line bound}
\begin{aligned}
\probsub{\Span_\theta}{p} &\leq \probsub{B_k \circ \Span_{2(k-1)}}{p}\\
&\leq C N^{k+1}p^k \prod_{j=1}^{k-m} N^2(N p)^{2(k-j)} \\
&= C p^{k - \alpha(k+1)} \prod_{j=1}^{k-m} p^{2(k-j) - 2\alpha(k-j+1)}.
\end{aligned}
\end{equation}
Therefore,
$$
\liminf_{p\to 0} \frac{\log\probsub{\Span_\theta}{p} }{\log p} \ge [(k+m-1)(k-m)+k] - \alpha\cdot [(k+m+1)(k-m)+k+1]
$$
whenever $0\le \frac{m-1}{m} \le \alpha < \frac{m}{m+1} \le \frac{k}{k+1}$.

Equation $(5.1)$ in~\cite{GHPS} gives the corresponding upper bounds on $\limsup_{p\to 0} \frac{\log\probsub{\Span_\theta}{p} }{\log p}$.
\end{proof}


\section{Euclidean limit of neighborhood growth}\label{sec-E-limit}

\newcommand{\rhoE}{\widetilde\rho}
\newcommand{\orho}{\overline\rho}
\newcommand{\oI}{\overline I}
\newcommand{\tI}{\widetilde{\mathcal I}}
\newcommand{\tF}{\widetilde F}
\newcommand{\tG}{\widetilde G}

The main aim of this section is the proof of Theorem~\ref{intro-d-E-theorem}, which we complete 
in Section~\ref{subsec-E-limit-proof}. As remarked 
in the Introduction, we need substantial information on the design 
of optimal spanning sets for $I(\alpha,\beta,\Z)$ when $\Z$ is large. This is given 
in Section~\ref{subsec-enhanced}, where we show that for large $\Z$, $I(\alpha,\beta,\Z)$ is well approximated by another extremal quantity that has 
a much more transparent continuum limit.  This limiting quantity 
is defined in Section~\ref{subsec-E-limit-definitions},
and the convergence is proved in Section~\ref{subsec-E-limit-enhanced}.
An analogous treatment for $\gamma_{\rm thin}$ is sketched in 
Section~\ref{subsec-E-limit-gamma-bar}.  The proof of Theorem~\ref{intro-d-E-theorem} is concluded in Section~\ref{subsec-E-limit-proof}.

\subsection{The enhancement rate}\label{subsec-enhanced}

Recall, from Section~\ref{sec-enhanced-definition}, the enhanced neighborhood growth given by 
a zero-set $\Z$ and the enhancements
$\vec f=(f_0, f_1,\ldots)$ 
and $\vec g=(g_0, g_1,\ldots)$. From now on, we assume that $\vec f$ and $\vec g$
are nondecreasing sequences with finite support. 
It will also be convenient (especially in Section~\ref{subsec-E-limit-definitions}) to represent 
$\vec f$ and $\vec g$ as Young diagrams $F$ and $G$, whereby $f_i$ is the 
$i$th row count in the digram $F$, and $g_i$ is the $i$th 
column count in the diagram $G$.

Let $\cI$ be the set of triples $(A,\vec f,\vec g)$, with $\vec f$ 
and $\vec g$ as above and $A$ a finite set that spans for $(\Z,\vec f, \vec g)$.  
We define the {\it enhancement rate\/} $\oI$ by 
$$
\oI(\alpha,\beta,\Z)=\min\{|A|+(1-\alpha)\sum \vec f+(1-\beta)\sum\vec g: 
(A,\vec f,\vec g)\in \cI\}.
$$
Observe that the elements of the above set are linear combinations  
of three nonnegative 
integers, with 
fixed nonnegative coefficients $1$, $1-\alpha$, $1-\beta$,  so its minimum 
indeed exists. 

We start with two preliminary results on $\oI$ that hold for arbitrary $\Z$. 

\begin{lemma}\label{Ibar-gamma} For any zero-set $\Z$, 
$\oI(0,0,\Z)=\gamma(\Z)$ and $\oI(\alpha,1,\Z)=\oI(1,\beta,\Z)=0$
for $\alpha,\beta\in [0,1]$. 
\end{lemma}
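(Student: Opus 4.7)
The plan is to prove each identity by exhibiting matching upper and lower bounds; the upper bounds come from explicit constructions, and the lower bound in the first identity requires converting an enhanced spanning triple into an honest spanning set for $\Z$.

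For $\oI(0,0,\Z)=\gamma(\Z)$: the inequality $\oI(0,0,\Z)\le\gamma(\Z)$ is immediate from taking $\vec f=\vec g=\vec 0$ together with a minimum spanning set $A$ for $\Z$, since then $(A,\vec 0,\vec 0)\in\cI$ and the cost evaluates to $|A|=\gamma(\Z)$. For the reverse inequality I would show that every triple $(A,\vec f,\vec g)\in\cI$ yields an honest spanning set $A'$ for $\Z$ of cardinality exactly $|A|+\sum\vec f+\sum\vec g$, via a \emph{padding} construction. Enclose $A$ in a rectangle $R_{N,M}$ with $N$ and $M$ much larger than the dimensions of $\Z$ and the supports of $\vec f,\vec g$, and adjoin $f_v$ occupied points on row $v$ placed in columns $[N,N+f_v)$ and $g_u$ occupied points on column $u$ placed in rows $[M,M+g_u)$. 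Inside $R_{N,M}$, the row padding raises every row count by exactly $f_v$ without contributing to any column count (since it sits in columns $\ge N$), and the column padding does the dual thing; hence the dynamics from $A'$ restricted to $R_{N,M}$ reproduce the enhanced dynamics from $A$ and thus fully occupy $R_{N,M}$ in finite time. Once $R_{N,M}$ is filled, every point outside it has either a row count $\ge N$ or a column count $\ge M$, which exceed the dimensions of $\Z$, so the two peripheral strips and then the far corner cascade into occupation. This gives $\gamma(\Z)\le|A'|=|A|+\sum\vec f+\sum\vec g$, and minimizing over $\cI$ yields $\gamma(\Z)\le\oI(0,0,\Z)$.

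For $\oI(\alpha,1,\Z)=0$: since the coefficient $1-\beta$ vanishes, the cost reduces to $|A|+(1-\alpha)\sum\vec f$, so it is enough to exhibit a single triple of the form $(\emptyset,\vec 0,\vec g)\in\cI$. Let $w$ and $h$ denote the width and height of $\Z$, and set $g_u=h$ for $0\le u<w$ and $g_u=0$ otherwise, which is nonincreasing with finite support. Beginning from $A=\emptyset$, every point $(u,v)$ with $u<w$ has augmented pair $(0,g_u)=(0,h)\notin\Z$, so the strip $[0,w)\times\bZ_+$ fills in one step; after that every remaining point has row count $\ge w$, so its augmented pair lies outside $\Z$ as well and the complement fills in the next step. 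This gives $\oI(\alpha,1,\Z)\le 0$, and since the functional is nonnegative on $\cI$, equality follows. The identity $\oI(1,\beta,\Z)=0$ is identical after swapping rows and columns.

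The only mildly delicate step is verifying that the padding construction for the lower bound in the first identity genuinely decouples the row and column contributions inside $R_{N,M}$; this is where the finite-support hypothesis on $\vec f$ and $\vec g$ (and the freedom to choose $N,M$ arbitrarily large) is used, since it is precisely what allows the two types of padding to be placed in disjoint peripheral strips without interfering with each other's intended effect.
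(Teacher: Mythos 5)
Your proof is correct and takes essentially the same approach as the paper: the row padding in columns $[N,N+f_v)$ is a horizontal translate of the Young diagram $F$, and the column padding in rows $[M,M+g_u)$ is a vertical translate of $G$, so your construction of $A'$ coincides with the paper's $A\cup Y_1\cup Y_2$. The one difference is that you spell out the verification that $A'$ spans, whereas the paper defers this to the analogous argument in Lemma~\ref{get-two-Y}; your explicit construction of $\vec g$ for the second identity likewise matches the paper's briefer ``suitably chosen $\vec g$''.
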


\begin{proof} Clearly, $\oI(0,0,\Z)\le \gamma(\Z)$, as $\gamma$ is obtained as
a minimum over a smaller set (with zero enhancements). On the other hand, 
assume that $A$ is a finite set that spans for $(\Z,\vec f,\vec g)$, 
with $\oI(0,0,\Z)=|A|+\sum\vec f+\sum \vec g$. Then we can form 
a set $A'=A\cup Y_1\cup Y_2$, such that $Y_1$ and $Y_2$ are, respectively, horizontal 
and vertical translates of corresponding Young diagrams $F$ and $G$ so that 
no horizontal line intersects both $F\cup A$ and $G$, and no vertical line 
intersects both $G\cup A$ and $F$. Using a similar argument as in the 
proof of Lemma~\ref{get-two-Y}, $A'$ spans for $\Z$ and so 
$\gamma(\Z)\le |A'|=\oI(0,0,\Z)$.

For the last claim, assume that, say, $\beta=1$ and observe that 
$\emptyset$ spans for $(\Z,\vec 0,\vec g)$  for a suitably chosen $\vec g$. 
\end{proof} 

For the rest of this subsection, we fix $\alpha,\beta\in [0,1)$ and
suppress the dependency on $\alpha$ and $\beta$ from 
the notation. 

\begin{lemma}\label{I-le-Ibar}
For any fixed $\Z$, $\alpha$ and $\beta$, 
$$
I(\Z)\le \oI(\Z).
$$
\end{lemma}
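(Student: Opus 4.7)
The plan is to take a minimizing triple $(A,\vec f,\vec g)\in\cI$ with $\oI(\Z)=|A|+(1-\alpha)\sum\vec f+(1-\beta)\sum\vec g$, explicitly construct a spanning set $\tilde A\in\cA$ for the un-enhanced $\Z$-dynamics, and directly bound $\rho(\alpha,\beta,\tilde A)\le \oI(\Z)$; then $I(\Z)\le\oI(\Z)$ follows from the variational formula of Theorem~\ref{ld-rate}.

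For the construction, I would choose $R$ so large that $A\subseteq [0,R-1]^2$ and that both $\vec f$ and $\vec g$ are supported in $[0,R-1]$. Let $F'$ be a set of $\sum\vec f$ points, all in distinct columns with $x$-coordinate at least $R$, arranged so that each row $y=v$ contains exactly $f_v$ points of $F'$. Analogously, let $G'$ be a set of $\sum\vec g$ points, all in distinct rows with $y$-coordinate at least $R$, arranged so that each column $x=u$ contains exactly $g_u$ points of $G'$. Set $\tilde A=A\cup F'\cup G'$; the three pieces are pairwise disjoint, the columns occupied by $F'$ are disjoint from those of $A\cup G'$, and the rows occupied by $G'$ are disjoint from those of $A\cup F'$. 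For any $B\subseteq \tilde A$, setting $B_A=B\cap A$, $B_F=B\cap F'$, $B_G=B\cap G'$, these disjointness properties give
$$|\pi_x(B)|=|\pi_x(B_A\cup B_G)|+|B_F|, \qquad |\pi_y(B)|=|\pi_y(B_A\cup B_F)|+|B_G|,$$
and therefore
\begin{align*}
|B|-\alpha|\pi_x(B)|-\beta|\pi_y(B)|
&= |B_A|+(1-\alpha)|B_F|+(1-\beta)|B_G| \\
&\quad -\alpha|\pi_x(B_A\cup B_G)|-\beta|\pi_y(B_A\cup B_F)| \\
&\le |A|+(1-\alpha)\sum\vec f+(1-\beta)\sum\vec g=\oI(\Z).
\end{align*}
Taking the maximum over $B\subseteq\tilde A$ yields $\rho(\alpha,\beta,\tilde A)\le \oI(\Z)$.

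It remains to verify that $\tilde A$ spans for $\Z$. For every $(u,v)\in [0,R-1]^2\setminus \tilde A$, by construction of $F'$ and $G'$ the un-enhanced initial counts $\row((u,v),\tilde A)$ and $\col((u,v),\tilde A)$ agree with the enhanced counts $\row((u,v),A)+f_v$ and $\col((u,v),A)+g_u$ from $A$. I would then argue by induction on time that every $(u,v)\in [0,R-1]^2$ eventually occupied by the enhanced $(\Z,\vec f,\vec g)$-dynamics from $A$ is also occupied by the un-enhanced $\Z$-dynamics from $\tilde A$. Since $A$ spans the enhanced dynamics, this gives $[0,R-1]^2\subseteq \cT^\infty(\tilde A)$; once $R$ is taken larger than the dimensions of $\Z$, every remaining point of $\bZ_+^2$ then has its row or column count exceeding all entries of $\Z$, so the un-enhanced dynamics sweep out the rest of $\bZ_+^2$ in finitely many further steps.

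The hard part will be this last spanning step: the naive step-by-step coupling is complicated by the possibility that the enhanced $A$-dynamics occupy points outside $[0,R-1]^2$ that feed back into row/column counts of points inside. Handling this cleanly will likely require either enlarging the comparison box, exploiting the fact that Theorem~\ref{when-done} applies to the enhanced dynamics as well (as noted in its proof) to reduce the induction to finitely many synchronous steps, or padding $F'$ and $G'$ with a few additional points so that the un-enhanced $\tilde A$-dynamics keep pace outside $[0,R-1]^2$ too.
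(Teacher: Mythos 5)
Your proof is the same as the paper's, both in the construction ($\tilde A=A\cup F'\cup G'$, which the paper calls $A_0=A\cup A_h\cup A_v$) and in the three-piece projection computation bounding $\rho(\alpha,\beta,\tilde A)$. The one thing you leave open --- the spanning of $\tilde A$ --- is also the one thing the paper dispatches in a single unelaborated sentence, so your caution is well placed. The way to close it is a monotone version of ``outside does not help inside'' for the enhanced dynamics: with $A\subseteq[0,R-1]^2$ and $\vec f,\vec g$ supported in $[0,R-1]$, if $(u_0,v_0)$ with $u_0\ge R$ is occupied by the enhanced trajectory started from $A$ at some time $t$, then so is $(u,v_0)$ for every $u<R$. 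This is proved by induction on $t$: $(u,v_0)$ has the same row count as $(u_0,v_0)$, an at least as large column count (the inductive claim applied along column $u_0$ maps every occupied $(u_0,v')$ to an occupied $(u,v')$), and enhancement $g_u\ge 0=g_{u_0}$; the symmetric statement holds for $v_0\ge R$. Consequently, for an unoccupied $x\in[0,R-1]^2$ all points contributing to its enhanced row and column counts lie inside $[0,R-1]^2$, so the enhanced dynamics restricted to the box agrees there with the unrestricted one, and your time-by-time coupling with the un-enhanced $\cT$-dynamics from $\tilde A$ then goes through cleanly and gives $[0,R-1]^2\subseteq\cT^\infty(\tilde A)$, hence spanning once $R$ is large enough that $\Z\subseteq [0,R-1]^2$. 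Of your three suggested remedies, this is essentially the first; the other two do not work as stated: termination in $\tmax(\Z)$ synchronous steps by itself does not remove the feedback from outside the box, and padding $F',G'$ with extra points would increase $\sum\vec f+\sum\vec g$ and spoil the bound $\rho(\alpha,\beta,\tilde A)\le \oI(\Z)$, which must hold exactly and not merely up to an $\epsilon$.
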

\begin{proof}
Pick $A$, $\vec f$ and $\vec g$ so that $A$ spans for $(\Z,\vec f,\vec g)$ and 
$|A|+(1-\alpha)\sum\vec f+(1-\beta)\sum \vec g=\oI(\Z)$. Create a set 
$A_0=A\cup A_h\cup A_v$ so that the union is disjoint, for every integer $v\ge 0$, 
$L^h(0,v)$ contains exactly $f_v$ sites 
of $A_h$, that every vertical line contains at most one site of $A_h$, 
and that analogous conditions hold for $A_v$. Moreover, make sure that
no horizontal line intersects both $A\cup A_h$ and 
$A_v$, and no vertical line intersects both $A\cup A_v$ and 
$A_h$. Then $A_0$ spans for $\Z$. Moreover, 
$|A_v|=\sum\vec g$, $|A_h|=\sum \vec f$. 
We now find an upper bound for $\rho(A_0)$. By dividing any subset of $A_0$ 
into three pieces, we get, with the maximum below taken over all 
sets  $B\subseteq A$, $B_h\subseteq A_h$ and  $B_v\subseteq A_v$, 
\begin{equation*}
\begin{aligned}
\rho(A_0)&=\max\{|B|+|B_h|+|B_v|-\alpha|\pi_x(B\cup B_h\cup B_v)|-\beta |\pi_y(B\cup B_h\cup B_v)|\}\\
&\le \max\{|B|+|B_h|+|B_v|-\alpha|\pi_x(B_h)|-\beta |\pi_y(B_v)|\}\\
&=\max\{|B|+|B_h|+|B_v|-\alpha|B_h|-\beta |B_v|\}\\
&=\max\{|B|+(1-\alpha)|B_h|+(1-\beta) |B_v|\}\\
&=|A|+(1-\alpha)|A_h|+(1-\beta) |A_v|.
\end{aligned}
\end{equation*}
Therefore, 
\begin{equation*}
\begin{aligned}
\oI(\Z)&=|A|+(1-\alpha)\sum\vec f+(1-\beta)\sum\vec g\\
&=|A_0|-\alpha|A_h|-\beta|A_v|\\
&\ge \rho(A_0)  \\
&\ge I(\Z),
\end{aligned}
\end{equation*}
as desired.  
\end{proof}

Finally, we show that, for large $\Z$, 
$\oI$ and $I$ are close throughout $[0,1]^2$.
The next lemma is, by far, the most substantial step in our convergence argument. 

\begin{lemma}\label{Ibar-le-I} 
Fix a bounded Euclidean zero-set $\ZE$.
Assume that $\delta>0$ and discrete zero-sets $\Z$ depend on $n$
(a dependence we suppress  from the notation), and that $\delta\squrep(\Z)\Econv\ZE$. 
Write $\ell=1/\delta$. 

Assume that positive integers $m$ and $k$ satisfy $\ell\ll m\ll \ell^2$, 
$1\ll k\ll\ell$. 
Then for some $C$ that depends on $\ZE$, $\alpha$, and $\beta$, 
$$
\oI(\Z^{\swarrow 1+2k+\lfloor C\ell^2/m\rfloor})\le I(\Z)+2m+C\frac{\ell^2}{k}.
$$
\end{lemma}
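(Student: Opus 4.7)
The plan is to start from an optimal spanning set $A$ for $I(\Z)$, meaning $\rho(\alpha,\beta,A)=I(\Z)$, and convert it into an enhanced spanning triple $(A',\vec f,\vec g)$ for the shrunken zero-set $\Z^{\swarrow 1+2k+\lfloor C\ell^2/m\rfloor}$ whose cost $|A'|+(1-\alpha)\sum\vec f+(1-\beta)\sum\vec g$ exceeds $I(\Z)$ by at most $2m+C\ell^2/k$. The enhancements will absorb the entropic contributions of $A$ captured by its projections, while the shift on $\Z$ compensates for the row- and column-count losses from removing points. Two thresholds control the construction: $k$ governs a coarse thin/thick classification via Lemma~\ref{retract}, and $m$ governs a finer truncation of the enhancement profile.

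First I would pass from $A$ to $\angk$ via Lemma~\ref{retract}, so that $\angk$ spans for $\Z^{\swarrow k}$, and combine Lemma~\ref{k-proj} with $\rho(\angk)\le\rho(A)=I(\Z)$ (which holds since $\angk\subseteq A$) to deduce
\begin{equation*}
|\angk|\bigl(1-\max(\alpha,\beta)\bigr) \le I(\Z)+\max(\alpha,\beta)\frac{|\angk|}{k+1} \le I(\Z)+C\frac{\ell^2}{k},
\end{equation*}
where the last bound uses $|\angk|=O(\ell^2)$, a consequence of $\delta\squrep(\Z)\Econv\ZE$ together with boundedness of $\ZE$. This already supplies the $C\ell^2/k$ part of the error budget.

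Next I would construct the enhancement by picking disjoint subsets $R_h,R_v\subseteq\angk$ with $R_h$ containing at most one point per column and $R_v$ at most one point per row; I would then set $A'=\angk\setminus(R_h\cup R_v)$, $f_v=|R_h\cap L^h(0,v)|$, and $g_u=|R_v\cap L^v(u,0)|$. A row and column permutation, permitted by property~(G3) of the growth transformation, renders $\vec f$ and $\vec g$ nondecreasing. The cost becomes $|\angk|-\alpha|R_h|-\beta|R_v|$; optimizing it under the constraints $|R_h|\le|\pi_x(\angk)|$, $|R_v|\le|\pi_y(\angk)|$, $|R_h|+|R_v|\le|\angk|$ and using Lemma~\ref{k-proj} yields cost $\le\rho(\angk)+C\ell^2/k\le I(\Z)+C\ell^2/k$. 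To verify that $(A',\vec f,\vec g)$ spans for $\Z^{\swarrow 1+2k+\lfloor C\ell^2/m\rfloor}$, I would couple monotonically with the $\angk$-dynamics for $\Z^{\swarrow k}$: the shift by $k$ subsumes the retract bound, the extra $+1$ absorbs the single-representative-per-line drop in the complementary direction, and the remaining $k+\lfloor C\ell^2/m\rfloor$ handles a truncation of $\vec f$ and $\vec g$ at height $m$ together with the reinstatement of overflow points.

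The main obstacle I expect is the delicate bookkeeping producing the residual $2m$ error. Truncating $\vec f$ and $\vec g$ at height $m$ forces at most $O(\ell^2/m)$ lines to be treated as dense (matching $\lfloor C\ell^2/m\rfloor$ in the shift), and the overflow points from these dense lines must be routed back into the core $A'$, inflating $|A'|$ by at most $m$ in each direction and hence by $2m$ in total. Maintaining nondecreasingness of $\vec f$ and $\vec g$ after permutation, the validity of the monotone spanning coupling, and the optimality of the $R_h$ versus $R_v$ assignment simultaneously will require an exchange-type argument in the spirit of Lemma~\ref{HV-VH} that trades row-enhancement for column-enhancement to align the construction with the optimal subset $B\subseteq A$ realizing $\rho(A)=I(\Z)$.
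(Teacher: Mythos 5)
Your proposal takes a genuinely different and more direct route than the paper's. Rather than iteratively reshuffling the retracted set $\angk$ into a small-projection core plus appended tentacles (the paper's Steps~2--5), you pull a column-system $R_h$ and a row-system $R_v$ directly out of $\angk$, delete them, and repackage their row/column counts as the enhancements. The cost then collapses to $|\angk|-\alpha|R_h|-\beta|R_v|$, which you compare against $\rho(\angk)\ge|\angk|-\alpha|\pi_x(\angk)|-\beta|\pi_y(\angk)|$. Note that if this worked as cleanly as you state, it would in fact give the strictly stronger conclusion $\oI(\Z^{\swarrow k+1})\le I(\Z)+C\ell^2/k$, which already implies the lemma since $\oI$ is monotone under shrinking the zero-set.

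The genuine gap is the combinatorial feasibility of the optimization. You treat $(|R_h|,|R_v|)$ as free under the LP constraints $|R_h|\le|\pi_x(\angk)|$, $|R_v|\le|\pi_y(\angk)|$, $|R_h|+|R_v|\le|\angk|$ and, via Lemma~\ref{k-proj}, conclude the deficit $\alpha(|\pi_x(\angk)|-|R_h|)+\beta(|\pi_y(\angk)|-|R_v|)$ is $O(\ell^2/k)$. But it is not obvious that the LP optimum is attainable by actual disjoint subsets with the one-per-column and one-per-row properties. The loss from enforcing disjointness equals the number of tree components of the bipartite incidence graph of $\angk$ (columns versus rows, edges indexed by points), and to bound that number by $O(\ell^2/k)$ one needs the observation that, since $\angk=A_{>k}$, every edge of this graph is incident to a vertex of degree at least $k+1$, so every tree component carries at least $k+1$ edges. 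This argument is nowhere in the proposal; your final paragraph gestures at an ``exchange-type argument in the spirit of Lemma~\ref{HV-VH},'' but that lemma concerns swapping the order in which rows and columns are completed in line growth and has no bearing on whether $R_h$ and $R_v$ can simultaneously cover nearly all nonempty columns and rows while remaining disjoint.

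The discussion of ``truncating $\vec f$ and $\vec g$ at height $m$'' and the resulting $2m$ error is a red herring: those terms do not arise in your route and are not needed once it is repaired. In the paper they come from a different mechanism entirely --- the iterative Step~2 strips a projection-realizing subset off the core $A_d$ in each pass, costing one $\swarrow 1$ shift and reducing $|A_d|$ by at least $m$ (whence $\lfloor C\ell^2/m\rfloor$), until $|\pi_x(A_d)|,|\pi_y(A_d)|\le m$ (whence $2m$); the byproduct is that the tentacles $A_h,A_v$ automatically have the one-per-line property, which is exactly how the paper sidesteps the feasibility issue your approach must confront head-on.
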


\begin{proof}
Pick a set $A$ that spans for $\Z$, and is such that 
$\rho(A)=I(\Z)$. 

\noindent{\it Step 1\/}. Let $A'=A_{>k}$. Then $A'$ spans for $\Z^{\swarrow k}$,
and there exists a constant $C$, which 
depends on $\Z$, $\alpha$ and $\beta$, so that 
$|A'|\le C\ell^2$. 

The spanning claim follows from Lemma~\ref{retract}. Moreover,  
by Lemma~\ref{k-proj} (as in the proof of Corollary~\ref{Ilb-discrete}), 
$\rho(A')\ge |A'|(1-\max\{\alpha,\beta\}\left(1+\frac 1k\right))$. As 
$\rho(A')\le \rho(A)=I(\Z)\le \gamma(\Z)$, the upper bound on $|A'|$ follows.

\noindent{\it Step 2\/}. There exists a set $\widehat A=A_d\cup A_h\cup A_v$ such that 
\begin{itemize}
\item[(1)] $A_d\subseteq A'$; 
\item[(2)] $|\widehat  A|=|A'|$;
\item[(3)] for every horizontal (resp.~vertical) line $L$, $|L\cap(A_d\cup A_h)|$
(resp.~$|L\cap(A_d\cup A_v)|$) equals $|L\cap A'|$; 
\item[(4)] $A_h$ has at most one point 
in each column and $A_v$ has at most one point 
in each row; 
\item[(5)] no horizontal line intersects both $A_d\cup A_h$ and 
$A_v$, and no vertical line intersects both $A_d\cup A_v$ and 
$A_h$; 
\item[(6)] $\widehat A$ spans for $\Z^{\swarrow k+\lfloor C\ell^2/m\rfloor}$; and 
\item[(7)] $|\pi_x(A_d)|\le m$, $|\pi_y(A_d)|\le m$.
\end{itemize}
We will inductively construct a finite sequence of 
sets $A_d^i$, $A_h^i$, $A_v^i$, $\widehat A^i=A_d^i\cup A_h^i\cup A_v^i$, 
so that, for each $i$, these sets satisfy  (1)--(5), with superscript $i$ 
on $A_d, A_h, A_v, \widehat A$, and 
\begin{itemize}
\item[(6$^i$)] $\widehat A^i$ spans for $\Z^{\swarrow k+i}$. 
\end{itemize}
We begin with $A_d^0=A'$, $A_h^0=\emptyset$, $A_v^0=\emptyset$.  

Assume we have a construction for some $i$. 
If $|\pi_x(A_d^i)|\le m$ and $|\pi_y(A_d^i)|\le m$, then the sequence is terminated. 
Otherwise, create 
a set $B\subseteq A_d^i$ by starting from $B=A_d^i$ and successively removing points 
that have both horizontal and vertical neighbors in $B$ until no such points remain.  Then no point in $B$ has both a horizontal and a vertical neighbor in $B$, and
$\pi_x(B)=\pi_x(A_d^i)$ and $\pi_y(B)=\pi_y(A_d^i)$. Divide $B$ into a disjoint union $B=B_h\cup B_v$ so that points in $B_h$ have no vertical neighbor in $B$ and 
points in $B_v$ have no horizontal neighbor in $B$. (Allocate points that 
satisfy both conditions arbitrarily.)  Let $A_d^{i+1}=A_d^i\setminus B$. Adjoin a 
horizontal translation of $B_h$ to $A_h^i$ to get $A_h^{i+1}$, and
vertical translation of $B_v$ to $A_v^i$ to get $A_v^{i+1}$, so 
that the conditions (3)--(5) are satisfied. 
For any line $L$, $|L\cap \widehat A^{i+1}|\ge |L\cap \widehat A^{i}|-1$, 
so, by the induction hypothesis, $A^{i+1}$ spans for 
$(\Z^{\swarrow k+i})^{\swarrow 1}$=$\Z^{\swarrow k+i+1}$. 

Note that $|A_d^{i}\setminus A_d^{i+1}|\ge m$, therefore 
the final $i$ satisfies $mi\le |A'|$, which, together with Step 1, 
gives (6). 

\noindent{\it Step 3\/}. For $\widehat A$ constructed in Step 2, 
$\rho(\widehat A)\le \rho(A')$. 

Let $\phi:A'\to\widehat A$ be the bijection that is identity on $A_d$, 
and an appropriate horizontal or vertical translation otherwise (corresponding to the construction of $\widehat A$ from $A'$ in Step 2). 
Pick a $B\subseteq \widehat A$ so that $|B|-\alpha|\pi_x(B)|-\beta|\pi_y(B)|=\rho(\widehat A)$. 
Let $B'=\phi^{-1}(B)$. Then 
$|\pi_x(B)|\ge|\pi_x(B')|$ because if $\phi(x)$ and $\phi(y)$
share a column, then so must $x$ and $y$ (by (4) and (5)). 
Similarly, $|\pi_y(B)|\ge|\pi_y(B')|$.  Therefore
$$
\rho(\widehat A)=|B|-\alpha|\pi_x(B)|-\beta|\pi_y(B)|\le 
 |B'|-\alpha|\pi_x(B')|-\beta|\pi_y(B')|\le \rho(A').$$

\noindent{\it Step 4\/}. Let $A_h'=(A_h)_{>k}$ and $A_v'=(A_v)_{>k}$. 
The set $A_0=A_d\cup A_h'\cup A_v'\subset \widehat A$ 
spans for $\Z^{\swarrow 2k+\lfloor C\ell^2/m\rfloor}$. 

This follows by the same argument as in the proof of Lemma~\ref{retract}. 

Define $f_v=|A_h'\cap L^h(0,v)|$ and $g_u=|A_v'\cap L^v(u,0)|$.
We may assume, by a rearrangement of rows and columns of $A_0$, that 
these are nonincreasing sequences.

\noindent{\it Step 5\/}. For so defined $\vec f$ and $\vec g$, 
$A_d$ spans for $(\Z^{\swarrow 1+2k+\lfloor C\ell^2/m\rfloor},\vec f,\vec g)$. 
Moreover, 
$$|A_d|+(1-\alpha)\sum \vec f+(1-\beta)\sum\vec g\le 
|A_0|-\alpha|\pi_x(A_0)|-\beta|\pi_y(A_0)|+2m+\frac1kC\ell^2.$$

Spanning follows from the fact that $A_h'$ has at most one point on 
any vertical line (which follows from (4)), and the analogous fact about 
$A_v'$. To show the inequality, note that $|\pi_x(A_d)|\le m$, 
$|\pi_y(A_d)|\le m$ (by (6)), 
$|\pi_y(A_v')|=|A_v'|=\sum\vec g$, $|\pi_x(A_h')|=|A_h'|=\sum \vec f$ (by (4)), 
$|\pi_x(A_v')|\le \frac1k |A_v'|$, and $|\pi_y(A_h')|\le \frac1k |A_h'|$, 
so
\begin{equation*}
\begin{aligned}
& |A_0|-\alpha|\pi_x(A_0)|-\beta|\pi_y(A_0)|\\
\ge\, & |A_d| +\sum\vec f+\sum \vec g\\&-\alpha(|\pi_x(A_d)|+|\pi_x(A_h')|+|\pi_x(A_v')|)
-\beta(|\pi_y(A_d)|+|\pi_y(A_h')|+|\pi_y(A_v')|)\\
\ge\, &|A_d| +(1-\alpha)\sum\vec f+(1-\beta)\sum \vec g\\
&-(|\pi_x(A_d)|+|\pi_y(A_d)|)-\frac1k(|A_h'|+|A_v'|)\\
\ge\, & |A_d| +(1-\alpha)\sum\vec f+(1-\beta)\sum \vec g\\
&-2m-\frac1kC\ell^2,
\end{aligned}
\end{equation*}
as $|A_h'|+|A_v'|\le |A_0|\le |A'|\le C\ell^2$. 

\noindent{\it Step 6\/}. End of the proof of Lemma~\ref{Ibar-le-I}. 
\begin{equation*}
\begin{aligned}
I(\Z)&=\rho(A)\\
&\ge \rho(A') &&\text{(as $A'\subseteq A$)}\\
&\ge \rho(\widehat A) &&\text{(by Step 2)}\\
&\ge \rho(A_0) &&\text{(as $A_0\subseteq \widehat A$)}\\
&\ge |A_0|-\alpha|\pi_x(A_0)|-\beta|\pi_y(A_0)|\\
&\ge |A_d|+(1-\alpha)\sum \vec f+(1-\beta)\sum\vec g-2m-\frac1kC\ell^2
&&\text{(by Step 5)}\\
&\ge \oI(\Z^{\swarrow 1+2k+\lfloor C\ell^2/m\rfloor})-2m-\frac1kC\ell^2&&\text{(by Step 5),}\\
\end{aligned}
\end{equation*}
as desired.
\end{proof}

\subsection{Definitions of limiting objects and their basic properies}
\label{subsec-E-limit-definitions}

We will assume throughout this section that $\ZE$ is a 
bounded Euclidean zero-set. 
Pick two left-continuous nonincreasing functions $f,g:[0,\infty)\to\bR$
with compact support. 
The {\it enhanced Euclidean neighborhood growth transformation\/} $\TE$ is determined 
by the triple $(\ZE,f,g)$ and is defined on Borel 
subsets $A$ of the plane as follows. 
For a Borel set $A\subseteq \bR_+^2$, and $x\in \bR_+^2$,
let $\hE(x,A)=\length(L^h(x)\cap A)$ and $\vE(x,A)=\length(L^v(x)\cap A)$. 
Then let
\begin{equation}\label{E-growth}
\TE(A)=A\cup\{(u,v)\in\bR_+^2: (\hE((u,v),A)+f(v),\vE((u,v),A)+g(u))\notin \ZE\}.
\end{equation}
Similar to the discrete case, the functions $f$ and $g$ 
may be represented by continuous Young diagrams $\tF$ and $\tG$, so that 
$f(v)=\length (L^h(0,v)\cap \tF)$ and $g(u)=\length (L^v(u,0)\cap \tG)$. 
Also as in discrete case, the non-enhanced transformation is given by $(\ZE,0,0)$ and 
we assume this version whenever we refer only to $\ZE$. 

Note $\TE(A)$ is also Borel for any Borel set $A$, thus $\TE$ can be iterated. 
Also, as $\ZE$ is a continuous Young diagram,  
$\TE(A)$ is well-defined even if $A$ is unbounded 
and one or both of the lengths are infinite. 
We say that a Borel set $A$ {\it E-spans\/} if 
$\TE^\infty(A)=\cup_n\TE^n(A)=\bR_+^2$, and we call $A$ {\it E-inert\/} if $\TE(A)=A$.

The connection between discrete and continuous transformations is 
give by the following simple but useful lemma, which says that 
$\TE$ is an extension of $\cT$ in the sense
that $\cT$ and $\TE$ are conjugate on 
square representations of discrete sets.

\begin{lemma} \label{conjugacy}
Assume $A\subseteq \bZ_+^2$, and assume $\cT$ is given by a discrete zero set $\Z$ and
enhancing Young diagrams $F$ and $G$. 
Let 
$\ZE=\squrep(\Z)$ be the corresponding Euclidean zero-set and 
$\tF=\squrep(F)$, $\tG=\squrep(G)$ the corresponding enhancements. 
Then 
$$
\TE(\squrep(A))=\squrep(\cT(A)).
$$
\end{lemma}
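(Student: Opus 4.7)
The plan is to verify $\TE(\squrep(A)) = \squrep(\cT(A))$ by direct pointwise comparison, exploiting that the Euclidean quantities $\hE, \vE, f, g$ reduce to their discrete counterparts on open unit cells.

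First I would fix a point $x = (u', v')$ with non-integer coordinates in the open cell $(u, u+1) \times (v, v+1)$, where $u = \lfloor u' \rfloor$ and $v = \lfloor v' \rfloor$. The horizontal line $L^h(x)$ at height $v'$ slices $\squrep(A)$ into disjoint unit-length segments $[u_0, u_0+1] \times \{v'\}$, one for each $u_0$ with $(u_0, v) \in A$, giving
\begin{equation*}
\hE(x, \squrep(A)) = \row((u, v), A), \qquad \vE(x, \squrep(A)) = \col((u, v), A).
\end{equation*}
By the same slicing argument applied to $\tF = \squrep(F)$ and $\tG = \squrep(G)$, the left-continuous enhancements satisfy $f(v') = f_v$ and $g(u') = g_u$ at non-integer arguments. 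Substituting these identities, the Euclidean growth condition at $x$ becomes a test on the integer-valued pair $(\row((u,v),A) + f_v,\, \col((u,v),A) + g_u)$ against $\ZE$, mirroring the discrete test of the same pair against $\Z$.

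Next I would conclude that the open cell of $(u, v)$ lies in $\TE(\squrep(A)) \setminus \squrep(A)$ if and only if $(u, v) \in \cT(A) \setminus A$. Since $\squrep(\cT(A))$ is by construction the union of the closed unit cells indexed by $\cT(A)$, and $\TE(\squrep(A))$ is a closed subset of $\bR_+^2$ whose open cell contents agree with those of $\squrep(\cT(A))$ by the previous step, the two sets coincide.

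The main obstacle is a careful handling of integer-coordinate points lying on the shared boundaries of cells, where $\hE$ and $\vE$ can jump (absorbing contributions from two adjacent cells), the enhancements $f, g$ take their left-limiting values, and the staircase boundary of $\squrep(\Z)$ must be accounted for in the membership test. This is resolved by observing that both $\TE(\squrep(A))$ and $\squrep(\cT(A))$ are closed unions of closed unit cells, and such a set is determined by its open cell contents; consistent behavior on the dense union of open cell interiors therefore propagates to set equality on the closures.
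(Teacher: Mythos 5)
The step in which you assert that the Euclidean occupancy test of the integer pair $(\row((u,v),A)+f_v,\,\col((u,v),A)+g_u)$ against $\ZE=\squrep(\Z)$ ``mirrors'' the discrete test of the same pair against $\Z$ is exactly where the argument breaks, and it is not a boundary technicality. For a nonnegative integer pair $(a,b)$ with $a,b\ge 1$, one has $(a,b)\in\squrep(\Z)$ if and only if $(a-1,b-1)\in\Z$: the closed unit cell $(a-1,b-1)+[0,1]^2$ has $(a,b)$ as its upper-right corner. Since $\Z$ is a lower set, $(a-1,b-1)\in\Z$ is a strictly weaker condition than $(a,b)\in\Z$, so $(a,b)\notin\squrep(\Z)$ implies $(a,b)\notin\Z$ but not conversely. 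Thus the Euclidean occupancy condition is more restrictive than the discrete one, and the most your reasoning establishes is the one-sided inclusion $\TE(\squrep(A))\subseteq\squrep(\cT(A))$; the claimed equality does not follow from the cell-by-cell reduction.

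To see that the reverse inclusion genuinely fails under a literal reading of $\ZE=\squrep(\Z)$, take $\Z=R_{1,1}=\{(0,0)\}$, trivial enhancements, and $A=\{(0,0)\}$. Discretely, every $(u,0)$ with $u\ge 1$ has $(\row,\col)=(1,0)\notin\Z$, and likewise for $(0,v)$, so $\cT(A)$ contains both coordinate axes and $\squrep(\cT(A))$ is an unbounded L-shape. On the Euclidean side $\squrep(A)=[0,1]^2=\ZE$, and every $x\in\bR_+^2$ has $\hE(x,[0,1]^2)\le 1$ and $\vE(x,[0,1]^2)\le 1$, so the pair always lies in $\ZE$ and $\TE([0,1]^2)=[0,1]^2$ is E-inert. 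A correct treatment therefore has to account for this one-unit shift explicitly (for instance by comparing with a shrunken zero-set such as $\Z^{\swarrow 1}$ on one side, or by adopting a different Euclidean-zero-set convention), rather than passing off the two membership tests as identical. A secondary point: $\TE(\squrep(A))$ is not automatically a union of closed unit cells, since at an integer cell boundary the line $L^h$ or $L^v$ meets cells of $\squrep(A)$ from two adjacent rows or columns and the length $\hE$ or $\vE$ can jump above the value seen in either neighboring open cell, so the reduction to open-cell interiors also requires justification.
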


\begin{proof}
This is straightforward to check.
\end{proof}

The Euclidean counterpart 
of $\gamma$ has a straightforward definition through the
non-enhanced dynamics
\begin{equation}\label{E-gamma-def}
\gammaE(\ZE)= 
\inf\{\area(A): A\text{ is a compact subset of $\bR^2$ that E-spans for }\ZE\}.
\end{equation}
To define the counterparts of $I$ and $\gamma_{\rm thin}$, 
let $\tI$ be the set of triples $(A,f,g)$, where 
$f$ and $g$ are, as in (\ref{E-growth}), 
left-continuous nonincreasing functions
 and $A\subset \bR_+^2$ is a compact set that spans for $(\ZE,f,g)$.
Then let
\begin{equation}\label{IE-def}
\begin{aligned}
&\IE(\alpha,\beta,\ZE)=\inf\{\area(A)+(1-\alpha)\int_0^\infty f+(1-\beta)\int_0^\infty g: (A,f,g)\in \tI\}.
\end{aligned}
\end{equation}
and
\begin{equation}\label{gammaE-thin-def}
\gammaE_{\rm thin}(\ZE)=\inf \{\int_0^\infty f+\int_0^\infty g: (\emptyset,f,g)\in \tI\}.
\end{equation}

\begin{lemma}\label{scaling}
Fix an $a>0$. Then for any $\alpha,\beta\in[0,1]^2$, 
$$\IE(\alpha,\beta,a\ZE)=a^2\IE(\alpha,\beta,\ZE).$$
Moreover, 
$\gammaE(a\ZE)=a^2\gammaE(\ZE)$ and $\gammaE_{\rm thin}(a\ZE)
=a^2\gammaE_{\rm thin}(\ZE).$
\end{lemma}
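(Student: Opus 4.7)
The plan is to exhibit a scaling bijection between admissible triples for $\ZE$ and admissible triples for $a\ZE$, and then observe that both the objective in~(\ref{IE-def}) and the constraint ``$A$ E-spans for $(\ZE,f,g)$'' transform in a controlled way under this bijection.

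Concretely, given any triple $(A,f,g)\in\tI$ for $\ZE$, I would set
\begin{equation*}
A'=aA,\qquad f'(v)=a\,f(v/a),\qquad g'(u)=a\,g(u/a),
\end{equation*}
and verify that $(A',f',g')\in\tI$ for $a\ZE$. The key point is that one-dimensional length and the enhancements both scale by $a$: for $x'=ax$ one has $\hE(x',A')=a\hE(x,A)$ and $\vE(x',A')=a\vE(x,A)$, so
\begin{equation*}
\bigl(\hE(x',A')+f'(v'),\,\vE(x',A')+g'(u')\bigr)=a\bigl(\hE(x,A)+f(v),\,\vE(x,A)+g(u)\bigr),
\end{equation*}
which lies outside $a\ZE$ iff the corresponding unscaled pair lies outside $\ZE$. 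Hence $\TE_{(a\ZE,f',g')}(A')=a\,\TE_{(\ZE,f,g)}(A)$, so iteration preserves the conjugacy and $A'$ E-spans for $(a\ZE,f',g')$ precisely when $A$ E-spans for $(\ZE,f,g)$. Of course $A'$ is compact iff $A$ is, and $f',g'$ are left-continuous nonincreasing with compact support iff $f,g$ are. The inverse map $(A',f',g')\mapsto (A'/a,\,f'(a\,\cdot)/a,\,g'(a\,\cdot)/a)$ shows this is a bijection $\tI(\ZE)\leftrightarrow \tI(a\ZE)$.

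Next I would compute the effect on the objective functional. Since $\area(aA)=a^2\area(A)$ and, by the substitution $u=v/a$,
\begin{equation*}
\int_0^\infty f'(v)\,dv=\int_0^\infty a\,f(v/a)\,dv=a^2\int_0^\infty f(u)\,du,
\end{equation*}
and similarly for $g'$, we obtain
\begin{equation*}
\area(A')+(1-\alpha)\int_0^\infty f'+(1-\beta)\int_0^\infty g'=a^2\Bigl[\area(A)+(1-\alpha)\int_0^\infty f+(1-\beta)\int_0^\infty g\Bigr].
\end{equation*}
Taking the infimum over $\tI(\ZE)$ on the right and noting the bijection identifies this with the infimum over $\tI(a\ZE)$ on the left gives $\IE(\alpha,\beta,a\ZE)=a^2\IE(\alpha,\beta,\ZE)$.

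The remaining two identities are immediate specializations: restricting the bijection to triples of the form $(A,0,0)$ yields $\gammaE(a\ZE)=a^2\gammaE(\ZE)$ via the definition~(\ref{E-gamma-def}), and restricting to triples $(\emptyset,f,g)$ together with the scaling of $\int f+\int g$ by $a^2$ gives $\gammaE_{\rm thin}(a\ZE)=a^2\gammaE_{\rm thin}(\ZE)$ from~(\ref{gammaE-thin-def}). There is no real obstacle here; the only point requiring care is the bookkeeping that left-continuity, monotonicity, compactness of supports, and the Borel/compactness properties of $A$ are preserved under the linear rescaling, which is routine.
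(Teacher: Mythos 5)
Your proof is correct and is precisely the detailed unpacking of the paper's one-line argument, which simply observes that $A$ spans for $(\ZE,\tF,\tG)$ if and only if $aA$ spans for $(a\ZE,a\tF,a\tG)$; you supply the bookkeeping (the conjugacy of the transformations under dilation, the $a^2$ scaling of area and of the enhancement integrals) that the paper leaves implicit.
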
 

\begin{proof}
A set $A\subset \bR_+^2$ spans for $(\ZE,\tF,\tG)$ if and only if
$aA$ spans for $(a\ZE,a\tF,a\tG)$.
\end{proof}

Next are three lemmas on non-enhanced growth.

\begin{lemma} \label{inert-inclusion} Assume $\TE$ is given by a Euclidean zero-set $\ZE$. 
Suppose $A_n\subseteq \bR_+^2$ is an increasing sequence of Borel sets and $A=\cup_n A_n$. Then $\TE(A)=\cup_n\TE(A_n)$. Consequently, $\TE^\infty(A)$ is E-inert for any Borel 
set $A\subseteq \bR_+^2$.  
\end{lemma}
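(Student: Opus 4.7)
The plan is to establish the two inclusions separately, and then derive E-inertness by a telescoping argument. The inclusion $\cup_n\TE(A_n)\subseteq\TE(A)$ is immediate from monotonicity: since $A_n\subseteq A$ we have $\hE(x,A_n)\le\hE(x,A)$ and $\vE(x,A_n)\le\vE(x,A)$, and because $\ZE$ is a continuous Young diagram it is downward closed, so $(\hE(x,A_n),\vE(x,A_n))\notin\ZE$ forces $(\hE(x,A),\vE(x,A))\notin\ZE$. Combined with $A_n\subseteq A\subseteq\TE(A)$, this gives the easy direction.

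For the reverse inclusion, fix $x\in\TE(A)$. If $x\in A$ then $x\in A_n$ for some $n$, and solidification of $\TE$ yields $x\in\TE(A_n)$. Otherwise $x\in\TE(A)\setminus A$, which means $(\hE(x,A),\vE(x,A))\notin\ZE$. The key observation is that $L^h(x)\cap A_n\uparrow L^h(x)\cap A$ and $L^v(x)\cap A_n\uparrow L^v(x)\cap A$, so by monotone convergence of one-dimensional Lebesgue measure,
\[
\hE(x,A_n)\uparrow\hE(x,A),\qquad \vE(x,A_n)\uparrow\vE(x,A),
\]
in $[0,\infty]$. Since $\ZE$ is closed and bounded (we are in the section where $\ZE$ is assumed bounded), its complement in $[0,\infty]^2$ is open: if both coordinates of the limit are finite, openness of $\ZE^c$ in $\bR_+^2$ gives a neighborhood avoiding $\ZE$; if some coordinate is infinite or merely exceeds the bounding box of $\ZE$, then eventually the approximating coordinate exceeds that box as well. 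In either case $(\hE(x,A_n),\vE(x,A_n))\notin\ZE$ for all sufficiently large $n$, hence $x\in\TE(A_n)$.

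For the consequence, observe first that $\TE(B)\supseteq B$ for every Borel $B$ (the trivial solidification built into the definition of $\TE$), so the sequence $A_n:=\TE^n(A)$ is increasing, and each $A_n$ is Borel since $\TE$ preserves Borel sets. Applying the first part with this sequence,
\[
\TE\bigl(\TE^\infty(A)\bigr)=\TE\Bigl(\bigcup_n\TE^n(A)\Bigr)=\bigcup_n\TE\bigl(\TE^n(A)\bigr)=\bigcup_n\TE^{n+1}(A)=\TE^\infty(A),
\]
which is exactly the E-inertness of $\TE^\infty(A)$.

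The only real subtlety is the handling of infinite line lengths when $A$ is unbounded, but the standing boundedness assumption on $\ZE$ makes this harmless: once a row or column count exceeds the width or height of the bounding box of $\ZE$, the corresponding pair automatically lies outside $\ZE$, so the ``openness of $\ZE^c$'' argument extends without change to the one-point compactification in each coordinate.
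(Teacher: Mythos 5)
Your proposal is correct and takes essentially the same route as the paper: the easy inclusion $\bigcup_n\TE(A_n)\subseteq\TE(A)$ from monotonicity of the line lengths together with downward closedness of $\ZE$, the reverse inclusion from monotone convergence of $\hE(x,A_n),\vE(x,A_n)$ together with $\ZE$ being closed (the paper phrases this as a contrapositive, assuming $x\notin\bigcup_n\TE(A_n)$ and concluding $(\hE(x,A),\vE(x,A))\in\ZE$), and the E-inertness from the same telescoping identity $\TE(\bigcup_n\TE^n(A))=\bigcup_n\TE^{n+1}(A)$. Your added remarks about infinite line lengths and boundedness of $\ZE$ are a harmless but sensible clarification that the paper leaves implicit.
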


\begin{proof} 
Assume $x\notin \cup_n\TE(A_n)$. Then $(\hE(x,A_n),\vE(x,A_n))\in \ZE$ for all $n$. As
$\hE(x,A_n)\to \hE(x,A)$, $\vE(x,A_n)\to\vE(x,A)$ and $\ZE$ is closed, $(\hE(x,A),\vE(x,A))\in \ZE$ 
and therefore $x\notin \TE(A)$. This proves the first claim, which implies, for any Borel set $A$, 
$$
\TE(\TE^\infty(A))=\TE(\cup_n\TE^n(A))=\cup_n\TE(\TE^n(A))=\cup_n\TE^{n+1}(A)=\TE^\infty(A),
$$
as desired.
\end{proof}

\begin{lemma}\label{open-preserved}
A map $\TE$, given by a Euclidean zero-set $\ZE$, maps open sets to open sets.
\end{lemma}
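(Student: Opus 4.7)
The plan is to exploit two simple facts: first, that when $A$ is open, the line-length functions $v\mapsto \hE((u,v),A)$ and $u\mapsto \vE((u,v),A)$ are lower semicontinuous; and second, that the complement of a continuous Young diagram $\ZE$ is an open up-set in $\bR_+^2$. Since $A\subseteq \TE(A)$ and $A$ is already open, it suffices to show that every point $x_0\in \TE(A)\setminus A$ has an open neighborhood contained in $\TE(A)$.

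First I would establish the semicontinuity claim. Note that $\hE((u,v),A)$ depends only on $v$, so fix $u$ and suppose $v_n\to v$. For any $u'$ with $(u',v)\in A$, openness of $A$ gives $(u',v_n)\in A$ for all large $n$; hence $\liminf_n \indicator_A(u',v_n)\geq \indicator_A(u',v)$ pointwise in $u'$, and Fatou's lemma yields $\liminf_n \hE((u,v_n),A)\geq \hE((u,v),A)$. The analogous statement for $\vE$ follows by symmetry. In particular, if $(u_n,v_n)\to (u_0,v_0)$, then $\liminf_n \hE((u_n,v_n),A)\geq \hE((u_0,v_0),A)$ and likewise for $\vE$.

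Second, fix $x_0=(u_0,v_0)\in \TE(A)\setminus A$ and set $(a_0,b_0) = (\hE(x_0,A),\vE(x_0,A))$, which lies in the open set $\bR_+^2\setminus \ZE$. Since $\ZE$ is a continuous Young diagram, its complement is an up-set: if $(a,b)\notin \ZE$ then $(a',b')\notin \ZE$ for all $a'\geq a$ and $b'\geq b$. Combining this with openness of $\ZE^c$, one finds $\epsilon>0$ such that the quadrant
\[
Q_\epsilon := \bigl((a_0-\epsilon)\vee 0,\infty\bigr)\times \bigl((b_0-\epsilon)\vee 0,\infty\bigr)
\]
is contained in $\bR_+^2\setminus \ZE$ (pick $\epsilon$ so that $((a_0-\epsilon)\vee 0, (b_0-\epsilon)\vee 0)\in \ZE^c$, then use the up-set property).

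Third, by the lower semicontinuity from step one, choose a neighborhood $U$ of $x_0$ in $\bR_+^2$ so that $\hE(x,A)>a_0-\epsilon$ and $\vE(x,A)>b_0-\epsilon$ for every $x\in U$. Then $(\hE(x,A),\vE(x,A))\in Q_\epsilon\subseteq \bR_+^2\setminus \ZE$, so $x\in \TE(A)$, giving $U\subseteq \TE(A)$ as required. The only minor obstacle is the bookkeeping at the axes when $a_0$ or $b_0$ is small, which is absorbed cleanly into the up-set property of $\ZE^c$; otherwise the argument is routine.
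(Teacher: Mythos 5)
Your argument is sound for bounded $A$ and is quite close in spirit to the paper's: both proofs come down to showing that the length functions $\hE(\cdot,A)$ and $\vE(\cdot,A)$ cannot drop discontinuously and then exploiting the geometry of $\ZE^c$ near $(\hE(x_0,A),\vE(x_0,A))$. You package the first step abstractly as lower semicontinuity via Fatou, and the second as the fact that $\ZE^c$ is an open up-set; the paper instead exhibits a compact $K\subseteq L^h(x_0)\cap A$ of nearly full length, uses $\dist(K,A^c)>0$ to get a ball of points whose horizontal lines still meet $A$ in a set of comparable length, and uses only closedness of $\ZE$ (which, combined with the Young-diagram structure, is morally your up-set observation). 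Your version is a little cleaner and slightly more conceptual; the paper's is more self-contained.

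However, there is a genuine gap: you never justify the claim in your second step that $(a_0,b_0)=(\hE(x_0,A),\vE(x_0,A))$ \emph{is a point of} $\bR_+^2$. If $A$ is unbounded, one or both of these lengths can equal $+\infty$, in which case your step of choosing $\epsilon$ with $((a_0-\epsilon)\vee 0,(b_0-\epsilon)\vee 0)\in\ZE^c$ is not meaningful, and the up-set/quadrant argument does not get off the ground (lower semicontinuity only tells you the lengths stay large, not that they remain in a fixed quadrant missing $\ZE$ when $\ZE$ may have unbounded rows or columns). The paper disposes of this by first invoking Lemma~\ref{inert-inclusion} to reduce to the case of bounded $A$, where both lengths are finite and your argument (or theirs) then applies without change. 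You should add that reduction as a first step; with it, the rest of your proof is correct. As a minor point, in step 3 you should use the closed quadrant $[(a_0-\epsilon)\vee 0,\infty)\times[(b_0-\epsilon)\vee 0,\infty)$ rather than the open $Q_\epsilon$, since when $a_0$ or $b_0$ is smaller than $\epsilon$ you only obtain $\hE(x,A)\ge 0$, not $\hE(x,A)>0$; as you note, the up-set property covers the closed quadrant, so this is only a notational adjustment.
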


\begin{proof}
Assume $A\subset \bR_+^2$ is open. To prove that $\TE(A)$ is open we may, by Lemma~\ref{inert-inclusion}, 
assume that $A$ is bounded. Pick an 
$x\in \TE(A)$. If $x\in A$, then there exists $\delta>0$ such that $B_\delta(x)\subset A \subset \TE(A)$.  Suppose now that $x\notin A$. Then $(\hE(x,A),\vE(x,A))\notin \ZE$. As $\ZE$ is closed, $(\hE(x,A)-\epsilon, \vE(x,A)-\epsilon)\notin\ZE$, for some $\epsilon>0$.
Find a compact subset $K\subseteq L^h(x)\cap A$, with 
$\length(K)>\hE(x,A)-\epsilon$. Let $\delta>0$ be the distance between $K$ and $A^c$. 
Then every point $y\in B_\delta(x)$ has a translate of $K$ on 
$L^h(y)\cap A$ (in particular, $y+K \subseteq A$) and so $\hE(y,A)>\hE(x,A)-\epsilon$. Similarly, by choosing a possibly smaller $\delta>0$, $\vE(y,A)>\vE(x,A)-\epsilon$ 
for all $y\in B_\delta(x)$. Thus, for any $y\in B_\delta(x)$, 
$(\hE(y,A),\vE(y,A))\notin \ZE$, thus $B_\delta(x)\subseteq \TE(A)$, and consequently  
$\TE(A)$ is open.
\end{proof}

\begin{lemma}\label{Z-E-spans}
Assume $\TE$ is given by a Euclidean zero-set $\ZE$ and 
$A$ is a Borel set that includes $\ZE$ in its interior. Then 
$A$ E-spans.
\end{lemma}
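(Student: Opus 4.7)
The plan is to approximate $\ZE$ from outside by the (rescaled) square representation of a finite discrete Young diagram that still lies inside $A$, and then transfer the discrete version of this statement, Lemma~\ref{Z-grows}, through the conjugacy Lemma~\ref{conjugacy}.

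Since $\ZE$ is bounded (hence compact, being closed by definition) and $\ZE \subseteq \operatorname{int}(A)$, I would first pick $\delta > 0$ small enough that $\ZE + [0,\delta]^2 \subseteq A$. I would then define the finite discrete set
\[
\Z_\delta = \{(i,j) \in \bZ_+^2 : (i\delta,\, j\delta) \in \ZE\},
\]
and check two easy properties: $\Z_\delta$ is a (discrete) Young diagram by the zero-set property of $\ZE$, and the Euclidean set $\ZE_0 := \delta\,\squrep(\Z_\delta)$ satisfies
\[
\ZE \;\subseteq\; \ZE_0 \;\subseteq\; \ZE + [0,\delta]^2 \;\subseteq\; A.
\]
The first inclusion holds because $(\lfloor u/\delta\rfloor\delta,\lfloor v/\delta\rfloor\delta) \in \ZE$ whenever $(u,v)\in \ZE$, so $(\lfloor u/\delta\rfloor,\lfloor v/\delta\rfloor) \in \Z_\delta$; the second is immediate from the definition of $\squrep$.

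Next, I would apply Lemma~\ref{Z-grows} with $A_0 = \Z = \Z_\delta$ to obtain $\bigcup_n \cT^n(\Z_\delta) = \bZ_+^2$ for the discrete dynamics $\cT$ with zero-set $\Z_\delta$. Iterating the conjugacy Lemma~\ref{conjugacy} and using that $\squrep$ commutes with monotone unions shows that the Euclidean dynamics with zero-set $\squrep(\Z_\delta)$, started from $\squrep(\Z_\delta)$, exhausts $\bR_+^2$. A scaling by $\delta$ (the growth rule is scale-invariant, since $\hE$ and $\vE$ scale linearly with $A$ and $\ZE_0$ scales in the same way) then yields that $\ZE_0$ E-spans for the Euclidean dynamics with zero-set $\ZE_0$.

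Two monotonicity observations now finish the proof. Since $A \supseteq \ZE_0$ and the Euclidean transformation with any fixed zero-set is monotone in the initial set (because $\hE$ and $\vE$ are monotone in the initial set and the zero-set is downward closed), $A$ also E-spans for the dynamics with zero-set $\ZE_0$. Since $\ZE \subseteq \ZE_0$, the rule with the smaller zero-set $\ZE$ is pointwise at least as fast at every iteration, so $A$ E-spans for $\ZE$ as well. The one conceptual subtlety I expect is that a direct imitation of the discrete argument of Lemma~\ref{Z-grows}---trying to grow $\ZE$ by infinitesimal shifts---is awkward, because the extent functions of a Euclidean zero-set can be discontinuous and $A$ carries no canonical ``boundary row'' on which to ignite the iteration; the detour through discretization sidesteps this entirely, at the cost of needing the hypothesis $\ZE \subseteq \operatorname{int}(A)$ rather than merely $\ZE \subseteq A$.
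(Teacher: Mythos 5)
Your argument is correct, but it takes a genuinely different route from the paper. You discretize: you inscribe $\ZE$ in the rescaled square representation $\ZE_0 = \delta\,\squrep(\Z_\delta)$ of a finite Young diagram lying inside $A$, invoke the discrete Lemma~\ref{Z-grows} (with $A_0 = \Z = \Z_\delta$), transfer spanning to the Euclidean dynamics via Lemma~\ref{conjugacy} and scaling, and finish with two monotonicity observations. The paper instead stays entirely in the continuous setting: it first shows that any open set $U$ with $\ZE \subseteq U \varsubsetneqq \bR_+^2$ fails to be E-inert (by locating, for a vertical line $L$ meeting $U^c$, the lowest horizontal segment between the $v$-axis and $L$ that meets $U^c$, and on that segment the leftmost point $x=(u,v)\notin U$, whose neighborhood lengths are at least $u$ and $v$, so $x\in\TE(U)$); it then applies Lemma~\ref{open-preserved} to keep all iterates open and Lemma~\ref{inert-inclusion} to see that $\TE^\infty(\operatorname{int}(A))$ is open and E-inert, hence all of $\bR_+^2$. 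You correctly anticipated that a naive continuous imitation of Lemma~\ref{Z-grows} (pushing $\ZE$ by infinitesimal shifts) would be delicate; the paper sidesteps this not by discretizing, as you do, but by the ``leftmost corner'' contradiction. Your route buys a shorter argument given the machinery already available (conjugacy, discrete spanning, scaling, monotonicity), while the paper's route establishes a reusable structural fact (no proper open superset of $\ZE$ is E-inert) and does not need $\ZE$ bounded to produce a finite $\Z_\delta$---though both are fine here since Section~\ref{subsec-E-limit-definitions} assumes $\ZE$ bounded throughout, which your construction does rely on.
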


\begin{proof}
Let $A\varsubsetneqq \bR_+^2$ be an open set that includes $\ZE$. We claim that $A$ cannot 
be E-inert. To see this, assume that a vertical line $L$ includes a point not in $A$. 
Take the lowest closed horizontal 
line segment bounded by the vertical axis and $L$ that includes a point not in $A$, then let $x=(u,v)$ 
be the leftmost point outside $A$ on this segment. 
Clearly $(\hE(x,A),\vE(x,A))=(u,v)\notin \ZE$ and therefore 
$x\in \TE(A)$. Thus $A$ is not E-inert. 
The proof is concluded by Lemmas~\ref{inert-inclusion} and~\ref{open-preserved}.
\end{proof}

The final two lemmas of this section connect $\IE$, $\gammaE$, and $\area(\ZE)$. 

\begin{lemma}\label{IE0-gamma} For any Euclidean zero-set $\ZE$,
$\IE(0,0,\ZE)=\gammaE(\ZE).
$
\end{lemma}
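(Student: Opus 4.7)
The inequality $\IE(0,0,\ZE)\le\gammaE(\ZE)$ is immediate: any compact set $A$ that E-spans for $\ZE$ gives the triple $(A,0,0)\in\tI$ with cost $\area(A)$, so passing to the infimum yields $\IE(0,0,\ZE)\le\gammaE(\ZE)$.

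For the reverse inequality, the plan is to mimic the discrete analog proof of Lemma~\ref{Ibar-gamma}. Given $(A,f,g)\in\tI$, let $\tF$ and $\tG$ denote the continuous Young diagrams corresponding to $f$ and $g$. Choose $R$ large enough that $A\cup\ZE\subseteq[0,R]^2$, set $S_f:=\sup\supp f$, and define
\[
A_h := \tF + (R+g(0)+1)\,e_1, \qquad A_v := \tG + (R+S_f+1)\,e_2, \qquad A' := A\cup A_h\cup A_v.
\]
This placement ensures that $A$, $A_h$, $A_v$ are pairwise disjoint, that no horizontal line meets both $A\cup A_h$ and $A_v$, and that no vertical line meets both $A\cup A_v$ and $A_h$. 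Hence $A'$ is compact with $\area(A') = \area(A) + \int f + \int g$, and every horizontal line $L$ at height $v<R+S_f+1$ satisfies $\length(L\cap A') = \length(L\cap A) + f(v)$, with the symmetric identity on vertical lines.

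The remaining task is to show $A'$ E-spans for $\ZE$. Write $B_t$ for the $t$-th iterate of the non-enhanced transformation from $A'$ and $C_t$ for the $t$-th iterate of the enhanced transformation determined by $(\ZE,f,g)$ from $A$. I propose to establish by induction on $t$ the joint invariants: (i) $B_t\supseteq C_t$, and (ii) $\length(L\cap B_t)\ge \length(L\cap C_t)+f(v)$ for every horizontal line $L$ at height $v<R+S_f+1$, together with the vertical counterpart. The base case $t=0$ is the identity above. For the inductive step, (ii) at time $t$ says that at every point $y$ the non-enhanced counts from $B_t$ pointwise dominate the effective enhanced counts $(\hE(y,C_t)+f(v_y),\vE(y,C_t)+g(u_y))$; since $\ZE$ is down-closed, any newly added point of $C_{t+1}$ also lies in $\TE(B_t)=B_{t+1}$, establishing (i) at time $t+1$. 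The inequalities in (ii) propagate by the same line-count bookkeeping as in the proof of Lemma~\ref{get-two-Y}: the set $L\cap A_h$ of length $f(v)$ is a permanent subset of every $B_t$ and is disjoint from $L\cap A$, so the surplus $+f(v)$ survives each step of the comparison.

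Once the invariants are in place, the conclusion is immediate: since $A$ E-spans under the enhanced dynamics, $\cup_t C_t = \bR_+^2$, and by (i), $\cup_t B_t \supseteq \cup_t C_t = \bR_+^2$, so $A'$ E-spans. Taking the infimum over $(A,f,g)\in\tI$ yields $\gammaE(\ZE)\le\IE(0,0,\ZE)$. The principal obstacle I anticipate is the careful propagation of (ii) once $C_t$ eventually grows into the region occupied by $A_h$ (or $A_v$), at which point the naive additivity of line measures breaks down; resolving this requires combining the set-theoretic inclusion (i) with the disjointness built into the construction in the style of Lemma~\ref{get-two-Y}, and is the single step where the argument is not a routine verification.
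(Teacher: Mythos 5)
Your approach matches the paper's: the paper's own proof is a one-line sketch deferring to the discrete analogue, Lemma~\ref{Ibar-gamma}, which in turn refers to Lemma~\ref{get-two-Y} for the comparison argument, and your construction of $A'$ and the two-dynamics induction are precisely the Euclidean transcription of that. Two points of detail need attention.

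First, the horizontal shift of $\tF$ is off: $g(0)$ is the \emph{height} of $\tG$, while what must be cleared is the $x$-extent of $A\cup A_v$, namely $\max(R,\sup\supp g)$. If $g$ has a long, thin tail one can have $\sup\supp g > R+g(0)+1$, and then $A_h$ and $A_v$ share vertical lines. Replace $R+g(0)+1$ by $R+\sup\supp g+1$ (or any bound exceeding $\max(R,\sup\supp g)$).

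Second, the obstacle you flag at the end is genuine, and as literally stated invariant (ii) is \emph{not} an invariant: once $C_t$ overruns the region occupied by $A_h$, the surplus $\length(L\cap A_h)=f(v)$ is no longer additive, since $L\cap A_h$ and $L\cap C_t$ cease to be disjoint. The resolution, which is exactly what the paper's pointer to Lemma~\ref{get-two-Y} supplies, is to first prove (as in the opening claim of that proof) that for the enhanced dynamics started from $A\subseteq R''$, where $R''$ is a box containing $A\cup\ZE$, occupation outside $R''$ never helps to occupy a point inside $R''$; one then restricts both the enhanced dynamics from $A$ and the non-enhanced dynamics from $A'$ to $R''$. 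With $A_h\cup A_v$ placed entirely outside $R''$, the disjointness of $L\cap A_h$ from the restricted $C_t$ holds for all $t$, so invariant (ii) propagates on the nose; the restricted enhanced dynamics still fills $R''$ because the unrestricted one E-spans, hence by the comparison the restricted non-enhanced dynamics from $A'$ fills $R''\supseteq\ZE$ as well, and $A'$ E-spans by Lemma~\ref{Z-E-spans}. This is the one missing step; with it and the corrected shift, your argument closes.
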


\begin{proof} By definition, we may assume that 
$\ZE$ is bounded. Then the inequality $\IE(0,0,\ZE)\le \gammaE(\ZE)$
is obvious as $\gammaE$ is obtained as an infimum over a smaller set (with $f=g=0$). 
The reverse inequality can be obtained by replacing the two Young
diagram enhancements with the corresponding two initially occupied Young diagrams. 
We leave out the details, which are very similar to the proof in the discrete 
case (Lemma~\ref{Ibar-gamma}). 
\end{proof}

\begin{cor}\label{gamma-upper}
For any Euclidean zero-set $\ZE$, $\gammaE(\ZE)\le \area(\ZE)$. In 
particular, if $\area(\ZE)<\infty$, then $\IE(\alpha,\beta,\ZE)\le \gammaE(\ZE)<\infty$ 
for all $(\alpha,\beta)\in [0,1]^2$. 
\end{cor}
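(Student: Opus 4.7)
The corollary has two essentially independent parts, both short.

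For the first inequality $\gammaE(\ZE)\le\area(\ZE)$: if $\area(\ZE)=\infty$ there is nothing to prove, so assume $\ZE$ is bounded, as in the standing hypothesis of this section (so $\ZE$ is compact, being closed and bounded). For each $\eta>0$, I would set $A_\eta=\overline{N_\eta(\ZE)}$, the closure of the open $\eta$-neighborhood of $\ZE$. This set is compact (closed and bounded in $\bR^2$) and contains $\ZE$ in its interior, since $N_\eta(\ZE)$ is open and already contains $\ZE$. Lemma~\ref{Z-E-spans} then gives that $A_\eta$ E-spans, so $\gammaE(\ZE)\le\area(A_\eta)$.

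Next I would argue $\area(A_\eta)\to\area(\ZE)$ as $\eta\to 0^+$. A one-line triangle-inequality check shows $\overline{N_\eta(\ZE)}\subseteq N_{2\eta}(\ZE)$, hence $\area(A_\eta)\le\area(N_{2\eta}(\ZE))$. As $\eta\downarrow 0$ the open sets $N_{2\eta}(\ZE)$ decrease to $\ZE$ (using only that $\ZE$ is closed), and they are all contained in the bounded set $N_1(\ZE)$ of finite area, so continuity of Lebesgue measure from above yields $\area(N_{2\eta}(\ZE))\to\area(\ZE)$. Combining, $\gammaE(\ZE)\le\area(\ZE)$.

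For the ``in particular'' statement, assume $\area(\ZE)<\infty$. By Lemma~\ref{IE0-gamma}, $\IE(0,0,\ZE)=\gammaE(\ZE)$. The functional inside the infimum in (\ref{IE-def}) is $\area(A)+(1-\alpha)\int f+(1-\beta)\int g$, which is nonincreasing in each of $\alpha,\beta\in[0,1]$ because $\int f,\int g\ge 0$, and the infimum is taken over the fixed set $\tI$. Consequently $\IE(\alpha,\beta,\ZE)\le\IE(0,0,\ZE)=\gammaE(\ZE)\le\area(\ZE)<\infty$ for all $(\alpha,\beta)\in[0,1]^2$. No step presents a real obstacle; the only minor care is the routine measure-theoretic approximation in the first part.
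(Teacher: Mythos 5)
Your proof is correct and takes essentially the same approach as the paper's two-sentence proof: the first inequality comes from Lemma~\ref{Z-E-spans} applied to a compact set that contains $\ZE$ in its interior (you instantiate this with $\overline{N_\eta(\ZE)}$ and let $\eta\to 0^+$, filling in the measure-theoretic limit the paper leaves implicit), and the second part comes from Lemma~\ref{IE0-gamma} together with the monotonicity of the integrand in (\ref{IE-def}) over the fixed set $\tI$.
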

\begin{proof}
The first claim follows from the definition of $\gammaE(\ZE)$ and Lemma~\ref{Z-E-spans}.  The second claim follows from Lemma~\ref{IE0-gamma} and monotonicity in $\alpha$ and $\beta$.
\end{proof}

\subsection{Euclidean limit for the enhanced growth}\label{subsec-E-limit-enhanced}

In this subsection, we establish the limit for the 
enhanced rate $\oI$. 

\begin{lemma}\label{Ibar-conv} Assume $\ZE$ is a bounded Euclidean zero-set. 
Suppose that Euclidean zero-sets $\Z_n$ and $\delta_n\to 0$ are 
such that $\delta_n\squrep(\Z_n)\Econv \ZE$ as $n\to\infty$. Then 
$$\delta_n^2\oI(\Z_n)\to \IE(\ZE).$$ 
\end{lemma}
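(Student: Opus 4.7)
The two inequalities $\limsup_n \delta_n^2 \oI(\Z_n) \le \IE(\ZE)$ and $\liminf_n \delta_n^2 \oI(\Z_n) \ge \IE(\ZE)$ will be treated separately.

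For the upper bound, fix $\epsilon>0$ and pick a near-optimal Euclidean triple $(A,f,g)\in\tI$ with $\area(A)+(1-\alpha)\int_0^\infty f+(1-\beta)\int_0^\infty g<\IE(\ZE)+\epsilon$. Using Lemmas~\ref{Z-E-spans} and~\ref{open-preserved}, I may slightly enlarge $A$ and the enhancements so that the E-spanning holds robustly against small inward perturbations of the zero-set. I then discretize at scale $\delta_n$: let $A_n=\{x\in\bZ_+^2:(\delta_n x+[0,\delta_n]^2)\cap A\ne\emptyset\}$, and choose Young diagrams $\vec f_n,\vec g_n$ whose $\delta_n$-scaled square representations E-approximate the diagrams $\tF,\tG$ corresponding to $f$ and $g$. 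By Lemma~\ref{conjugacy} and the hypothesis $\delta_n\squrep(\Z_n)\Econv\ZE$, the triple $(A_n,\vec f_n,\vec g_n)$ spans for $(\Z_n,\vec f_n,\vec g_n)$ once $n$ is large, so $\delta_n^2\oI(\Z_n)\le\IE(\ZE)+\epsilon+o(1)$, and $\epsilon\to 0$ closes this direction.

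For the lower bound, I approximate $\ZE$ from within. Fix any bounded continuous zero-set $\ZE^-$ compactly contained in the interior of $\ZE$; by E-convergence, $\ZE^-\subseteq\delta_n\squrep(\Z_n)$ for all large $n$. For optimal discrete triples $(A_n,\vec f_n,\vec g_n)$ achieving $\oI(\Z_n)$, Lemma~\ref{conjugacy} says the rescaled Euclidean triple $(\delta_n\squrep(A_n),\delta_n\squrep(\vec f_n),\delta_n\squrep(\vec g_n))$ E-spans for $\delta_n\squrep(\Z_n)$, hence also for the smaller zero-set $\ZE^-$ (since E-spanning for a smaller zero-set is easier). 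The rescaled triple is therefore a valid element of $\tI$ for $\ZE^-$, with objective exactly $\delta_n^2\oI(\Z_n)$, yielding $\IE(\ZE^-)\le\liminf_n\delta_n^2\oI(\Z_n)$.

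The main obstacle is passing from the bound $\IE(\ZE^-)\le\liminf_n\delta_n^2\oI(\Z_n)$, valid for all $\ZE^-\subsetneq\ZE$ compactly contained in the interior of $\ZE$, to the target bound with $\IE(\ZE)$; this requires $\sup_{\ZE^-}\IE(\ZE^-)=\IE(\ZE)$, i.e., lower semicontinuity of $\IE$ in the zero-set argument under inward E-convergence. I plan to establish this directly from the infimum definition \eqref{IE-def}: if $(A,f,g)$ E-spans for every $\ZE^-$ compactly contained in $\ZE$, then by Lemmas~\ref{inert-inclusion} and~\ref{open-preserved} the inert set $\TE^\infty(A)$ under the $\ZE$-dynamics is open and contains every such $\ZE^-$, hence contains $\ZE$ itself and therefore equals $\bR_+^2$, so $(A,f,g)$ also E-spans for $\ZE$. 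Combined with a diagonal/compactness extraction of near-optimal triples (Helly's theorem for the monotone $f,g$ and Hausdorff compactness for $A$ confined to a fixed bounded rectangle via the boundedness of $\ZE$), this will give $\IE(\ZE^-)\nearrow\IE(\ZE)$ and complete the proof.
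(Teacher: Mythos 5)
Your upper bound argument follows essentially the same route as the paper: pick a near-optimal Euclidean triple, dilate slightly, discretize at scale $\delta_n$, and invoke Lemma~\ref{conjugacy}. The paper does this cleanly via an explicit $(1+\epsilon)$-dilation, where your ``enlarge so that E-spanning holds robustly'' step is a bit vague but morally the same thing.

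The gap is in your lower bound. You correctly reduce to showing $\sup_{\ZE^-}\IE(\ZE^-)=\IE(\ZE)$ over zero-sets $\ZE^-$ compactly contained in $\ZE$, but the lower semicontinuity argument you sketch does not go through. You claim that if $(A,f,g)$ E-spans for every such $\ZE^-$, then by Lemmas~\ref{inert-inclusion} and~\ref{open-preserved} the $\ZE$-inert set $\TE^\infty(A)$ contains every $\ZE^-$ and hence $\ZE$. But $\ZE^-\subseteq\ZE$ means the $\ZE$-dynamics is \emph{harder}: $\TE_{\ZE}(B)\subseteq\TE_{\ZE^-}(B)$ for every $B$, so the $\ZE$-orbit of $A$ is a \emph{subset} of the $\ZE^-$-orbit. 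That the $\ZE^-$-orbit fills the plane says nothing about whether the $\ZE$-orbit covers $\ZE^-$, let alone $\ZE$; the $\ZE$-orbit can stall at some point $(u,v)\in\ZE\setminus\ZE^-$ without contradicting $\ZE^-$-spanning. The Helly/Hausdorff extraction also does not rescue this, since it gives a limiting triple $(A,f,g)$ but no reason that triple E-spans for the limiting zero-set $\ZE$ rather than merely for each $\ZE^-$.

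The missing key idea is Lemma~\ref{scaling}. Instead of an arbitrary $\ZE^-$, take $\ZE^-=(1-\epsilon)\ZE$ specifically; by (C1), $(1-\epsilon)\ZE\subseteq\delta_n\squrep(\Z_n)$ for large $n$, and your rescaled discrete triples give $\IE((1-\epsilon)\ZE)\le\delta_n^2\,\oI(\Z_n)$. Then the scaling identity $\IE((1-\epsilon)\ZE)=(1-\epsilon)^2\IE(\ZE)$ yields the lower bound directly as $\epsilon\to 0$, sidestepping any lower semicontinuity claim about $\IE$ in the zero-set variable.
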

\begin{proof}
Let $\epsilon\in (0,1)$.  Define the Euclidean zero-set $\ZE_n=\delta_n\squrep (\Z_n)$. 
For large enough $n\ge N_1=N_1(\epsilon)$, by (C1),
\begin{equation}\label{E-conv}
(1-\epsilon)\ZE\subseteq \ZE_n\subseteq (1+\epsilon)\ZE.
\end{equation}
Pick a compact set $K\subseteq \bR_+^2$, and two continuous Young diagrams $\tF$ and $\tG$ so that
$K$ E-spans for $(\ZE,\tF,\tG)$ and 
with
$$\area(K)+(1-\alpha)\area(\tF)+(1-\beta)\area(\tG)<\IE(\ZE)+\epsilon.$$ 
Define $A\subseteq \bZ_+^2$ and discrete Young diagrams $F$ and $G$ by 
\begin{equation*}
\begin{aligned}
&A=\{x\in \bZ_+^2: (x+[0,1]^2)\cap (\delta_n^{-1}(1+\epsilon)K)\ne \emptyset\},\\
&F=\{x\in \bZ_+^2: (x+[0,1]^2)\cap (\delta_n^{-1}(1+\epsilon)\tF)\ne \emptyset\},\\
&G=\{x\in \bZ_+^2: (x+[0,1]^2)\cap (\delta_n^{-1}(1+\epsilon)\tG)\ne \emptyset\}.
\end{aligned}
\end{equation*}
Then 
$\delta_n\squrep(A)\supseteq (1+\epsilon) K$ 
E-spans for $((1+\epsilon)\ZE, (1+\epsilon)\tF, (1+\epsilon)\tG)$, 
thus by (\ref{E-conv}) also for  $(\ZE_n,(1+\epsilon)\tF, (1+\epsilon)\tG)$, 
and then also for $(\ZE_n,\delta_n\squrep(F), \delta_n\squrep(G))$. Therefore,
by Lemma~\ref{conjugacy}, $A$ spans 
for $(\Z_n,F,G)$, and so
\begin{equation*}
\begin{aligned}
\oI(\Z_n)&\le |A|+(1-\alpha)|F|+(1-\beta)|G|\\
&=\delta_n^{-2}\left(\area(\delta_n\squrep(A))+
(1-\alpha)\area(\delta_n\squrep(F))+
(1-\beta)\area(\delta_n\squrep(G))\right)
\\
&\le \delta_n^{-2} \left((1+\epsilon)^2(\area(K)+(1-\alpha)\area(\tF)+
(1-\beta)\area(\tG))+\epsilon\right),
\end{aligned}
\end{equation*}
if $n$ is large enough. Thus 
\begin{equation}\label{dE3}
\oI(\Z_n)\le 
\delta_n^{-2} ((1+\epsilon)^2\IE(\ZE)+5\epsilon)
\le \delta_n^{-2} (1+\epsilon)^2(\IE(\ZE)+5\epsilon). 
\end{equation}

To get an inequality in the opposite direction, assume that $n\ge N_1$ and pick a 
finite set $A\subset \bZ_+^2$  and Young diagrams $F$ and $G$, such that $A$
spans for $(\Z_n, F,G)$.
Then $\delta_n\squrep(A)$ is a compact set 
that, by Lemma~\ref{conjugacy}, spans for $(\ZE_n,\delta_n\squrep(F),\delta_n\squrep(G))$, 
and then by (\ref{E-conv}) it also spans for 
$(1-\epsilon)\ZE$. Therefore,
\begin{equation*}
\begin{aligned}
&\IE((1-\epsilon)\ZE)\\&\le
\area(\delta_n\squrep(A))+ (1-\alpha)\area(\delta_n\squrep(F)) +(1-\beta)\area(\delta_n\squrep(G))\\
&=\delta_n^2\left(|A|+(1-\alpha)|F|+(1-\beta)|G|\right)
\end{aligned}
\end{equation*}
By taking infimum over all triples $(A,F,G)$, we get 
\begin{equation}\label{dE4}
(1-\epsilon)^2\IE(\ZE)=
\IE((1-\epsilon)\ZE)\le \delta_n^2\oI(\Z_n).
\end{equation}
The two inequalities (\ref{dE3}) and (\ref{dE4}) end the proof. 
\end{proof}

\subsection{The smallest thin sets} \label{subsec-E-limit-gamma-bar}

Fix a zero-set $\Z$. To prove (\ref{dE-gamma-thin}), we need a comparison quantity, 
analogous to $\oI$. To this end, we define
$\overline\gamma_{\rm thin}(\Z)$ to be the minimum of $\sum\vec f+\sum\vec g$ 
over all sequences 
$\vec f,\vec g$ such that $\emptyset$ spans for $(\Z,\vec f,\vec g)$. We first sketch proofs of a couple 
of simple comparison lemmas. 

\begin{lemma}\label{gamma-thin-gamma} For any zero-set $\Z$,
$\gamma(\Z)\le 
\gamma_{\rm thin}(\Z)\le 2\gamma(\Z)$.
\end{lemma}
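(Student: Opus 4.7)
The lower bound $\gamma(\Z) \le \gamma_{\rm thin}(\Z)$ is immediate from the definitions, since $\gamma_{\rm thin}$ is the minimum of $|A|$ over a subclass (the thin spanning sets) of all finite spanning sets.

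For the upper bound, the plan is to take a spanning set $A_0$ of cardinality $\gamma(\Z)$ and apply the doubling construction from Lemma~\ref{get-two-Y}, but with the output arranged to be thin rather than to be a two-Y set. This is already flagged by the remark immediately after Lemma~\ref{get-two-Y}, so I will only outline how to adapt that argument. Starting from $A_0 \subseteq R$ with $R = [0,a-1] \times [0,b-1]$, work inside the horizontal double $R' = [0,2a-1]\times[0,b-1]$ exactly as before. After permuting the columns and rows of $A_0$ to have nonincreasing counts, for each vertical line $L$ meeting $R'$ that contains $k > 0$ points of $A_0$, place $k$ occupied sites on $L$ above $L \cap R'$, but spread them out on $k$ \emph{distinct} horizontal lines rather than stacking them contiguously; do the analogous horizontal operation to the right of $R'$, using fresh rows and columns so no collisions occur. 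Then erase the sites inside $R'$ to obtain a set $A_0''$ with $|A_0''| = 2|A_0|$ and with no two points sharing a row or a column, i.e., $A_0''$ is thin.

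The verification that $A_0''$ spans is the same as in Lemma~\ref{get-two-Y}: one checks by induction on $t$ that, restricted to $R'$, every site of $R' \setminus A_0$ occupied at time $t$ under $\cT^t(A_0)$ is also occupied under $\cT^t(A_0'')$, using the fact that row and column counts inside $R'$ under $A_0''$ are at least as large as under $A_0$. Since $A_0$ spans, $A_0''$ eventually occupies all of $R' \setminus R$ (hence all of $\bZ_+^2$). This yields a thin spanning set of size $2\gamma(\Z)$, giving $\gamma_{\rm thin}(\Z) \le 2\gamma(\Z)$.

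The only substantive step is ensuring that the doubling can be carried out while keeping the auxiliary points thin; this is a matter of bookkeeping, using the fact that $R'$ provides enough fresh rows and columns outside $R$ to absorb the $|A_0|$ new points without overlap. No further obstacle arises, since the spanning argument is insensitive to the particular placement of the added points within their prescribed lines.
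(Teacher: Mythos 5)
Your construction and the induction that verifies spanning essentially match the paper's intended argument: the paper's one-line proof, together with the remark after Lemma~\ref{get-two-Y}, sketches exactly this doubling into two disjointly placed pieces, one reproducing the column counts of $A_0$ with at most one point per row and the other reproducing its row counts with at most one point per column.

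The one real problem is your assertion that $A_0''$ has ``no two points sharing a row or a column,'' which misreads the definition of \emph{thin}. In this paper, thin only requires that each point have no other point of the set on \emph{at least one} of its two lines, not on both. This is clear from the paper's own usage: the proof of Lemma~\ref{makethin} repeatedly removes points that share both a row and a column with other points and declares the remaining set thin, and the thin spanning set exhibited in the proof of Theorem~\ref{edge11-L} places roughly $(a-1)n$ sites on each of $n$ horizontal lines. Your stronger reading is in fact inconsistent with your own construction: the $k$ sites you place on the vertical line $L$ all share the column of $L$, and this is unavoidable, since the spanning induction needs $|L\cap A_0''|\ge|L\cap A_0|$ for every vertical line $L$ meeting $R'$, which cannot hold if every column contains at most one occupied site. (For the same reason, for $\Z$ with $(1,1)\in\Z$ no set that is thin in your stronger sense can span at all, which would make $\gamma_{\rm thin}$ infinite and the lemma false.) Under the correct reading your set is thin for the intended reason: each vertically-added point shares the column of its $L$ but is alone on its fresh row, and each horizontally-added point shares its row but is alone on its fresh column. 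Rephrase the conclusion accordingly and the proof is complete.
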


\begin{proof} The lower bound is clear as $\gamma_{\rm thin}$ is the 
minimum over a smaller set than $\gamma$.  
The upper bound is a simple construction (similar to the one in the proof of
Lemma~\ref{get-two-Y}): one may replace any spanning set $A$ by a thin spanning 
set consisting of two pieces, one with the row counts the same as those of $A$, and the other with the column counts the same as those of $A$.
\end{proof}

\begin{lemma}\label{gamma-bar-gamma} For any zero-set $\Z$,
$\overline\gamma_{\rm thin}(\Z^{\swarrow 1})\le 
\gamma_{\rm thin}(\Z)\le \overline\gamma_{\rm thin}(\Z)$.
\end{lemma}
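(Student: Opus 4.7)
\emph{Plan.} I would prove the two inequalities separately, with the upper bound being the harder direction. For $\gamma_{\rm thin}(\Z)\le\overline\gamma_{\rm thin}(\Z)$, given optimal enhancements $(\vec f,\vec g)$ with $\emptyset$ spanning for $(\Z,\vec f,\vec g)$, I realize them as a thin initial set $A=A_h\cup A_v$ placed outside a large box $R_{M,M}$: $A_h$ places $f_v$ points on row $v$ (for each $v$ in the support of $\vec f$), all in distinct columns inside $[M,M+\sum\vec f)$, while $A_v$ places $g_u$ points on column $u$ in distinct rows inside $[M,M+\sum\vec g)$. This makes $A$ thin (each $A_h$-point is alone on its column, each $A_v$-point alone on its row), with $|A|=\sum\vec f+\sum\vec g$ and $A\cap R_{M,M}=\emptyset$. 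I choose $M$ larger than the longest row and column of $\Z$ and than the supports of $\vec f$ and $\vec g$.

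Inside $R_{M,M}$, the set $A_h$ contributes exactly $f_v$ to every horizontal count and $A_v$ exactly $g_u$ to every vertical count, and neither has an actual point in the box. Consequently $\cT^t(A)\cap R_{M,M}$ evolves identically to the enhanced dynamics from $\emptyset$ under $(\Z,\vec f,\vec g)$, restricted to the box. By an enhanced analogue of Theorem~\ref{when-done}, the latter reaches $\bZ_+^2$ at some finite time $\tmax$, so $R_{M,M}$ is fully occupied in the thin dynamics by time $\tmax$. Once $R_{M,M}$ is filled, every external point has row or column count at least $M$, which exceeds the extent of $\Z$, so occupation propagates to all of $\bZ_+^2$ within two further steps; thus $A$ spans.

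For $\overline\gamma_{\rm thin}(\Z^{\swarrow 1})\le\gamma_{\rm thin}(\Z)$, take an optimal thin spanning set $A$ and partition it as $A=A_h\sqcup A_v$, with $A_h$ the points of $A$ alone on their column and $A_v$ the remaining points (which are necessarily alone on their row). Using the permutation invariance of $\cT$, I may rearrange rows and columns so that $f_v=|A_h\cap L^h(0,v)|$ and $g_u=|A_v\cap L^v(u,0)|$ are both nonincreasing; then $\sum\vec f+\sum\vec g=|A|$. I would prove by induction on $t$ that the enhanced dynamics $E_t$ under $(\Z^{\swarrow 1},\vec f,\vec g)$ from $\emptyset$ satisfies $E_t\supseteq\cT^t(A)\setminus A$. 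For the inductive step at a newly occupied $(u,v)\in\cT^t(A)\setminus\cT^{t-1}(A)$, the partition structure gives
\[
\row((u,v),\cT^{t-1}(A))\le f_v+1+\row((u,v),E_{t-1})
\]
and the symmetric bound for columns, because $A_h$ contributes exactly $f_v$ points on row $v$, $A_v$ at most one more, and $\cT^{t-1}(A)\setminus A\subseteq E_{t-1}$ by induction. Since the pair of counts from $\cT^{t-1}(A)$ lies outside the Young diagram $\Z$, the coordinatewise-larger pair $(\row((u,v),E_{t-1})+f_v+1,\,\col((u,v),E_{t-1})+g_u+1)$ is also outside $\Z$, which is equivalent to $(\row((u,v),E_{t-1})+f_v,\,\col((u,v),E_{t-1})+g_u)\notin\Z^{\swarrow 1}$; hence $(u,v)\in E_t$.

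The main technical work is in the upper bound, where one must verify that placing $A_h$ and $A_v$ just outside a sufficiently large box makes the in-box dynamics mimic the enhancement exactly, and that the filled box then triggers occupation of the rest of $\bZ_+^2$; the finite completion time of the enhanced dynamics combined with the Young-diagram closure of $\Z$ drive the propagation. The lower bound is comparatively clean: the shift by $(1,1)$ in passing to $\Z^{\swarrow 1}$ is precisely what absorbs the extra $+1$ row contribution of $A_v$ and the extra $+1$ column contribution of $A_h$ in the dominating bound.
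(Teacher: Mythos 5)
Your route matches the paper's: realize the optimal enhancements as a thin spanning set for the second inequality, and read off a thin spanning set's row and column counts as enhancements for $\Z^{\swarrow 1}$ for the first. The lower-bound induction is correct — the $(1,1)$ shift does exactly absorb the single extra $A_v$ point on a row and $A_h$ point on a column — and is the paper's terse argument made explicit.

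The upper-bound coupling, however, is stated imprecisely in a way that, taken literally, is false. The thin-set dynamics $\cT^t(A)$ does not stay aligned with the box-restricted enhanced dynamics: as soon as some $f_v$ is at least the length of the bottom row of $\Z$, the very first step $\cT(A)$ occupies the \emph{entire} line $L^h(0,v)$, including all columns $\ge M$, so $\cT^t(A)\cap R_{M,M}$ need not "evolve identically" to anything restricted to the box. Moreover, "the latter reaches $\bZ_+^2$" conflates the restricted and unrestricted enhanced processes; the restricted one can at most fill $R_{M,M}$. What you actually need (and what holds) is a one-sided domination: with $F_t$ the box-restricted enhanced dynamics from $\emptyset$, one has $\cT^t(A)\cap R_{M,M}\supseteq F_t$ by induction, since on every line through the box the counts from $A_h\cup A_v$ and from $\cT^{t-1}(A)\cap R_{M,M}$ are disjoint contributions summing to at least the enhanced count. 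You then still owe an argument that $F_t$ eventually equals $R_{M,M}$ given only that the \emph{unrestricted} enhanced dynamics spans $\bZ_+^2$; this is true (a site outside the box is occupied only once its full in-box line already is, by a row/column monotonicity argument akin to the one in Lemma~\ref{get-two-Y}), but it is a genuine step. These are points of precision rather than a wrong approach — the paper's one-line justification ("a simple construction argument as in Lemma~\ref{get-two-Y}") elides the same details — but as written your coupling claim would not survive a careful reading.
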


\begin{proof} This is again a simple construction argument as in Lemma~\ref{get-two-Y}.
If $\emptyset$ spans for $(\Z,\vec f,\vec g)$, 
then the thin set $A$ constructed by populating row $i$ with $f_i$ occupied points and column $\sum_i f_i + 1+j$ with $g_j$ occupied points has 
\begin{equation}\label{A-seq}
|A|=\sum_i f_i+\sum_j g_j,
\end{equation} 
and spans for $\Z$. Conversely, if a thin set $A$ spans for $\Z$, 
then the row and column counts of $A$ can be gathered into $\vec f$ and $\vec g$ (once sorted), so that (\ref{A-seq}) holds and 
$\emptyset$ spans for $(\Z^{\swarrow 1}, \vec f,\vec g)$.
\end{proof}

Recall the definition of  
$\gammaE_{\rm thin}$ from Section~\ref{subsec-E-limit-definitions}. 
We will omit the proof of the following convergence result, which 
can be obtained by adapting the 
argument for enhancement rates.  
\begin{lemma}\label{dE-thin} Assume $\ZE$ is a bounded Euclidean zero-set. 
Then $\gammaE_{\rm thin}(\ZE)\le \area(\ZE)$. Moreover,  
suppose discrete zero-sets $\Z_n$ and $\delta_n>0$ satisfy
$\delta_n\to 0$ and 
$\delta_n\squrep(\Z_n)\Econv \ZE$. Then 
$
\delta_n^{2}\overline\gamma_{\rm thin}(\Z_n)\to \gammaE_{\rm thin}(\ZE)$.
\end{lemma}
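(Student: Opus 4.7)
The plan is to first prove the discrete inequality $\overline\gamma_{\rm thin}(\Z)\le |\Z|$ and then adapt the argument of Lemma~\ref{Ibar-conv}, with $A=\emptyset$ throughout, to obtain the convergence. The area bound on $\gammaE_{\rm thin}$ then follows by passing the discrete inequality through the limit using~(C2).

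For the discrete bound, take $G=\emptyset$ and $F=\Z$, so $f_j=a_j$ where $a_j$ is the length of row $j$ of $\Z$, and $g\equiv 0$. I claim that $\emptyset$ E-spans for $(\Z,\vec f,\vec 0)$ by induction on $k$: if $A_k\supseteq \bZ_+\times\{0,\ldots,k-1\}$ then any $(i,k)$ has $\col((i,k),A_k)=k$ and $\row((i,k),A_k)=0$, so the enhanced-count pair is $(a_k,k)$; since $(u,v)\notin\Z$ iff $u\ge a_v$ and $a_k\ge a_k$, the point lies in $A_{k+1}$. After $b_0$ steps (with $b_0$ the number of nonempty rows of $\Z$) the strip $\bZ_+\times\{0,\ldots,b_0-1\}$ is fully occupied, and one further step handles the rest because $\col((i,j),A_{b_0})=b_0$ and $a_{b_0}=0$. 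Hence $\overline\gamma_{\rm thin}(\Z)\le |F|=|\Z|$.

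For the convergence, I adapt Lemma~\ref{Ibar-conv}. Fix $\epsilon>0$ and use~(C1) to get $(1-\epsilon)\ZE\subseteq\delta_n\squrep(\Z_n)\subseteq(1+\epsilon)\ZE$ for large $n$. For the lower bound on $\delta_n^2\overline\gamma_{\rm thin}(\Z_n)$, take $(F_n,G_n)$ realizing $\overline\gamma_{\rm thin}(\Z_n)$; by Lemma~\ref{conjugacy}, scaling, and monotonicity of $\TE$ in $\ZE$ (smaller $\ZE$ is easier to E-span), $\emptyset$ E-spans for $((1-\epsilon)\ZE,\delta_n\squrep(F_n),\delta_n\squrep(G_n))$, so Lemma~\ref{scaling} gives $(1-\epsilon)^2\gammaE_{\rm thin}(\ZE)\le\delta_n^2\overline\gamma_{\rm thin}(\Z_n)$. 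For the upper bound, pick $(\emptyset,f,g)\in\tI$ with $\int f+\int g<\gammaE_{\rm thin}(\ZE)+\epsilon$ and set $F=\{x\in\bZ_+^2:(x+[0,1]^2)\cap\delta_n^{-1}(1+\epsilon)\tF\ne\emptyset\}$ and $G$ analogously; these are Young diagrams because $(1+\epsilon)\tF,(1+\epsilon)\tG$ are, and $\delta_n\squrep(F)\supseteq(1+\epsilon)\tF$. By the same conjugacy and monotonicity arguments combined with $\delta_n\squrep(\Z_n)\subseteq(1+\epsilon)\ZE$, $\emptyset$ spans for $(\Z_n,F,G)$, and a boundary-shell estimate gives $\delta_n^2|F|\le(1+\epsilon)^2\int f+O(\delta_n)$. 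Sending $\epsilon\to 0$ in both bounds yields $\delta_n^2\overline\gamma_{\rm thin}(\Z_n)\to\gammaE_{\rm thin}(\ZE)$, and combining with the discrete inequality gives
\begin{equation*}
\gammaE_{\rm thin}(\ZE)=\lim_n\delta_n^2\overline\gamma_{\rm thin}(\Z_n)\le\lim_n\delta_n^2|\Z_n|=\area(\ZE).
\end{equation*}

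The main delicate point is the careful discretization in the upper-bound direction: one must confirm that the ceiling rounding produces genuine Young diagrams whose scaled square representations cover $(1+\epsilon)\tF$ and $(1+\epsilon)\tG$ while differing from them only in a boundary shell of scaled area $O(\delta_n)$, so the count of extra cells is negligible as $n\to\infty$. Everything else is a transcription of the enhancement-rate argument with the initial set fixed at $\emptyset$.
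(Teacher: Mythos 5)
Your proof is correct and is precisely the adaptation the paper alludes to when it says the result "can be obtained by adapting the argument for enhancement rates" (Lemma~\ref{Ibar-conv} with $A=\emptyset$). The discrete bound $\overline\gamma_{\rm thin}(\Z)\le|\Z|$ via the enhancement $f_j=a_j$, $g\equiv 0$ is the right way to get the area inequality without running into the closedness issue that would arise from trying to argue directly that $\emptyset$ E-spans for $(\ZE,f_{\rm row},0)$ (for a closed $\ZE$ one has $(f(v),0)\in\ZE$, so $\TE(\emptyset)=\emptyset$); passing the discrete inequality through the limit sidesteps this cleanly. One small remark on ordering: to use the upper-bound half of the convergence argument you need $\gammaE_{\rm thin}(\ZE)<\infty$, but this falls out of the lower-bound half together with $\delta_n^2\overline\gamma_{\rm thin}(\Z_n)\le\delta_n^2|\Z_n|\to\area(\ZE)<\infty$, so the logic closes.
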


\subsection{Proof of the main convergence theorem}\label{subsec-E-limit-proof}

We begin with an extension of Theorem~\ref{gamma-perturbation} that is needed
to reduce our argument to bounded Euclidean zero-sets.

\begin{lemma}\label{I-perturbation}
Let $\Z$ be any zero-set, $(\alpha,\beta)\in [0,1]^2$, and $R>0$ an integer. Then  
$$
I(\alpha,\beta,\Z\cap [0,R]^2)\le I(\alpha,\beta,\Z)\le 
I(\alpha,\beta,\Z\cap [0,R]^2)+|\Z\setminus [0,R]^2| .
$$
\end{lemma}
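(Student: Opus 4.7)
The plan is to prove the two inequalities separately, with the left one being essentially immediate from monotonicity, and the right one being a direct application of Theorem~\ref{gamma-perturbation} combined with a simple observation about how $\rho$ behaves under single-point additions.

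For the left inequality, note that if $\Z'\subseteq \Z$ are zero-sets and $A$ spans for $\Z$, then $A$ spans for $\Z'$ as well. Indeed, any point $x$ with row/column counts $(u,v)\notin \Z$ automatically satisfies $(u,v)\notin\Z'$, so the $\Z'$-dynamics contains the $\Z$-dynamics. Hence $\cA(\Z)\subseteq \cA(\Z')$, and taking infimum of $\rho(\alpha,\beta,\cdot)$ over the larger set $\cA(\Z')$ yields a value no larger than the infimum over $\cA(\Z)$. Applied to $\Z':=\Z\cap[0,R]^2$, this gives the left inequality (valid even when $\Z$ is infinite, interpreting $I(\alpha,\beta,\Z)$ as an infimum over finite spanning sets).

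The right inequality relies on the following elementary observation: for any finite set $A\subseteq\bZ_+^2$ and any point $z\in\bZ_+^2$,
\begin{equation*}
\rho(\alpha,\beta,A\cup\{z\})\le \rho(\alpha,\beta,A)+1.
\end{equation*}
To see this, pick $B\subseteq A\cup\{z\}$ achieving the maximum in the definition of $\rho(\alpha,\beta,A\cup\{z\})$, and let $B'=B\setminus\{z\}\subseteq A$. Then $|B|\le |B'|+1$, while $|\pi_x(B)|\ge |\pi_x(B')|$ and $|\pi_y(B)|\ge |\pi_y(B')|$, so
\begin{equation*}
|B|-\alpha|\pi_x(B)|-\beta|\pi_y(B)|\le |B'|+1-\alpha|\pi_x(B')|-\beta|\pi_y(B')|\le \rho(\alpha,\beta,A)+1.
\end{equation*}
Iterating, any set obtained from $A$ by adjoining $k$ points has $\rho$ at most $\rho(\alpha,\beta,A)+k$.

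With this in hand, the right inequality is immediate. If $|\Z\setminus[0,R]^2|=\infty$ the bound is vacuous, so assume it is finite. By Theorem~\ref{ld-rate}, the infimum defining $I(\alpha,\beta,\Z\cap[0,R]^2)$ is attained by some finite spanning set $A'$ for $\Z\cap[0,R]^2=\Z\cap R_{R+1,R+1}$. Applying Theorem~\ref{gamma-perturbation} with $a=b=R+1$, there exists $A\supseteq A'$ that spans for $\Z$ with $|A\setminus A'|=|\Z\setminus[0,R]^2|$. Combining this with the observation above,
\begin{equation*}
I(\alpha,\beta,\Z)\le \rho(\alpha,\beta,A)\le \rho(\alpha,\beta,A')+|\Z\setminus[0,R]^2|=I(\alpha,\beta,\Z\cap[0,R]^2)+|\Z\setminus[0,R]^2|,
\end{equation*}
which completes the proof. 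There is no substantive obstacle; the only thing to verify carefully is the monotonicity of the projections $|\pi_x|,|\pi_y|$ under point removal, which is immediate.
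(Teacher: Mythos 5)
Your proof is correct and follows essentially the same route as the paper: both invoke Theorem~\ref{gamma-perturbation} to enlarge an optimal spanning set $A'$ for $\Z\cap[0,R]^2$ by exactly $|\Z\setminus[0,R]^2|$ extra points to a spanning set for $\Z$, and then bound the resulting $\rho$ by $\rho(A')+|\Z\setminus[0,R]^2|$; your one-point-at-a-time lemma is just an iterated version of the paper's single direct estimate $|B\cup B_1|\le|B|+|A_1|$ combined with monotonicity of the projections. The only cosmetic difference is that you also write out the (trivial) left inequality, which the paper leaves implicit.
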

\begin{proof} 
Pick a set $A$ that spans for $\Z\cap [0,R]^2$, such that 
$\rho(A)=I(\alpha,\beta,\Z\cap [0,R]^2)$. By Theorem~\ref{gamma-perturbation}, 
there exists a set $A_1$ with $|A_1|\le |\Z\setminus [0,R]^2|$, such that 
$A\cup A_1$ spans for $\Z$. Therefore, with supremum below over all sets 
$B\subseteq A$ and $B_1\subseteq A_1$, 
\begin{equation*}
\begin{aligned}
I(\alpha,\beta,\Z)&\le \rho(A\cup A_1)\\
&=\sup_{B,B_1} |B\cup B_1|-\alpha |\pi_x(B\cup B_1)|
-\beta |\pi_y(B\cup B_1)|\\
&\le 
\sup_{B} |B|+|A_1|-\alpha |\pi_x(B)|
-\beta |\pi_y(B)|\\
&=\rho(A)+|A_1|\\
&\le I(\alpha,\beta,\Z\cap [0,R]^2)+|\Z\setminus [0,R]^2|,
\end{aligned}
\end{equation*}
as desired. 
\end{proof} 

Recall the definition of E-convergence from Section~\ref{sec-intro}. 
We omit the routine proof of the following lemma. 

\begin{lemma}
\label{C2-uniformity} 
Assume that (C1) holds, $\area(\ZE)<\infty$, and $\area(\ZE_n)<\infty$ for all 
$n$. Then (C2) is equivalent to 
$$
\lim_{R\to\infty} \area(\ZE_n\setminus [0,R]^2)= 0
$$
uniformly in $n$. 
\end{lemma}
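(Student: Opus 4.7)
The entire argument rests on the following elementary fact: if $\ZE_n$ and $\ZE$ are continuous Young diagrams and $\ZE_n\cap[0,R]^2\to\ZE\cap[0,R]^2$ in Hausdorff distance, then $\area(\ZE_n\cap[0,R]^2)\to\area(\ZE\cap[0,R]^2)$. This holds because each such restricted Young diagram is the subgraph of a bounded nonincreasing height function on $[0,R]$; Hausdorff convergence forces pointwise convergence of these height functions at every continuity point of the limit, monotonicity leaves only countably many exceptions, and dominated convergence delivers the area statement. I will invoke this key fact repeatedly under assumption (C1).

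For the direction (C2) $\Rightarrow$ uniform tail, I fix $\epsilon>0$ and choose $R_0$ so large that $\area(\ZE\setminus[0,R_0]^2)<\epsilon$. Combining the key fact above with (C2),
\[
\area(\ZE_n\setminus[0,R_0]^2)=\area(\ZE_n)-\area(\ZE_n\cap[0,R_0]^2)\longrightarrow \area(\ZE\setminus[0,R_0]^2)<\epsilon,
\]
so for all $n$ beyond some threshold $N$ one has $\area(\ZE_n\setminus[0,R]^2)<2\epsilon$ whenever $R\ge R_0$. The finitely many remaining indices $n<N$ each have $\area(\ZE_n)<\infty$, so I can individually choose $R_n\ge R_0$ with $\area(\ZE_n\setminus[0,R_n]^2)<\epsilon$, then take $R^{\ast}=\max\{R_0,R_1,\dots,R_{N-1}\}$ to obtain a single threshold past which $\area(\ZE_n\setminus[0,R]^2)<2\epsilon$ uniformly in $n$.

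For the converse, given the uniform tail, I would choose $R$ so large that both $\sup_n\area(\ZE_n\setminus[0,R]^2)<\epsilon$ and $\area(\ZE\setminus[0,R]^2)<\epsilon$. The triangle inequality then yields
\[
|\area(\ZE_n)-\area(\ZE)|\;\le\;|\area(\ZE_n\cap[0,R]^2)-\area(\ZE\cap[0,R]^2)|+2\epsilon,
\]
whose first term tends to zero by the key fact. Sending $n\to\infty$ and then $\epsilon\downarrow 0$ establishes (C2).

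The one delicate point is the handling of the finitely many small-$n$ terms in the forward direction; this is the standard device that converts pointwise-in-$n$ tail decay into uniform-in-$n$ tail decay, and it works precisely because each $\ZE_n$ has finite total area by hypothesis. Everything else is routine bookkeeping built on top of the Hausdorff-to-area fact for restricted Young diagrams.
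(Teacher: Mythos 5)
Your proof is correct. The paper explicitly omits a proof of this lemma ("We omit the routine proof of the following lemma"), so there is no argument in the paper to compare against; your write-up supplies the content that the authors deemed routine. The one substantive step you single out — that (C1), Hausdorff convergence of the boxed sets $\ZE_n\cap[0,R]^2$, forces $\area(\ZE_n\cap[0,R]^2)\to\area(\ZE\cap[0,R]^2)$ — is indeed the crux, and your justification via the truncated nonincreasing height functions (pointwise convergence at continuity points, countably many exceptions by monotonicity, then dominated convergence on $[0,R]$) is sound. An alternative that avoids dominated convergence: for continuous Young diagrams $Y,Y'\subseteq[0,R]^2$ one has the direct estimate $|\area(Y)-\area(Y')|\le 2R\,d_H(Y,Y')$, since $Y^{+\epsilon}\cap\bR_+^2$ lies under the shifted height function $x\mapsto h\bigl((x-\epsilon)\vee 0\bigr)+\epsilon$; either route serves. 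The remaining two implications — absorbing the finitely many small-$n$ tails in the forward direction, and the triangle-inequality splitting in the converse — are the standard $\epsilon$-management the paper anticipated.
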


We are now ready to prove our main convergence result, Theorem~\ref{intro-d-E-theorem}. 
Before we proceed, we need to extend the definitions 
of $\IE$ and $\gammaE_{\rm thin}$ to unbounded Euclidean zero-sets. For 
an arbitrary $\ZE$, we define 
\begin{equation}\label{IE-def-unbounded}
\IE(\alpha,\beta,\ZE)=\lim_{R\to\infty} \IE(\alpha,\beta,\ZE\cap[0,R]^2)
\end{equation}
and 
\begin{equation}\label{gammaEthin-def-unbounded}
\gammaE_{\rm thin}(\ZE)=\lim_{R\to\infty} \gammaE_{\rm thin}(\ZE\cap[0,R]^2).
\end{equation}
Observe that, if $\area(\ZE)<\infty$, $\IE(\ZE)\le \gammaE(\ZE)\le \area(\ZE)<\infty$, 
and likewise $\gammaE_{\rm thin}(\ZE)<\infty$. 


\begin{lemma} \label{dE-IE-partial}
Assume $\ZE$ is an arbitrary Euclidean zero-set. 
Suppose that discrete zero-sets $\Z_n$ and $\delta_n\to 0$ are 
such that $\delta_n\squrep(\Z_n)\Econv \ZE$ as $n\to\infty$. Then 
$\delta_n^2I(\alpha,\beta,\Z_n)\to \IE(\alpha,\beta,\ZE).$
If $\area(\ZE)<\infty$ this convergence is uniform for $(\alpha,\beta)\in [0,1]^2$
and the limit is concave and continuous on $[0,1]^2$. 
If $\area(\ZE)=\infty$, the limit is infinite on $[0,1)^2$. 
\end{lemma}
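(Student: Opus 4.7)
The plan is to reduce the general case to bounded $\ZE$ by truncation, establish pointwise convergence by sandwiching $I$ between two values of the enhancement rate $\oI$, and then derive concavity, continuity, and uniformity from properties of $\oI$ and its limit.

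I will first handle the case of bounded $\ZE$. The upper bound $\limsup_n \delta_n^2 I(\alpha,\beta,\Z_n) \le \IE(\alpha,\beta,\ZE)$ is immediate from $I\le\oI$ (Lemma~\ref{I-le-Ibar}) together with $\delta_n^2\oI(\Z_n)\to\IE(\ZE)$ (Lemma~\ref{Ibar-conv}). For the matching lower bound, set $\ell=1/\delta_n$ and apply Lemma~\ref{Ibar-le-I} with, say, $m=\lfloor\ell^{3/2}\rfloor$ and $k=\lfloor\ell^{1/2}\rfloor$, giving
\[ \oI(\Z_n^{\swarrow k_n}) \le I(\Z_n) + O(\ell^{3/2}), \qquad k_n = O(\ell^{1/2}). \]
After multiplying by $\delta_n^2$ the error is $O(\delta_n^{1/2})=o(1)$, and since $k_n\delta_n\to 0$ the perturbed sequence still satisfies $\delta_n\squrep(\Z_n^{\swarrow k_n})\Econv\ZE$; a second application of Lemma~\ref{Ibar-conv} then yields $\delta_n^2\oI(\Z_n^{\swarrow k_n})\to\IE(\ZE)$, so $\liminf_n\delta_n^2 I(\Z_n)\ge\IE(\ZE)$.

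Next I will extend to general $\ZE$ by truncating to $[0,R/\delta_n]^2$ and invoking Lemma~\ref{I-perturbation}:
\[ I\bigl(\alpha,\beta,\Z_n\cap[0,R/\delta_n]^2\bigr) \le I(\alpha,\beta,\Z_n) \le I\bigl(\alpha,\beta,\Z_n\cap[0,R/\delta_n]^2\bigr) + \bigl|\Z_n\setminus[0,R/\delta_n]^2\bigr|. \]
The truncated sequence satisfies $\delta_n\squrep(\Z_n\cap[0,R/\delta_n]^2)\Econv\ZE\cap[0,R]^2$, which is bounded, so the previous paragraph applies to the truncated quantity. When $\area(\ZE)<\infty$, Lemma~\ref{C2-uniformity} lets me make the error $\delta_n^2|\Z_n\setminus[0,R/\delta_n]^2|\to\area(\ZE)\setminus\area(\ZE\cap[0,R]^2)$ uniformly small in $n$ by choosing $R$ large, so (\ref{IE-def-unbounded}) gives $\delta_n^2 I(\Z_n)\to\IE(\ZE)$. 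When $\area(\ZE)=\infty$, I combine Proposition~\ref{Ilb-discrete} with $\gamma\ge |\Z|/4$ from Theorem~\ref{intro-gamma-area-thm} to obtain, for fixed $k$ and $R$ and large $n$,
\[ \delta_n^2 I\bigl(\alpha,\beta,\Z_n\cap[0,R/\delta_n]^2\bigr) \ge \tfrac14\,\area(\ZE\cap[0,R]^2)\Bigl(1-\max(\alpha,\beta)\bigl(1+\tfrac1{k+1}\bigr)\Bigr) + o(1). \]
Letting $k\to\infty$ and then $R\to\infty$ forces both $\IE(\alpha,\beta,\ZE)=\infty$ and $\delta_n^2 I(\alpha,\beta,\Z_n)\to\infty$ for every $(\alpha,\beta)\in[0,1)^2$.

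For the case $\area(\ZE)<\infty$, concavity of $\IE$ on all of $[0,1]^2$ is immediate from~(\ref{IE-def}), which presents $\IE$ as an infimum of functions that are affine in $(\alpha,\beta)$. Interior continuity then follows automatically, while continuity up to the boundary $\{\alpha=1\}\cup\{\beta=1\}$ follows from the upper bound $\IE(\alpha,\beta,\ZE)\le(1-\max(\alpha,\beta))\cdot C(\ZE)$ produced by the enhancement triples $(\emptyset,f,0)$ or $(\emptyset,0,g)$ whose $f$ or $g$ is chosen large enough on a bounded support (of total area $O(\area(\ZE))$) to force $\emptyset$ to E-span. For uniform convergence, fix $\epsilon>0$ and split $[0,1]^2=K_\epsilon\cup(K_\epsilon)^c$, where $K_\epsilon=\{\max(\alpha,\beta)\le 1-\epsilon\}$. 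On $K_\epsilon$ any optimal triple for $\oI(\alpha,\beta,\Z_n)$ satisfies $\sum f^*,\sum g^*\le\oI/(1-\max(\alpha,\beta))\le|\Z_n|/\epsilon$, so $\delta_n^2\oI$ is uniformly Lipschitz with constant $O(\area(\ZE)/\epsilon)$; combined with pointwise convergence and continuity of $\IE$, this gives uniform convergence of $\delta_n^2\oI\to\IE$ on $K_\epsilon$, and the same sandwich from paragraph one (whose constants in Lemma~\ref{Ibar-le-I} remain bounded on $K_\epsilon$ once $k>2/\epsilon$) transfers this to $\delta_n^2 I\to\IE$. Outside $K_\epsilon$ both quantities are $O(\epsilon)$, so uniform convergence there is trivial.

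The main obstacle is precisely this uniformity step: the constant $C$ in Lemma~\ref{Ibar-le-I} degenerates as $\max(\alpha,\beta)\to 1$, and the splitting into a Lipschitz-controlled slab $K_\epsilon$ plus a boundary strip where everything is small is the crucial maneuver that sidesteps this degeneration.
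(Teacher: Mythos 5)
Your proof is correct and follows the same overall architecture as the paper's: establish pointwise convergence for bounded $\ZE$ by sandwiching $\delta_n^2 I$ between two enhancement-rate quantities via Lemmas~\ref{I-le-Ibar}, \ref{Ibar-conv}, and \ref{Ibar-le-I} (your explicit choices $m=\lfloor\ell^{3/2}\rfloor$, $k=\lfloor\ell^{1/2}\rfloor$ are a valid instantiation of the asymptotic regime those lemmas require, and the resulting perturbation $\Z_n^{\swarrow O(\ell^{1/2})}$ still E-converges to $\ZE$); extend to general $\ZE$ by truncating to $[0,R/\delta_n]^2$ and invoking Lemmas~\ref{I-perturbation} and \ref{C2-uniformity}; dispose of the $\area(\ZE)=\infty$ case via Proposition~\ref{Ilb-discrete} together with $\gamma\ge\frac14|\Z|$; and obtain concavity from the infimum-of-affines form of $\IE$. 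Those parts match the paper nearly step for step.

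Where you genuinely diverge is the uniform-convergence step. The paper's argument is a one-line Dini-type observation: each $\delta_n^2 I(\cdot,\cdot,\Z_n)$ is nonincreasing in each argument, the pointwise limit $\IE(\cdot,\cdot,\ZE)$ is continuous on $[0,1]^2$, and pointwise convergence of monotone functions to a continuous limit on a compact set is automatically uniform. You instead split $[0,1]^2 = K_\epsilon\cup K_\epsilon^c$ with $K_\epsilon=\{\max(\alpha,\beta)\le 1-\epsilon\}$, prove a uniform Lipschitz bound $O(\area(\ZE)/\epsilon)$ for $\delta_n^2\oI(\cdot,\cdot,\Z_n)$ on $K_\epsilon$ (via $\sum f^*\le\oI/(1-\alpha)\le|\Z_n|/\epsilon$ for optimal triples), upgrade pointwise to uniform on $K_\epsilon$ via equicontinuity, verify that the error constants in Lemma~\ref{Ibar-le-I} stay bounded on $K_\epsilon$ once $k>2/\epsilon$, and then show both $\delta_n^2 I$ and $\IE$ are $O(\epsilon\,\area(\ZE))$ outside $K_\epsilon$ using Proposition~\ref{Iupper-bound1}. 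This is a correct but considerably bulkier route: it requires tracking how the constant in Lemma~\ref{Ibar-le-I} degenerates as $\max(\alpha,\beta)\to 1$ and controlling the ($\alpha,\beta$)-dependent perturbation index uniformly on $K_\epsilon$, none of which is needed in the paper's monotonicity-based argument. What the extra effort buys you is a quantitative (Lipschitz-type) modulus of convergence on $K_\epsilon$, which the soft argument does not give; but for the stated conclusion the paper's route is the more economical one. One small point to tighten: concavity of $\IE$ for unbounded $\ZE$ of finite area is not "immediate from~(\ref{IE-def})," since that formula is only stated for bounded $\ZE$; you should pass through the truncated limit~(\ref{IE-def-unbounded}), observing that a pointwise limit of concave functions is concave.
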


\begin{proof}
We first prove (\ref{dE-I}) for fixed $(\alpha,\beta)\in [0,1)^2$, which 
we suppress from the notation.
If $\area(\ZE)=\infty$, then 
$\delta_n^{2}I(\Z_n)\to \infty$ by Lemma~\ref{conjugacy}, Proposition~\ref{Ilb-discrete}, (C2) and Theorem~\ref{intro-gamma-area-thm}. 
We assume $\area(\ZE)<\infty$ for the remainder of the proof. 

Fix an $\epsilon\in(0,1)$. By definition, we can choose $R$ large enough 
so that 
\begin{equation}
\label{dE0}
\IE(\ZE\cap [0,R]^2)>\IE(\ZE)-\epsilon.
\end{equation} 
It follows by  Lemma~\ref{C2-uniformity} that, if $R$ is large enough,
$\delta_n^2 |\Z_n\setminus [0,\delta_n^{-1}R]^2|<\epsilon$, for all $n$.  
Then, by Lemma~\ref{I-perturbation},
\begin{equation}
\label{dE1}
I(\Z_n\cap [0,\delta_n^{-1}R]^2)\le I(\Z_n)\le 
I(\Z_n\cap [0,\delta_n^{-1}R]^2)+\epsilon\delta_n^{-2}, 
\end{equation}
for every $n$.

For every $R>0$, $\delta_n\squrep(\Z_n\cap [0,\delta_n^{-1}R]^2)\Econv \ZE\cap[0,R]^2$, and therefore, by 
Lemma~\ref{Ibar-conv}, 
$$
\delta_n^2 \oI(\Z_n\cap [0,\delta_n^{-1}R]^2)\to \IE(\ZE\cap [0,R]^2),
$$
and then, by Lemmas~\ref{Ibar-le-I} and~\ref{I-le-Ibar}, 
$$
\delta_n^2 I(\Z_n\cap [0,\delta_n^{-1}R]^2)\to \IE(\ZE\cap [0,R]^2).
$$
By (\ref{dE0}) and (\ref{dE1}), it follows that
\begin{equation*}
\begin{aligned}
\IE(\ZE) -\epsilon\le \IE(\ZE\cap [0,R]^2) &\le \liminf \delta_n^2 I(\Z_n)\\
&\le \limsup \delta_n^2 I(\Z_n)\le  \IE(\ZE\cap [0,R]^2) +\epsilon\le \IE(\ZE) +\epsilon,\\
\end{aligned}
\end{equation*}
which ends the proof of the convergence claim. 


By Proposition~\ref{Iupper-bound1} and the established convergence,
\begin{equation}\label{dE2}
\IE(\alpha,\beta,\ZE)\le (1-\max\{\alpha,\beta\}) \area(\ZE),
\end{equation}
for any $(\alpha,\beta)\in[0,1]^2$ and any Euclidean zero-set $\ZE$ with finite area.

If $\ZE$ is bounded, the function $\IE(\cdot,\cdot,\ZE)$ is concave on $[0,1]^2$ because it is an 
infimum of linear functions.  By passing to the limit (\ref{IE-def-unbounded}), 
this holds for arbitrary $\ZE$. Clearly, $\IE(\alpha,\beta,\ZE)$ is
nonincreasing in $\alpha$ and $\beta$, so by concavity and (\ref{dE2}), $\IE(\cdot,\cdot,\ZE)$ is continuous on $[0,1]^2$.
The functions $\delta_n^{2}I(\cdot,\cdot,\Z_n)$ are also nonincreasing in 
each argument for every $n$, so pointwise convergence implies uniform convergence. 
\end{proof}

\begin{cor}\label{gamma-E-perturbation} For any 
Euclidean zero-set $\ZE$ with $\area(\ZE)<\infty$, any $(\alpha,\beta)\in[0,1]^2$, and any $R>0$,
$$
\IE(\alpha,\beta,\ZE)\le 
\IE(\alpha,\beta,\ZE\cap[0,R]^2)
+\area(\ZE\setminus[0,R]^2).
$$
\end{cor}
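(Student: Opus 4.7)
The plan is to reduce the statement to the bounded case via the defining limit (\ref{IE-def-unbounded}), and then transfer the discrete perturbation bound of Lemma~\ref{I-perturbation} to the Euclidean setting using Lemma~\ref{dE-IE-partial}. Specifically, for any $R' > R$ I will establish the auxiliary inequality
\begin{equation}\label{prop-aux}
\IE(\alpha,\beta,\ZE\cap[0,R']^2) \le \IE(\alpha,\beta,\ZE\cap[0,R]^2) + \area\bigl(\ZE\setminus[0,R]^2\bigr),
\end{equation}
and then send $R' \to \infty$. The left-hand side converges to $\IE(\alpha,\beta,\ZE)$ by the definition (\ref{IE-def-unbounded}), and the right-hand side does not depend on $R'$ apart from the first term, so letting $R'\to\infty$ in (\ref{prop-aux}) and again invoking (\ref{IE-def-unbounded}) yields the corollary.

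To prove (\ref{prop-aux}), fix $R' > R$ and choose a sequence of discrete zero-sets $\Z_n$ and $\delta_n \to 0$ so that $\delta_n \squrep(\Z_n) \Econv \ZE\cap[0,R']^2$; a natural choice is $\Z_n = \{(i,j)\in\bZ_+^2 : (\delta_n i,\delta_n j)\in \ZE\cap[0,R']^2\}$ with $\delta_n = 1/n$ and $nR\in\bN$. Applying Lemma~\ref{I-perturbation} to the zero-set $\Z_n$ and the box $[0,\delta_n^{-1}R]^2$,
\begin{equation*}
I(\alpha,\beta,\Z_n) \le I\bigl(\alpha,\beta,\Z_n\cap[0,\delta_n^{-1}R]^2\bigr) + \bigl|\Z_n\setminus[0,\delta_n^{-1}R]^2\bigr|.
\end{equation*}
Multiplying through by $\delta_n^2$ and passing to $n\to\infty$, Lemma~\ref{dE-IE-partial} applied to $\Z_n$ gives that the left side tends to $\IE(\alpha,\beta,\ZE\cap[0,R']^2)$. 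The cardinality error term equals $\area\bigl(\delta_n\squrep(\Z_n)\bigr) - \area\bigl(\delta_n\squrep(\Z_n)\cap[0,R]^2\bigr)$, which by (C2) and area convergence of Young diagrams on the bounded box $[0,R]^2$ (see below) tends to $\area\bigl((\ZE\cap[0,R']^2)\setminus[0,R]^2\bigr) \le \area(\ZE\setminus[0,R]^2)$.

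The principal technical step is to verify that Lemma~\ref{dE-IE-partial} applies to the restricted sequence, namely that $\delta_n\squrep(\Z_n\cap[0,\delta_n^{-1}R]^2) \Econv \ZE\cap[0,R]^2$; condition (C1) on a box $[0,S]^2$ follows from (C1) for the original sequence restricted to $[0,S\wedge R]^2$, and (C2) is the area convergence already used above. The only genuine content is that Hausdorff convergence of Euclidean Young diagrams on the bounded box $[0,R]^2$ implies convergence of their areas; this is a standard consequence of their common monotone staircase structure (each is bounded by a nonincreasing upper boundary function, and Hausdorff convergence forces convergence of these boundary functions at all but countably many heights, so dominated convergence yields the area). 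With this verification in hand, taking limits in the displayed inequality yields (\ref{prop-aux}) and completes the proof.
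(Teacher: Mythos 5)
Your proof is correct and rests on the same two ingredients as the paper's argument: apply the discrete perturbation bound of Lemma~\ref{I-perturbation} along a sequence $\Z_n$ of lattice approximations, rescale, and use Lemma~\ref{dE-IE-partial} to pass to the Euclidean limit. The one difference is the extra truncation level $R'$: you first prove the bound for the bounded set $\ZE\cap[0,R']^2$ and then send $R'\to\infty$ via (\ref{IE-def-unbounded}), whereas the paper applies Lemma~\ref{I-perturbation} directly to approximations $\Z_n$ of $\ZE$ itself (using the inclusion-maximal discretization $\frac 1n \squrep(\Z_n)\subseteq\ZE$, which is automatically finite when $\area(\ZE)<\infty$) and then takes $n\to\infty$ once. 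Your detour is harmless --- it buys you a sequence $\Z_n$ that is obviously finite even when $\ZE$ is unbounded --- but it is not needed, since Lemma~\ref{dE-IE-partial} already covers unbounded $\ZE$ of finite area. The technical points you flag (the requirement $\delta_n^{-1}R\in\bN$, which should be replaced by $\lfloor \delta_n^{-1}R\rfloor$ when $R$ is irrational; and the fact that Hausdorff convergence of staircase regions on a bounded box forces area convergence) are handled correctly, and the latter is the same $\cO(1/n)$ boundary bookkeeping implicit in equation (\ref{gEp1}) of the paper's proof.
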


\begin{proof}
Define $\Z_n$ to be the inclusion-maximal subset of $\bZ_+^2$ such that 
$\frac 1n\squrep(\Z_n)\subseteq \ZE$. Then 
$\frac 1n\squrep(\Z_n)\Econv \ZE$, 
$(\frac 1n\squrep(\Z_n))\cap[0,R]^2\Econv \ZE\cap [0,R]^2$
and 
\begin{equation}\label{gEp1}
{\textstyle \frac 1{n^2}}|\Z_n\setminus[0,nR]^2|=
\area(({\textstyle\frac1n}\squrep(\Z_n))\setminus[0,R]^2)+\cO({\textstyle\frac 1n})\to 
\area(\ZE\setminus[0,R]^2).
\end{equation}
By Lemma~\ref{I-perturbation}, we have 
\begin{equation}\label{gEp2}
I(\alpha,\beta,\Z_n)\le I(\alpha,\beta,\Z_n\cap[0,nR]^2)+|\Z_n\setminus [0,nR]^2|.
\end{equation}
Upon dividing (\ref{gEp2}) by $n^2$ and sending $n\to\infty$, 
Lemma~\ref{dE-IE-partial} and (\ref{gEp1}) give the desired inequality. 
\end{proof}

\begin{cor}\label{E-area-bound} For any 
Euclidean zero-set $\ZE$,
$\gammaE(\ZE)\ge \frac 14 \area(\ZE)$. 
\end{cor}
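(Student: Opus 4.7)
The plan is to deduce this bound from its discrete counterpart in Theorem~\ref{intro-gamma-area-thm} by approximating $\ZE$ by rescaled discrete zero-sets and passing to the E-limit. All the needed ingredients are already in place: Lemma~\ref{dE-IE-partial} provides the convergence of $\delta_n^2 I(0,0,\Z_n)$ to $\IE(0,0,\ZE)$, and $\IE(0,0,\ZE)=\gammaE(\ZE)$ by definition (via Theorem~\ref{intro-d-E-theorem}, extended through (\ref{IE-def-unbounded}) when $\ZE$ is unbounded).

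Following the construction used in the proof of Corollary~\ref{gamma-E-perturbation}, I would define $\Z_n$ to be the inclusion-maximal subset of $\bZ_+^2$ such that $\tfrac{1}{n}\squrep(\Z_n)\subseteq \ZE$, and then verify that $\tfrac{1}{n}\squrep(\Z_n)\Econv \ZE$. Condition (C1) follows from the fact that $\ZE$ is the closure of $\ZE\cap(0,\infty)^2$, which allows each point of $\ZE\cap[0,R]^2$ to be approximated at scale $1/n$ by interior points captured by $\Z_n$; combined with the inclusion $\tfrac{1}{n}\squrep(\Z_n)\subseteq \ZE$, this gives Hausdorff convergence on $[0,R]^2$. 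For (C2), the sets $\tfrac{1}{n}\squrep(\Z_n)$ exhaust $\ZE$ up to a set of zero area, so by monotone convergence $\area(\tfrac{1}{n}\squrep(\Z_n))\to\area(\ZE)$, including the case $\area(\ZE)=\infty$.

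Applying Theorem~\ref{intro-gamma-area-thm} to each $\Z_n$ and dividing by $n^2$ yields
\begin{equation*}
\frac{1}{n^2}\,\gamma(\Z_n)\;\geq\;\frac{1}{4}\cdot\frac{|\Z_n|}{n^2}\;=\;\frac{1}{4}\,\area\!\left(\tfrac{1}{n}\squrep(\Z_n)\right).
\end{equation*}
By Lemma~\ref{dE-IE-partial} applied with $(\alpha,\beta)=(0,0)$ (noting that $I(0,0,\Z_n)=\gamma(\Z_n)$), the left-hand side converges to $\IE(0,0,\ZE)=\gammaE(\ZE)$, while the right-hand side converges to $\tfrac{1}{4}\area(\ZE)$ by (C2). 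This delivers the inequality when $\area(\ZE)<\infty$; when $\area(\ZE)=\infty$, Lemma~\ref{dE-IE-partial} (which applies since $(0,0)\in[0,1)^2$) forces $\gammaE(\ZE)=\infty$, and the inequality is again immediate.

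The argument is a soft passage to the limit and presents no serious obstacle. The only point requiring any care is the verification that the approximating sequence $\Z_n$ satisfies both (C1) and (C2); this is standard, but relies in an essential way on the closure property built into the definition of a Euclidean zero-set.
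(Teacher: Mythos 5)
Your proof is correct and follows essentially the same route as the paper: discretize $\ZE$ via the inclusion-maximal $\Z_n$ with $\tfrac 1n\squrep(\Z_n)\subseteq\ZE$ (the construction from Corollary~\ref{gamma-E-perturbation}), apply the discrete bound of Theorem~\ref{intro-gamma-area-thm}, and pass to the limit using Lemma~\ref{dE-IE-partial} together with (C2). For the infinite-area case the paper prefers the slightly more elementary route of truncating to $[0,R]^2$, using $\gammaE(\ZE)\ge\gammaE(\ZE\cap[0,R]^2)$, and sending $R\to\infty$, but your direct appeal to the conclusion of Lemma~\ref{dE-IE-partial} gives the same result.
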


\begin{proof}
If $\area(\ZE)<\infty$ then the argument is similar to the one in the preceding 
corollary. If $\area(\ZE)=\infty$, then for any $R>0$, 
$\gammaE(\ZE)\ge \gammaE(\ZE\cap [0,R]^2) \ge \frac 14 \area(\ZE\cap[0,R]^2)$, 
and so $\gammaE(\ZE)=\infty$. 
\end{proof}

\begin{cor}\label{gamma-E-continuous} Assume $\area(\ZE)<\infty$. 
If $\ZE_n\Econv \ZE$, then $\IE(\cdot,\cdot,\ZE_n)\to \IE(\cdot,\cdot,\ZE)$, 
uniformly on $[0,1]^2$. 
\end{cor}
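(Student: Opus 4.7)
The plan is a sandwich argument that controls $|\IE(\cdot,\cdot,\ZE_n) - \IE(\cdot,\cdot,\ZE)|$ by the area of the symmetric difference $\ZE_n\triangle \ZE$, and then deduces that the latter tends to zero under E-convergence.

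First I will upgrade Corollary~\ref{gamma-E-perturbation} to the following general monotonicity-perturbation bound: if $\ZE_1\subseteq \ZE_2$ are Euclidean zero-sets with $\area(\ZE_2)<\infty$, then for every $(\alpha,\beta)\in[0,1]^2$,
\[
\IE(\alpha,\beta,\ZE_1)\le \IE(\alpha,\beta,\ZE_2)\le \IE(\alpha,\beta,\ZE_1)+\area(\ZE_2\setminus \ZE_1).
\]
The lower bound is immediate from (\ref{IE-def}), since any $(A,f,g)$ that spans for $\ZE_2$ also spans for $\ZE_1$. For the upper bound I will first establish the discrete analog $I(\Z_2)\le I(\Z_1)+|\Z_2\setminus \Z_1|$ by decomposing $\Z_2\setminus \Z_1$ one extremal cell at a time and, at each step, extending an optimal $\rho$-minimizer for the current zero-set by a single site via Theorem~\ref{gamma-perturbation}; the bound $\rho(A)\le \rho(A')+|A\setminus A'|$, which is immediate from the definition of $\rho$, then yields the inequality. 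To pass to the continuous limit, I will approximate $\ZE_i$ by inclusion-maximal discrete zero-sets $\Z_n^{(i)}$ with $\frac1n\squrep(\Z_n^{(i)})\Econv \ZE_i$ and $\Z_n^{(1)}\subseteq \Z_n^{(2)}$, divide the discrete inequality by $n^2$, and apply Lemma~\ref{dE-IE-partial} together with $\frac{1}{n^2}|\Z_n^{(2)}\setminus \Z_n^{(1)}|\to \area(\ZE_2\setminus \ZE_1)$.

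Next, I will set $\ZE_n^-=\ZE_n\cap \ZE$ and $\ZE_n^+=\ZE_n\cup \ZE$; both are Euclidean zero-sets (downward closedness and the closure property are preserved by intersection and finite union), $\ZE_n^-\subseteq \ZE_n$, $\ZE\subseteq \ZE_n^+$, and $\ZE_n^+\setminus \ZE_n^-=\ZE_n\triangle \ZE$. Applying the monotonicity-perturbation bound twice yields, uniformly in $(\alpha,\beta)\in[0,1]^2$,
\[
|\IE(\alpha,\beta,\ZE_n)-\IE(\alpha,\beta,\ZE)|\le \IE(\alpha,\beta,\ZE_n^+)-\IE(\alpha,\beta,\ZE_n^-)\le \area(\ZE_n\triangle \ZE).
\]
Finally, I will show $\area(\ZE_n\triangle \ZE)\to 0$: given $\epsilon>0$, I use Lemma~\ref{C2-uniformity} to pick $R>0$ such that $\area(\ZE\setminus[0,R]^2)<\epsilon$ and $\area(\ZE_n\setminus[0,R]^2)<\epsilon$ for all large $n$; within $[0,R]^2$, condition (C1) gives Hausdorff convergence of $\ZE_n\cap[0,R]^2$ to $\ZE\cap[0,R]^2$, which, since both are downward closed with boundaries that are graphs of nonincreasing functions on $[0,R]$, implies $L^1$ convergence of those functions and hence $\area((\ZE_n\triangle \ZE)\cap[0,R]^2)\to 0$.

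The main obstacle is the discrete perturbation inequality and its passage to the continuous limit; once that is in place, the remainder is routine manipulation combining Hausdorff-to-$L^1$ convergence for monotone regions with the tail estimate from (C2).
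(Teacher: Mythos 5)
Your argument hinges on the claimed monotonicity--perturbation inequality
\[
\IE(\alpha,\beta,\ZE_1)\le \IE(\alpha,\beta,\ZE_2)\le \IE(\alpha,\beta,\ZE_1)+\area(\ZE_2\setminus\ZE_1)\qquad\text{for }\ZE_1\subseteq\ZE_2,
\]
and on its discrete counterpart $I(\alpha,\beta,\Z_2)\le I(\alpha,\beta,\Z_1)+|\Z_2\setminus\Z_1|$ for arbitrary nested zero-sets. The upper bound is false in both settings, and the paper already records the discrete counterexample in Section~\ref{sec-perturbations}: with $\Z_1=R_{2,2}\setminus\{(1,1)\}$ and $\Z_2=R_{2,2}$ one has $\gamma(\Z_1)=2$, $\gamma(\Z_2)=4$, and $|\Z_2\setminus\Z_1|=1$, so at $\alpha=\beta=0$ your bound would say $4\le 3$. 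The same example survives the continuum limit: take $\ZE_1=\RE_{2,1}\cup\RE_{1,2}$ and $\ZE_2=\RE_{2,2}$; then $\gammaE(\ZE_1)=2$ (see the remark after Theorem~\ref{edge11-L}), $\gammaE(\ZE_2)=\area(\RE_{2,2})=4$ by Corollary~\ref{E-gamma-lg}, and $\area(\ZE_2\setminus\ZE_1)=1$, again giving $4\le 3$.

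The step that fails in your proof sketch is the one-cell-at-a-time iteration. Theorem~\ref{gamma-perturbation} (and the Lemma~\ref{I-perturbation} built on it) only permits passing from $\Z$ to $\Z\cap R_{a,b}$, i.e.\ peeling off entire shortest rows or columns. It says nothing about removing an interior corner of $\partial_o\Z$, and indeed $R_{2,2}\setminus\{(1,1)\}$ is not of the form $R_{2,2}\cap R_{a,b}$ for any $a,b$. So the decomposition of $\Z_2\setminus\Z_1$ ``one extremal cell at a time'' cannot invoke Theorem~\ref{gamma-perturbation} at the interior-corner steps, and the counterexample shows no repair is possible. Consequently the claimed sandwich $|\IE(\ZE_n)-\IE(\ZE)|\le\area(\ZE_n\triangle\ZE)$ is unjustified and the rest of the argument collapses. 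The paper's proof avoids this entirely: after reducing to bounded $\ZE$ via Lemma~\ref{C2-uniformity} together with Corollary~\ref{gamma-E-perturbation} (which uses perturbation only by truncation to a square, where the bound is valid), it uses that for $n$ large $(1-\epsilon)\ZE\subseteq\ZE_n\subseteq(1+\epsilon)\ZE$, and then the scaling identity of Lemma~\ref{scaling} to get $(1-\epsilon)^2\IE(\ZE)\le\IE(\ZE_n)\le(1+\epsilon)^2\IE(\ZE)$.
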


\begin{proof}
If $\area(\ZE)<\infty$ we may assume all areas are finite. By Lemma~\ref{C2-uniformity} and 
Corolllary~\ref{gamma-E-perturbation}, we may also assume that 
all $\ZE_n$ and $\ZE$ are subsets of $[0,R]^2$, for some $R$. 
In this case, for any $\epsilon>0$, 
$(1-\epsilon)\ZE\subseteq \ZE_n\subseteq (1+\epsilon)\ZE,
$
when $n$ is large enough. Thus, by Lemma~\ref{scaling},  
$(1-\epsilon)^2\IE(\ZE)\le \IE( \ZE_n)\le(1+\epsilon)^2\IE(\ZE),
$
which clearly suffices. 
\end{proof}

\begin{proof}[Proof of Theorem~\ref{intro-d-E-theorem}]
All statements on large deviation rates follow from Lemma~\ref{dE-IE-partial}
and Corollary~\ref{gamma-E-continuous}, and imply (\ref{dE-gamma}). 
We omit the similar proof of (\ref{dE-gamma-thin}). 
\end{proof}

\section{Bounds on large deviations rates for large zero-sets}

In Sections~\ref{ld-limit-bounds-subsec}--\ref{ld-Lshape-11-subsec} we 
address bounds on $\IE(\alpha,\beta,\ZE)$.  In Section~\ref{ld-limit-bounds-subsec}, 
we complete the proof of Theorem~\ref{intro-IEbounds}.   
In  Sections~\ref{ld-10-subsec},~\ref{ld-Lshape-00-subsec} and~\ref{ld-Lshape-11-subsec}, 
we prove lower bounds on $\IE$ near the corners of $[0,1]^2$, either for general Euclidean 
zero-sets or an L-shaped Euclidean zero-set, 
which establish Theorem~\ref{intro-IEcorners} and show that each of the three
upper bounds on $\IE(\alpha,\beta,\ZE)$ is, in a sense, impossible to 
improve near one of the corners.

\subsection{General bounds on $\IE$}\label{ld-limit-bounds-subsec}

We assume that 
$(\alpha,\beta)\in [0,1]^2$.  Having established the existence of $\IE$, we now recall the three propositions in Section~\ref{sec-ld-bounds} and complete the proof of Theorem~\ref{intro-IEbounds}.

%

\begin{proof}[Proof of Theorem~\ref{intro-IEbounds}]
Pick a sequence of zero-sets $\Z_n$, such that 
$\delta_n \squrep(\Z_n)\Econv \ZE$ for some sequence of positive numbers $\delta_n\to 0$.
To prove the lower bound, we use the Proposition~\ref{Ilb-discrete}
with any numbers $k=k_n$ that satisfy $1\ll k\ll 1/\delta_n$, so that also
$\delta_n \squrep(\Z_n^{\swarrow k})\Econv \ZE$. 
To prove the upper bound (\ref{IEub}), we use the inequalities (\ref{I-area-ineq}), 
(\ref{I-gamma-ineq}), and the inequality $I(\alpha,\beta,\Z_n)\le \gamma(\Z_n)$ 
(see Theorem~\ref{intro-ld-rate}). We multiply these four
inequalities by $\delta_n^2$, take the limit as $n\to\infty$, and 
use $\delta_n^2|\Z_n|\to\area(\ZE)$ (by definition of E-convergence)
and Theorem~\ref{intro-d-E-theorem} to obtain~\eqref{IElb} and
(\ref{IEub}). 
\end{proof}

A continuous version of Theorem~\ref{lgld-thm} follows.

\begin{cor}\label{E-gamma-lg}
For any Euclidean rectangle $\RE_{a,b}$,
	$$\IE(\alpha,\beta,\RE_{a,b})=(1-\max(\alpha,\beta))ab.$$
\end{cor}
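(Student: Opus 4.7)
The plan is to combine the general lower bound from Theorem~\ref{intro-IEbounds} with the discrete line-growth asymptotics from Corollary~\ref{lgld-E}, using the convergence machinery of Theorem~\ref{intro-d-E-theorem} to pass from the discrete to the Euclidean setting.

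First I would establish $\gammaE(\RE_{a,b}) = ab$. Take the natural discrete approximation $\Z_n = R_{\lfloor na\rfloor, \lfloor nb\rfloor}$ with $\delta_n = 1/n$; clearly $\delta_n \squrep(\Z_n) \Econv \RE_{a,b}$. By Proposition~\ref{lp-gamma}, $\gamma(\Z_n) = \lfloor na\rfloor\lfloor nb\rfloor$, so $\delta_n^2 \gamma(\Z_n) \to ab$, and then (\ref{dE-gamma}) of Theorem~\ref{intro-d-E-theorem} identifies the limit as $\gammaE(\RE_{a,b}) = ab$.

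For the lower bound, I would then apply inequality (\ref{IElb}) of Theorem~\ref{intro-IEbounds}:
$$\IE(\alpha,\beta,\RE_{a,b}) \ge (1-\max(\alpha,\beta))\,\gammaE(\RE_{a,b}) = (1-\max(\alpha,\beta))ab.$$

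For the matching upper bound, I would use the same approximating sequence $\Z_n = R_{\lfloor na\rfloor, \lfloor nb\rfloor}$. Since $\min\{\lfloor na\rfloor,\lfloor nb\rfloor\} \to \infty$, Corollary~\ref{lgld-E} yields
$$I(\alpha,\beta,\Z_n) \sim \gamma(\Z_n)(1-\max(\alpha,\beta)) = \lfloor na\rfloor \lfloor nb\rfloor (1-\max(\alpha,\beta)).$$
Multiplying by $\delta_n^2 = 1/n^2$ and invoking (\ref{dE-I}) of Theorem~\ref{intro-d-E-theorem} gives
$$\IE(\alpha,\beta,\RE_{a,b}) = \lim_{n\to\infty} \delta_n^2 I(\alpha,\beta,\Z_n) = (1-\max(\alpha,\beta))ab,$$
which matches the lower bound and completes the proof.

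There is no real obstacle; this corollary is essentially a repackaging of Corollary~\ref{lgld-E} in the Euclidean scaling framework, together with the observation that the rectangle saturates the general lower bound of Theorem~\ref{intro-IEbounds}. The only mild subtlety is the need to route through an integer approximation of $(na,nb)$, but this introduces only $O(1/n)$ corrections that vanish in the limit.
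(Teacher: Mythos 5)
Your proof is correct and is essentially the route the paper explicitly flags as an alternative (``Alternatively, one may use Corollary~\ref{lgld-E}''). The paper's primary argument is slightly more economical: it establishes $\gammaE(\RE_{a,b})=\area(\RE_{a,b})=ab$ via Proposition~\ref{lp-gamma} and Theorem~\ref{intro-d-E-theorem}, and then simply observes that the two bounds $(1-\max(\alpha,\beta))\gammaE(\ZE)\le\IE(\alpha,\beta,\ZE)\le(1-\max(\alpha,\beta))\area(\ZE)$ from Theorem~\ref{intro-IEbounds} coincide when $\gammaE=\area$, so no fresh limit computation is needed. Your use of Corollary~\ref{lgld-E} together with~\eqref{dE-I} is equally valid; note only that once you invoke~\eqref{dE-I} you obtain the exact limit $ab(1-\max(\alpha,\beta))$ directly, so the separate lower-bound step via~\eqref{IElb} is redundant rather than load-bearing.
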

 
\begin{proof}
 	It follows from Theorems~\ref{lp-gamma} and~\ref{intro-d-E-theorem} that 
$\gammaE(\RE_{a,b}) = \area(\RE_{a,b}) = ab,$ so the upper and lower bounds on $\IE(\alpha,\beta,\RE_{a,b})$ given in Theorem~\ref{intro-IEbounds} agree.
(Alternatively, one may use Corollary~\ref{lgld-E}.)
 \end{proof}

\subsection{The $(1,0)$ corner} \label{ld-10-subsec}

\begin{theorem} 
\label{one-zero-edge}
Fix a continuous zero-set $\ZE$ with finite area. Then
\begin{equation}\label{one-zero-edge-eq}
\liminf_{\alpha\to 1-}\frac 1{1-\alpha}\IE(\alpha,0,\ZE)\ge \area(\ZE).
\end{equation}
\end{theorem}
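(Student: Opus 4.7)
The bound $\IE(\alpha,0,\ZE)\le(1-\alpha)\area(\ZE)$ is already in Theorem~\ref{intro-IEbounds}, so the task is the matching lower bound. My plan is to reduce a general spanning triple $(A,f,g)\in\tI$ to a pure-enhancement configuration and then exploit that pure-$f$ spanning requires $\int f\ge\area(\ZE)$.

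\textbf{Reduction to $A=\emptyset$.} Given $(A,f,g)\in\tI$ with $E=\area(A)$, $F=\int f$, $G=\int g$, define the row and column profile functions of $A$ by $f_A(v):=\length(L^h(v)\cap A)$ and $g_A(u):=\length(L^v(u)\cap A)$, so that $\int f_A=\int g_A=E$ (and, after sorting, they are nonincreasing). A coupling argument shows that $(\emptyset,f+f_A,g+g_A)$ also spans: at each time $t$, the $A$-started dynamics and the $\emptyset$-started enhanced dynamics satisfy the same occupation threshold $(\hE+f(v)+f_A(v),\vE+g(u)+g_A(u))\notin\ZE$, so their sets differ exactly by $A$; once the complement $\bR_+^2\setminus A$ is filled, $A$ is absorbed in one step since each horizontal slice of $\bR_+^2\setminus A$ has infinite length. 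The new cost is $(1-\alpha)(F+E)+(G+E)=[E+(1-\alpha)F+G]+(1-\alpha)E$, differing from the original by $(1-\alpha)E$.

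\textbf{Pure-$f$ bound.} For $(\emptyset,f,0)$ spanning, I would show $\int f\ge\area(\ZE)$ by analyzing the iteration. Since $g=0$ and $f$ depends only on $v$, each iterate $\TE^t(\emptyset)$ is a union of horizontal strips $V_t\times[0,\infty)$, and the recursion for $\mu_t:=\length(V_t)$ is $\mu_{t+1}=F^{-1}(a(\mu_t))$ with $F^{-1}(y):=\length\{v:f(v)>y\}$. Spanning requires $\mu_t$ to escape past the height $h$ of $\ZE$, which, given $f$ and $a$ are nonincreasing, forces $f(v)\ge a(v)$ for a.e.~$v\in[0,h]$ (otherwise the iteration is stuck at the first $v$ with $f(v)\le a(v)$). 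Integrating: $\int f\ge\int_0^h a(v)\,dv=\area(\ZE)$.

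\textbf{Small-$g$ continuity.} The extension to mixed $(\emptyset,f,g)$ needs $\int f\ge\area(\ZE)-\psi(\int g)$ for some modulus $\psi$ vanishing at $0$. The plan is a compactness argument: if $(\emptyset,f_n,g_n)$ spans with $G_n\to 0$ but $\liminf F_n<\area(\ZE)-\eta$, pass to a subsequence to extract a nonincreasing limit $f^*$ (bounded since the optimizing sequence is bounded in support) and $g^*\equiv 0$; show that $(\emptyset,f^*,0)$ spans $(\ZE,f^*,0)$ in a limiting (possibly weak) sense, contradicting the pure-$f$ bound applied to a slight inflation $f^*+\varepsilon\indicator_{[0,R]}$. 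The main technical point here is that spanning is monotone in the enhancements, so small perturbations preserve the pure-$f$ threshold characterization.

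\textbf{Conclusion and obstacle.} For near-optimal triples $(A_\alpha,f_\alpha,g_\alpha)$ approaching $\IE(\alpha,0,\ZE)$, the upper bound forces $E_\alpha,G_\alpha\to 0$ as $\alpha\to 1-$. The reduction and the preceding two steps give $F_\alpha+E_\alpha\ge\area(\ZE)-o(1)$, whence the cost is at least $(1-\alpha)(F_\alpha+E_\alpha)-(1-\alpha)E_\alpha=(1-\alpha)F_\alpha\ge(1-\alpha)(\area(\ZE)-o(1))$; dividing by $(1-\alpha)$ and taking liminf yields the bound. The main obstacle is the small-$g$ continuity step, since the enhanced dynamics can be sensitive to small $g$ (a thin vertical enhancement may nucleate a thick region whose area is much larger than $\int g$); careful bookkeeping of the contribution of $g$ through the iteration, possibly by working with the enhancement rate $\oI$ from Section~\ref{subsec-enhanced} and leveraging the convergence machinery of Lemma~\ref{Ibar-conv}, will be needed to make this rigorous.
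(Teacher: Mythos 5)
Your approach is genuinely different from the paper's. The paper argues directly in the discrete setting: it modifies $\Z$ to satisfy a ``slope condition'' (no long horizontal or vertical runs on $\partial_o\Z$) and then runs a line-by-line painting argument to show that any spanning set whose points mostly lie one-per-column must have cardinality close to $|\Z|$. You instead pass to the enhancement formulation $(\emptyset,f,g)$, where the pure-$f$ recursion $\mu_{t+1}=\length\{v:f(v)>a(\mu_t)\}$ is explicit and makes the bound $\int f\ge\area(\ZE)$ transparent. That structural insight is nice and is not present in the paper's argument.

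The gap, which you correctly flag, is the small-$g$ continuity step: showing that $(\emptyset,f,g)$ spanning with $\int g$ small forces $\int f\ge\area(\ZE)-\psi(\int g)$ for a modulus $\psi$ with $\psi(0+)=0$. This is not a loose end but the technical heart of the result. The corresponding statement with $f$ and $g$ on equal footing is false (one has $\gammaE_{\rm thin}(\ZE)$ strictly smaller than $\area(\ZE)$ for, e.g., triangular $\ZE$), so whatever argument you give must genuinely exploit the asymmetry $\int g\to 0$. Your compactness proposal does not currently do this. Two concrete obstructions: (a) $(f_n,g_n)$ is not precompact in any useful topology merely from $\int g_n\to 0$ and $\int f_n$ bounded — $g_n$ can blow up on shrinking supports or spread vanishing values over growing supports, and a priori the $f_n$ are unbounded in sup norm; (b) spanning is not obviously preserved under a pointwise or weak limit of $(f_n,g_n)$, and the suggested inflation $f^*+\varepsilon\indicator_{[0,R]}$ adds $\varepsilon R$ to $\int f$, which need not be small. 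Secondary issues worth recording: in the reduction step, $f+f_A$ need not be nonincreasing, so you must take a rearrangement and invoke permutation invariance of the pure-enhancement dynamics (true, but it has to be said); and in step 2 the deduction ``stuck at the first $v$ with $f(v)\le a(v)$'' should be phrased in terms of the map $\Phi$ having a fixed point below the height of $\ZE$, after first reducing to bounded $\ZE$ as the paper does. Steps 1 and 2 are salvageable; step 3 is an open hole, and the paper's slope-condition/painting argument is precisely the machinery that fills this role on the discrete side.
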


A consequence of this theorem is a characterization of Euclidean zero-sets 
which attain the lower bound (\ref{IElb}).

\begin{cor} 
\label{one-zero-edge-cor} 
Assume $\ZE$ is a Euclidean zero-set with $\area(\ZE)<\infty$. 
Then $\IE(\alpha,\beta,\ZE)=(1-\max\{\alpha,\beta\})\gammaE(\ZE)$  for all 
$(\alpha,\beta)\in [0,1]^2$ if and only if 
$\gammaE(\ZE)=\area(\ZE)$, which in turn holds if and only if $\ZE=\RE_{a,b}$ for some $a,b\ge0$. 
\end{cor}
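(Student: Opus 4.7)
The plan is to prove the equivalences $(c)\Rightarrow (a)$, $(a)\Leftrightarrow (b)$, and $(b)\Rightarrow (c)$. The first two implications follow quickly from the machinery established in this section, while $(b)\Rightarrow (c)$ carries the real content.

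For $(c)\Rightarrow (a)$: apply Corollary~\ref{E-gamma-lg}, which simultaneously gives $\IE(\alpha, \beta, \RE_{a,b}) = (1 - \max(\alpha, \beta))\, ab$ and $\gammaE(\RE_{a,b}) = ab = \area(\RE_{a,b})$, so the identity in (a) holds. For $(a)\Leftrightarrow (b)$: when $\gammaE(\ZE) = \area(\ZE)$, the lower bound (\ref{IElb}) and the first term of (\ref{IEub}) in Theorem~\ref{intro-IEbounds} both reduce to $(1-\max(\alpha,\beta))\gammaE(\ZE)$, forcing equality. Conversely, assuming (a), setting $\beta = 0$ gives $\IE(\alpha, 0, \ZE)/(1-\alpha) = \gammaE(\ZE)$ for every $\alpha \in [0,1)$, and sending $\alpha \to 1^-$ in Theorem~\ref{one-zero-edge} yields $\gammaE(\ZE) \ge \area(\ZE)$, which combined with Corollary~\ref{gamma-upper} forces $\gammaE(\ZE) = \area(\ZE)$.

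For $(b)\Rightarrow (c)$, I argue contrapositively: assuming $\ZE$ is not a rectangle, I show $\gammaE(\ZE) < \area(\ZE)$. A non-rectangular continuous Young diagram contains a nontrivial L-shape, so I may fix $0 < c < a$ and $0 < d < b$ with $\RE_{c,b} \cup \RE_{a,d} \subseteq \ZE$. Choose discrete approximations $\Z_n$ with $\delta_n = 1/n$ and $\delta_n \squrep(\Z_n) \Econv \ZE$ such that the discrete L-shape $L_n := R_{\lfloor nc\rfloor,\lfloor nb\rfloor} \cup R_{\lfloor na\rfloor,\lfloor nd\rfloor}$ is contained in $\Z_n$ for all large $n$. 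The goal is to establish
$$
\gamma(\Z_n)\le |\Z_n| - \bigl(\min\{c_n b_n, a_n d_n\} - c_n d_n\bigr),
$$
with $c_n = \lfloor nc\rfloor$, $a_n = \lfloor na\rfloor$, etc., by combining the L-shape formula $\gamma(L_n) = \max\{c_n b_n, a_n d_n\}$ from \cite{CGP} with an iterated application of Theorem~\ref{gamma-perturbation} that successively intersects $\Z_n$ with rectangles peeling off rows or columns of $\Z_n\setminus L_n$ while leaving $L_n$ intact. Dividing by $\delta_n^{-2}$ and passing to the limit via Theorem~\ref{intro-d-E-theorem} then yields $\gammaE(\ZE) \le \area(\ZE) - (\min\{cb,ad\} - cd)$, and since $a > c$, $b > d$ force $\min\{cb, ad\} > cd$, the subtracted term is strictly positive.

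The main obstacle is the telescoping step. A naive attempt to write a spanning set for $\Z_n$ as an optimal $L_n$-spanning set together with a plain filling of $\Z_n \setminus L_n$ can fail: the stricter $\Z_n$-dynamics need not propagate the L-shape growth out to $L_n$, even though the $L_n$-dynamics would. The fix is to peel $\Z_n$ back to $L_n$ through a sequence of rectangular truncations, applying Theorem~\ref{gamma-perturbation} at each stage and observing that the accumulated error terms $|\Z^{(i-1)}\setminus R_{a_i,b_i}|$ telescope to $|\Z_n|-|L_n|$. Verifying that such a peeling sequence always exists for Young diagrams $\Z_n\supseteq L_n$, and that the errors combine as claimed, is the central combinatorial step I would have to nail down.
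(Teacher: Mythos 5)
Your handling of $(c)\Rightarrow(a)$ and $(a)\Leftrightarrow(b)$ is correct and is essentially what the paper does: $(c)\Rightarrow(a)$ via Corollary~\ref{E-gamma-lg}, $(b)\Rightarrow(a)$ by comparing (\ref{IElb}) with the first term of (\ref{IEub}), and $(a)\Rightarrow(b)$ by setting $\beta=0$, letting $\alpha\to1^-$ in Theorem~\ref{one-zero-edge}, and invoking Corollary~\ref{gamma-upper}.

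The gap is in your $(b)\Rightarrow(c)$ argument, and the ``central combinatorial step'' you flag is not a technicality to be nailed down later --- the peeling sequence you describe cannot exist in general. Theorem~\ref{gamma-perturbation} only allows intersection with a rectangle $R_{u,v}$, which removes sites from the top-right frontier of a Young diagram (rows above height $v$, columns beyond width $u$) and never sites interior to it. Moreover iteration buys nothing: $\Z_n\cap R_{a_1,b_1}\cap\cdots\cap R_{a_m,b_m}=\Z_n\cap R_{\min_i a_i,\min_i b_i}$, so a chain of truncations is equivalent to a single one. Now $L_n=R_{c_n,b_n}\cup R_{a_n,d_n}$ has a missing notch $[c_n,a_n)\times[d_n,b_n)$. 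Unless $\ZE$ literally has a right-angled corner along its boundary --- which is precisely the non-generic situation --- $\ZE$ and hence $\Z_n$ will fill in part of that notch, and those filled-in sites lie strictly inside $\Z_n\cap R_{a_n,b_n}$. No truncation $\Z_n\cap R_{u,v}$ removes them without also cutting into one of the two arms of $L_n$ (one needs $u<a_n$ or $v<b_n$). Consequently $\Z_n\cap R_{u,v}$ can never equal $L_n$, and the telescoping bound $\gamma(\Z_n)\le\gamma(L_n)+|\Z_n|-|L_n|$ is unsupported.

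The paper's argument for $(b)\Rightarrow(c)$ sidesteps this by staying in the continuous setting. It picks $a,b>0$ so that the boundary of $\ZE$ crosses $\RE_{a,b}$ in intervals of length at least some $\epsilon>0$ and the top-right corner square $(a-\epsilon,b-\epsilon)+[0,\epsilon]^2$ lies outside $\ZE$. It then checks directly from the definition of the Euclidean transformation $\TE'$ with zero-set $\ZE\cap\RE_{a,b}$ that deleting the square $[0,\epsilon]^2$ near the origin from $\ZE\cap\RE_{a,b}$ still E-spans, giving $\gammaE(\ZE\cap\RE_{a,b})\le\area(\ZE\cap\RE_{a,b})-\epsilon^2$, and finally applies Corollary~\ref{gamma-E-perturbation} to conclude $\gammaE(\ZE)\le\area(\ZE)-\epsilon^2$. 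If you want to retain a discrete viewpoint, the correct analogue is not peeling $\Z_n$ down to $L_n$, but rather exhibiting a spanning set for $\Z_n\cap R_{a_n,b_n}$ that omits an $\Omega(n^2)$-site square; the perturbation bound for the outer part $\Z_n\setminus R_{a_n,b_n}$ is then a single application of Theorem~\ref{gamma-perturbation}, not a telescope.
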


\begin{proof}
By Corollary~\ref{E-gamma-lg} and Theorem~\ref{one-zero-edge}, we only need to show that the second statement implies the 
third.
Suppose there do not exist $a,b\ge0$ such that $\ZE = \RE_{a,b}$.  Since $0<\area(\ZE)<\infty$, we may choose $a,b>0$ such that for some $\epsilon>0$ the boundary of $\ZE$ 
intersects $\RE_{a,b}$ in intervals of length at least $\epsilon>0$ and such that $(a-\epsilon,b-\epsilon) + [0,\epsilon]^2 \subset \RE_{a,b}\setminus \ZE$.   If $\TE'$ is the growth transformation for the dynamics given by $\ZE\cap \RE_{a,b}$, then it follows that $\TE'((\ZE\cap \RE_{a,b}) \setminus [0,\epsilon]^2)\supseteq \ZE\cap \RE_{a,b}$, so $\gammaE(\ZE\cap \RE_{a,b}) \leq \area(\ZE\cap \RE_{a,b}) - \epsilon^2$. By Corollary~\ref{gamma-E-perturbation}, 
$$
\gammaE(\ZE) \le \gammaE(\ZE\cap \RE_{a,b}) + \area(\ZE\setminus \RE_{a,b}) \leq \area(\ZE) - \epsilon^2,
$$
which ends the proof.
\end{proof}

\begin{proof}[Proof of Theorem~\ref{one-zero-edge}]
We first argue that it is enough to prove (\ref{one-zero-edge-eq}) 
when $\ZE$ is bounded. Indeed, once we achieve that, the $\liminf$ 
in (\ref{one-zero-edge-eq}) is, for any $\ZE$ and any $R>0$, at least $\area(\ZE\cap[0,R]^2)$.
The general result then follows by sending $R\to\infty$.
We assume that $\ZE$ is bounded for the rest of the proof.

We fix an $\alpha\in [0,1)$. We also fix $\epsilon,\delta>0$, to be 
chosen to depend on $\alpha$ (and go to $0$ as $\alpha\to 1$) later.  
We assume the discrete zero-sets $\Z$ are large, depend on $n$, and $\frac1n\squrep(\Z)\Econv\ZE$, but for readability 
we will drop the dependence on $n$ from the notation. 

In addition, we fix an integer $k\ge 2$ that will also depend on $\alpha$ and increase to infinity
as $\alpha\to 1$. We say that a zero-set $\Z$ satisfies {\it the slope condition\/}
if  there is no contiguous horizontal or vertical interval 
of $k$ sites in $\partial_o\Z$.
Let $a_0$ and $b_0$ be the
longest row and column lengths of $\Z$.  

We claim that for any $\Z$ there exists 
a zero-set $\Z'\supseteq \Z^{\swarrow \lfloor a_0/k\rfloor + \lfloor b_0/k\rfloor}$
that satisfies the slope condition. To see why this holds, assume there is a leftmost 
horizontal interval 
of $k$ sites in $\partial_o\Z$, ending at site $(u_0,v_0)$. Replace $\Z$ by the 
zero set obtained by moving down points on the line $R^v(u_0,v_0)$ and to its right, that is, by
$$\{(u,v)\in \bZ_+^2: (u<u_0\text{ and }(u,v)\in \Z) 
\text{ or } (u\ge u_0\text{ and }(u,v+1)\in \Z) \}.
$$
Observe that, first, the resulting set includes $\Z^{\downarrow 1}$;
second, if $\partial_o\Z$ does not have a contiguous vertical interval 
of $k$ sites, this operation does not produce one; and, third, after at most
$\lfloor a_0/k\rfloor$ iterations we obtain a zero-set whose boundary 
has no contiguous horizontal interval of $k$ sites. Thus we can produce 
a zero-set that satisfies the slope condition after at most $\lfloor a_0/k\rfloor$ 
steps for horizontal intervals, followed by at most $\lfloor b_0/k\rfloor$ steps for vertical ones, 
which proves the claim. The resulting $\Z'$ satisfies
\begin{equation}\label{edge10-eq0}
|\Z'|\ge|\Z|-|\Z^{\myll \lfloor a_0/k\rfloor + \lfloor b_0/k\rfloor}|\ge |\Z|-\frac 1k(a_0+b_0)^2.
\end{equation}

Assume that $A$ spans for $\Z$, therefore also for $\Z'$, 
and that $|A|\le |\Z'|$. 
If $|\pi_x(A)|\le (1-\delta)|A|$, then 
\begin{equation}
\label{edge10-eq1} 
\rho(\alpha,0,A)\ge \delta|A|\ge \delta\gamma(\Z)\ge \frac 14\delta|\Z|.
\end{equation} 
We now concentrate on the case when 
$|\pi_x(A)|\ge (1-\delta)|A|$. Define the {\it narrow region\/} of $\bZ_+^2$ to be the union of vertical lines that contain exactly one point of $A$,  and the {\it wide region\/} to be the union of vertical lines that contain at least two points of $A$.  Let $A_{\rm narrow}$ be the subset of $A$ that lies in the narrow region, and $A_{\rm wide}$ be the remaining points of $A$.  We claim that $|A_{\rm wide}|\le 2\delta |A|$. To see this, observe that
$$2|\pi_x(A_{\rm wide})|+|\pi_x(A_{\rm narrow})|\le |A|,$$
so 
$$|\pi_x(A_{\rm wide})|\le |A|-|\pi_x(A)|\le \delta |A|$$
and then 
$$
|A_{\rm wide}|=|A|-|A_{\rm narrow}|
=|A|-|\pi_x(A_{\rm narrow})|=|A|-|\pi_x(A)|+|\pi_x(A_{\rm wide})|\le 2\delta|A|.
$$

We will successively paint whole lines of $\bZ_+^2$, including points in $A$, 
red and blue, transforming the
zero-set $\Z'$ in the process. The resulting (finitely many) zero-sets $\Z_i'$, $i=0,1,\ldots$, will  
satisfy the slope condition, and will span with initial set $A$ from which the
points painted by that time have been removed. 
The painted points will 
dominate the set of points that become occupied in a slowed-down version of 
neighborhood growth with zero-set $\Z'$.
Initially, no point is painted and we let $\Z_0'=\Z'$, with $a_0'$ and $b_0'$ its largest row and 
column counts.

Assume that $i\ge 0$ and we have a zero-set $\Z_i'$, with $a_i'$ its largest row 
count. If $a_i'<\epsilon a_0'$, the procedure stops with this final $i$. 
Otherwise, choose an unpainted point $x\notin A$ that gets occupied by 
the growth given by $\Z_i'$, applied to $A$ without the painted points. 
The first possibility is that at least $(1-\epsilon)a_i'$ unpainted
points of $A$ are on $L^h(x)$.  Then paint blue all points on $L^h(x)$ that have 
not yet been painted, and let $\Z_{i+1}'=\Z_i'^{\downarrow 1}$. The second possibility 
is that fewer than $(1-\epsilon)a_i'$ unpainted
points of $A$ are on $L^h(x)$. Then $x$ is in the wide region and 
there must be at least $\frac12 \epsilon a_i'/k\ge \frac12 \epsilon^2 a_0'/k$ 
points of $A$ on $L^v(x)$, due to the slope condition. Paint all 
unpainted points in the entire neighborhood of $x$ red, and let $\Z_{i+1}'=\Z_i'^{\swarrow 1}$.

If $\ell$ is the number of times the red points are added, then 
$$
\ell\le 4k\epsilon^{-2}\delta |A|/a_0'\le 4k\epsilon^{-2}\delta |\Z'|/a_0'\le
4k\epsilon^{-2}\delta b_0'. 
$$ 
Observe that $|\Z'^{\myll \ell}|\le \ell(a_0'+b_0')$. Moreover,
the number of points in $\Z'$ in rows of length at most $\epsilon a_0'$ is 
at most $k(\epsilon a'_0)^2$, by the slope condition.  
Therefore, the points of $A$ colored blue at the final step have cardinality at least
$$
(1-\epsilon)|\Z'|-k(\epsilon a'_0)^2-\ell(a_0'+b_0').
$$ 
Choose $\delta=\epsilon^3$ to get  
\begin{equation}
\label{edge10-eq2} 
|A|\ge (1-\epsilon) |\Z'|-4k\epsilon (a_0'+b_0')^2.
\end{equation}
Clearly, (\ref{edge10-eq2}) holds if $|A|\ge |\Z'|$ as well. 
Therefore, (\ref{edge10-eq0}) and (\ref{edge10-eq2}) imply
\begin{equation}
\label{edge10-eq21} 
|A|\ge (1-\epsilon) |\Z|-4k\epsilon (a_0+b_0)^2-\frac 1k(a_0+b_0)^2.
\end{equation}
We now choose $k=1/\sqrt\epsilon$. Moreover, we observe that there exists a constant $C>1$ that depends on the limiting shape $\ZE$ such that $(a_0+b_0)^2\le C|\Z|$ for all sufficiently large $n$. (It is here we use 
the assumption that $\ZE$ is bounded, so $a_0/n$ and $b_0/n$ converge.) Therefore, when $|\pi_x(A)|\ge (1-\delta)|A|$, 
(\ref{edge10-eq21}) implies
\begin{equation}
\label{edge10-eq22} 
\rho(\alpha,0,A)\ge (1-6C\sqrt\epsilon)(1-\alpha)\abs{\Z}.
\end{equation}
Then (\ref{edge10-eq1}) and (\ref{edge10-eq22}) together imply
\begin{equation}
\label{edge10-eq3} 
\liminf_n I(\alpha,0,\Z)/|\Z|\ge \min\{(1-6C\sqrt\epsilon)(1-\alpha), \frac 14\epsilon^3\}.
\end{equation}
Finally, we pick $\epsilon=2(1-\alpha)^{1/3}$ to get from (\ref{edge10-eq3}) that 
\begin{equation}
\label{edge10-eq31} 
\IE(\alpha,0,\ZE)\ge \area(\ZE)\cdot \left((1-\alpha) -12C(1-\alpha)^{7/6}\right),
\end{equation}
which implies (\ref{one-zero-edge-eq}). 
\end{proof}

\subsection{The $(0,0)$ corner for the L-shapes}\label{ld-Lshape-00-subsec}

As the lower bound (\ref{IElb}) can be attained, we know that
$\inf_{\ZE} \IE(\alpha,\beta,\ZE)/\gamma(\ZE)$ 
is a piecewise linear function that is nonzero on $[0,1)^2$. 
It is natural to inquire to what extent the upper bound (\ref{IEub}) on
$\sup_{\ZE}\IE(\alpha,\beta,\ZE)/\gamma(\ZE)$ can be improved.
One might ask, for example, for a piecewise linear bound which is, 
unlike (\ref{IEub}), strictly less than $1$ on $(0,1]^2$. 
We will now demonstrate by an example 
that such an improvement is impossible. 

Our example is the limit of L-shaped zero-sets consisting of $(2a-1)$ symmetrically placed 
$n\times n$ 
squares. For simplicity, 
we will assume that $a\ge 3$ is an integer. 
(A variation of the argument can be made for any real number 
$a>2$.) We will only consider the diagonal $\alpha=\beta$, which 
suffices for the purposes discussed above. 

\begin{theorem}\label{L-thm}
For the Euclidean zero set $\ZE=R_{a,1}\cup R_{1,a}$ we have, for all $\alpha\in (0,1)$,  
$$
a-2\alpha-9a\alpha^{3/2}\le \IE(\alpha,\alpha,\ZE)\le a-2\alpha.
$$
\end{theorem}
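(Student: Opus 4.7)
I would prove the two bounds by separate arguments: an explicit construction for the upper bound, and a structural analysis of near-optimal spanning configurations for the lower bound. Throughout I would work in the enhanced framework of Section~\ref{subsec-enhanced} and exploit the baseline identity $\gammaE(\ZE)=a$, which is immediate from the line growth bound $\RE_{a,1}\subseteq\ZE$ together with the staircase construction $A_{\mathrm{st}}=\bigcup_{i=0}^{a-1}[i,i+1+\delta]^2$ as $\delta\to 0$.

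\textbf{Upper bound $\IE(\alpha,\alpha,\ZE)\le a-2\alpha$.} I would exhibit a spanning triple $(A,f,g)\in\widetilde{\mathcal I}$ whose cost $\area(A)+(1-\alpha)(\int f+\int g)$ equals $a-2\alpha$ in the $\delta\to 0$ limit. The staircase alone achieves the weaker bound $\IE\le a$, matching $\IE$ at $\alpha=0$ but having the wrong slope. To obtain the $-2\alpha$ improvement, I would modify the staircase so that two "units" of the block-area are replaced by enhancement contributions, exploiting the factor $(1-\alpha)$: the two extremal blocks (at positions $(0,0)$ and $(a-1,a-1)$) are the natural candidates, and they are shrunk and compensated by short enhancements $f$ and $g$ supported near $\{v=0,\,v=a\}$ and $\{u=0,\,u=a\}$ respectively, calibrated so that the L-shape firing conditions ($u'>1\wedge v'>1$, or $u'>a$, or $v'>a$) are met throughout. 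A careful bookkeeping of the area freed versus enhancement added (accounting for shared spans near the corners where both arms of $\ZE$ are active) should yield a net change of $-2\alpha + o(1)$ relative to the pure staircase cost $a$.

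\textbf{Lower bound $\IE(\alpha,\alpha,\ZE)\ge a-2\alpha-9a\alpha^{3/2}$.} I would take any spanning triple $(A,f,g)$ with near-minimal cost and show that cost$\,\ge a-2\alpha-9a\alpha^{3/2}$. The trivial bound $\IE\ge (1-\alpha)\gammaE(\ZE)=a-a\alpha$ from Theorem~\ref{intro-IEbounds} has the wrong slope (too steep: $-a$ rather than $-2$), so one needs to exploit the L-shape's geometry. My plan is to pass to a discrete approximation via Theorem~\ref{intro-d-E-theorem} and Lemmas~\ref{I-le-Ibar},~\ref{Ibar-le-I}, decomposing a near-optimal discrete spanning set as $A=A_d\cup A_h\cup A_v$ (core, horizontal tentacles, vertical tentacles). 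The contribution of $A_d$ is controlled by the line growth lower bound $\gamma(R_{na,n})=n^2a$ applied to the sub-zero-set $R_{a,1}\subseteq\ZE$, and the tentacles $A_h,A_v$ play the role of enhancements $\vec f,\vec g$ with cost factor $(1-\alpha)$. The savings available from enhancements are limited by the 1-to-2 conversion ratio (removing a unit square of "core" requires at least two units of enhancement along both arms of $\ZE$), which forces the $-2\alpha$ coefficient. Choosing an interpolation parameter $k\sim 1/\sqrt{\alpha}$ (as in Proposition~\ref{Ilb-discrete}) optimizes the trade-off and produces the $O(\alpha^{3/2})$ correction.

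\textbf{Main obstacle.} The principal difficulty is the lower bound: obtaining the precise slope $-2$ (and the $9a\alpha^{3/2}$ correction) requires a geometric argument tailored to the L-shape that extracts the right amount of "rigidity" from near-optimal configurations — in particular, showing that even allowing thin tentacles cannot convert core area into enhancement at a better-than-$1{:}2$ rate. This step combines the perturbation lemmas of Section~\ref{sec-perturbations} with the enhanced-growth analysis of Section~\ref{subsec-enhanced} and a careful parameter optimization.
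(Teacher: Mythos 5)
Your upper bound sketch is in the right spirit but the paper's construction is cleaner and worth knowing: for $\Z_n=R_{an,n}\cup R_{n,an}$ the spanning set consists of $a-2$ full $n\times n$ squares plus two thin ``staircase'' patterns (one made of $n$ diagonally adjacent $1\times n$ intervals, the other of $n$ diagonally adjacent $n\times 1$ intervals), placed so that no line meets two patterns. The two thin patterns contribute $\pi_x(B)+\pi_y(B)\ge|B|$ per subset, giving $\rho(A)\le(1-\alpha)an^2+\alpha(a-2)n^2=(a-2\alpha)n^2$ directly, with no enhancement calibration needed and no risk of the firing conditions failing. Your version, which tries to shrink two blocks and compensate with enhancements $f,g$, would likely work but you would need to verify spanning explicitly, and the bookkeeping is more delicate than you acknowledge.

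The lower bound, however, has a genuine gap. Your plan -- decompose a near-optimal discrete spanning set as $A_d\cup A_h\cup A_v$, control $|A_d|$ by the line growth bound for $R_{a,1}$, treat the tentacles as enhancements, and appeal to a ``$1$-to-$2$ conversion ratio'' -- does not actually produce the slope $-2$. Two problems. First, the decomposition from Lemma~\ref{Ibar-le-I} gives a core $A_d$ whose projections are $\le m$, but what the lower bound needs is a much finer structural fact: the set $A_{\rm dense}$ of points lying on \emph{both} a heavy row and a heavy column has size $\ge(a-2)n^2-O(\sqrt{\epsilon}\,n^2)$, and its projections are $\le\frac{1}{n-r-k}|A_{\rm dense}|$. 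The paper obtains this by running the slowed-down line growth for the two rectangular zero-sets $R_{an-k,n-k}$ and $R_{n-k,an-k}$ inside the L, producing sets $A_1,A_2$ of heavy vertical/horizontal lines, and then using $|A_1|+|A_2|-|A_1\cup A_2|\le|A_1\cap A_2|$ with $|A_1\cup A_2|\le(1+\epsilon)an^2$ to bound the intersection from below. This intersection argument is the heart of the proof and is absent from your sketch. Second, your claim that the savings are ``limited by a $1$-to-$2$ conversion ratio'' is a heuristic that is not established by the tools you cite; Proposition~\ref{Ilb-discrete}, even with an optimal choice of $k$, only gives $\IE\ge(1-\alpha)a$ with slope $-a$, which you correctly observe is too steep -- tuning $k\sim1/\sqrt{\alpha}$ in that proposition does not repair this. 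The actual $\alpha^{3/2}$ correction comes from balancing two regimes: either $|A|>(1+\epsilon)an^2$ (handled by Lemma~\ref{k-proj}), or $|A|\le(1+\epsilon)an^2$ (handled by the $A_{\rm dense}$ argument), with the thresholds $s-n=a\sqrt{\epsilon}\,n$, $r=\tfrac32\sqrt{\epsilon}\,n$ and the final choice $\epsilon=\tfrac{a-2}{a}\alpha$, none of which follow from a generic conversion-ratio bound. You would need to supply the intersection/line-growth argument to make the lower bound go through.
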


\begin{proof}[Proof of Theorem~\ref{L-thm}]
For the sequence of zero-sets $\Z_n=R_{an,n}\cup R_{n,an}$, we clearly have 
$$\squrep(\Z_n)/n\Econv R_{a,1}\cup R_{1,a}=\ZE.$$
We will show that 
\begin{equation}\label{L-eq0}
a-2\alpha-9a\alpha^{3/2}\le \liminf \frac 1{n^2}I(\alpha,\alpha,\Z_n)
\le \limsup \frac 1{n^2}I(\alpha,\alpha,\Z_n)\le a-2\alpha.
\end{equation}
This will show that $\gammaE(\ZE)=a$ and prove the desired bounds. 

To prove the upper bound, we build a spanning set $A$ by a suitable 
placement of $a$ patterns.  Of these, $a-2$ are full $n\times n$ squares, 
one consist of $n$ diagonally adjacent $1\times n$ intervals, and the final one consist of $n$ diagonally adjacent $n\times 1$ intervals. To obtain $A$, place these $a$ patterns
so that any horizontal or vertical line intersects at most one of them. It is easy to check that $A$ 
spans. Now any $B\subseteq A$ has 
$$
\pi_x(B)+\pi_y(B)\ge |B|-(a-2)n^2
$$
and so 
\begin{equation*}
\begin{aligned}
\rho(A)&\le \sup_B(1-\alpha)|B|+\alpha(a-2)n^2 
\\&=
(1-\alpha)an^2+\alpha(a-2)n^2
\\&= (a-2\alpha)n^2,
\end{aligned}
\end{equation*}
which proves the upper bound in (\ref{L-eq0}). 

To prove the lower bound, 
assume that 
$A$ is any set that spans for $\Z$. By Lemma~\ref{retract}, we may replace $A$ 
with another set, that we still denote by $A$, that spans for $\Z^{\swarrow k}$ and 
whose every point has $k$ other points in $A$ on some line of its neighborhood. We assume that
$1\ll k\ll n$. 

Fix an $\epsilon>0$, to be chosen later to be dependent on $\alpha$. 
Assume first that $|A|>(1+\epsilon)\cdot an^2$. 
Then, by Lemma~\ref{k-proj},
\begin{equation}\label{L-eq1}
\rho(A)\ge (1+\epsilon)(1-(1+1/k)\alpha)\cdot an^2.
\end{equation}

Now assume that $|A|\le(1+\epsilon)\cdot an^2$. Fix numbers $s\ge n$ and $r>0$, to be chosen 
later. If there exist $r$ horizontal lines, each with at least $s$ sites of $A$ on it, 
then $r(s-n+k)$ sites of $A$ are wasted for the $R_{n-k,an-k}$ line growth, with 
$\gamma(R_{n-k,an-k})=(n-k)(an-k)$, so 
$$
r(s-n+k)+(an-k)(n-k)\le (1+\epsilon)\cdot an^2.
$$
It follows that, if we assume
\begin{equation}\label{L-eq2}
r(s-n)-(a+1)nk\ge \epsilon \cdot an^2,
\end{equation}
then at most $r$ horizontal lines and at most $r$ vertical lines contain $s$ or more sites of $A$. 
Now, $A$ is a spanning set for both line growths with zero-sets $R_{an-k,n-k}$ and $R_{n-k,an-k}$. 
Using the slowed-down version of line growth in which a single line is occupied each time step, we
see that  
there exist some $an-k-s$ vertical lines, and some $an-k-s$ horizontal lines, each with
at least $n-k-r$ sites of $A$. Let $A_1$ and $A_2$ be the respective sets formed by occupied points on these vertical lines and horizontal lines and $A_{\rm dense}=A_1\cap A_2$. 
Then 
$$
2(an-k-s)(n-k-r)-\abs{A_{\rm dense}}\le |A_1\cup A_2|\le (1+\epsilon)\cdot an^2,
$$
and so 
\begin{equation}\label{L-eq3}
|A_{\rm dense}|\ge (a-2)n^2-2(s-n)n-2(ar+(a+1)k)n-\epsilon\cdot an^2.
\end{equation}

We now need a variant of the argument in the proof of Lemma~\ref{k-proj} for an 
upper bound on the entropy of $A$. Let $A_h'$ be the 
set of points of $A$ that are not in $A_{\rm dense}$ but lie on a horizontal line
of a point in $A_{\rm dense}$. 
Let $A_h$ be the set of points of $A$ that are not in $A_{\rm dense}\cup A_h'$ 
but lie on a horizontal line with at least $k$ other points of $A$
(and therefore with at least $k$ other points of $A_h$).  
Let $A_v'$ be the set of points that are not 
in $A_{\rm dense}\cup A_h\cup A_h'$ but lie on a vertical line of 
a point in this union. Let $A_v=A\setminus (A_{\rm dense}\cup A_h\cup A_h')$, so that any points 
of $A_v$ shares a vertical 
line with at least $k$ other points of $A_v$. Then 
\begin{equation*}
\begin{aligned}
|\pi_x(A)|&\le |\pi_x(A_{\rm dense})| +|\pi_x(A_v)|+|\pi_x(A_h)|+|\pi_x(A_h')|\\
&\le \frac1{n-r-k}|A_{\rm dense}| +\frac1k|A_v|+|A_h|+|A_h'| 
\end{aligned}
\end{equation*}
and 
\begin{equation*}
\begin{aligned}
|\pi_y(A)|&\le |\pi_y(A_{\rm dense})| +|\pi_y(A_h)|+|\pi_x(A_v)|+|\pi_x(A_v')|\\
&\le \frac1{n-r-k}|A_{\rm dense}| +\frac 1k|A_h|+|A_v|+|A_v'|
\end{aligned}
\end{equation*}
and so 
\begin{equation}\label{L-eq4}
\begin{aligned}
|\pi_x(A)|+|\pi_y(A)|&\le \frac2{n-r-k}|A_{\rm dense}|+\left(1+\frac 1k\right)(|A_h|+|A_v|)+|A_h'|+|A_v'|\\
&\le  \frac2{n-r-k}|A_{\rm dense}|+\left(1+\frac 1k\right)(|A|-|A_{\rm dense}|)
\end{aligned}
\end{equation}

By (\ref{L-eq4}), the fact that $\gamma(Z^{\swarrow k})\ge (an-k)(n-k)$ 
(which follows from Proposition~\ref{lp-bound}), and (\ref{L-eq3})
\begin{equation}\label{L-eq5}
\begin{aligned}
\rho(A)&\ge |A|-\alpha(|\pi_x(A)|+|\pi_y(A)|)
\\&\ge |A|\left(1-\left(1+\frac 1k\right)\alpha\right)
+\alpha \left(1+\frac 1k-\frac2{n-r-k}\right)|A_{\rm dense}|\\
&\ge  (an-k)(n-k)\left(1-\left(1+\frac 1k\right)\alpha\right)\\
&\quad+\alpha  \left(1+\frac 1k-\frac2{n-r-k}\right)
((a-2)n^2-2(s-n)n-2(ar+(a+1)k)n-\epsilon\cdot an^2).
\end{aligned}
\end{equation}

To guarantee (\ref{L-eq2}) for large $n$, we choose
$s-n=a\sqrt{\epsilon}n$ and 
$r=\frac 32\sqrt{\epsilon}n$.
We know that for any spanning set $A$, 
either (\ref{L-eq1}) or (\ref{L-eq5}) holds, so that 
$$
\liminf \frac 1{n^2} I(\alpha,\alpha, \Z_n)\ge \min\{a(1+\epsilon)(1-\alpha), a-2\alpha-5a\alpha\sqrt\epsilon-a\alpha\epsilon\}.
$$
To assure that the second quantity inside the $\min$ is the smaller one, we need that
$$
(a-2)\alpha \le a\epsilon+5\alpha\sqrt \epsilon , 
$$
which is assured for all $\alpha\in (0,1)$ with $\epsilon=\frac{a-2}{a}\alpha$. This 
finally gives 
\begin{equation}
\begin{aligned}
\liminf \frac 1{n^2} I(\alpha,\alpha, \Z_n)&\ge a-2\alpha-5a\sqrt{\frac a{a-2}}\alpha^{3/2}-(a-2)\alpha^2\\&
\ge  a-2\alpha-9a\alpha^{3/2}, 
\end{aligned}
\end{equation}
ending the proof of the lower bound in (\ref{L-eq0}). 
\end{proof}


\subsection{The $(1,1)$ corner}\label{ld-Lshape-11-subsec}

The upper bound (\ref{IEub}) provides a lower bound of $-2$ for the slope of 
$\sup_{\ZE}\IE(\alpha,\alpha,\ZE)/\gamma(\ZE)$
at $\alpha=1-$. Continuing with the theme from the previous section, we show that 
this bound cannot be improved either. To achieve this, we again show that the L-shapes asymptotically
attain this bound, a fact that easily follows from our next theorem. 

\begin{theorem}\label{edge11-L}
Assume the Euclidean zero set $\ZE=\RE_{a,1}\cup \RE_{1,a}$ for some $a\ge 2$. Then, 
\begin{equation*}
\begin{aligned}
2(a-1)
\left((1-\alpha)-2(1-\alpha)^2\right)
&\le \IE(\alpha,\alpha,\ZE)\le 2(a-1)
(1-\alpha),
\end{aligned}
\end{equation*}
for all $\alpha\in [0,1]$.  
\end{theorem}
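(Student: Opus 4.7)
For the upper bound, the idea is to exhibit a pure-enhancement spanning triple for the continuous L-shape.  Take $A=\emptyset$ with the symmetric step functions
\[
f(v)=(1+\varepsilon)\,\mathbf 1_{[0,\,a-1+\delta]}(v), \qquad g(u)=(1+\varepsilon)\,\mathbf 1_{[0,\,a-1+\delta]}(u),
\]
for small $\varepsilon,\delta>0$.  A direct three-step verification of the continuous growth from (\ref{E-growth}) suffices: step~1 occupies the square $[0,a-1+\delta]^2$ because $(1+\varepsilon,1+\varepsilon)\notin\ZE$ (both coordinates exceed~$1$); step~2 extends to the large L-shape $\{u\le a-1+\delta\}\cup\{v\le a-1+\delta\}$ since in the horizontal wing the effective horizontal count $\hE+f(v)=(a-1+\delta)+(1+\varepsilon)$ strictly exceeds $a$; step~3 fills the remaining corner $\{u>a-1+\delta,\ v>a-1+\delta\}$ because the step~2 L-shape contributes $\hE=\vE=a-1+\delta>1$, and $(a-1+\delta,\,a-1+\delta)\notin\ZE$ for $a\ge 2,\delta>0$.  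Sending $\varepsilon,\delta\to 0$ yields $\int f+\int g\to 2(a-1)$, hence $\IE(\alpha,\alpha,\ZE)\le \area(\emptyset)+(1-\alpha)(\int f+\int g)\le 2(a-1)(1-\alpha)$.

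For the lower bound the plan is to work with the discrete scaling $\Z_n=R_{an,n}\cup R_{n,an}$ (so that $\frac1n\squrep(\Z_n)\Econv\ZE$), bound $\oI(\Z_n)$ from below, and then transfer to $I(\Z_n)$ and to $\IE(\alpha,\alpha,\ZE)$ via Lemma~\ref{Ibar-le-I} and Theorem~\ref{intro-d-E-theorem}.  The central tool is a \emph{conversion lemma}: if $(A,\vec f,\vec g)$ spans $(\Z_n,\vec f,\vec g)$, then the pure-enhancement triple $(\emptyset,\tilde{\vec f},\tilde{\vec g})$ with $\tilde f_v=f_v+r_v(A)$ and $\tilde g_u=g_u+c_u(A)$ also spans.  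Indeed, a short induction shows $A_t\setminus A\subseteq\tilde A_t$ for all $t$: the effective row count $r_v(\tilde A_{t-1})+\tilde f_v$ dominates $r_v(A_{t-1})+f_v$ because $r_v(A_{t-1})=r_v(A_{t-1}\setminus A)+r_v(A)\le r_v(\tilde A_{t-1})+r_v(A)$, and the downward-closed (Young-diagram) property of $\Z_n$ guarantees that any check outside $\Z_n$ remains outside $\Z_n$ when the effective counts increase; the remaining points of $A$ are absorbed at the final step because $\tilde A_\infty\supseteq \bZ_+^2\setminus A$ has infinite row counts.  Since $\sum\tilde f+\sum\tilde g=\sum\vec f+\sum\vec g+2|A|$, this yields
\[
\sum\vec f+\sum\vec g+2|A|\ \ge\ \overline\gamma_{\mathrm{thin}}(\Z_n).
\]

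The hardest step will be establishing $\overline\gamma_{\mathrm{thin}}(\Z_n)\ge 2(a-1)n^2-o(n^2)$, i.e.\ that no pure-enhancement spanning of the discrete L-shape uses asymptotically less than $2(a-1)n^2$ total mass.  I plan to prove this by tracing the enhanced dynamics from $\emptyset$ and observing that, writing $v_1=|\{v:f_v> n\}|$ and $u_1=|\{u:g_u> n\}|$, the three-step structure mirrors the upper-bound construction: spanning forces the step-2 extension $\{u>u_1,\ v\le v_1\}$, which demands $u_1+f_v>an$ for some relevant row, and the step-3 closing of the far corner $\{u>u_1,v>v_1\}$ uses only $(\hE,\vE)\le(u_1,v_1)$, so $(u_1,v_1)\notin\Z_n$ is needed.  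Carrying the bootstrap through, the L-shape geometry (in particular $(a-1,a-1)n\notin\Z_n$ but $((a-1-\delta)n,(a-1-\delta)n)\in\Z_n$ up to lower order) forces $v_1,u_1\ge(a-1)n-o(n)$.  A layer-cake argument then yields $\sum\vec f\ge n\cdot v_1\ge(a-1)n^2-o(n^2)$, and symmetrically for $\vec g$.

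Combining the conversion bound with the trivial bound $|A|+(1-\alpha)(\sum\vec f+\sum\vec g)\ge|A|$ and optimizing over the tradeoff gives
\[
\oI(\Z_n)\ \ge\ (1-\alpha)\,\overline\gamma_{\mathrm{thin}}(\Z_n)+(2\alpha-1)|A|\ \ge\ \min\!\bigl((1-\alpha),\tfrac12\bigr)\cdot\overline\gamma_{\mathrm{thin}}(\Z_n).
\]
For $\alpha\ge 1/2$ this already gives $\oI(\Z_n)\ge 2(a-1)(1-\alpha)n^2-o(n^2)$, while for $\alpha<1/2$ the stated bound $2(a-1)((1-\alpha)-2(1-\alpha)^2)$ is non-positive and trivially satisfied.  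Applying Lemma~\ref{Ibar-le-I} with $k\sim\sqrt n$ and $m\sim n^{3/2}$ transfers the estimate to $I(\Z_n)$ with an error $O(n^{3/2})$, which vanishes after dividing by $n^2$; the $(1-\alpha)^2$ correction in the theorem statement comfortably absorbs all $o(1)$ error terms.  Finally, Theorem~\ref{intro-d-E-theorem} passes to the Euclidean limit and produces the claimed lower bound for $\IE(\alpha,\alpha,\ZE)$.
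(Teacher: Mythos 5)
Your plan for the lower bound takes a genuinely different route from the paper's.  The paper reduces to a general comparison result (its Theorem~\ref{IE-diagonal}), whose proof goes through a two-case argument about whether the projections of $A_{>k}$ are close to $|A_{>k}|$, and this is where the quadratic loss $-2(1-\alpha)^2$ originates.  Your conversion lemma — that a spanning triple $(A,\vec f,\vec g)$ produces a pure-enhancement spanning triple $(\emptyset,\tilde{\vec f},\tilde{\vec g})$ at total-mass cost $2|A|$ — is a clean observation the paper does not use, and I believe it is correct: the induction $A_t\setminus A\subseteq\tilde A_t$ goes through by downward-closedness of $\Z$, and the leftover finite set $A$ is absorbed because its lines then have infinite counts. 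This route would in fact give the sharper statement $\oI(\alpha,\alpha,\Z)=(1-\alpha)\overline\gamma_{\rm thin}(\Z)$ for all $\alpha\ge\tfrac12$, hence $\IE(\alpha,\alpha,\ZE)=(1-\alpha)\gammaE_{\rm thin}(\ZE)$ on $[\tfrac12,1]$.  The upper bound construction is also fine and essentially reproduces the proof that $\gammaE_{\rm thin}(\ZE)\le 2(a-1)$.

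The genuine gap is the step you yourself flag as the hardest: the bound $\overline\gamma_{\rm thin}(\Z_n)\ge 2(a-1)n^2-o(n^2)$.  Your sketch does not establish it, and the geometric heuristic it rests on is wrong.  You write that closing the far corner needs $(u_1,v_1)\notin\Z_n$ and that this, together with ``$(a-1,a-1)n\notin\Z_n$ but $((a-1-\delta)n,(a-1-\delta)n)\in\Z_n$,'' forces $u_1,v_1\ge(a-1)n-o(n)$.  But for $a>2$ no diagonal point $(x,x)$ with $x\ge n$ lies in $\Z_n=R_{an,n}\cup R_{n,an}$, so $((a-1-\delta)n,(a-1-\delta)n)\notin\Z_n$, and $(u_1,v_1)\notin\Z_n$ is already satisfied once $u_1,v_1\ge n$ — far weaker than $(a-1)n$.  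The three-step ``mirror'' is only a property of the optimal upper-bound configuration, and a general pure-enhancement spanning sequence can have a much more delicate interleaved structure; nothing in the sketch rules that out.  The paper's Lemma~\ref{thin-for-L} (via Lemma~\ref{thin-for-L-approx}) proves exactly this bound for L-shapes, but it requires a careful induction on the four arm parameters using the slowed-down line growth and an explicit redistribution of occupied points between rows and columns; your proposal would have to reproduce something of that depth to be complete.  Note also that you could have sidestepped your sketch entirely by citing Lemma~\ref{gamma-bar-gamma} to reduce to $\gamma_{\rm thin}$ and then Lemma~\ref{thin-for-L}, which is what the paper's actual proof of Theorem~\ref{edge11-L} does.
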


We note that for $\ZE$ as in the above theorem, $\gammaE(\ZE)=a$, and therefore 
the L-shape with $a=2$ provides 
another case (apart from the line and bootstrap growths) for which the 
lower bound (\ref{IElb}) is attained on the entire diagonal $\alpha=\beta$. 

The proof of Theorem~\ref{edge11-L} proceeds in two main steps. 
In the first step, which holds for general $\ZE$, we show that in the relevant 
circumstances an arbitrary spanning set $A$ 
can be replaced by a thin spanning set of a similar size,
and use this to prove (\ref{IE-11}). The second step is a lower bound
on $\gamma_{\rm thin}(\Z)$ for the L-shaped zero-sets 
$\Z$.

  
\begin{lemma}\label{makethin}
Fix a $\delta\in (0,1)$ and a positive integer $k$. 
Let $A$ be a set that satisfies both  
$|\pi_x(A)| + |\pi_y(A)| \geq ( 1-\delta ) |A|$ and 
$A = A_{>k}.$  Then there exists a thin set $A'\subseteq A$ such that 
$$|\pi_x(A')| + |\pi_y(A')| = |\pi_x(A)| + |\pi_y(A)|$$
and 
$$|A\setminus A'|\leq \left(\delta+\frac {2}k\right)|A|.$$
\end{lemma}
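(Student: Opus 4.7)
The key reformulation is bipartite-graph theoretic. Identify $A$ with the edge set of a bipartite graph $G$ on vertex set $\pi_y(A)\sqcup\pi_x(A)$; then $A'\subseteq A$ is thin precisely when every edge of $A'$ is incident to a degree-one vertex of the subgraph induced by $A'$, i.e.\ when $A'$ is a disjoint union of stars. The equality $|\pi_x(A')|+|\pi_y(A')|=|\pi_x(A)|+|\pi_y(A)|$ is exactly the condition that this star forest spans every vertex of $G$, so the problem reduces to finding a spanning star forest of $G$ with $|A\setminus A'|\le(\delta+\tfrac 2k)|A|$.

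To build the forest, I would first apply the decomposition from Lemma~\ref{k-proj}, writing $A=A_h\sqcup A_0\sqcup A_v$; the hypothesis $A=A_{>k}$ makes this an honest partition and supplies the bounds $|\pi_y(A_h)|\le|A_h|/(k+1)$ and $|\pi_x(A_v)|\le|A_v|/(k+1)$. The rows hosting $A_h$ points and the rows hosting $A_v$ points are disjoint (heavy versus light), and the columns of $A_v$ are disjoint from those of $A_h\cup A_0$, so $A_h$-based stars and $A_v$-based stars do not interfere. I would then take each heavy row $r\in\pi_y(A_h)$ and each heavy column $c\in\pi_x(A_v)$ as a star center, and greedily assign each column $c\in\pi_x(A_h\cup A_0)$ as a leaf to a single star centered at some row $r$ with $(r,c)\in A$ (and symmetrically for $A_v$).

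Any rows or columns of $A$ not yet met by the forest are light residual lines that pass through $A_0$ points; I would complete the forest by adding one auxiliary star per such line, drawing its single edge from the $A_0$-piece (and if necessary splitting overloaded $A_h$-star leaves into their own singleton stars to preserve thinness). Counting edges: each heavy row contributes $\ge k+1$ edges of which at most one is lost by becoming a star center instead of a leaf, and similarly for heavy columns, yielding an $O(1/k)|A|$ wastage; the remaining wastage corresponds to double-heavy overlap, which is controlled directly by the hypothesis $|\pi_x(A)|+|\pi_y(A)|\ge(1-\delta)|A|$ through the identity $|A|-\tfrac12(|\pi_x(A)|+|\pi_y(A)|)\le\tfrac12(1+\delta)|A|$ combined with the structural $1/(k+1)$ saving.

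The main obstacle will be verifying a defect version of Hall's condition for the simultaneous assignment: every column of $\pi_x(A_h\cup A_0)$ needs a host row of $A_h$, every row of $A_h$ needs at least one leaf, and the light residuals must be coverable by single-edge stars whose endpoints are not already over-subscribed. The structural separation of $A_h$, $A_v$, and $A_0$ across rows and columns should make Hall's condition routine; the delicate bookkeeping is synchronizing the two sides so that the $2/k$ term (from heavy-line collapse) and the $\delta$ term (from the entropy deficit) add cleanly rather than multiplicatively.
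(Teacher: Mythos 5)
Your bipartite reformulation is a correct and nice way to see the statement — a thin set is exactly a disjoint union of stars in the bipartite row/column incidence graph, and preserving projections is exactly being a spanning star forest. The structural facts you cite about $A_h,A_0,A_v$ (disjointness of heavy rows from light rows, and of $A_v$-columns from $A_h\cup A_0$-columns) are also correct. But your proof sketch has a genuine gap: the rows in $\pi_y(A_0)\setminus\pi_y(A_v)$ have all of their incident columns lying in $\pi_x(A_h\cup A_0)$, i.e.\ every such row's potential leaf-edge already competes with a heavy-row star, so covering these rows requires breaking leaves off existing stars. Each break increases the number of connected components of the forest, and since $|A'|=|\pi_x(A)|+|\pi_y(A)|-(\text{\#components})$, each break costs exactly one point of $A'$. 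You have not bounded how many such breaks occur — the number of rows in $\pi_y(A_0)$ could be as large as $|A_0|$, and no defect-Hall argument is actually given. Acknowledging that "the delicate bookkeeping is synchronizing the two sides" is not a substitute for doing it, and without it the $\tfrac 2k$ term is unproven.

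The paper sidesteps all of this by not constructing the star forest at all. It defines $A'$ by greedy \emph{removal}: repeatedly delete any point of $A$ that has both a row-neighbor and a column-neighbor, until no such point remains. This makes $A'$ thin by construction, and each removal is projection-preserving (the deleted point had a witness in both directions), so $|\pi_x(A')|+|\pi_y(A')|=|\pi_x(A)|+|\pi_y(A)|$ for free. Then the size of $A'$ is bounded from below \emph{after the fact}: using the $A_h,A_0,A_v$ decomposition of the ambient set $A$ (not of $A'$), one has $|\pi_x(A')|+|\pi_y(A')|\le |A'|+\tfrac 1k(|A_h|+|A_v|+|A|)\le |A'|+\tfrac 2k|A|$, because $\pi_y(A_h')\subseteq\pi_y(A_h)$, $\pi_x(A_v')\subseteq\pi_x(A_v)$, and $\pi_x(A_0')\subseteq\pi_x(A_0)$ all inherit the $|A|/(k+1)$-type bounds from $A$. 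Combining with the entropy hypothesis immediately gives $|A'|\ge(1-\delta-\tfrac 2k)|A|$. No Hall's condition, no component accounting, no case analysis. If you want to pursue the constructive route you would need to carefully bound the number of extra singleton/split components by $\tfrac 2k|A|$; the decomposition bounds might let you do that, but it is strictly harder work than the paper's removal argument and has not been done in your sketch.
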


\begin{proof}
Partition $A$ into three disjoint sets $A_h$, $A_v$, and $A_0$ as in the proof of 
Lemma~\ref{k-proj}.   Points in $A_h$ lie in a row with at least 
$k$ other points of $A_h$, points in $A_v$ lie in a column with at least 
$k$ other points of $A_v$, and points of $A_0$ lie in a column with 
at least $k$ other points of $A$. 

Choose any point in $A$ that shares both a row and a column with other points in $A$, 
then remove it. Repeat until no point can be removed. Let $A'$ be the so obtained 
final set. Observe that $A'$ is thin and that, as the removed points do not affect either 
projection, 
$$|\pi_x(A')| + |\pi_y(A')|= |\pi_x(A)| + |\pi_y(A)|.$$
Let $A_h' = A_h\cap A'$, $A_v' = A_v \cap A'$, and $A_0' = A_0\cap A'$.  Then, 
\begin{equation}\label{makethin-eq1}
\begin{aligned}
|\pi_x(A')| + |\pi_y(A')| 
&\leq |\pi_x(A_0'\cup A_v'\cup A_h')| + |\pi_y(A'_0\cup A_v'\cup A_h')|\\
&\leq |\pi_x(A_h')| + |\pi_y(A'_0\cup A_v')|+ |\pi_x( A_v' )| + |\pi_y(A_h')| + |\pi_x(A_0')|\\
&\leq |A'| + \frac 1k(|A_v| + |A_h| + |A|)\\
&\leq |A'| + \frac 2k |A|.
\end{aligned}
\end{equation}
Moreover,
\begin{equation}\label{makethin-eq2}
\begin{aligned}
(1-\delta)|A| \leq |\pi_x(A)| + |\pi_y(A)| = |\pi_x(A')| + |\pi_y(A')|.
\end{aligned}
\end{equation}
Combining (\ref{makethin-eq1}) and (\ref{makethin-eq2}) gives 
$\left(1-\delta - \frac 2k\right) |A|\leq |A'|$ 
and hence $|A\setminus A'| \leq \left(\delta + \frac 2k\right)|A|.$
\end{proof}


\begin{lemma}\label{makethin-span} Assume $\delta$, $k$ and $A$ satisfy conditions in
Lemma~\ref{makethin}, and 
suppose in addition that $A$ spans for some zero-set $\Z$. 
Then there exists a thin set $B$ that spans for $\Z$, such that
$$
|B|\le \left(1+ \delta + \frac 2k\right) |A|.
$$ 
\end{lemma}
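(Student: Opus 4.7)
I will apply Lemma~\ref{makethin} to obtain a thin $A'\subseteq A$ with $|A\setminus A'|\le(\delta+2/k)|A|$, and write $D=A\setminus A'$. The aim is to build a thin spanning set $B$ with $|B|\le|A'|+2|D|=|A|+|D|\le(1+\delta+2/k)|A|$. The central observation is that $A'$ spans for the enhanced neighborhood growth $(\cZ,\vec f,\vec g)$, where $\vec f=(f_v)_{v\ge 0}$ with $f_v$ the number of points of $D$ in row $v$, and $\vec g=(g_u)_{u\ge 0}$ with $g_u$ the number of points of $D$ in column $u$. This will follow from an induction analogous to the proof of Lemma~\ref{retract}: for any site $x\notin A$, the effective row and column counts at $x$ under the enhanced dynamics from $A'$ exactly equal the plain counts at $x$ under the dynamics from $A$, so the enhanced trajectory from $A'$ dominates the plain one inside the complement of $D$; and once $\bZ_+^2\setminus D$ is fully occupied, the remaining sites of $D$ are absorbed in one further step.

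To convert this enhanced spanning back to a plain thin spanning set, I will follow the enhancement-to-regular conversion used in the proof of Lemma~\ref{gamma-bar-gamma} and realize the enhancements by two thin pieces $E_r$ and $E_c$: $E_r$ places $f_v$ points in row $v$ on fresh columns (one per column, disjoint from those of $A'$) for each $v\in\pi_y(D)$, and $E_c$ places $g_u$ points in column $u$ on fresh rows for each $u\in\pi_x(D)$. Then $|E_r|=|E_c|=|D|$, and $B=A'\cup E_r\cup E_c$ has size $|A'|+2|D|$. By construction, the row and column counts of $B$ at every line meeting the rectangle containing $A$ agree with the corresponding enhanced counts from $A'$, so the comparison above gives spanning of $B$.

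The main obstacle will be ensuring that $B$ is thin in the paper's sense (each point alone in its row or its column). I plan to split $A'=A'_h\sqcup A'_v$, with $A'_h$ the points alone in their column within $A'$ and $A'_v$ the points alone in their row; one checks that $A'_h$ and $A'_v$ have pairwise disjoint row sets and disjoint column sets, which rules out most potential collisions in $B$. The remaining \emph{problematic} $A'$ points, those whose row lies in $\pi_y(D)$ \emph{and} whose column lies in $\pi_x(D)$, must be handled either by exploiting the freedom in the greedy removal order of Lemma~\ref{makethin} so that no such point survives, or by a small local relocation whose cost can be absorbed into the slack of the size bound. Making this absorption simultaneously preserve thinness, spanning, and the tight bound $(1+\delta+2/k)|A|$ is the delicate combinatorial step.
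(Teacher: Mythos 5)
Your route is essentially the paper's own --- with the enhancement framing replacing the direct per-line count comparison the paper cites from Lemma~\ref{get-two-Y} --- and both give $|B|=|A'|+2|D|=|A|+|D|$ with $D=A\setminus A'$. The step you leave open is indeed a genuine gap, and the paper's one-line assertion that $B=A'\cup B_s$ is thin does not close it. The difficulty: a point $x\in A'$ that is alone only in its column within $A'$ fails thinness as soon as $D$ has a point in $x$'s column, because the column piece of $B_s$ then lands in $x$'s column while $x$'s row is already shared; the symmetric failure occurs for rows. Already $A=R_{2,2}$ is a counterexample to the naive construction: the only thin $A'\subseteq A$ with full projections is a diagonal or anti-diagonal pair, $D$ is the other pair, and a corner such as $(0,0)$ picks up a row-mate from the $B_r$ piece and a column-mate from the $B_c$ piece no matter how the translates are placed. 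In particular, exploiting the freedom in the greedy removal order of Lemma~\ref{makethin} cannot by itself resolve the problem.

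The second route you float --- a local repair absorbed into the slack --- can be made to work, at a cost to the constant. Call $x\in A'$ \emph{problematic} if it is alone in neither line once $B_s$ is adjoined; among the $A'$-points alone in their row within $A'$, at most one per row of $D$ is problematic, and symmetrically for columns, so the set $P$ of problematic points has $|P|\le|\pi_x(D)|+|\pi_y(D)|\le 2|D|$. Put $A''=A'\setminus P$, $D''=D\cup P$, and rebuild $B''$ from $A''$ and $D''$. No point of $A''$ is problematic for $D''$: if $x\in A''$ was alone in row $v$ within $A'$ with $D$ avoiding row $v$, then $D''$ also avoids row $v$, since the only $A'$-point in that row is $x$ itself and $x\notin P$. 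Hence $B''$ is thin, spans, and $|B''|=|A|+|D''|\le|A|+3|D|\le(1+3\delta+6/k)|A|$. The worse constant is harmless for the application in Section~\ref{ld-Lshape-11-subsec}, where only the $\delta\to 0$, $k\to\infty$ regime matters, but recovering the stated bound $1+\delta+2/k$, if it is indeed attainable, needs a sharper construction than either your sketch or the paper supplies.
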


\begin{proof} Let $A'\subseteq A$ be the thin set guaranteed by Lemma~\ref{makethin}.  
Let $B_r$ be a set with the same row counts as $A\setminus A'$ 
but with no two points in the same column,  
and let $B_c$ be a set with the same column counts 
as $A\setminus A'$ with no two points in the same row.  
Assuming $A\subseteq R_{a,b}$, 
let $B_s = \left((a,0) + B_r\right) \cup \left((0,b) + B_c\right).$
The set $B= A' \cup B_s$ is a thin set that spans (see the proof of
Lemma~\ref{get-two-Y}), and satisfies $|B| \leq  (1+ \delta + \frac 2k) |A|$. 
\end{proof}

\begin{lemma}\label{gamma-thin-diagonal}
For any discrete zero-set $\Z$, and $\alpha\in[0,1]$,
$I(\alpha,\alpha,\Z)\le (1-\alpha)\gamma_{\rm thin}(\Z)$.
\end{lemma}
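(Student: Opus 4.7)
The plan is to exhibit a specific spanning set whose $\rho$-value directly yields the bound. Let $A_0$ be a thin spanning set of minimum cardinality, so that $|A_0| = \gamma_{\rm thin}(\Z)$; such a set exists by definition of $\gamma_{\rm thin}$. The key observation is that thinness is inherited by subsets: for every $B \subseteq A_0$, no two points of $B$ share a row or column, and therefore
\begin{equation*}
|\pi_x(B)| = |B| = |\pi_y(B)|.
\end{equation*}

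Substituting this into the definition of $\rho$ gives
\begin{equation*}
\rho(\alpha,\alpha,A_0) = \max_{B \subseteq A_0}\bigl(|B| - 2\alpha|B|\bigr) = \max_{B\subseteq A_0}(1-2\alpha)|B|.
\end{equation*}
Splitting on the sign of $1-2\alpha$: if $\alpha \le 1/2$ the maximum is attained at $B = A_0$, yielding $(1-2\alpha)|A_0|$, while if $\alpha \ge 1/2$ the maximum is attained at $B = \emptyset$, yielding $0$. In both cases $\rho(\alpha,\alpha,A_0) \le (1-\alpha)|A_0|$, since $(1-2\alpha) \le (1-\alpha)$ for $\alpha \ge 0$ and $0 \le (1-\alpha)|A_0|$ for $\alpha \le 1$.

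Finally, by the variational characterization in Theorem~\ref{intro-ld-rate} (equivalently Theorem~\ref{ld-rate}), $I(\alpha,\alpha,\Z) \le \rho(\alpha,\alpha,A_0) \le (1-\alpha)\gamma_{\rm thin}(\Z)$, as desired. There is no real obstacle here — the lemma is essentially an immediate consequence of the fact that thin sets have both projections equal to their cardinality, so the whole argument fits in a few lines.
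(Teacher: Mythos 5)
Your proof rests on the claim that thinness of $A_0$ forces $|\pi_x(B)| = |B| = |\pi_y(B)|$ for every $B\subseteq A_0$, i.e.\ that no two points of a thin set share a row \emph{or} a column. That misreads the paper's definition of ``thin.'' In this paper a thin set may well contain several points on the same horizontal line (or on the same vertical line); the restriction is only that no point simultaneously shares a row with another point of $A$ \emph{and} shares a column with another point of $A$. This is visible, for instance, in the construction in the proof of Lemma~\ref{gamma-thin-gamma}, which builds a thin spanning set with the same row counts as an arbitrary $A$ (so it necessarily has many collinear points), and in the proof of Lemma~\ref{makethin}, where $A'$ is deemed thin precisely after removing every point that shares \emph{both} a row and a column. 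For such a thin $A_0$ the equalities you assert fail — take three points on one horizontal line, pairwise in distinct columns: $|\pi_y| = 1 \ne 3 = |B|$ — so your intermediate bound $\rho(\alpha,\alpha,A_0) = \max_B (1-2\alpha)|B|$ is not justified.

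The correct (and all that is needed) consequence of thinness is the inequality $|\pi_x(B)| + |\pi_y(B)| \ge |B|$ for every $B\subseteq A_0$: each point of $B$ is, by thinness, either alone in its column or alone in its row within $B$, so one can inject $B$ into $\pi_x(B)\sqcup\pi_y(B)$. Plugging this into $\rho$ gives
\begin{equation*}
\rho(\alpha,\alpha,A_0)=\sup_{B\subseteq A_0}\bigl(|B|-\alpha(|\pi_x(B)|+|\pi_y(B)|)\bigr)\le \sup_{B\subseteq A_0}(1-\alpha)|B|=(1-\alpha)|A_0|,
\end{equation*}
which is exactly the paper's argument and yields the lemma. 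So your overall strategy (evaluate $\rho$ on a minimum thin spanning set, then invoke the variational characterization) is the right one, but the key estimate on projections needs to be weakened to the additive form above; the version with two separate equalities is false under the intended definition.
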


\begin{proof}
Take a thin set $A$ that spans for $\Z$, with 
$|A|=\gamma_{\rm thin}(\Z)$. For any $B\subset A$, $|\pi_x(B)|+|\pi_y(B)|\ge |B|$, therefore
$$\rho(A)
=\sup_{B\subseteq A} |B|-\alpha(|\pi_x(B)|+|\pi_y(B)|)
\le\sup_{B\subseteq A} (1-\alpha)|B|
=(1-\alpha)|A|
=(1-\alpha)\gamma_{\rm thin}(\Z),
$$
and consequently 
$I(\alpha,\alpha,\Z)\le (1-\alpha)\gamma_{\rm thin}(\Z)$.
\end{proof}

\begin{theorem}\label{IE-diagonal} Suppose $\ZE$ is a Euclidean zero-set with finite area. 
Then 
\begin{equation}\label{IE-diagonal-ineq}
\gammaE_{\rm thin}(\ZE)\cdot \left((1-\alpha)-2(1-\alpha)^2\right)\le 
\IE(\alpha,\alpha,\ZE)\le \gammaE_{\rm thin}(\ZE)\cdot (1-\alpha).
\end{equation}
Furthermore, 
$\IE(\alpha,\alpha,\ZE)=(1-\alpha)\gamma(\ZE)$ for all $\alpha\in [0,1]$ 
if and only if $\gammaE_{\rm thin}(\ZE)=\gammaE(\ZE)$. 
\end{theorem}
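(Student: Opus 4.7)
The plan is to prove both bounds via discretization: pick a sequence of discrete zero-sets $\Z_n$ and scales $\delta_n\to 0$ with $\delta_n\squrep(\Z_n)\Econv \ZE$ (as in Corollary~\ref{gamma-E-perturbation}), and transfer estimates through Theorem~\ref{intro-d-E-theorem}. The upper bound follows immediately: Lemma~\ref{gamma-thin-diagonal} gives $I(\alpha,\alpha,\Z_n)\le (1-\alpha)\gamma_{\rm thin}(\Z_n)$, so multiplying by $\delta_n^2$ and invoking (\ref{dE-I}) together with (\ref{dE-gamma-thin}) yields $\IE(\alpha,\alpha,\ZE)\le (1-\alpha)\gammaE_{\rm thin}(\ZE)$.

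For the lower bound, fix $n$ and a spanning set $A$ for $\Z_n$ achieving $\rho(\alpha,\alpha,A)=I(\alpha,\alpha,\Z_n)$. Let $A'=A_{>k}$ with $k=k_n$ chosen so that $1\ll k_n\ll 1/\delta_n$; by Lemma~\ref{retract}, $A'$ spans for $\Z_n^{\swarrow k}$, and by Lemma~\ref{k-proj}, $|\pi_x(A')|+|\pi_y(A')|\le (1+\tfrac{1}{k+1})|A'|$. Write $\epsilon=1-\alpha$ and introduce a free parameter $\delta\in(0,1)$. Split into two cases. If $|\pi_x(A')|+|\pi_y(A')|<(1-\delta)|A'|$, then $\rho(A)\ge \rho(A')\ge |A'|(\epsilon+\alpha\delta)$, and since $A'$ spans for $\Z_n^{\swarrow k}$ one has $|A'|\ge \gamma(\Z_n^{\swarrow k})\ge \tfrac12\gamma_{\rm thin}(\Z_n^{\swarrow k})$ by Lemma~\ref{gamma-thin-gamma}. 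In the opposite case $A'$ satisfies the hypotheses of Lemma~\ref{makethin-span} (note $A'=(A')_{>k}$ by definition of the retraction), producing a thin spanning set for $\Z_n^{\swarrow k}$ of cardinality at most $(1+\delta+2/k)|A'|$; thus $|A'|\ge \gamma_{\rm thin}(\Z_n^{\swarrow k})/(1+\delta+2/k)$, and $\rho(A)\ge |A'|(\epsilon-\alpha/(k+1))$.

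Multiplying each bound by $\delta_n^2$, letting $n\to\infty$ so that $\delta_n^2\gamma_{\rm thin}(\Z_n^{\swarrow k_n})\to \gammaE_{\rm thin}(\ZE)$ by (\ref{dE-gamma-thin}), and finally sending $k\to\infty$, leaves
\begin{equation*}
\IE(\alpha,\alpha,\ZE)\ \ge\ \gammaE_{\rm thin}(\ZE)\cdot\min\!\left\{\tfrac12\bigl(\epsilon+\delta(1-\epsilon)\bigr),\ \tfrac{\epsilon}{1+\delta}\right\}.
\end{equation*}
The choice $\delta=2\epsilon$ makes $\epsilon/(1+2\epsilon)\ge \epsilon-2\epsilon^2$, while the first argument of the minimum is strictly larger for $\epsilon\in(0,1)$, yielding $\IE(\alpha,\alpha,\ZE)\ge \gammaE_{\rm thin}(\ZE)\bigl((1-\alpha)-2(1-\alpha)^2\bigr)$. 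The main technical hurdle is calibrating $k_n$ and $\delta$ so that the error contributions from Lemmas~\ref{retract}, \ref{k-proj}, and~\ref{makethin-span} are simultaneously negligible relative to both sides of the balance; any slack here costs a constant factor that would prevent recovering the sharp leading coefficient $(1-\alpha)$ and the subleading $-2(1-\alpha)^2$.

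For the equivalence in the second statement, a squeeze argument closes everything. If $\IE(\alpha,\alpha,\ZE)=(1-\alpha)\gammaE(\ZE)$ for every $\alpha\in[0,1]$, then the lower bound just proved gives $(1-\alpha)\gammaE(\ZE)\ge \gammaE_{\rm thin}(\ZE)\bigl((1-\alpha)-2(1-\alpha)^2\bigr)$; dividing by $1-\alpha$ and letting $\alpha\to 1-$ forces $\gammaE(\ZE)\ge \gammaE_{\rm thin}(\ZE)$, while the reverse inequality is immediate from the definitions (every spanning set in (\ref{E-gamma-def}) corresponds to a triple $(A,0,0)\in \tI$ in (\ref{gammaE-thin-def})). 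Conversely, if $\gammaE_{\rm thin}(\ZE)=\gammaE(\ZE)$, then the upper bound of (\ref{IE-diagonal-ineq}) combined with the lower bound (\ref{IElb}) of Theorem~\ref{intro-IEbounds} (at $\alpha=\beta$) traps $\IE(\alpha,\alpha,\ZE)$ between $(1-\alpha)\gammaE(\ZE)$ and $(1-\alpha)\gammaE_{\rm thin}(\ZE)$, which are equal.
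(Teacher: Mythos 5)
Your proof follows essentially the same strategy as the paper's: discretize $\ZE$, retract the spanning set $A$ to $A'=A_{>k}$ via Lemma~\ref{retract}, split on whether the projections of $A'$ are close to $|A'|$ or not, apply Lemma~\ref{gamma-thin-gamma} in one case and Lemmas~\ref{k-proj} and~\ref{makethin-span} in the other, and then pass to the Euclidean limit. Your Case~1 estimate ($\rho(A')\ge|A'|(\epsilon+\alpha\delta)$) is slightly sharper than the paper's bound $\delta|A'|$, which lets you set $\delta=2\epsilon$ outright rather than solving the balance equation $\delta+\delta^2=2\epsilon$ as the paper does, but both land on the same final estimate. Your observation that $(A')_{>k}=A'$ is correct: points of $A$ on the same line as a retained point are themselves retained, so the retraction is idempotent.

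The one genuine gap is in your last paragraph. To close direction two you invoke the "reverse inequality" $\gammaE(\ZE)\le\gammaE_{\rm thin}(\ZE)$ and justify it by saying that a spanning set in the definition~(\ref{E-gamma-def}) of $\gammaE$ yields a triple $(A,0,0)\in\tI$ relevant to~(\ref{gammaE-thin-def}). But the definition~(\ref{gammaE-thin-def}) of $\gammaE_{\rm thin}$ restricts to triples of the form $(\emptyset,f,g)$ — the set component must be empty, and the "size" is $\int f + \int g$, not an area. A triple $(A,0,0)$ gives no admissible competitor for $\gammaE_{\rm thin}$ at all, so this reasoning does not establish $\gammaE\le\gammaE_{\rm thin}$. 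The inequality is nonetheless true, and the cleanest way to get it in your setting is from the upper bound of~(\ref{IE-diagonal-ineq}) itself together with the hypothesis: $(1-\alpha)\gammaE(\ZE)=\IE(\alpha,\alpha,\ZE)\le(1-\alpha)\gammaE_{\rm thin}(\ZE)$ for $\alpha\in(0,1)$, and dividing by $1-\alpha$ gives $\gammaE\le\gammaE_{\rm thin}$ directly. Alternatively one can transfer Lemma~\ref{gamma-thin-gamma} through the convergence in Theorem~\ref{intro-d-E-theorem}, or appeal to Lemma~\ref{IE0-gamma}. With that replacement your argument is complete.
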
 
\begin{proof}
Pick discrete zero-sets $\Z_n$ so that $n^{-2}\squrep(\Z_n)\to \ZE$.
Assume that $A$ spans for $\Z_n$. Assume $1\ll k\ll n$ throughout. 
The number $\delta\in(0,1)$ will eventually be chosen to depend on $\alpha\in (0,1)$. 

By Lemma~\ref{retract}, $A'=A_{>k}$
spans for $\Z_n^{\swarrow k}$.
If $|\pi_x(A')|+|\pi_y(A')|\le (1-\delta)|A'|$, then 
\begin{equation}
\label{edge11-eq11} 
\rho(A)\ge \rho(A')\ge \delta|A'|\ge \delta\gamma(\Z_n^{\swarrow k})\ge 
\frac 12\delta\gamma_{\rm thin}(\Z_n^{\swarrow k}),
\end{equation} 
the last inequality following from Lemma~\ref{gamma-thin-gamma}. 
If $|\pi_x(A')|+|\pi_y(A')|\ge (1-\delta)|A'|$, then by Lemma~\ref{makethin-span} we can find a thin 
set $B$ that spans for $\Z_n^{\swarrow 2k}$ and has  
\begin{equation}
\label{edge11-eq2} 
|B|\le \left(1+\delta+\frac 2k\right)|A'|.
\end{equation}
Finally, we take $B'=B_{>k}$ to get a thin set that spans for 
$\Z_n^{\swarrow 3k}$. Therefore, by (\ref{edge11-eq2}),
\begin{equation}
\label{edge11-eq3} 
|A'|\ge \frac{1}{1+\delta+\frac 2k}\cdot \gamma_{\rm thin}(\Z_n^{\swarrow 3k}).
\end{equation}
By Lemma~\ref{k-proj}, 
$$
|\pi_x(A')|+|\pi_y(A')|\le \left(1+\frac 1k\right)|A'|
$$
and therefore, by (\ref{edge11-eq3}), in this case, 
\begin{equation}
\label{edge11-eq4} 
\rho(A)\ge \rho(A')\ge\frac{1-\alpha-\frac \alpha k}{1+\delta+\frac 2k}\cdot \gamma_{\rm thin}(\Z_n^{\swarrow 3k}).
\end{equation}
Now we divide (\ref{edge11-eq11}) and (\ref{edge11-eq4}) by $n^2$, send $n\to\infty$, and use Theorem~\ref{intro-d-E-theorem} to conclude that 
$$
\IE(\alpha,\alpha,\ZE)\ge \min\left\{\frac 12\delta,\frac{1-\alpha}{1+\delta}\right\}\cdot
\gammaE_{\rm thin}(\ZE).
$$
We choose $\delta$ so that the two quantities inside the minimum are equal, 
that is, 
$
\delta+\delta^2=2(1-\alpha).
$
The observation that $\delta\ge (\delta+\delta^2)-(\delta+\delta^2)^2=2(1-\alpha)-4(1-\alpha)^2$
concludes the proof of the lower bound.

The upper bound is a consequence of Lemma~\ref{gamma-thin-diagonal} 
and Theorem~\ref{intro-d-E-theorem}, and then 
the claimed equivalence follows from (\ref{IE-diagonal-ineq}) and (\ref{IElb}). 
\end{proof}

 
The key bound we need for the proof of  Theorem~\ref{edge11-L} is given by the next lemma, 
which implies that, for an L-shaped zero-set $\Z$, 
$\gamma_{\rm thin}(\Z)$ can be much larger than $\gamma(\Z)$. 

\begin{lemma}\label{thin-for-L}
Assume an L-shaped zero-set given by $\Z=R_{a+b,c}\cup R_{a, c+d}$, for some $a,b,c,d\ge 0$. 
Then $\gamma_{\rm thin}(\Z)\ge bc+ad-b-d$. 
\end{lemma}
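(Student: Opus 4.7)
The plan is to apply Lemma~\ref{gamma-bar-gamma} to reduce to showing $\overline{\gamma}_{\rm thin}(\Z^{\swarrow 1})\ge b(c-1)+d(a-1)$, where $\Z^{\swarrow 1}=R_{a+b-1,c-1}\cup R_{a-1,c+d-1}$ is again L-shaped. So let $(\vec f,\vec g)$ be nonincreasing sequences for which $\emptyset$ spans under the enhanced $(\Z^{\swarrow 1},\vec f,\vec g)$-dynamics; the goal becomes $\sum\vec f+\sum\vec g\ge b(c-1)+d(a-1)$.

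I would first show that each iterate $B_t$ of the enhanced dynamics from $\emptyset$ is a Young diagram (southwest-closed in $\bZ_+^2$). Since $\Z^{\swarrow 1}$ is a Young diagram and $\vec f,\vec g$ are monotone, if $(u,v)\in\cT(B_t)$ and $(u',v')$ lies componentwise below $(u,v)$, then the row and column lengths of $B_t$ at $(u',v')$ dominate those at $(u,v)$, so the enhanced counts at $(u',v')$ are pointwise at least as large and hence also lie outside $\Z^{\swarrow 1}$. This permits defining $T_v$ (respectively $S_u$) as the first step at which row $v$ (respectively column $u$) becomes infinite in $B_t$, with $T_0\le T_1\le\cdots$, $S_0\le S_1\le\cdots$, and all finite by the spanning assumption.

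Examining the condition for row $v$ to become infinite at step $T_v$---that $(R_v^{T_v-1}+f_v,C_u^{T_v-1}+g_u)\notin\Z^{\swarrow 1}$ for all $u\ge R_v^{T_v-1}$, and in particular as $u\to\infty$, where $g_u=0$ and $C_u^{T_v-1}$ stabilizes at the number of previously-infinite rows (at most $v\le c-2$)---yields the threshold inequality $R_v^{T_v-1}+f_v\ge a+b-1$ for $v\in\{0,\ldots,c-2\}$, and symmetrically $C_u^{S_u-1}+g_u\ge c+d-1$ for $u\in\{0,\ldots,a-2\}$. Summing these, together with the algebraic identity $(c-1)(a+b-1)+(a-1)(c+d-1)=2(a-1)(c-1)+b(c-1)+d(a-1)$, gives
\begin{equation*}
\sum\vec f+\sum\vec g\ge 2(a-1)(c-1)+b(c-1)+d(a-1)-\Sigma,
\end{equation*}
where $\Sigma=\sum_{v=0}^{c-2}R_v^{T_v-1}+\sum_{u=0}^{a-2}C_u^{S_u-1}$.

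The main obstacle is to establish the matching bound $\Sigma\le 2(a-1)(c-1)$. The naive estimates $R_v^{T_v-1}<a+b-1$ and $C_u^{S_u-1}<c+d-1$ (else mode (i) or (ii) would have triggered without the enhancement) are too weak. The intended argument is a refined accounting exploiting the Young-diagram interlocking: each $(u,v)\in B_{T_v-1}$ with $u\le a-2$ and $v\le c-2$ lies in the $(a-1)\times(c-1)$ corner region and contributes at most twice to $\Sigma$ (once to a row length, once to a column length), yielding the $2(a-1)(c-1)$ factor; out-of-corner contributions (cells with $u\ge a-1$ in an $R$-summand or $v\ge c-1$ in a $C$-summand) must be absorbed separately, using that their presence forces additional mass into $\vec g$ or $\vec f$ via threshold constraints in earlier batches. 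Carrying out this coupled accounting across successive batches of row and column activations is the principal technical challenge of the proof.
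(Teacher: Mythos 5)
Your strategy — reducing via Lemma~\ref{gamma-bar-gamma} to $\overline\gamma_{\rm thin}(\Z^{\swarrow 1})$, observing that the iterates of the enhanced dynamics from $\emptyset$ with nonincreasing $\vec f,\vec g$ are Young diagrams, and extracting the threshold inequalities $R_v^{T_v-1}+f_v\ge a+b-1$ and $C_u^{S_u-1}+g_u\ge c+d-1$ — is a genuinely different route from the paper's, and the parts you actually prove are correct. (The paper instead works directly with thin sets and approximate spanning, doing induction on $a+b+c+d$ with a careful redistribution argument in the key case; no enhancement is used.)

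However, the step you flag as ``the main obstacle'' is not just unfinished, it is aimed at a false target. The bound $\Sigma\le 2(a-1)(c-1)$ does not hold. Take $a=c=2$, $b=d=10$, so $\Z^{\swarrow1}=R_{11,1}\cup R_{1,11}$ and $2(a-1)(c-1)=2$. Choose the (nonincreasing, finitely supported) enhancements $\vec f=(10,1,1,\ldots,1,0,\ldots)$ with ten $1$'s and $\vec g=(1,1,0,\ldots)$. One checks $B_1=\{(u,v):u\le 1,\ v\le 10\}$, a $2\times 11$ rectangle, and at step $2$ both row~$0$ and column~$0$ become infinite, so $T_0=S_0=2$, giving $R_0^{T_0-1}=2$ and $C_0^{S_0-1}=11$, hence $\Sigma=13\gg 2$. (The lemma's conclusion $\sum\vec f+\sum\vec g\ge 20$ is still met here, since the sum is $22$, but it cannot be recovered from the inequality $\sum\vec f+\sum\vec g\ge 2+10+10-\Sigma=9$.) The underlying issue is that your lower bound on $\sum\vec f+\sum\vec g$ only uses $f_0,\ldots,f_{c-2}$ and $g_0,\ldots,g_{a-2}$, which is far from the whole sum when $c$ or $a$ is small.

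Your remark that out-of-corner cells ``force additional mass'' into $\vec g$ and $\vec f$ is the right intuition, and it can be made precise: e.g.\ every $u<R_v^{T_v-1}$ satisfies $g_u\ge c-1-v$, so $R_v^{T_v-1}\le |\{u:g_u\ge c-1-v\}|$, and every $v<C_u^{S_u-1}$ satisfies $f_v\ge a-1-u$. Combining these level-set bounds with the threshold inequalities and the identities $\sum f=\sum_j|\{v:f_v\ge j\}|$, $\sum g=\sum_j|\{u:g_u\ge j\}|$ does close the $a=c=2$ case, but it is exactly this ``coupled accounting across batches'' that you leave unproved, and it is not a routine calculation. As written, the proposal therefore has a genuine gap, and the intermediate bound you announce as the missing lemma is itself false, so the proof cannot be completed in the stated form.
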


\newcommand{\ah}{\widehat a}
\newcommand{\ch}{\widehat c}

To prove Lemma~\ref{thin-for-L}, we need 
some definitions. 
Consider two line growths, the {\it horizontal\/} one with zero-set $R_{a+b,c}$ and 
{\it vertical\/} one with zero-set $R_{a,c+d}$. Fix integers $\ah$, $\ch$ such that 
$a\le \ah\le a+b$ and $c\le \ch\le c+d$. We say that 
a set $A$ {\it H-spans\/}  if $A$ spans for $R_{a+b,c}$ after a thin set with $c$ rows of 
$\ah$ sites each is added to $A$ so that no point in it shares a row or a column with a point of $A$. 
We also say that 
a set $A$ {\it V-spans\/} if $A$ spans for $R_{a,c+d}$ after a 
thin set with $a$ columns of 
$\ch$ sites each is added to $A$, none of whose points share a row or column with $A$.
We say that a set $A$ {\it approximately spans\/} if it both H-spans and V-spans.
Clearly, any set that spans for $\Z$ as in Theorem~\ref{thin-for-L} also approximately 
spans with $\ah=a$ and $\ch=c$, so the next lemma proves Lemma~\ref{thin-for-L}.

\begin{lemma}\label{thin-for-L-approx}
Any thin set $A$ that approximately spans has $|A|\ge (c-1)(a+b-\ah)+(a-1)(c+d-\ch)$. 
\end{lemma}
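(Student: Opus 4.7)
The plan is to apply a slowed-down line-growth accounting to each of the H-spanning and V-spanning requirements, and then to combine the two inequalities using the structure of the thin set. Since $A$ is thin, it decomposes as $A = A^h \cup A^v$ (disjoint union), where $A^h$ consists of horizontal chunks (sites whose column contains no other $A$-site) and $A^v$ the vertical chunks. A consequence of the thin property is that $A^h$ and $A^v$ occupy disjoint rows and disjoint columns, so $|A| = |A^h| + |A^v|$.

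For H-spanning I would run the slowed-down line growth of $R_{a+b,c}$ starting from $A \cup A_1$, in which a single line fills per time step. Let $(n_R, n_V)$ denote the number of filled rows and columns at a given moment. A mass-accounting argument in the spirit of Theorem~\ref{lb-general} (double-counting ``new'' initial sites on filled lines---those not already in a previously filled line of the opposite type) yields the inequality
\begin{equation*}
n_R(a+b) + n_V c - n_R n_V \ \le\ |A| + c\ah,
\end{equation*}
valid as long as $n_V \le a+b$ and $n_R \le c$. The decisive observation is that the $A$-contributions to column counts can only come from $A^v$ (together with $A_1$, each of whose columns contains a single site), since $A^h$-columns contain at most one $A$-site each. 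Running the dynamics just up to the moment when the final row fires by cascade---that is, with $n_R = c-1$ rows and $n_V = a+b-\ah$ columns filled, and the $c$-th $A_1$-row about to fire---one can extract from the inequality the bound $|A^v| \ge (c-1)(a+b - \ah)$. The factor $c-1$ rather than $c$ reflects the cascade: once $c-1$ rows are filled, every column with one initial site already has $c$ occupied cells, so the last row fires ``for free'' from the $A^v$ standpoint.

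Symmetrically, V-spanning yields $|A^h| \ge (a-1)(c+d-\ch)$, using the analogous mass-accounting with the roles of rows and columns interchanged and the observation that row-count contributions come only from $A^h$ (together with $A_2$). Summing these two estimates, using the disjointness of $A^h$ and $A^v$, produces the desired inequality.

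The main obstacle is the delicate isolation of the $A^v$-specific bound $(c-1)(a+b-\ah)$ from the mixed inequality above and the rigorous justification of the ``$-1$'' via the cascade. This will almost certainly require a case analysis on the source of the initial firing---whether spanning begins from an $A^v$-column already carrying $\ge c$ sites, from an $A^h$-row carrying $\ge a+b$ sites, or from a staggered interleaving in which partially-filled $A^v$-columns fire only after the first row has filled---so that in every case one can account for precisely where the $(c-1)$ blocks of $(a+b-\ah)$ sites must reside inside $A^v$.
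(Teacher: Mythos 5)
The decomposition strategy you propose has a genuine gap: the two separate bounds $|A^v|\ge (c-1)(a+b-\ah)$ and $|A^h|\ge (a-1)(c+d-\ch)$ are simply not true individually, so summing them cannot produce the lemma. To see the failure, take $a=b=c=d=2$, $\ah=a=2$, $\ch=c=2$, and let $A$ be the thin set consisting of two rows of four points each on disjoint columns, say $\{(0,0),\ldots,(3,0)\}\cup\{(4,1),\ldots,(7,1)\}$. Here $A=A^h$ and $A^v=\emptyset$. Both rows immediately become occupied under line growth, so $A$ both H-spans and V-spans. Yet $|A^v|=0 < 2 = (c-1)(a+b-\ah)$, contradicting the claimed isolated bound. (The lemma itself of course still holds: $|A|=8\ge 4$.) The root of the difficulty is that your mass-accounting inequality bounds the \emph{total} number of initial sites consumed, but offers no way to force that consumption onto the $A^v$ side; a large $A^h$ can pay for the entire mass budget by filling many rows at once, each of high mass $a+b-n_V$. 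Your ``decisive observation'' that $A$-contributions to column counts come only from $A^v$ is also not quite right: each $A^h$ point contributes one to its column, and together with $A_1$ (one per column) this supplies up to two points per vertical line even when $A^v$ is empty, which is exactly what lets the counterexample cascade.

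The paper's proof avoids this by not attempting to separate $|A|$ into row-like and column-like contributions at all. Instead it inducts on $a+b+c+d$, with base cases $a=1$, $c=1$, $a+b=\ah$, or $c+d=\ch$ handled by the line-growth formula. Each inductive step identifies a single heavy line (a row with $\ge a+b$ points, a column with $\ge c+d$ points, or --- in the crucial mixed case --- a row with $\ge a$ and a column with $\ge c$ points), removes it, and carefully redistributes the points that spill over, producing a smaller instance on which the hypothesis applies. The factors $(c-1)$ and $(a-1)$ come not from a cascade phenomenon but from the mixed case: thinness forces the heavy row and heavy column to be disjoint, and one of each is sacrificed before the reduction. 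If you want to salvage your strategy you would need to add a compensating inequality showing that when $|A^v|$ falls below $(c-1)(a+b-\ah)$, then $|A^h|$ must exceed $(a-1)(c+d-\ch)$ by at least the deficit; as stated, the two one-sided estimates do not hold.
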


\begin{proof}
We emphasize that $\ah$ and $\ch$ will stay fixed throughout the proof, 
while $a\ge 1$, $b\ge \ah-a$, $c\ge 1$, $d\ge \ch-c$ will decrease. 
We will proceed by induction on $a+b+c+d$. The claim clearly 
holds if either of the four equalities hold: $a=1$, $c=1$, $a+b=\ah$, or $c+d=\ch$, 
by the formula for the line growth $\gamma$ (Proposition~\ref{lp-gamma}). 
We will from now on assume that none of these equalities hold.

Suppose $A$ is a thin set that approximately spans for the quadruple $(a,b,c,d)$. 
The argument is divided into three cases below. We will use the slowed-down version of 
the line growth whereby a single full line (horizontal or vertical) is occupied in a
single time step, which is equivalent to the removal of that line and 
shrinking of the rectangular zero-set by eliminating one row or one column from it. 

\noindent{\it Case 1\/}. There is a horizontal line $L_h$ 
with at least $a+b$ points of $A$. Eliminate 
all points on $L_h$ from $A$ to get $A'$, and take 
$a'=a$, $b'=b$, $c'=c-1$, $d'=d$. Clearly, $A'$ is thin and V-spans for 
$R_{a',c'+d'}=R_{a,c+d}^{\downarrow 1}$. To see that $A'$  H-spans for 
$R_{a'+b',c'}=R_{a+b,c}^{\downarrow 1}$, we need to check that the addition of 
a thin set of $c-1$ rows of 
$\ah$ sites each, added to $A$, actually produces a spanning set for $R_{a+b,c}$
in this case. Indeed, after $L_h$ is made fully occupied, at most $c-1$ horizontal lines ever 
need to be spanned in the line-by-line slowed down version 
of the line growth. By the induction hypothesis, 
\begin{equation*}
\begin{aligned}
|A|\ge a+b+|A'|&\ge a+b+(c'-1)(a'+b'-\ah) + (a'-1)(c'+d'-\ch)
\\&= (c-1)(a+b-\ah)+(a-1)(c+d-\ch)+\ah-a+1
\\&> (c-1)(a+b-\ah)+(a-1)(c+d-\ch),
\end{aligned}
\end{equation*}
as $\ah\ge a$. 

\noindent{\it Case 2\/}. There is a vertical line $L_v$ 
with at least $c+d$ points of $A$. 
Using {\it Case 1\/}, this case follows by symmetry. 

\noindent{\it Case 3\/}. There exists a horizontal line $L_h$ with $a_0 \ge a$ points 
of $A$, {\it and\/} 
there exists a vertical line $L_v$ with  $c_0\ge c$ points of $A$. We assume that 
$a_0$ is the smallest such number, that is, that any horizontal line 
with strictly fewer than $a_0$ points has strictly fewer than $a$ points, and 
thus strictly fewer than $\ah$ points. We also assume the analogous condition for $c_0$.
Observe that the points on $L_h$ and $L_v$ are disjoint, 
because $A$ is thin and $a,c\ge 2$. This is the only place where we use thinness;
the necessity for disjointness is the reason that $a$ or $c$ cannot be $1$, leading to 
the factors $(c-1)$ and $(a-1)$ in the statement. 

Now we let $a'=a$, $c'=c$, 
$b'=b-1$ and $d'=d-1$. 
We will remove $a$ points from $L_h$ and $c$ points from $L_v$, redistributing the remaining
points on these two lines to make a thin set $A'$ that approximately spans. 
Once we achieve that, 
the induction hypothesis will imply that
\begin{equation*}
\begin{aligned}
|A|\ge a+c+|A'|&\ge a+c+(c'-1)(a'+b'-\ah) + (a'-1)(c'+d'-\ch)
\\&= (c-1)(a+b-\ah)+(a-1)(c+d-\ch)+2
\\&> (c-1)(a+b-\ah)+(a-1)(c+d-\ch).
\end{aligned}
\end{equation*}

It remains to demonstrate the construction and approximate spanning of $A'$.  
Clearly, if we remove the points on $L_v$ from $A$, the resulting 
set $A_0$ H-spans for $R_{a'+b',c'}=R_{a+b-1,c}=R_{a+b,c}^{\leftarrow 1}$, even 
without the redistribution 
of excess points from $L_v$. Now we address the removal and redistribution 
of points from $L_h$. Let $B_0$ be the set $A_0$ augmented 
with the set $A_0'$ of $c$ horizontal lines of $\ah$ 
points, so that $B_0$ is a thin set that spans for $R_{a+b-1,c}$. 
The set $B_0$ still contains $a_0$ points on $L_h$. 

Consider the line-by-line slowdown of line growth $R_{a+b-1,c}$, accompanied by the 
corresponding removal and shrinking of the zero-set (spanning of a horizontal line results in removal of that line and of the bottom row from the zero-set; likewise for vertical lines). If $a_0\le \ah$, then the line 
$L_h$ is {\it never\/} used, as the lines  in $A_0'$ complete the spanning before it 
{\it could\/} be used, that is, because 
lines in $A_0'$ suffice after the shrunken zero-set has $\ah$ columns. Thus the points on $L_h$ 
may be removed from $B_0$ to form $B_1$.  
Assume now $a_0>\ah$, and recall the minimality of $a_0$. 
When $L_h$ is spanned, the shrunken zero-set has at most $a_0$ columns. By minimality, 
only vertical lines, 
say, $L_1,\ldots, L_m$, $m\le a_0-\ah\le a_0-a$, are spanned before the zero-set 
shrinks to $\ah$ columns, then lines in $A'_0$ finish the job. 
Place $m$ points on the lines  $L_1,\ldots, L_m$, one point per line, 
so that they share no rows with any 
other points of $B_0$, and remove all points on line $L_h$, forming the set $B_1$. Then the lines 
$L_1,\ldots, L_m$ become occupied as before, since the extra point formerly
provided by (spanning of) the line $L_h$ has been compensated. This brings the reduced zero-set to 
$\ah$ columns and leads to spanning. Therefore, $B_1\setminus A_0'$ is a thin set that H-spans for 
$R_{a+b-1,c}$. 

The redistribution of at most $b_0-b$ points from $L_v$ 
is obtained analogously; add those redistributed points to $B_1\setminus A_0'$ to obtain the 
desired set $A'$. This justifies the induction step in this case and finishes the proof.
\end{proof}

 
\begin{proof}[Proof of Theorem~\ref{edge11-L}]
Let $\Z_n=R_{\lceil an\rceil,n}\cup R_{n,\lceil an\rceil}$. Then
Lemma~\ref{thin-for-L} implies that 
$
\gamma_{\rm thin}(\Z_n)\ge 2(a-1)n^2+\cO(n).
$
The opposite inequality follows from the fact that a thin set with $\lceil an \rceil -n$ sites on each 
of $n$ horizontal and $n$ vertical lines spans for $\Z_n$. Therefore, 
$$
\gamma_{\rm thin}(\Z_n)= 2(a-1)n^2+\cO(n).
$$
Clearly $\frac 1{n^2}\squrep(\Z_n)\Econv \ZE$, thus by (\ref{dE-gamma-thin}),
$
\gammaE_{\rm thin}(\ZE)=2(a-1).
$
Theorem~\ref{IE-diagonal} now concludes the proof.
\end{proof}

\begin{proof}[Proof of Theorem~\ref{intro-IEcorners}]
The claimed limits (\ref{IE-10}) and (\ref{IE-11})
follow from, respectively, Theorem~\ref{one-zero-edge} together with~\eqref{IEub}, and 
Theorem~\ref{IE-diagonal}. 
To prove (\ref{IE-max}), first observe that (\ref{IEub})
provides an upper bound for all $\alpha$, which has the 
slope $0$ (resp.~$-2$) when $\alpha$ is close to $0$ 
(resp.~$1$). The matching lower bound is provided by 
Theorems~\ref{L-thm} and~\ref{edge11-L} upon sending $a\to\infty$. 
\end{proof}

\section{A law of large numbers for random zero-sets}\label{sec-random-Z}

Assume that $n$ is large and that we pick 
at random a Young diagram of cardinality $n$. 
We consider the following two ways to make this random choice.
\begin{itemize}
\item Let $\Z_n$ be a Young diagram of cardinality $n$ chosen uniformly at random. We call this 
the {\it Vershik\/} sample \cite{Ver}. 
\item Build $\Z_n$ sequentially: start with $\Z_0=\emptyset$ and, 
given $\Z_k$, choose $\Z_{k+1}$ by adding a
single site to $\Z_k$ chosen at random among corners, i.e., from all sites that
make $\Z_{k+1}$ a Young diagram.   We call this 
the {\it corner growth\/} or {\it Rost\/} sample \cite{Rom}. 
\end{itemize}  

See \cite{Rom} for a review of the fascinating research into 
properties of the many possible random choices of a Young diagram.
The key property of these selections are the corresponding asymptotic shapes. Let 
$$
\ZE_{\mathrm{Vershik}}=\{(x,y)\in \bR^2: \exp\left(-{\textstyle\frac\pi{\sqrt 6}}x\right)
+\exp\left(-{\textstyle\frac\pi{\sqrt 6}}y\right)\ge 1\}
$$
and 
$$
\ZE_{\mathrm {Rost}}=\{(x,y)\in \bR^2: \sqrt{x}+\sqrt{y}\le 6^{1/4}\}.
$$
We now state the shape theorem. See \cite{Rom} and \cite{Pet} for concise proofs.

\begin{theorem} \label{shape-theorem}
For any $\epsilon>0$, the Rost sample $\Z_n$ satisfies
$$\prob{(1-\epsilon)\ZE_{\mathrm {Rost}}\subseteq n^{-1/2}\squrep(\Z_n)
\subseteq (1+\epsilon)\ZE_{\mathrm {Rost}}}\to 1,
$$
as $n\to\infty$.

For any $\epsilon>0$ and $R>0$, the Vershik sample $\Z_n$ satisfies
\begin{equation*}
\begin{aligned}
\mathbb P\left((1-\epsilon)(\ZE_{\mathrm {Vershik}}\cap [0,R]^2)\right.
&\subseteq (n^{-1/2}\squrep(\Z_n))\cap[0,R]^2\\
&\left.\subseteq (1+\epsilon)(\ZE_{\mathrm {Vershik}}\cap [0,R]^2)\right)\to 1,
\end{aligned}
\end{equation*}
as $n\to\infty$.

\end{theorem}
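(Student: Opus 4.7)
The plan is to obtain both parts by deferring to the classical shape theorems for these two random Young-diagram ensembles, doing only the translation work needed to express their conclusions in the Hausdorff-containment form used here.

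For the Rost/corner-growth sample, I would first observe that placing i.i.d.\ $\mathrm{Exp}(1)$ clocks at each site of $\bZ_+^2$ and adding a site as soon as both its left and bottom neighbors (or the corresponding axis) are present produces, by the memorylessness of exponentials, exactly the uniform-corner dynamics at each jump time; thus $\Z_n$ has the law of the first $n$ sites added by exponential last-passage percolation. Rost's hydrodynamic limit theorem (see, e.g., Chapter 4 of \cite{Rom}) then asserts that the rescaled height profile converges in probability, uniformly on compact sets, to a curve of the form $\sqrt{x}+\sqrt{y}=c$. Matching the area (forced to $1$ by $|\Z_n|=n$) pins down $c=6^{1/4}$: indeed, a short computation gives $\area(\ZE_{\mathrm{Rost}})=1$. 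Since both $n^{-1/2}\squrep(\Z_n)$ and $\ZE_{\mathrm{Rost}}$ sit inside the compact box $[0,6^{1/4}]^2$, uniform-on-compacts convergence of the boundary curves translates directly into the required Hausdorff-type containment with high probability.

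For the Vershik sample, I would invoke the Vershik--Kerov--Logan--Shepp theorem, which states that a uniformly chosen partition of $n$, rescaled by $n^{-1/2}$, converges in probability to the region bounded by $\exp(-\tfrac{\pi}{\sqrt{6}}x)+\exp(-\tfrac{\pi}{\sqrt{6}}y)=1$, in the Hausdorff metric on any compact window $[0,R]^2$; the concise proofs appear in Chapter 1 of \cite{Rom} and in \cite{Pet}. This is precisely the stated claim, modulo packaging the result (usually phrased as convergence of the boundary or height function) as a two-sided set inclusion, which is routine for monotone regions.

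The main subtlety, and hence the only real obstacle, lies on the Vershik side: the limit shape $\ZE_{\mathrm{Vershik}}$ is unbounded, with legs asymptotic to both axes, so global Hausdorff convergence genuinely fails and the restriction to a compact window $[0,R]^2$ is essential -- this is why the statement for the Vershik sample carries the extra parameter $R$, while the Rost statement does not. Beyond this structural point, no further work is required beyond quoting the two shape theorems.
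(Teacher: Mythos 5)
Your proposal is correct and matches the paper's treatment, which gives no independent proof of Theorem~\ref{shape-theorem} but simply defers to the references \cite{Rom} and \cite{Pet}; your added commentary on the exponential-clock coupling, the area normalization $\area(\ZE_{\mathrm{Rost}})=1$, and the reason $R$ appears only in the Vershik part is accurate and consistent with those sources. One minor imprecision: $n^{-1/2}\squrep(\Z_n)$ need not lie inside $[0,6^{1/4}]^2$ deterministically (a Rost sample can have one very long row), so the relevant point is only that $\ZE_{\mathrm{Rost}}$ itself is compact, making the global two-sided containment a sensible high-probability statement without any window $[0,R]^2$.
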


As a consequence, we obtain the following law of large numbers.

\begin{cor}\label{strong-law}
For either the Rost or Vershik samples 
$$
\sup_{(\alpha,\beta)\in[0,1]^2}\left|\frac 1n I(\alpha,\beta,\Z_n)-\IE(\alpha,\beta,\ZE)\right|\to 0,
$$
where $\ZE$ is the corresponding limit shape, and the convergence is in probability. 
\end{cor}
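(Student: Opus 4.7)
The strategy is to combine the shape theorems (Theorem~\ref{shape-theorem}) with the deterministic Euclidean limit (Theorem~\ref{intro-d-E-theorem}), throughout with the scaling $\delta_n=n^{-1/2}$. First I would isolate the following deterministic \emph{sandwich bound}, a consequence of Theorem~\ref{intro-d-E-theorem} via a diagonal subsequence argument: for any bounded Euclidean zero-set $\ZE$ and any $\eta>0$, there exist $\epsilon>0$ and $N$ such that for all $n\ge N$ and every discrete zero-set $\cZ$ satisfying
\begin{equation*}
	(1-\epsilon)\ZE \subseteq \delta_n\squrep(\cZ) \subseteq (1+\epsilon)\ZE,
\end{equation*}
one has $\sup_{(\alpha,\beta)\in[0,1]^2}|\delta_n^2 I(\alpha,\beta,\cZ)-\IE(\alpha,\beta,\ZE)|<\eta$. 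To prove this by contradiction, take $\epsilon_k=1/k$, $n_k\to\infty$, and $\cZ^{(k)}$ violating the $\eta$-bound; the sandwich tightens, so $\delta_{n_k}\squrep(\cZ^{(k)})\Econv\ZE$ (Hausdorff from the bounded-window sandwich, area from Lemma~\ref{scaling}), and Theorem~\ref{intro-d-E-theorem} yields uniform convergence, a contradiction.

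For the Rost sample, $\ZE_{\mathrm{Rost}}$ is bounded, and for every fixed $\epsilon>0$ the sandwich event has probability tending to $1$ by Theorem~\ref{shape-theorem}. Applying the sandwich bound to $\ZE_{\mathrm{Rost}}$ then yields the desired uniform convergence in probability directly.

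The Vershik case is more delicate because $\ZE_{\mathrm{Vershik}}$ is unbounded (with area $1$), and the shape theorem gives control only on bounded windows. Fix $\eta>0$ and $R>0$ so large that $\area(\ZE_{\mathrm{Vershik}}\setminus[0,R]^2)<\eta/3$. Set $\cZ_n^R:=\cZ_n\cap R_{\lceil\sqrt n R\rceil,\lceil\sqrt n R\rceil}$. Lemma~\ref{I-perturbation} gives
\begin{equation*}
	I(\alpha,\beta,\cZ_n^R) \,\le\, I(\alpha,\beta,\cZ_n) \,\le\, I(\alpha,\beta,\cZ_n^R) + \bigl|\cZ_n\setminus R_{\lceil\sqrt n R\rceil,\lceil\sqrt n R\rceil}\bigr|.
\end{equation*}
By the second part of Theorem~\ref{shape-theorem}, $\delta_n\squrep(\cZ_n^R)$ is sandwiched between $(1\pm\epsilon)(\ZE_{\mathrm{Vershik}}\cap[0,R]^2)$ with probability tending to $1$, so the sandwich bound applied to the bounded limit $\ZE_{\mathrm{Vershik}}\cap[0,R]^2$ forces $\tfrac{1}{n}I(\alpha,\beta,\cZ_n^R)$ to converge in probability, uniformly on $[0,1]^2$, to $\IE(\alpha,\beta,\ZE_{\mathrm{Vershik}}\cap[0,R]^2)$. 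Because $|\cZ_n|=n$ is deterministic,
\begin{equation*}
	\tfrac{1}{n}\bigl|\cZ_n\setminus R_{\lceil\sqrt n R\rceil,\lceil\sqrt n R\rceil}\bigr| = 1-\area\!\bigl(\delta_n\squrep(\cZ_n)\cap [0,R]^2\bigr),
\end{equation*}
which by the shape theorem converges in probability to $\area(\ZE_{\mathrm{Vershik}}\setminus[0,R]^2)<\eta/3$. Finally, Corollary~\ref{gamma-E-perturbation} bounds $|\IE(\alpha,\beta,\ZE_{\mathrm{Vershik}})-\IE(\alpha,\beta,\ZE_{\mathrm{Vershik}}\cap[0,R]^2)|$ by the same quantity $<\eta/3$. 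A triangle inequality then completes the argument.

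The main obstacle will be the Vershik case, where one must control the tail of an unbounded limit shape using a shape theorem that only sees bounded windows. The saving grace is that $|\cZ_n|=n$ is deterministic, so the tail mass outside the window is automatically the complement of the bounded-window area, which the shape theorem controls at no extra cost.
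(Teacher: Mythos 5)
Your proposal is correct and fills in exactly the details left implicit by the paper's one-line proof, which simply cites Theorems~\ref{intro-d-E-theorem} and~\ref{shape-theorem}: the quantitative ``sandwich bound'' turns the deterministic Euclidean limit into a statement usable in probability, and the $[0,R]^2$-truncation combined with Lemma~\ref{I-perturbation}, Corollary~\ref{gamma-E-perturbation}, and the deterministic cardinality $|\cZ_n|=n$ correctly handles the unboundedness of the Vershik limit shape (the same decomposition the paper itself uses in the proof of Lemma~\ref{dE-IE-partial}). One small inaccuracy: in your diagonal contradiction argument, the area of $\delta_{n_k}\squrep(\cZ^{(k)})$ converges because it is squeezed between $(1\pm\epsilon_k)^2\area(\ZE)$ by trivial Lebesgue scaling, not by Lemma~\ref{scaling}, which concerns $\IE$, $\gammaE$, and $\gammaE_{\rm thin}$ rather than area.
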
 

\begin{proof} This follows from Theorems~\ref{intro-d-E-theorem} and~\ref{shape-theorem}.
\end{proof}

\section{Final remarks and open problems}\label{sec-open}
\begin{enumerate}
\item Does the completion time property given by Theorem~\ref{when-done} hold
for a more general class of growth dynamics than neighborhood growth? 
\item What is $\sup_{\Z}I(\alpha,\beta,\Z)/\gamma(\Z)$? See (\ref{I-area-ineq}) and 
(\ref{I-gamma-ineq}), and observe that we only have
trivial upper bound $1$ for this quantity 
when $\alpha$ and $\beta$ are small.
\item  Is there a simple characterization 
of Euclidean  zero-sets $\ZE$ for which $\gammaE_{\rm thin}(\ZE)=\gammaE(\ZE)$?   
We know that this holds for rectangles, isosceles right triangles, 
and L-shapes $\RE_{1,a}\cup \RE_{a,1}$, for $a\le 2$, but not for L-shapes with $a>2$
(see Section~\ref{ld-Lshape-11-subsec}). 
\item 
Does the slope $\lim_{\alpha\to 0+} \alpha^{-1}(\IE(\alpha,\alpha,\ZE)-\gammaE(\ZE))$ 
have a variational characterization?
\item
What is the slope of $\IE(\alpha,\beta,\ZE)$ as $(\alpha,\beta)$ approaches 
one of the corners at a different direction from those considered in 
Section~\ref{ld-10-subsec}--\ref{ld-Lshape-11-subsec}? What can be said about  
other boundary points? 
\item Fix $(\alpha,\beta)\ne (0,0)$ and a zero-set $\Z$. What is  
the minimal $a$ such that  
there exists an $A\subseteq R_{a,a}$ with $\rho(\alpha,\beta,A) = I(\alpha,\beta,\Z)$?
\item Can an explicit analytical
formula for $I(\alpha,\beta,T_\theta)$ be given for all $(\alpha,\beta)\in [0,1]^2$?
\item Can existence of large deviation rates be proved
for bootstrap percolation \cite{GHPS} or for line growth \cite{BBLN} in three dimensions?  
A result in this direction is proved in \cite{BBLN}, where it is also 
pointed out that it is not at all clear that the 
completion time result holds in higher dimensions.  

\item What is the algorithmic complexity for computation of $\gamma(\Z)$, when $\Z$ is given as 
input?  

\end{enumerate}

\section*{Acknowledgments} 

Janko Gravner was partially supported by the NSF grant DMS-1513340, 
Simons Foundation Award \#281309,
and the Republic of Slovenia's Ministry of Science
program P1-285. David Sivakoff 
was partially supported by NSF CDS\&E-MSS Award \#1418265, and a portion of this work was conducted while a visitor of the Mathematical Biosciences Institute.


\begin{thebibliography}{99}

\bibitem[{\bf AL}]{AL}
M.~Aizenman, J.~L.~Lebowitz, {\it
Metastability effects in bootstrap percolation\/}, 
Journal of Physics A: Mathematical and General 21 (1988), 3801--3813. 

\bibitem[{\bf BB}]{BB}
J.~Balogh and  B.~Bollob\'as,   
{\it Bootstrap percolation on the hypercube\/},  
Probability Theory and Related Fields   134 (2006) 624--648.

\bibitem[{\bf BBLN}]{BBLN}
P.~N.~Balister, B.~Bollob\'as, J.~D.~Lee, B.~P.~Narayanan, 
{\it Line percolation\/}. \newline
{\tt arXiv:1403.6851}

\bibitem[{\bf BBDM}]{BBDM} J.~Balogh, B.~Bollob\'as, H.~Duminil-Copin, R.~Morris,
    {\it The sharp threshold for bootstrap percolation in all dimensions\/},
    Transactions of the American Mathematical Society~364 (2012), 2667--2701.

\bibitem[{\bf BBM}]{BBM}
J.~Balogh, B.~Bollob\'as, R.~Morris, {\it 
Bootstrap percolation in high dimensions\/}, 
Combinatorics, Probability and Computing 19 (2010), 643--692.

\bibitem[{\bf BBMR}]{BBMR}
J.~Balogh, B.~Bollob\'as, R.~Morris, O.~Riordan, {\it
Linear algebra and bootstrap percolation\/},
Journal of Combinatorial Theory, Series A, 119 (2012), 1328--1335.

\bibitem[{\bf BDMS}]{BDMS} B.~Bollob\'as, H.~Duminil-Copin, R.~Morris, P.~Smith,
    {\it The sharp threshold for the Duarte model\/}.\newline
   {\tt arXiv:1603.05237} 


\bibitem[{\bf BPe}]{BPe}
J.~Balogh and G.~Pete, {\it Random disease on the square grid\/}, 
Random Structures and Algorithms 13 (1998), 409--422.

\bibitem[{\bf BPr}]{BPr}
F.~Benevides, M.~Przykucki, {\it 
Maximum percolation time in two-dimensional bootstrap percolation\/}, SIAM Journal on Discrete Mathematics 29 (2015), 224--251.

\bibitem[{\bf CGP}]{CGP} 
T.~Chan, G.~Gordon, J.~E.~Paguyo, {\it Neighborhood growth on the Hamming plane: 
bounds on extremal quantities\/}, 
REU project, UC Davis, work in progress.

\bibitem[{\bf CLR}]{CLR} 
J.~Chalupa, P.~L.~Leath, G.~R.~Reich, {\it Bootstrap percolation on a Bethe latice\/}, 
Journal of Physics C 12 (1979), L31--L35.

\bibitem[{\bf Gra}]{Gra} J.~Gravner, {\it Growth phenomena in cellular automata,\/} In ``Encyclopedia of Complexity and Systems Science,'' (R. A. Meyers, ed.), Springer, 2009, pp.~1--22.

\bibitem[{\bf GG1}]{GG1} J.~Gravner, D.~Griffeath, {\it   
First passage times for the threshold growth dynamics on $\bZ^2$\/}, Annals of Probability 24 (1996), 1752--1778.

\bibitem[{\bf GG2}]{GG2} J.~Gravner, D.~Griffeath, {\it  
Nucleation parameters in discrete threshold growth dynamics\/}, 
Experimental Mathematics 6 (1997), 207--220. 

\bibitem[{\bf GHPS}]{GHPS}
J.~Gravner, C.~Hoffman, J.~Pfeiffer, D.~Sivakoff, 
{\it  Bootstrap percolation on the Hamming torus\/}, 
Annals of Applied Probability 25 (2015), 287--323.

\bibitem[{\bf Hol}]{Hol}
A.~E.~Holroyd, {\it 
Sharp metastability threshold for two-dimensional bootstrap percolation\/}, 
Probabability Theory and Related Fields  125  (2003), 195--224. 

\bibitem[{\bf JLR}]{JLR}
S.~Janson, T. {\L}uczak, A.~Ruci\'nski, 
``Random Graphs." Wiley, 2000. 

\bibitem[{\bf Mor}]{Mor} R.~Morris, {\it 
Minimal percolating sets in bootstrap percolation\/}, 
Electronic Journal of Combinatorics 16 (2009), Issue 1, R2.

  
\bibitem[{\bf Pet}]{Pet}
F.~Petrov, {\it Limit shapes of Young diagrams. Two elementary approaches\/}, 
Journal of Mathematical Sciences
166 (2010), 63--74.

\bibitem[{\bf Rie1}]{Rie1}
E.~Riedl, {\it Largest minimal percolating sets in hypercubes under 
2-bootstrap percolation\/}, Electronic Journal of Combinatorics, Volume 17, 2010, R80. 

\bibitem[{\bf Rie2}]{Rie2}
E.~Riedl, {\it Largest and smallest minimal percolating sets in trees\/}, 
Electronic Journal of Combinatorics, Volume 19,
Issue 1, 2012, P64.

\bibitem[{\bf Rom}]{Rom} D.~Romik, ``The Surprising Mathematics of Longest Increasing Subsequences.''
Cambridge University Press, New York, 2015.

\bibitem[{\bf Siv}]{Siv} D.~Sivakoff, {\it Random site subgraphs of the $d$-dimensional 
Hamming torus\/}, Combinatorics,
Probability and Computing 23 (2014), 290--315. 

\bibitem[{\bf Sli}]{Sli} E.~Slivken, 
{\it Low threshold bootstrap percolation on the Hamming torus\/}.\newline  	
{\tt arXiv:1407.2317} 

\bibitem[{\bf Ver}]{Ver} A.~M.~Vershik, {\it Statistical mechanics of combinatorial partitions,
and their limit shapes\/}, Functional Analysis and its Applications 30 (1996), 90--105.
  
\end{thebibliography}
\end{document}